\newcommand{\moins}{\mathbin{\fgebackslash}}
\let\mathcal\mathscr
\let\cal\mathcal
\let\bb\mathbb
\newtheorem{theorem}[equation]{Theorem}
 \newtheorem{lemma}[equation]{Lemma}
 \newtheorem{proposition}[equation]{Proposition}
 \newtheorem{corollary}[equation]{Corollary}  
\newtheorem{conjecture}[equation]{Conjecture}
\theoremstyle{definition}
\newtheorem{definition}[equation]{Definition}
\newtheorem{remark}[equation]{Remark}
\newtheorem{example}[equation]{Example}
\theoremstyle{remark}
\newtheorem*{acknowledgments}{Acknowledgments}
\def\FIL{{\mathbb F}\mathbbm{il}}
\def\invlim{\mathop{\vtop{\ialign{##\crcr$\hfill{\lim}\hfil$\crcr
\noalign{\kern1pt\nointerlineskip}\leftarrowfill\crcr\noalign
{\kern -3pt}}}}\limits}
\def\dirlim{\mathop{\vtop{\ialign{##\crcr$\hfill{\lim}\hfil$\crcr
\noalign{\kern1pt\nointerlineskip}\rightarrowfill\crcr\noalign
{\kern -3pt}}}}\limits} 
\def\lomapr#1{\smash{\mathop{\relbar\joinrel\longrightarrow}\limits^{#1}}}
 \def\verylomapr#1{\smash{\mathop{\relbar\joinrel\relbar\joinrel\relbar\joinrel\longrightarrow}\limits^{#1}}}
\def\phi{\varphi}
\def\epsilon{\varepsilon}
\newcommand{\ovk}{\overline{K} }
\newcommand{\wlim}{\operatorname{lim} }
\newcommand{\dr}{\operatorname{dR} } 
  \newcommand{\hk}{\operatorname{HK} }   
   \newcommand{\nr}{\operatorname{nr} }   
 \newcommand{\colim}{\operatorname{colim} }
 \newcommand{\holim}{\operatorname{holim} }
 \newcommand{\proeet}{\operatorname{pro\acute{e}t} } 
 \newcommand{\eet}{\operatorname{\acute{e}t} }
 \newcommand{\Spec}{\operatorname{Spec} }
\newcommand{\Spf}{\operatorname{Spf} }
 \newcommand{\Hom}{{\rm{Hom}} }
 \newcommand{\Ext}{\operatorname{Ext} }
\newcommand{\Rep}{\operatorname{Rep} }
\newcommand{\Gal}{\operatorname{Gal} }
\newcommand{\can}{ \operatorname{can} }
\newcommand{\synt}{ \operatorname{syn} }
\newcommand{\st}{\operatorname{st} }
 \newcommand{\kker}{\operatorname{Ker} } 
  \newcommand{\FF}{\operatorname{FF} } 
  \newcommand{\ccoker}{\operatorname{Coker} }  
 \newcommand{\im}{\operatorname{Im} }
 \newcommand{\sff}{{\mathcal{F}}}
 \newcommand{\sg}{{\mathcal{G}}}
 \newcommand{\sbb}{{\mathcal{B}}}
 \newcommand{\scc}{{\mathcal{C}}}
 \newcommand{\so}{{\mathcal O}}
 \newcommand{\se}{{\mathcal{E}}}
 \newcommand{\sx}{{\mathcal{X}}}
\newcommand{\sd}{{\mathcal{D}}}
\newcommand{\wotimes}{\widehat{\otimes}}
 \newcommand{\wt}{\widetilde}
 \newcommand{\wh}{\widehat}
   \numberwithin{equation}{section}
\def\R{{\mathrm R}}
\def\O{{\cal O}}
  \def\B{{\bf B}}
\def\Q{{\bf Q}} \def\Z{{\bf Z}}
\def\C{{\bf C}}
\def\N{{\bf N}}
\def\O{{\cal O}}
\def\G{{\cal G}}
\def\ainf{{\bf A}_{{\rm inf}}}
\def\bcris{{\bf B}_{{\rm cris}}} 
\def\bst{{\bf B}_{{\rm st}}}
\def\bdr{{\bf B}_{{\rm dR}}}
\def\Bcris{{\mathbb B}_{{\rm cris}}} 
\def\Bdr{{\mathbb B}_{{\rm dR}}}
\def\Bst{{\mathbb B}_{{\rm st}}}
\def\rg{{\rm R}\Gamma}
\def\qBC{q{\cal{BC}}}
\def\epsilon{\varepsilon}
\def\dual{{\boldsymbol *}}
\def\bmu{{\boldsymbol\mu}}
\numberwithin{equation}{section}
\begin{document}
\title[The $C_{\st}$-conjecture]
 {On the cohomology of $p$-adic analytic spaces,\\ II: The $C_{\st}$-conjecture}
 \author{Pierre Colmez} 
\address{CNRS, IMJ-PRG, Sorbonne Universit\'e, 4 place Jussieu, 75005 Paris, France}
\email{pierre.colmez@imj-prg.fr} 
\author{Wies{\l}awa Nizio{\l}}
\address{CNRS, IMJ-PRG, Sorbonne Universit\'e, 4 place Jussieu, 75005 Paris, France}
\email{wieslawa.niziol@imj-prg.fr}
 \date{\today}
\thanks{The authors' research was supported in part by the grant NR-19-CE40-0015-02 COLOSS  and the NSF grant No. DMS-1440140.}
 \begin{abstract}
Long ago, Fontaine  formulated conjectures
(now theorems) 
relating \'etale and de Rham cohomologies
of algebraic varieties over $p$-adic fields. 
In an earlier work we have shown that 
 pro-\'etale and de Rham cohomologies of analytic varieties in the
two extreme cases (proper
and Stein) are also related.  In the proper case, the comparison theorems are similar
to those for algebraic varieties, but for Stein varieties they are quite different.

In this paper, we state analogs of Fontaine's conjectures for general smooth dagger varieties, 
that interpolate between these
two extreme cases, and
we prove them
for many ``small'' varieties (cases we deal with include products of overconvergent affinoids and
proper varieties, 
analytifications of algebraic varieties, or "almost proper" dagger varieties).   
The proofs use a ``geometrization'' of all involved cohomologies in terms of 
quasi-Banach-Colmez spaces (qBC's for short, quasi- because we relax 
the finiteness conditions). The heart of the proof
relies on delicate properties of BC's and qBC's.  These properties 
should be of independent interest and we have devoted 
a large part of the paper to them.

%

 \end{abstract}

\maketitle
 \tableofcontents

\section{Introduction}
Let $\so_K$ be a complete discrete valuation ring with fraction field
$K$  of characteristic 0 and with perfect
residue field $k$, countable, of characteristic $p$.  
Let $\ovk$ be an algebraic closure of $K$, 
and let $\so_{\ovk}$ denote the integral closure of $\so_K$ in $\ovk$. 
Let $\O_C$ be the $p$-adic completion of $\so_{\ovk}$ and let $C$ be its
field of fractions
(i.e., $C$ is the completion of $\ovk$; note that $\O_C/p\simeq\so_{\ovk}/p$
is countable by our assumption on $k$).
Let $W(k)$ be the ring of Witt vectors of $k$ with 
 fraction field $F$ (i.e, $W(k)=\so_F$) and 
let $\varphi$ be the absolute
Frobenius on $W(\overline {k})$. Set $\sg_K=\Gal(\overline {K}/K)$.

\subsection{Comparison theorems for algebraic varieties}
In order to put our results into perspective, let us first recall what is known for algebraic varieties.
The story started with Tate conjecturing~\cite{Tate} the existence of a Hodge-like decomposition for
the \'etale cohomology of smooth and proper algebraic (or even rigid analytic) varieties over $K$ and
proving the existence of such a decomposition for abelian varieties with good reduction.
One upshot of Tate's results is that the $p$-adic periods of algebraic varieties do not live in~$C$.
Fontained constructed~\cite{Fo82,Fo83,Fo94b} rings $\bcris$, $\bst$, $\bdr$ that should contain
these periods and refined Tate's conjecture by conjecturing (first~\cite{Fo82,Fo83} in the case of $X$ with good
reduction and then~\cite{Fo94a}
 in the case of  $X$ with semistable reduction) the
existence of functorial period isomorphisms relating \'etale and de Rham cohomologies of smooth algebraic varieties.
\begin{conjecture} {\rm (Fontaine)}\label{CONJ1}
Let $X$ be a proper and smooth algebraic variety over $K$ admitting
a semistable model over $\so_K$. Let $i\geq 0$. 

{\rm (i)}
{\rm (Conjecture $C_{\dr}$)}
We have a functorial $\G_K$-equivariant isomorphism preserving filtrations
$$H^i_{\eet}(X_C,\Q_p)\otimes_{\Q_p}\bdr\simeq H^i_{\dr}(X)\otimes_K\bdr$$

{\rm (ii)}
{\rm (Conjecture $C_{\st}$)}
We have a functorial $\G_K$-equivariant isomorphism commuting  with $\varphi$ and~$N$
$$H^i_{\eet}(X_C,\Q_p)\otimes_{\Q_p}\bst\simeq H^i_{\hk}(X)\otimes_F\bst,$$
compatible with the de Rham period morphism, and the natural injections
$\bst{\subset} \bdr$ and $H^i_{\rm HK}{\subset} H^i_{\dr}$.
\end{conjecture}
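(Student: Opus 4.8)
The plan is to deduce both statements from a comparison with \emph{syntomic cohomology}, in the spirit of Fontaine--Messing, Kato, Kurihara and Tsuji. First, it is enough to prove $(ii)$: granting the $C_{\st}$-isomorphism, one extends scalars along $\bst\subset\bdr$, uses the Hyodo--Kato isomorphism $H^i_{\hk}(X)\otimes_F K\simeq H^i_{\dr}(X)$ to rewrite the right-hand side, and checks that the filtration transported from $\bdr$ is the Hodge filtration on $H^i_{\dr}(X)$ --- which is precisely the compatibility built into the statement of $(ii)$. So from now on the target is the $\varphi$- and $N$-equivariant comparison after $\otimes\,\bst$.

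Fix the semistable model $\sx/\so_K$ with its canonical log structure, and for $r\geq 0$ let $\mathcal S_n(r)_{\sx}$ be the mod-$p^n$ log-syntomic complexes, with rationalization $\mathcal S(r)_{\sx}$. Two points are needed. \textbf{(A)} The Fontaine--Messing period map
$$\alpha_{\rm FM}\colon\ \rg_{\synt}(\sx_{\so_{\ovk}},\Q_p(r))\lra \rg_{\eet}(X_C,\Q_p(r))$$
is an isomorphism in degrees $\leq r$. \textbf{(B)} Degeneration of the Hyodo--Kato and Hodge--de Rham spectral sequences, together with the Hyodo--Kato description of log-crystalline cohomology over $\acris$, identifies $\rg_{\synt}(\sx_{\so_{\ovk}},\Q_p(r))$ in degree $i$ with the homotopy fiber of
$$\big(H^i_{\hk}(X)\otimes_F\bst^+\big)^{N=0}\ \xrightarrow{\ (1-\varphi/p^r,\ \iota_{\dr})\ }\ \big(H^i_{\hk}(X)\otimes_F\bst^+\big)^{N=0}\oplus\big(H^i_{\dr}(X)\otimes_K\bdr^+/\mathrm{Fil}^r\big)$$
(in the semistable case the monodromy contributes, as usual, one further $[\,\cdot\xrightarrow{N}\cdot\,]$ term). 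Letting $r\to\infty$ in (A) and (B) and feeding in the fundamental exact sequences $0\to\Q_p(r)\to\bcris^{\varphi=p^r}\to\bdr/\mathrm{Fil}^r\to 0$ and its $\bst$-analogue, the two sides reassemble into a functorial $\sg_K$-, $\varphi$- and $N$-equivariant morphism $H^i_{\hk}(X)\otimes_F\bst\to H^i_{\eet}(X_C,\Q_p)\otimes_{\Q_p}\bst$, which (A) forces to be an isomorphism; this is $C_{\st}$, and $C_{\dr}$ then follows from the first paragraph.

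Step (B) is essentially bookkeeping once the definitions are in place; the real work, and the main obstacle, is step (A). I would prove it locally on the special fiber, identifying $\mathcal S_n(r)_{\sx}$ in the range of degrees $\leq r$ with Tsuji's truncated log $p$-adic nearby-cycle sheaf $\tau_{\leq r}\,i^*Rj_*\Z/p^n(r)$. This is where the delicate $p$-adic Hodge theory enters: one computes syntomic cohomology of ``small'' log-smooth affines via the cotangent complex, uses Faltings' almost purity to control the cohomology of the pro-\'etale Kummer cover trivializing the log structure, and treats carefully the horizontal part of the boundary divisor together with the monodromy operator. The passage from $\sx$ to the rigid generic fiber and the control of the denominators in $\alpha_{\rm FM}$ are what force $\Q_p$-coefficients (equivalently, $r$ large relative to $i$); globalizing by proper base change and the Hyodo--Kato spectral sequence then yields the theorem. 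An alternative, avoiding nearby cycles, is Beilinson's: establish $C_{\dr}$ first via the derived de Rham comparison and $h$-descent, then extract the Hyodo--Kato refinement --- but the syntomic argument above is the one that stays closest to the technology (syntomic cohomology, $\qBC$'s) developed in the body of the paper.
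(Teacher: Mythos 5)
The paper does not actually prove Conjecture~\ref{CONJ1}: it is stated as background (now a theorem), and Remark~\ref{INTRO2} simply lists the four known lines of attack --- the almost \'etale approach~\cite{Fa94}, the syntomic approach~\cite{Ts}, the motivic approach~\cite{Ni08}, and the derived-geometry approach~\cite{BE2} --- without reproducing any of them. There is therefore no ``paper proof'' to compare against; what you have written is a condensed but essentially accurate sketch of the \emph{syntomic} route (Fontaine--Messing, Kato, Tsuji), which is indeed one of the four cited proofs. Your step (A), the degree-$\leq r$ comparison between log-syntomic and $p$-adic nearby-cycle/\'etale cohomology, is exactly the hard core of Tsuji's argument, and your step (B) is the standard reassembly via the Hyodo--Kato isomorphism and the fundamental exact sequence $0\to\Q_p(r)\to(\bcris^+)^{\varphi=p^r}\to\bdr^+/\mathrm{Fil}^r\to 0$.

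Two small cautions, neither fatal to the sketch. First, the ``$C_{\st}\Rightarrow C_{\dr}$'' reduction in your opening paragraph is slightly circular as stated: the compatibilities listed in part (ii) of the conjecture already reference the de Rham period morphism, so in practice one constructs $\alpha_{\st}$ and $\alpha_{\dr}$ simultaneously from the syntomic side (as you in effect do in (B)) rather than deducing one from the other. Second, in the displayed description of $\rg_{\synt}$ in (B), the $N=0$ eigenspaces should not appear on the nose at the level of cohomology groups; in the log-smooth/semistable case syntomic cohomology is the fiber of a map of complexes involving both $1-\varphi_r$ and the monodromy differential $N$, and passing to cohomology groups produces a spectral sequence rather than a single fiber of $H^i$'s --- you acknowledge this parenthetically, but the display as written is not quite the correct object. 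As a proof sketch of a theorem the paper only cites, the proposal is reasonable; if the paper had carried out a proof here, the closer match in spirit would actually be Beilinson's~\cite{BE2} (no semistable model hypothesis), which you mention only as an ``alternative''.
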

As we explain below (Remark~\ref{INTRO2}) these conjectures are now 
 theorems (even without the assumptions on existence of nice models, properness, or even smoothness).
\begin{remark}\label{INTRO1}
{\rm (i)} All the above cohomology groups are finite dimensional over the appropriate field
($\Q_p$ for \'etale cohomology, $K$ for de Rham, and $F$ for Hyodo-Kato).

{\rm (ii)} 
The existence of the period isomorphism implies in particular that
$$\dim_{\Q_p} H^i_{\eet}(X_C,\Q_p)=\dim_K H^i_{\dr}(X).$$

{\rm (iii)} The Hyodo-Kato cohomology group in (ii) is an $F$-vector space with a semilinear 
Frobenius $\varphi$
and a monodromy operator $N$ satisfying $N\varphi=p\varphi N$. Moreover, there is a Hyodo-Kato isomorphism
$$\iota_{\rm HK}:H^i_{\rm HK}(X)\otimes_FK\overset{\sim}{\to} H^i_{\dr}(X)$$

{\rm (iv)} In $C_{\dr}$, the filtration on the left-hand side is the one coming from $\bdr$; the one on the
right-hand side is the tensor product of the Hodge filtration on $H^i_{\dr}(X)$ and the filtration
on $\bdr$. In $C_{\st}$, the $\varphi$ and $N$ on the left-hand side are those coming from $\bst$;
on the right-hand side they are the tensor products of those coming from $H^i_{\rm HK}(X)$
and $\bst$.

{\rm (v)} Galois properties of the rings $\bst$ and $\bdr$ make it possible to recover
de Rham cohomology from \'etale cohomology by taking fixed points by $\G_K$: we have 
functorial ``\'etale-to-de Rham'' isomorphisms 
\begin{align*}
&H^i_{\dr}(X)\simeq (H^i_{\eet}(X_C,\Q_p)\otimes_{\Q_p}\bdr)^{\G_K},\quad{\text{as filtered $K$-vector spaces,}}\\
&H^i_{\rm HK}(X)\simeq (H^i_{\eet}(X_C,\Q_p)\otimes_{\Q_p}\bst)^{\G_K},
\quad{\text{as $F$-vector spaces with a $\varphi$ and a $N$}}.
\end{align*}
Moreover, the Hydo-Kato isomorphism is induced by the inclusion $\bst\subset\bdr$.
Note that, instead of tensor products, we could have considered $\G_K$-equivariant homomorphisms:
we have 
functorial isomorphisms
\begin{align*}
&H^i_{\dr}(X)^\dual\simeq {\rm Hom}_{\G_K}(H^i_{\eet}(X_C,\Q_p),\bdr),\quad{\text{as filtered $K$-vector spaces,}}\\
&H^i_{\rm HK}(X)^\dual\simeq {\rm Hom}_{\G_K}(H^i_{\eet}(X_C,\Q_p),\bst),
\quad{\text{as $F$-vector spaces with a $\varphi$ and a $N$}}.
\end{align*}

{\rm (vi)} It is possible to extract $\Q_p$ from $\bst$, using the extra structures. This induces
a description of \'etale cohomology by  de Rham cohomology (with the extra structures coming from
Hydo-Kato cohomology), i.e., it gives a ``de Rham-to-\'etale'' bicartesian diagram
$$\xymatrix@R=5mm{
H^i_{\eet}(X_C,\Q_p)\ar[r]\ar[d] & (H^i_{\rm HK}(X)\otimes_F\bst)^{N=0,\varphi=1}\ar[d]\\
F^0(H^i_{\dr}(X)\otimes_K\bdr)\ar[r]& H^i_{\dr}(X)\otimes_K\bdr}$$
One can refine this diagram by taking a large enough twist, which
makes it possible to remove denominators\footnote{Recall that $\bdr=\bdr^+[\frac{1}{t}]$,
$\bst=\bst^+[\frac{1}{t}]$.} in $t$:
if $r\geq i$, we have a bicartesian diagram
$$\xymatrix@R=5mm{
H^i_{\eet}(X_C,\Q_p(r))\ar[r]\ar[d] & (H^i_{\rm HK}(X)\otimes_F\bst^+)^{N=0,\varphi=p^r}\ar[d]\\
F^r(H^i_{\dr}(X)\otimes_K\bdr^+)\ar[r]& H^i_{\dr}(X)\otimes_K\bdr^+}$$

{\rm (vii)} The pair $(H^i_{\rm HK}(X),H^i_{\dr}(X))$ is a filtered {\it $(\varphi,N)$-module}
in the sense of Fontaine; the fact that the above diagram is bicartesian and $H^i_{\eet}(X_C,\Q_p)$
is finite dimensional implies, in particular,
that this filtered $(\varphi,N)$-module is {\it weakly admissible}, a condition that can be described
purely in terms of the interplay between $\varphi$, $N$ and the filtration.
\end{remark}
\begin{remark}\label{INTRO2}
{\rm (i)} As we have mentioned above, Fontaine's conjecture is now a theorem. There have been essentially
four lines of attack: the almost \'etale approach~\cite{Fa94}, the syntomic approach~\cite{Ts},
the motivic approach~\cite{Ni08}, and the derived geometry approach~\cite{BE2}.
The resulting period isomorphisms are compared in~\cite{Ni18}.
The most comprehensive results are those of Beilinson~\cite{BE2}: there is
no assumption of properness, existence of good models or smoothness.

{\rm (ii)}
If we don't assume $X$ to have a semistable model over $\so_K$ then $H^i_{\rm HK}(X)$ has to be replaced with $H^i_{\rm HK}(X_{\ovk})$, which is not
an $F$-vector space anymore but a $F^{\rm nr}$-vector space, where $F^{\nr}$ is the maximal unramified extension of $F$. Moreover,   $H^i_{\rm HK}(X_{\ovk})$ is equipped with a semi-linear action
of $\G_K$ commuting with $\varphi$ and $N$ and the action of $\G_K$ is smooth (i.e., any element
$x$ is fixed by $\G_L$ for some finite extension $L$ of $K$ that depends on~$x$). In this case,
the isomorphisms involving $H^i_{\rm HK}(X)$ in (v) of Remark~\ref{INTRO1} involve smooth
vectors for the action of $\G_K$ and not only fixed vectors: i.e, we have
a functorial isomorphism
$$H^i_{\rm HK}(X_{\ovk})^\dual\simeq {\rm Hom}_{\G_K}^{\rm sm}(H^i_{\eet}(X_C,\Q_p),\bst)
\quad{\text{of $(\varphi, N,\G_K)$-modules over $F^{\rm nr}$}}.$$

\end{remark}

\subsection{Analytic varieties}
Interest in  analytic varieties is more recent despite the fact that Tate~\cite{Tate}
already formulated his conjecture for rigid analytic spaces.  Scholze~\cite{Sch} established
a version of Tate's original conjecture for
smooth and proper analytic spaces over $K$ or $C$
and proved the $C_{\rm dR}$-conjecture for smooth and proper analytic spaces over~$K$. We
proved the $C_{\rm st}$-conjecture for
proper analytic spaces with a semistable model  \cite{CN1} (see \cite{SG} for a simplified construction) and ``de Rham-to-pro-\'etale'' versions of the $C_{\rm st}$-conjecture
for Stein varieties~\cite{CDN3} 
over $K$ (also with a semistable model).

\smallskip
In this paper, we will consider smooth dagger varieties
over $K$ or $C$ (without any assumption on the existence of nice models): 

$\bullet$ Any proper or partially proper rigid analytic variety has a natural dagger structure, and this
includes, in particular,  analytifications of algebraic varieties, Stein varieties (e.g.~\'etale
coverings of Drinfeld's spaces in any dimension), or, more generally, holomorphically convex varieties
(proper fibrations over a Stein base).

$\bullet$ Dagger varieties that are not necessarily partially proper include overconvergent affinoids
or, more generally, quasi-compact rigid analytic varieties with an overconvergent structure.

\smallskip
As we have seen in the case of algebraic varieties, Fontaine's conjectures $C_{\dr}$ and $C_{\st}$ can
be split in two: a ``de Rham-to-\'etale'' direction, and a ``\'etale-to-de Rham'' direction.
We are going to state analogous conjectures for analytic varieties but with \'etale cohomology replaced
with pro-\'etale cohomology (i.e.,~we are dealing with ``rational'' $p$-adic Hodge theory and not
``integral'' $p$-adic Hodge theory; in particular, the pro-\'etale cohomology of the analytication
of an algebraic variety is much bigger than the \'etale cohomology of the algebraic 
variety -- the latter is finite dimensional).

\subsubsection{The ``de Rham-to-\'etale'' $C_{\dr}$ and $C_{\st}$ conjectures for dagger varieties}
\begin{conjecture}\label{CONJ2}
{\rm (de Rham-to-pro-\'etale $C_{\dr}$ + $C_{\st}$)}
Let $X$ be a smooth dagger variety over $C$. Let $i\leq r$.
We have a functorial bicartesian diagram:
$$\xymatrix@C=.4cm@R=.5cm{
H^i_{\proeet}(X,\Q_p(r))\ar[r]\ar[d]
&(H^i_{\rm HK}(X)\wotimes_{F^{\nr}}\B^+_{\st})^{N=0,\varphi=p^r}\ar[d]\\
H^i(F^r\R\Gamma_{\rm dR}(X/\B^+_{\dr}))\ar[r]
&H^i_{\rm dR}(X/\B^+_{\dr})}$$
\end{conjecture}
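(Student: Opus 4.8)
\emph{Sketch of the argument, for small $X$.} The proof I propose treats $X$ small --- quasi-compact, a naive interior of such, or an analytification of an algebraic variety --- and runs everything through a \emph{syntomic cohomology} $\R\Gamma_{\synt}(X,\Q_p(r))$, defined functorially as the homotopy limit
$$\R\Gamma_{\synt}(X,\Q_p(r)):=\holim\Big(\big[\R\Gamma_{\hk}(X)\wotimes_{F^{\nr}}\B^+_{\st}\big]^{N=0,\varphi=p^r}\to\R\Gamma_{\dr}(X/\B^+_{\dr})\leftarrow F^r\R\Gamma_{\dr}(X/\B^+_{\dr})\Big),$$
the first term being the Koszul-style derived eigenspace for $N$ and $\varphi-p^r$ (with $\B^+_{\st}=\B^+_{\mathrm{cris}}[u]$, $N=-d/du$). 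A homotopy-pullback square of complexes is automatically homotopy bicartesian, so the statement reduces to two points: \textbf{(a)} the fundamental (Fontaine--Messing-type) comparison $\R\Gamma_{\proeet}(X,\Q_p(r))\overset{\sim}{\to}\R\Gamma_{\synt}(X,\Q_p(r))$ in degrees $\le r$, which is what places $H^i_{\proeet}(X,\Q_p(r))$ in the corner of the square for $i\le r$; and \textbf{(b)} the degeneration of the resulting Mayer--Vietoris long exact sequence into $4$-term pieces in each degree $i\le r$, i.e.\ strictness and surjectivity of $\big(H^i_{\hk}(X)\wotimes_{F^{\nr}}\B^+_{\st}\big)^{N=0,\varphi=p^r}\oplus H^i(F^r\R\Gamma_{\dr}(X/\B^+_{\dr}))\to H^i_{\dr}(X/\B^+_{\dr})$. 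Both (a) and (b) are where the work is.

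The unifying device --- and the reason the reductions below actually close up --- is the \emph{geometrization}: I would promote each of the four complexes above to an object of the derived category of the quasi-abelian category $\qBC$ of quasi-Banach--Colmez spaces, i.e.\ of objects built from finitely many extensions of elementary pieces (finite-dimensional $\Q_p$- and $C$-vector spaces, finite free $\B^+_{\dr}/t^{k}$-modules, and so on). A substantial preliminary is to establish the basic properties of $\qBC$: stability under the operations used below, an internal dévissage, and above all a rigidity property --- a morphism of qBC's with dense image is an epimorphism, kernels and cokernels of qBC maps are again qBC's, and countable towers of qBC's with strict transition maps have vanishing $\R^1\lim$. These are exactly what make the five-lemma, spectral-sequence and $\R\lim$ manipulations in the inductive step legitimate, whereas they would be meaningless for bare infinite-dimensional topological vector spaces.

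With this in hand, (a) is obtained by propagation from the affinoid case. The fundamental comparison for smooth overconvergent affinoids --- the local input, which I take from the earlier parts of this project and the syntomic literature --- is transported along descent: for quasi-compact $X$, pick a finite affinoid dagger cover and run the \v{C}ech/Mayer--Vietoris descent spectral sequences for $\R\Gamma_{\proeet}$ and $\R\Gamma_{\synt}$ in parallel, the comparison being a map of descent data; for a naive interior or a Stein piece, write $X=\bigcup_n X_n$ as an increasing union of quasi-compact dagger subdomains and use that every cohomology in the square commutes with the corresponding $\R\lim$ in $\qBC$ (vanishing $\R^1\lim$); analytifications of algebraic varieties reduce to these by combining the algebraic comparison theorems with an admissible affinoid exhaustion. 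The same descent pattern will be used for (b).

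The heart of the proof --- and the step I expect to be the main obstacle --- is (b): the strictness and surjectivity of the period map above, a genuinely infinite-dimensional analog of the weak-admissibility input in Fontaine's original conjecture, amounting to solving $Nx=0$, $\varphi x=p^{r}x$ over $\B^+_{\st}$ modulo the Hodge filtration uniformly enough to survive dévissage. I would prove it by induction on the ``size'' of $X$, starting from affinoids --- where it follows from the explicit description of syntomic cohomology together with the acyclicity of $\B^+_{\mathrm{cris}}$-type period rings in the relevant range --- and reducing the inductive step for $X$ to the (smaller, already treated) members of a cover via $\qBC$-level Mayer--Vietoris, the point being to show the connecting maps vanish; this last reduction ultimately rests on the same kind of strictness statement for the period rings $\B^+_{\st}\subset\B^+_{\dr}$ themselves, which I regard as the technical core. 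Running (a) and (b) for all small $X$ then yields the bicartesian diagram in that generality; the conjecture for arbitrary smooth dagger varieties would require a further descent not carried out here.
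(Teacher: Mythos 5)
Your high-level architecture coincides with the paper's: pass to syntomic cohomology, which is tautologically the homotopy pullback of the three corners; invoke the basic comparison from~\cite{CN4} to put pro-\'etale cohomology in the top-left corner for $i\le r$; handle affinoids directly; promote everything to qBC's; and run a Mayer--Vietoris induction on the number of affinoids needed to cover $X$. That is indeed the skeleton of the proof. But the place where you wave your hands --- ``the point being to show the connecting maps vanish,'' resting on ``the same kind of strictness statement for the period rings $\B^+_{\st}\subset\B^+_{\dr}$ themselves'' --- is precisely where the paper has to do the real work, and the mechanism is genuinely different from what you propose.

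The connecting maps in Mayer--Vietoris do \emph{not} vanish in general, and strictness of $\B^+_{\st}\subset\B^+_{\dr}$ is not what closes the induction. What the paper proves instead is an equivalence of several reformulations (Proposition~\ref{pos10.3}): bicartesianness of the diagram in degree $r$ is equivalent to the single numerical condition ${\rm ht}(\mathbb{H}^r_{\proeet}(X,\Q_p(r)))=\dim_C H^r_{\rm dR}(X)$, and also to acyclicity of $(H^i_{\rm HK}(X),F^0H^i_{\rm dR}(X/\B_{\dr}))$ for $i=r-1,r$. Two ingredients make this work and are absent from your sketch. First, the \emph{categorification of height}: for a qBC of curvature~$\le 0$ one has ${\rm ht}(\mathbb{W})={\rm rk}\,{\rm Hom}_{\rm VS}(\mathbb{W},\Bdr)$ (Proposition~\ref{baco3}), and the functor $h(-)={\rm Hom}_{\rm VS}(-,\Bdr)$ is exact on short exact sequences of curvature-$\le 0$ qBC's (Corollary~\ref{HN21}); this is what lets a five-lemma argument (Lemma~\ref{baco7.1}) transport the height equality across the Mayer--Vietoris diagram. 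Second, the \emph{dichotomy} of Proposition~\ref{pos5}: for a $(\varphi,N)$-module $M$ with $\varphi$-slopes in $[0,r]$, either $(M\otimes\bst^+)^{N=0,\varphi=p^r}\to (M\otimes\bdr^+)/t^rM_{\dr}^+$ is surjective, or the image of the corresponding $\Bdr^+$-quotient map has strictly positive height --- a structural statement about $\varphi$-modules and the Fargues--Fontaine curve (via Lemma~\ref{quasi121}), not a strictness property of period rings. The dichotomy is where the hypothesis $i\le r$ enters essentially: it is what forces the Frobenius slopes to lie in $[0,r]$. Finally, the induction is interwoven in $r$ as well as in the covering: one needs the $r-1$ statement to produce the curvature-$\le 0$ hypothesis that feeds Lemma~\ref{baco7.1} at level $r$ (via Lemma~\ref{pos10.1}).

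A smaller but real problem: your assertion that ``a morphism of qBC's with dense image is an epimorphism'' is not in the paper and is not obviously true, nor is it what is used. The paper instead uses that a sequence of qBC's is exact iff its sequence of $C$-points is exact (Remark~\ref{qbc100}), which is an honest statement flowing from the fact that the kernel/cokernel construction lands back in $\qBC$ (Proposition~\ref{HN201}) and a nonzero qBC has nonzero $C$-points. Density arguments play a role only in the Galois-theoretic Hom-computations of \S\ref{USED}, not in the exactness machinery of the $\cal BC$/$\qBC$ category.
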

\begin{remark}\label{INTRO3}
(i) The $\bdr^+$-cohomology group $H^i_{\dr}(X/\bdr^+)$ is a torsion-free $\bdr^+$-module from
which one recovers the usual de Rham cohomology by moding out by $t$. If $X$ is defined over $K$ then
$H^i_{\dr}(X_C/\bdr^+)\simeq H^i_{\dr}(X)\wotimes_K\bdr^+$.

(ii) If $X$ is defined over $K$, all spaces are endowed with a natural topology
and an action of $\G_K$ and all maps
are supposed to be $\G_K$-equivariant and continuous.

(iii) As we have shown in~\cite{CN4},
if $X$ is defined over $C$, then all spaces have a natural topology
and are $C$-points of VS's (pro-\'etale sheaves of $\Q_p$-vector spaces on the category
${\rm Perf}_C$ of perfectoid spaces over $C$) and all maps are supposed to be evaluations
of morphisms of VS's and continuous.

(iv) The Hyodo-Kato cohomology group
$H^i_{\rm HK}(X)$ (see \cite[Sec. 2]{CN4}) is a $F^{\nr}$-module with a Frobenius $\varphi$,
a monodromy operator $N$, a (pro-)smooth action of $\G_K$, and a Hyodo-Kato isomorphism
$\iota_{\rm HK}:H^i_{\rm HK}(X)\wotimes_{F^{\nr}}\B^+_{\dr}\overset{\sim}{\to}H^i_{\rm dR}(X/\B^+_{\dr})$.
The definitions of $H^i_{\rm HK}(X)$ and $\iota_{\rm HK}$
are adapted from the ones of Beilinson in the algebraic setting  and use the alterations
of Hartl and Temkin to produce good local models.

(v) 
In the case of proper analytic varieties, all cohomology groups
in the diagram are finite dimensional (as was the case in the algebraic  setting)
and the kernels of the horizontal arrows are~$0$.
 This is not the case for a general analytic variety and all spaces 
have to be seen in the derived category of locally convex topological vector spaces over $\Q_p$; in particular, 
 the tensor products are (derived) completed tensor products. 
Even if $H^i_{\rm dR}(X)$, for $X$ over $K$,  is finite
dimensional, $H^i({\rm Fil}^r(\bdr^+\wotimes_K{{\rm R}\Gamma}_{\rm dR}(X)))$  
surjects onto $\Omega^i(X_C)^{d=0}$
and hence can be huge (and then so is $H^i_{\proeet}(X_C,\Q_p(r))$).
\end{remark}

\subsubsection{The ``\'etale-to-de Rham'' $C_{\dr}$ and $C_{\st}$ conjectures for dagger varieties}
In the other direction, we have the following conjectures:
\begin{conjecture}\label{CONJ3}
{\rm (pro-\'etale-to-de Rham)}
Let $X$ be a smooth dagger variety defined over $K$. We have functorial isomorphims:
\begin{align*}
&(C_{\dr})&&H^i_{\dr}(X)^\dual\simeq {\rm Hom}_{\G_K}(H^i_{\proeet}(X_C,\Q_p),\bdr),\quad{\text{as filtered $K$-vector spaces}},\\
&(C_{\st})&&H^i_{\rm HK}(X)^\dual\simeq {\rm Hom}_{\G_K}^{\rm sm}(H^i_{\proeet}(X_C,\Q_p),\bst),
\quad{\text{as $(\varphi, N,\G_K)$-modules over $F^{\rm nr}$}}.
\end{align*}
\end{conjecture}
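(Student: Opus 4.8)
The plan is to deduce the two isomorphisms of Conjecture~\ref{CONJ3}, for $X$ over $K$, from the ``de Rham-to-pro-\'etale'' bicartesian diagram of Conjecture~\ref{CONJ2} over $C$; the latter must itself first be established for small $X$, and this is the substantial part. The first step is a \emph{geometrization}: realize, functorially in the smooth dagger variety $X/C$ and at the level of complexes, each of the four objects $\R\Gamma_{\proeet}(X_C,\Q_p(r))$, $(\R\Gamma_{\rm HK}(X)\wotimes_{F^{\nr}}\B^+_{\st})^{N=0,\varphi=p^r}$, $F^r\R\Gamma_{\dr}(X/\B^+_{\dr})$ and $\R\Gamma_{\dr}(X/\B^+_{\dr})$ as complexes built from quasi-Banach--Colmez spaces, i.e.\ as complexes of pro-\'etale sheaves on ${\rm Perf}_C$ with the relaxed finiteness of a qBC; here one uses the Hyodo--Kato machinery, the comparison sheaves of~\cite{CN4}, and the affinoid computations of the first part of this series. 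The reason for passing to qBC's is that the resulting category is stable under cones, completed tensor products, countable homotopy limits, and, up to a controlled shift, under the fixed-point functors $M\mapsto (M\wotimes\B^+_{\st})^{N=0,\varphi=p^r}$ and $M\mapsto F^r(M\wotimes\B^+_{\dr})$; these stability and cohomological-amplitude statements, together with the behaviour of qBC's under Galois cohomology and duality, are precisely the properties developed at length earlier in the paper and are what drive the induction.

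With the geometrization in hand, I would prove Conjecture~\ref{CONJ2} by induction on the way $X$ is built up from dagger affinoids---by finite unions, naive interiors (increasing unions), and the other operations under which the class of small varieties is closed. The base case is a dagger affinoid, where the bicartesian diagram is checked by the explicit syntomic description of pro-\'etale cohomology. For the inductive step one uses Mayer--Vietoris (for $X=U_1\cup U_2$) and $\holim$ (for increasing unions); since the corners of the diagram are obtained by applying the functors above, which have bounded amplitude on qBC's, the diagram for the pieces yields it for $X$ by the five lemma. The two things to watch are that the smallness hypothesis is preserved along the induction, and that the twist $r\ge i$ suffices to keep the fundamental exact sequence $0\to\Q_p(r)\to (\B^+_{\st})^{N=0,\varphi=p^r}\oplus F^r\B^+_{\dr}\to\B^+_{\dr}\to 0$ exact after completed tensoring with the qBC's that occur; this is where the bound $i\le r$ in Conjecture~\ref{CONJ2} comes from.

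To obtain Conjecture~\ref{CONJ3}, now for $X$ over $K$, I would dualize Conjecture~\ref{CONJ2} and then take $\G_K$-invariants, resp.\ smooth vectors. Over $K$ one has $\R\Gamma_{\dr}(X_C/\B^+_{\dr})\simeq\R\Gamma_{\dr}(X)\wotimes_K\B^+_{\dr}$, and $\R\Gamma_{\rm HK}(X)\wotimes_{F^{\nr}}\B^+_{\st}$ is constant over $\B^+_{\st}$ (Remark~\ref{INTRO3}); applying $\R\Hom(-,\B^+_{\dr})$, resp.\ the evident duality, to the bicartesian diagram and using the coherence of these period sheaves turns it into a dual bicartesian diagram whose corners involve $\Hom_{\G_K}(H^i_{\proeet}(X_C,\Q_p),\bdr)=\Hom_{\G_K}(H^i_{\proeet}(X_C,\Q_p(r)),\B^+_{\dr}(r))$ on one side, and $H^i_{\dr}(X)^\dual\wotimes\B^+_{\dr}$ and $H^i_{\rm HK}(X)^\dual\wotimes\B^+_{\st}$ on the others. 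Taking $\G_K$-invariants (resp.\ smooth vectors) now collapses this diagram: the Galois cohomology of the qBC's in play vanishes above degree~$0$ after the twist---the usual $\B^+_{\dr}$- and $\B^+_{\st}$-admissibility of these spaces---so the functor is exact on the relevant subcategory and one reads off the desired isomorphisms. It then remains to check that the filtration transported to $\Hom_{\G_K}(-,\bdr)$ is the Hodge filtration on $H^i_{\dr}(X)$, that the $(\varphi,N,\G_K)$-structure transported to $\Hom^{\rm sm}_{\G_K}(-,\bst)$ is the Hyodo--Kato one---using the compatibility of $\iota_{\rm HK}$ with $\B^+_{\st}\subset\B^+_{\dr}$ from Remark~\ref{INTRO3}(iv)---and that everything is functorial in $X$.

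I expect the main obstacle to be the inductive step for Conjecture~\ref{CONJ2}: one has to propagate the precise qBC structure---and especially the smallness condition---through Mayer--Vietoris, through $\holim$, and through the non-exact operations $N=0$, $\varphi=p^r$, $F^r$ and the completed tensor products, all while working in the derived category of locally convex $\Q_p$-vector spaces, where naive cohomology groups are badly behaved. The affinoid base case and the exactness, finiteness and duality properties of qBC's are the two pillars on which the whole argument rests; by comparison, the descent from $C$ to $K$ and the identification of the extra structures is the classical $p$-adic Hodge theory formalism transplanted into the geometrized setting.
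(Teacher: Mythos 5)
Your strategy for passing from Conjecture~\ref{CONJ2} to Conjecture~\ref{CONJ3}---dualize the bicartesian diagram and take $\G_K$-invariants, citing ``the usual $\B^+_{\dr}$- and $\B^+_{\st}$-admissibility of these spaces''---relies on two premises that fail in the non-proper case, and this is exactly where the paper has to work. First, the filtered $(\varphi,N,\G_K)$-modules $(H^i_{\rm HK}(X_C),H^i_{\dr}(X))$ produced by overconvergent geometry are only \emph{acyclic}, not weakly admissible: $V^r_{\rm st}$ of such a module is not finite-dimensional over $\Q_p$ (it is an extension of a de Rham representation by a huge torsion $\bdr^+$-module), so the classical admissibility formalism does not apply. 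The recovery of $M^\dual$ and $M^\dual_K$ from $\Hom^{\rm sm}_{\G_K}(V^r_{\rm st},\bst)$ and $\Hom_{\G_K}(V^r_{\rm st},\bdr)$ is the new Theorem~\ref{new-tate2} / Corollary~\ref{new-tate2.7}, proved by choosing a weakly admissible refinement ${\rm Fil}^\bullet_1\subset{\rm Fil}^\bullet$ and controlling the torsion-$\bdr^+$ discrepancy. Second, the ``collapse'' you invoke when eliminating the filtered de Rham corner is not a Galois-cohomology vanishing for qBC's. The term to be killed, $\wt{A}^{r,i}=\kker(\wt{F}^{r,i}\to B^i)$ (in the affinoid case $(\Omega^{i-1}_{X}/\kker\,d)\wotimes_K C$), is a large torsion $\bdr^+$-module, and what one needs is $\Hom_{\G_K}(\wt{A}^{r,i},\bdr)=0$. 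This is obtained from the concrete vanishing $\Hom_{\G_K}(\bdr^+/t^k\bdr^+,\bdr)=0$ of Proposition~\ref{twists}(iv), propagated by a density and \v{C}ech argument (Lemma~\ref{nioule1}); it is a statement about continuous $\G_K$-equivariant \emph{maps}, proved via density of $\overline K$ in $\bdr^+$, not a cohomological vanishing. Without these two ingredients the desired isomorphisms do not follow from Conjecture~\ref{CONJ2} by ``dualize and take invariants.''

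There is also a gap upstream: proving Conjecture~\ref{CONJ2} for unions by ``Mayer--Vietoris and the five lemma'' is not adequate, because bicartesianness is an exactness assertion at both ends of a three-term complex and does not propagate through Mayer--Vietoris by a five-lemma alone. The paper instead establishes the \emph{equivalence} (Proposition~\ref{pos10.3}) of the bicartesian property with the height identity ${\rm ht}({\mathbb H}^r_{\proeet}(X,\Q_p))=\dim_C H^r_{\dr}(X)$ and with acyclicity of $(H^i_{\rm HK},F^0H^i_{\dr}(X/\B_{\dr}))$ in degrees $i=r-1,r$, and then runs a double induction on $r$ and on the number of affinoids, using the categorified height $h(-)=\Hom(-,\Bdr)$ (Proposition~\ref{baco3}), the exactness of $h$ on qBC's of curvature~$\le 0$ (Corollary~\ref{HN21}), and the dichotomy of Proposition~\ref{pos5} which needs the slope bound $\le r$. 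Your appeal to ``bounded cohomological amplitude on qBC's'' does not substitute for this mechanism.
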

\begin{remark}\label{INTRO4}
As we mentioned above, even if $H^i_{\dr}(X)$ is finite dimensional, $H^i_{\proeet}(X_C,\Q_p)$ is, in general,
huge. Hence it is a little bit of a miracle that one could recover $H^i_{\dr}(X)$ from it.
\end{remark}

The previous conjecture uses Galois action to recover de Rham and Hyodo-Kato cohomologies from pro-\'etale
cohomology. If $X$ is defined over $C$, there is no Galois action anymore but one can use the VS structure
alluded to above to recover part of the structure.  This leads to the following conjecture:
\begin{conjecture}\label{CONJ4}
Let $X$ be a smooth dagger variety defined over $C$. 
We have functorial isomorphisms:
\begin{align*}
&({\mathbb C}_{\dr})&&{\rm Hom}_{\bdr^+}(H^i_{\dr}(X/\bdr^+),\bdr)\simeq {\rm Hom}_{\rm VS}({\mathbb H}^i_{\proeet}(X,\Q_p),\Bdr),\quad{\text{as $\bdr$-modules}},\\
&({\mathbb C}_{\st})&&{\rm Hom}_{F^{\nr}}(H^i_{\rm HK}(X),\bst)\simeq {\rm Hom}_{\rm VS}({\mathbb H}^i_{\proeet}(X,\Q_p),\Bst),
\quad{\text{as $\bst$-modules}}.
\end{align*}
\end{conjecture}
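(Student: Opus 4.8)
The plan is to reduce Conjecture~\ref{CONJ4} to Conjecture~\ref{CONJ3} by a descent argument along the extension $C/K$, after having first disposed of the "small" case that forms the technical core of the paper. The first step is therefore to prove $C_{\dr}$ and $C_{\st}$ of Conjecture~\ref{CONJ3} (and their $C$-linear avatars in Conjecture~\ref{CONJ4}) for small dagger varieties --- overconvergent affinoids and their naive interiors, and analytifications of algebraic varieties --- by the geometrization of all the cohomologies in terms of qBC's. Concretely, one dualizes the bicartesian diagram of Conjecture~\ref{CONJ2}: applying ${\rm Hom}_{\rm VS}(-,\Bst)$ (resp. ${\rm Hom}_{\rm VS}(-,\Bdr)$) to the square, using that $\Bst$ and $\Bdr$ are "enough injective" on the relevant categories of qBC's, and that ${\rm Hom}_{\rm VS}(({\mathbb H}^i_{\rm HK}\wotimes\B^+_{\st})^{N=0,\varphi=p^r},\Bst)$ computes ${\rm Hom}_{F^{\nr}}(H^i_{\rm HK}(X),\bst)$ via the fundamental exact sequences relating $\B^+_{\st}$, $t$, and the Frobenius/monodromy eigenspaces. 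The naive-interior reduction then upgrades affinoids to partially proper small varieties, and the algebraic comparison isomorphisms (Remark~\ref{INTRO2}) handle analytifications.

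Second, I would run the delicate induction on a suitable invariant of $X$ (e.g. the number of affinoids in an admissible cover, or a dagger-analogue of the number of charts), using that a general smooth dagger variety is glued from small pieces along small intersections. The induction step rests on the Mayer--Vietoris spectral sequence for $\proeet$, $\dr$, and $\hk$ cohomology, combined with the stability properties of qBC's: kernels, cokernels, and extensions of qBC's in the relevant abelian-up-to-stuff category are again qBC's, and the functors ${\rm Hom}_{\rm VS}(-,\Bst)$, ${\rm Hom}_{\rm VS}(-,\Bdr)$ are exact on this category (or have controlled derived functors that vanish in the right range). Since the isomorphisms of Conjecture~\ref{CONJ4} are between the evaluations on the two halves of the diagram of Conjecture~\ref{CONJ2}, and that diagram is functorial and compatible with Mayer--Vietoris, a five-lemma / spectral-sequence comparison pushes the isomorphism from the small pieces to $X$. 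The $C_{\dr}$ case is the filtered/$\bdr$-linear shadow; the $C_{\st}$ case carries the extra $(\varphi,N)$-structure, and one checks compatibility with $\iota_{\rm HK}$ and with $\bst\subset\bdr$ as in Remark~\ref{INTRO1}(v).

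Third, to descend from $C$ to $K$ for Conjecture~\ref{CONJ3}: given $X/K$, base-change to $X_C$, apply Conjecture~\ref{CONJ4}, and then take $\G_K$-(pro-)smooth vectors. The point is that $H^i_{\dr}(X) = (H^i_{\dr}(X_C/\bdr^+)/t)^{\G_K}$ as filtered $K$-vector spaces and $H^i_{\rm HK}(X)$ is the $\G_K$-smooth part of $H^i_{\rm HK}(X_C)$, so taking $\G_K$-invariants (resp. smooth vectors) of the VS-isomorphisms of Conjecture~\ref{CONJ4} --- using that $(\Bdr)^{\G_K}=\bdr$ on $C$-points and similarly for $\bst$, together with the pro-smoothness of the Galois action established in \cite{CN4} --- yields the Galois-equivariant isomorphisms of Conjecture~\ref{CONJ3}. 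One must verify that ${\rm Hom}_{\G_K}(H^i_{\proeet}(X_C,\Q_p),\bdr) = {\rm Hom}_{\rm VS}({\mathbb H}^i_{\proeet}(X,\Q_p),\Bdr)^{\G_K}$, i.e. that every continuous Galois-equivariant map on $C$-points comes from a map of VS's --- this is where the VS-geometrization of pro-\'etale cohomology is essential.

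The main obstacle, and the reason the paper devotes so much space to qBC's, is the inductive step: a priori the cohomology groups are only objects of a derived category of locally convex $\Q_p$-vector spaces, with no finiteness, so Mayer--Vietoris produces no clean long exact sequence of vector spaces and the naive five-lemma argument fails. One has to show that the relevant cohomologies are qBC's (so that the category is abelian enough for diagram chases), that the duality functors into $\Bst$, $\Bdr$ behave exactly on qBC's, and --- most delicately --- that the kernels of the horizontal maps in the diagram of Conjecture~\ref{CONJ2}, which can be "huge" ($\Omega^i(X_C)^{d=0}$ and the like, cf. Remark~\ref{INTRO3}(v)), are themselves qBC's whose $\Bst$/$\Bdr$-duals vanish or match up. Controlling these kernels under gluing --- i.e. proving that the qBC property and the vanishing of the pertinent Ext-groups are preserved along the admissible covers, with no loss in the relevant cohomological degree --- is the heart of the argument and the step I expect to be hardest.
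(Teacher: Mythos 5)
Your proposal misreads the scope of the result and rearranges the logical structure in ways that would not hold up. Three concrete problems:

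\textbf{(1) Gluing from small pieces to general $X$ does not work, and the paper does not claim it does.} Theorem~\ref{PRINCE} only asserts Conjecture~\ref{CONJ4} for \emph{small} varieties, and the proof (Theorem~\ref{miracle3.5}) is confined to that case. Your Step~2 proposes a Mayer--Vietoris/induction argument over an admissible cover to reach a general smooth dagger variety, but the paper explicitly flags this as an obstruction rather than an available tool: Remark~\ref{PRINCE8}(ii) points to $\R^1\lim$ problems when passing from quasi-compact pieces to a general partially proper variety, and says the authors have only partial results in that direction. A five-lemma comparison in a qBC-abelian category does not resolve the issue, because the relevant limit systems of qBC's are not Mittag--Leffler in the needed way once one leaves the quasi-compact world (the de Rham filtration can ``jump at infinity'').

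\textbf{(2) The small case: you gesture at the right idea but miss the key technical inputs.} The paper's argument for the small case is not a generic ``$\Bdr$, $\Bst$ are injective enough'' dualization. It proceeds by isolating the torsion $\Bdr^+$-module ${\mathbb A}^{r,i}=\kker({\mathbb F}^{r,i}\to{\mathbb B}^i)$ inside ${\mathbb H}^{r,i}$, using Corollary~\ref{HN12} to prove $h({\mathbb A}^{r,i})=0$, hence $h(\overline{\mathbb H}^{r,i})\simeq h({\mathbb H}^{r,i})$; then identifying $\overline{\mathbb H}^{r,i}$ with ${\mathbb V}^r_{\rm st}(H^i_{\rm HK}(X),t^{-r}\overline F^{r,i})$ via the exact sequences coming from Conjecture~\ref{CONJ2}; and finally invoking Proposition~\ref{baco3.7}, which computes ${\rm Hom}_{\rm VS}({\mathbb V}^r_{\rm st}(M,M_{\dr}^+),\Bdr)$ and ${\rm Hom}_{\rm VS}({\mathbb V}^r_{\rm st}(M,M_{\dr}^+),\Bst)$ for an acyclic filtered $(\varphi,N)$-module $(M,M_{\dr}^+)$ over $C$ with $\varphi$-slopes in $[0,r]$. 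None of these inputs (and in particular not the acyclicity result, which was established in Section~\ref{SS4}) appears in your sketch, and the ``enough injective'' framing would not substitute for them: what one actually needs is the vanishing ${\rm Ext}^{1,\natural}({\mathbb W},\Bdr)=0$ for curvature $\leq 0$ (Lemma~\ref{baco5}, Corollary~\ref{HN21}), which is a statement about BC's rather than general qBC's or injectivity of the target.

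\textbf{(3) Deducing Conjecture~\ref{CONJ3} from Conjecture~\ref{CONJ4} by Galois descent fails.} The paper proves the two conjectures separately, both from Conjecture~\ref{CONJ2} --- Conjecture~\ref{CONJ3} uses Lemma~\ref{nioule1} and Corollary~\ref{new-tate2.7}, Conjecture~\ref{CONJ4} uses Corollary~\ref{HN12} and Proposition~\ref{baco3.7}. Your proposal of proving Conjecture~\ref{CONJ4} and then taking $\G_K$-(smooth) vectors runs into exactly the point made in Remark~\ref{INTRO5}(i)--(ii): Conjecture~\ref{CONJ4} forgets the filtration on the $\bdr^+$-cohomology side (the $t^k\Bdr^+$ are all isomorphic as VS's) and forgets $\varphi$ and $N$ on the Hyodo--Kato side. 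Taking Galois invariants of a $\bdr$-linear (resp.\ $\bst$-linear) isomorphism can at best recover the underlying $K$-vector space (resp.\ $F^{\rm nr}$-module), not the filtered $K$-module or $(\varphi,N,\G_K)$-module structure that Conjecture~\ref{CONJ3} asserts. Recovering those structures requires the Galois-theoretic ingredients (Proposition~\ref{twists}, Theorem~\ref{new-tate2} and its corollaries on almost $C$-representations) that are independent of the VS-side argument.
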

\begin{remark}\label{INTRO5}
(i) It is not possible to recover the filtration on $\bdr^+$-cohomology
 just by looking at the cohomology level because the $t^k\Bdr^+$'s are all isomorphic
as VS's whereas the $t^k\bdr^+$'s are all distinct as $\G_K$-modules.

(ii) In the same way, $\varphi$ and $N$ disappear since 
$M=M^{N=0,\varphi=1}\otimes_{\bcris^{\varphi=1}}\bst$
if $M=M_0\otimes_{F^{\rm nr}}\bst$, where $M_0$ is a finite rank $(\varphi,N)$-module over $F^{\rm nr}$.

(iii) 
One way to understand points (i) and (ii) of the remark is the following.
Conjecture~\ref{CONJ2} represents $H^i_{\proeet}$ as the $H^0$ 
of a quasi-coherent sheaf on the Fargues-Fontaine curve that is obtained as the "modification" 
at $\infty$ of the sheaf associated to a $\varphi$-module. The $H^0$ determines the sheaf 
(because there is no $H^1$), but not
the "modification" that gave rise to it. Maybe a more sophisticated
formulation on the level of derived categories would allow to do that  (see Remark~\ref{recover})~?

\end{remark}
\subsubsection{Results supporting the conjectures}
To state our main result towards these conjectures, say that a smooth dagger variety
is {\it small} if its de Rham cohomology is finite dimensional, and that a small
(smooth dagger) variety {\it has de Rham slopes~$\geq 0$} if the slopes of the vector bundle
on the Fargues-Fontaine curve associated to the filtered $(\varphi,N)$-modules
$(H^i_{\rm HK}(X), F^0H^i(X/\bdr))$ are~$\geq 0$ for all $i$ (this includes in particular
the case when these filtered $(\varphi,N)$-modules are weakly admissible: the slopes of the
associated vector bundles are all $0$ in this case).

\begin{remark}\label{slopes}
(i) Small varieties include proper varieties, overconvergent affinoids, quasi-compact dagger varieties,
analytifications of
algebraic varieties, certain tubular neighbourhoods of subvarieties of  proper varieties or complements of such 
tubular neighbourhoods.

(ii) We conjecture that all small varieties have de Rham slopes~$\geq 0$.
Small varieties with de Rham slopes~$\geq 0$ include
proper varieties, overconvergent affinoids,  analytifications of
algebraic varieties, various complements of
tubular neighbourhoods of subvarieties of proper varieties.  Moreover, a product of two
small varieties with de Rham slopes~$\geq 0$ is again
a small variety with de Rham slopes~$\geq 0$.

(iii) It is conceivable that the above mentionned results
could be extended to show that all complements of
tubular neighbourhoods of subvarieties of proper varieties have 
de Rham slopes~$\geq 0$.  If one is very optimistic, one can also imagine
that any quasi-compact dagger variety can be written as 
the complement of a
tubular neighbourhood of a subvariety of a proper variety.
\end{remark}

\begin{theorem}\label{slope1}
If $X$ is small and has de Rham slopes~$\geq 0$, then:

$\bullet$ Conjecture~\ref{CONJ2} is true.

$\bullet$ If $X$ is defined over $K$, then Conjecture~\ref{CONJ3} is true,
and if $X$ is defined over $C$,
Conjecture~\ref{CONJ4} is true.
\end{theorem}
The VS's in the diagram in Conjecture~\ref{CONJ2}
 have some finiteness properties: they are extensions of
finite Dimensional Vector Spaces (also known as Banach-Colmez spaces, 
and referred to as BC's in the rest
of the text) by torsion $\Bdr^+$-Modules.  In particular, such objects ${\mathbb W}$
(referred to as qBC's, the ``q'' standing
for ``quasi'') have a height ${\rm ht}({\mathbb W})\in\Z$ (if ${\mathbb W}$ is the qBC attached
to a finite dimensional $\Q_p$-vector space $W$, then ${\rm ht}({\mathbb W})=\dim_{\Q_p}W$).
We have the following result echoing (ii) of Remark~\ref{INTRO1};
it is a little bit surprising
that the pro-\'etale cohomology encodes this finiteness result considering how big it is
if $X$ is not proper.
\begin{theorem}\label{slope2}
If $X$ is small with de Rham slopes~$\geq 0$, then
$${\rm ht}({\mathbb H}^i_{\proeet}(X_C,\Q_p))=\dim_C(H^i_{\dr}(X_C)).$$
\end{theorem}
\begin{remark}\label{PRINCE7}
(i) The proof of Theorem~\ref{slope1} makes heavy use of the theory of BC's and, in particular,
the geometric point of view advocated in Le Bras's thesis~\cite{lebras}.  About half of the paper
is devoted to proving results about BC's that are needed in the proof of our main result.
Some of these results may be of  independent interest.

(ii) For proper varieties, we can use the more naive theory of BC's
 as in~\cite{CN1},
where we treated the case of varieties with semistable models over the integers.

(iii) For an overconvergent affinoid or a small Stein variety, one gets a direct proof from
the basic comparison theorem proved in~\cite{CN4}.  
\end{remark}

\begin{remark}\label{PRINCE8}
If $X$ is smooth and Stein over $C$ or $K$, 
it can be written as a strict increasing union of overconvergent affinoids. 
We get that
the horizontal rows in the diagram in Conjecture~\ref{CONJ2} are surjective and their kernels are
$(\Omega^{i-1}(X)/{\rm Ker}\,d)(r-i)$.  In particular, we have an exact sequence
$$0\to \Omega^{r-1}(X)/{\rm Ker}\,d\to H^r_{\proeet}(X,\Q_p(r))\to
(H^r_{\rm HK}(X)\wotimes_{F^{\nr}}\bst^+)^{N=0,\varphi=p^r}\to 0$$
generalizing the exact sequence of~\cite[Th.\,1.8]{CDN3} which was proven under the assumption that $X$ has
a semistable model. It is not difficult to deduce Conjectures~\ref{CONJ2}, \ref{CONJ3} and~\ref{CONJ4}
in this case.
\end{remark}

\subsection{Proofs}
The main ingredients in the proofs are the results from~\cite{CN4} and the parallel theories
of BC's~\cite{CB,CF,lebras} and Fontaine's almost $C$-representations~\cite{Fo-Cp,Fonew}.

 Let $X$ be a quasi-compact smooth dagger variety over $C$. From~\cite{CN4} we use the existence of the basic comparison isomorphism
with syntomic cohomology:
\begin{equation}
\label{iso1}
H^i_{\synt}(X,\Q_p(r))\stackrel{\sim}{\to} H^i_{\proeet}(X,\Q_p(r)),\quad i\leq r,
\end{equation}
and the exact sequence
\begin{equation}
 \label{rigid1}
\cdots\to {\rm DR}_r^{i-1}(X)\to  H^i_{\synt}(X,\Q_p(r))\to {\rm HK}_r^i(X)
\lomapr{\iota_{\hk}} {\rm DR}_r^i(X)\to\cdots,
\end{equation}
where we have set:
$${\rm HK}_r^i(X):=(H^i_{\hk}(X)\wh{\otimes}_{F^{\nr}}\B^+_{\st})^{N=0,\phi=p^r},\quad
{\rm DR}_r^i(X):=H^i(\rg_{\dr}(X/\B^+_{\dr})/F^r).$$
Very important for what follows is that this exact sequence and the  isomorphism (\ref{iso1}) can be promoted to the category of VS's
(see \cite[Th.\,1.3, Th.\,1.7]{CN4}). 

\subsubsection{On the proof of Conjecture~\ref{CONJ2}}
Proving Conjecture~\ref{CONJ2} amounts to splitting the long exact sequence (\ref{rigid1})
into short exact sequences,
which can be done directly for dagger affinoids (Theorem~\ref{affinoids}), 
going back to the definition of syntomic cohomology.
To treat the case of a small variety with de Rham slopes~$\geq 0$,
we use the theory of BC's. We already used BC's in~\cite{CN1} to treat
the proper case (with a semistable model), but there we were helped by Scholze's theorem~\cite[Th.\,1.1]{Sch} that
the $H^i_{\proeet}(X,\Q_p(r))$'s are, in that case, finite dimensional $\Q_p$-vector spaces; this made
it possible to use the basic theory of BC's as developed in~\cite{CB,CF}.  In the present paper,
we need to use the more powerful point of view advocated in Le Bras' thesis~\cite{lebras}.
The key result that comes out of the theory of BC's is the following proposition
(Proposition~\ref{pos10.3}): 

\begin{proposition}\label{PRINCE5}
The following properties are equivalent:

\quad {\rm (a)} The diagram in Conjecture~\ref{CONJ2} is bicartesian.

\quad {\rm (b)} 
The slopes of the vector bundle associated
to $(H^{i}_{\rm HK}(X), F^0H^{i}_{\rm dR}(X/\B_{\dr}))$ are~$\geq 0$, for $i=r-1$ and $i=r$.

\quad {\rm (c)} The kernel and cokernel of ${\mathbb H}^r_{\proeet}(X,\Q_p(r))\to (H^r_{\rm HK}(X)\otimes_{F^{\nr}}\Bst^+)^{N=0,\varphi=p^r}$ have height~$0$.

\quad {\rm (d)} ${\rm ht}({\mathbb H}^r_{\proeet}(X,\Q_p(r)))=\dim_{C} H^r_{\rm dR}(X)$.
\end{proposition}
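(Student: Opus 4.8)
The plan is to establish the chain of implications $(a)\Rightarrow(d)\Rightarrow(c)\Rightarrow(b)\Rightarrow(a)$, translating the homological statement about the diagram into the language of qBC's and heights, and feeding in the basic comparison exact sequence \eqref{rigid1}. The conceptual engine is that all the spaces in sight are qBC's (extensions of BC's by torsion $\Bdr^+$-modules), that the height is additive in short exact sequences of qBC's, and that a qBC of height $0$ is exactly a torsion $\Bdr^+$-module (it has ``no $\Q_p$-part''), so that a \emph{map} of qBC's has kernel and cokernel of height $0$ precisely when it is an isomorphism on the underlying BC's modulo torsion. With these bookkeeping principles, each arrow becomes essentially a computation of Euler characteristics of heights along \eqref{rigid1}.

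The heart of the argument, and the expected main obstacle, is the equivalence $(a)\Leftrightarrow(b)$, i.e.\ relating bicartesianness of the square to acyclicity of the filtered $(\varphi,N)$-modules $(H^i_{\rm HK}(X),F^0H^i_{\rm dR}(X/\B_{\dr}))$ for $i=r-1,r$. For this I would use the geometric picture sketched in Remark~\ref{INTRO5}(iii): the square, read on the Fargues--Fontaine curve, exhibits ${\mathbb H}^i_{\proeet}(X,\Q_p(r))$ as the global sections of a modification at $\infty$ of the vector bundle attached to the $\varphi$-module $H^i_{\rm HK}$, the modification being governed by the Hodge filtration on $H^i_{\rm dR}(X/\B_{\dr})$; bicartesianness of the square is then equivalent to the vanishing of the relevant $H^1$'s of these modified bundles, which (after the twist by $r$ and the shift to the $F^0$ part) is exactly the acyclicity condition. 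Concretely, the mapping fibre of the right vertical arrow $(H^i_{\rm HK}(X)\wotimes\Bst^+)^{N=0,\varphi=p^r}\to H^i_{\rm dR}(X/\bdr^+)$ computes, degree by degree, the syntomic-type complex whose cohomology is governed by $H^\bullet$ of the bundle associated to $(H^i_{\rm HK},F^0H^i_{\rm dR})$; the square is bicartesian iff these fibres, for $i=r-1$ and $i=r$, have the homology predicted by exactness of \eqref{rigid1}, which is the acyclicity in (b). I would extract the needed ``$H^1$-vanishing $\Leftrightarrow$ acyclic'' statement from the theory of BC's developed earlier in the paper (the fundamental exact sequences for $\B^{\varphi=p^r}$, $\B^+_{\dr}$, and the slope formalism of Le Bras).

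For $(a)\Rightarrow(d)$: if the square is bicartesian, the long exact sequence \eqref{rigid1} splits into short exact sequences of qBC's, so ${\rm ht}\,{\mathbb H}^r_{\proeet}(X,\Q_p(r))={\rm ht}\,{\rm HK}^r_r(X)-{\rm ht}\,({\rm image of }\,\iota_{\hk})$ plus the contribution from ${\rm DR}^{r-1}_r$; a direct height computation with the building blocks ($H^i_{\rm HK}$ a finite $(\varphi,N)$-module over $F^{\nr}$, and the de Rham pieces) collapses this to $\dim_C H^r_{\rm dR}(X)$, exactly as the corresponding Euler-characteristic identity in the proper case. For $(d)\Rightarrow(c)$: the map in (c) fits, via \eqref{rigid1} and \eqref{iso1}, into an exact sequence whose other terms have computable heights; additivity of height forces ${\rm ht}(\ker)+{\rm ht}(\coker)={\rm ht}\,{\mathbb H}^r_{\proeet}(X,\Q_p(r))-\dim_C H^r_{\rm dR}(X)$ up to a sign, using that the de Rham-side terms always contribute on the nose, so (d) makes the right-hand side vanish and, both heights being non-negative, each vanishes. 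For $(c)\Rightarrow(b)$: a qBC whose kernel and cokernel under the natural map to $(H^r_{\rm HK}\wotimes\Bst^+)^{N=0,\varphi=p^r}$ have height $0$ is, up to torsion, that target; tracing this back through \eqref{rigid1} pins down the $H^1$ of the modified bundle and hence the acyclicity for $i=r$, while the $i=r-1$ case comes from the surjectivity/injectivity at the previous step of the long exact sequence (this is where one must be careful that height-$0$ kernels on the $r$-level genuinely control the $(r-1)$-level terms — the induction and the precise qBC dévissage set up earlier in the paper are what make this work). I would present the four implications in that cyclic order, doing $(a)\Leftrightarrow(b)$ with the Fargues--Fontaine-curve interpretation and the rest by height accounting.
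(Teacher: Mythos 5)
Your overall route is the same as the paper's (Proposition~\ref{pos10.3}): translate everything into qBC heights, feed in the exact sequence~\eqref{rigid1}, and close the loop by showing each condition forces the others. The paper organizes this by first proving Lemmas~\ref{pos10.1} and~\ref{pos10.2} (left resp.~right exactness of the sequence), and Proposition~\ref{pos10.3} then falls out as a conjunction plus the implications $(a)\Rightarrow(d)$ and $(d)\Rightarrow(b)$. Your cyclic chain $(a)\Rightarrow(d)\Rightarrow(c)\Rightarrow(b)\Rightarrow(a)$ is a logically legitimate reorganization of the same content, and the bundle-on-the-curve picture you invoke for $(a)\Leftrightarrow(b)$ is essentially what Lemma~\ref{pos4} together with Remark~\ref{quasi5} do. So the architecture is sound. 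There are, however, two genuine gaps.

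First, and most seriously, your proposal nowhere uses the dichotomy of Proposition~\ref{pos5}, which is the actual engine that turns a statement about \emph{heights} into a statement about \emph{exactness}. The point is that ``height $0$'' and ``curvature $0$'' do not by themselves yield surjectivity: the cokernel $I^r={\rm Coker}({\mathbb H}^{r,r}\to{\mathbb X}^{r,r})$ is a sub-qBC of the ${\mathbb B}_r$-Module ${\mathbb{DR}}^{r,r}$, so knowing it has height $0$ tells you it has curvature $0$, hence is a $\Bdr^+$-Module (Proposition~\ref{HN11}) — but it could a priori be a \emph{proper} $\Bdr^+$-submodule. The step from ``is a $\Bdr^+$-Module'' to ``equals all of ${\mathbb B}^r/\FIL^{r,r}$'' is exactly Proposition~\ref{pos5}, and its proof rests on the nontrivial Lemma~\ref{quasi121} (that $(M\otimes\bst^+)^{N=0,\varphi=p^r}$ generates $M\otimes\bdr^+$ as $\bdr^+$-module, which uses Dieudonn\'e--Manin and linear independence of characters). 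Without this ingredient the implication $(c)\Rightarrow(b)$, and likewise $(d)\Rightarrow(b)$, cannot be closed; ``height accounting'' alone stops short.

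Second, your $(d)\Rightarrow(c)$ step has a sign/parity error. You write that ${\rm ht}(\ker)+{\rm ht}(\coker)$ equals ${\rm ht}({\mathbb H}^{r,r})-\dim_CH^r_{\dr}(X)$ ``up to a sign'' and then conclude by non-negativity of both terms. But the kernel of ${\mathbb H}^{r,r}\to{\mathbb X}^{r,r}$ is a \emph{quotient} of the curvature-$0$ object ${\mathbb{DR}}^{r,r-1}$, hence (Corollary~\ref{ppp2.2}) has height $\leq 0$, whereas the cokernel is a \emph{subobject} of the curvature-$0$ object ${\mathbb{DR}}^{r,r}$, hence has height $\geq 0$. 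The correct consequence of $(d)$ is ${\rm ht}(\ker)={\rm ht}(\coker)$, and it is the \emph{opposite} signs on the two sides that squeeze both to zero. The paper sidesteps this by phrasing the bookkeeping directly in terms of the images $I^{r-1}$, $I^r$ (both subobjects of curvature-$0$ modules, hence both with non-negative height) and showing their heights sum to zero; this also explains cleanly how a single condition in degree $r$ controls acyclicity in \emph{both} degrees $r-1$ and $r$, a point your sketch glosses over.
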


Among the ingredients that go into the proof of this proposition
are:

$\bullet$ an interpretation (Proposition~\ref{baco3}) of ${\rm ht}({\mathbb W})$ for a qBC ${\mathbb W}$ as
the rank of ${\rm Hom}_{\rm VS}({\mathbb W},\Bdr)$ (i.e. a {\it categorification} of the height ${\rm ht}$),

$\bullet$ results on $(\varphi,N)$-modules (Lemma~\ref{quasi121}) and BC's (Proposition~\ref{lacompagnie})
 that tell us
what happens if the sequence associated to the diagram is not exact on the right. 

It is in the application of Lemma~\ref{quasi121}
 that we use in an essential way that the degrees of involved cohomology groups are $\leq r$: 
this implies that the slopes of Frobenius on Hyodo-Kato cohomology  are $\leq r$.

\subsubsection{On the proof of Conjectures~\ref{CONJ3} and~\ref{CONJ4}}
Conjectures~\ref{CONJ3} and~\ref{CONJ4} follow from Conjecture~\ref{CONJ2} and results 
about almost $C$ representations or BC's such as:
$${\rm Hom}_{\G_K}(\bdr^+/t^k,\bdr)=0 \quad{\rm and}\quad
{\rm Hom}_{\rm VS}(\Bdr^+/t^k,\Bdr)=0$$
(Proposition~\ref{twists} for the first statement and Corollary~\ref{HN12} for the second;
this type of results allow us  to get rid of the de Rham terms in the sequence of Hom's 
that is deduced from the exact sequence associated to the 
bicartesian diagram of Conjecture~\ref{CONJ2}.) and, if $M$ is a $(\varphi,N)$-module of slopes~$\leq r$,
\begin{align*}
{\rm Hom}_{\G_K}((M\otimes_{F^{\nr}}\bst^+)^{N=0,\varphi=p^r},\bdr)&=M^\dual_K\\
{\rm Hom}_{\rm VS}((M\otimes_{F^{\nr}}\Bst^+)^{N=0,\varphi=p^r},\Bdr)&= M^\dual\otimes_{F^{\nr}}\bdr
\end{align*}
(Theorem~\ref{new-tate2} and corollaries
for the first statement and Propositions~\ref{new-baco3.5} and~\ref{baco3.7} for the second).

 \begin{acknowledgments}
W.N. would like to thank MSRI, Berkeley, and the Isaac Newton Institute, Cambridge, for hospitality during Spring 2019 and Spring 2020 semesters, respectively,  when parts of this paper were written. We would like to thank  Guido Bosco, Jean-Beno\^it Bost, Gabriel Dospinescu,   Laurent Fargues,  Marco Maculan, 
 Jer\^ome Poineau, Peter Scholze for helpful conversations concerning the content of this paper.
Special thanks are due to the referee whose comments helped us discover a serious flaw in the first version of 
this paper.
  \end{acknowledgments}

   \subsubsection{Notation and conventions.}\label{Notation}
 Let $\so_K$ be a complete discrete valuation ring with fraction field
$K$  of characteristic 0 and perfect
residue field $k$ of characteristic $p$. Let $\ovk$ be an algebraic closure of $K$ and let $\so_{\ovk}$ denote the integral closure of $\so_K$ in $\ovk$. Let $C=\wh{\ovk}$ be the $p$-adic completion of $\ovk$.  Let
$W(k)$ be the ring of Witt vectors of $k$ with 
 fraction field $F$ (i.e., $W(k)=\so_F$); let $e=e_K$ be the ramification index of $K$ over $F$.   Set $\sg_K=\Gal(\overline {K}/K)$ and 
let $\phi$ be the absolute
Frobenius on $W(\overline {k})$. 
We will denote by $\bcris, \B_{\st},\B_{\dr}$ 
the crystalline, semistable, and  de Rham period rings of Fontaine. 

 We will denote by $\so_K$,
$\so_K^{\times}$, and $\so_K^0$, depending on the context,  the scheme $\Spec ({\so_K})$ or the formal scheme $\Spf (\so_K)$ with the trivial, the canonical (i.e., associated to the closed point), and the induced by $\N\to \so_K, 1\mapsto 0$,
log-structure, respectively.  Unless otherwise stated all   formal schemes are $p$-adic, locally of finite type, and equidimensional. For a ($p$-adic formal) scheme $X$ over $\so_K$, let $X_0$ denote
the special fiber of $X$; let $X_n$ denote its reduction modulo $p^n$. 

All rigid analytic spaces considered will be over $K$ or $C$. We assume that they are separated, taut, and countable at infinity. 

   Our cohomology groups will be equipped with a canonical topology. To talk about it in a systematic way,  we will work  rationally in the category of locally convex $K$-vector spaces. For details  the reader may consult \cite[Sec. 2.1, 2.3]{CDN3}. To summarize quickly:  $C_K$ is the category of convex $K$-vector spaces; 
  it  is a quasi-abelian category.
   We will denote the left-bounded derived $\infty$-category of $C_K$ by $\sd(C_K)$.
  A morphism of complexes that is a quasi-isomorphism in $\sd(C_K)$, i.e., its cone is strictly exact,  will be called a {\em strict quasi-isomorphism}.
 The associated cohomology objects are denoted by $\wt{H}^n(E)\in {\rm LH}(C_K)$, where
${\rm LH}(C_K)$ is the left heart of $\sd(C_K)$; 
they are called {\em classical} if the natural map $\wt{H}^n(E)\to {H}^n(E)$ is an isomorphism.

   Unless otherwise stated, we work in the derived (stable) $\infty$-category $\sd(A)$ of left-bounded complexes of a quasi-abelian category $A$ (the latter will be clear from the context).
  Many of our constructions will involve (pre)sheaves of objects from $\sd(A)$. 
  We will use a shorthand for certain homotopy limits:
 if $f:C\to C'$ is a map  in the derived $\infty$-category of a quasi-abelian  category, we set
$$[\xymatrix{C\ar[r]^f&C'}]:=\holim(C\to C^{\prime}\leftarrow 0).$$ 
For an operator $F$ acting on $C$, we will use the brackets $[C ]^{F=\lambda}$ to denote the derived eigenspaces and the brackets $(C)^{F=\lambda}$ or simply $C^{F=\lambda}$  to denote the non-derived ones.

  Finally, we will use freely the notation and results from \cite{CN3}.

\section{Review of almost $C$-representations }\label{SS1}
We will briefly review Fontaine's theory of almost $C$-representations 
\cite{Fo-Cp} (see also \cite{Fonew}) and some of its consequences. 
The theory has a satisfactory shape only when $[K:\Q_p]<\infty$, but some parts
work for $K$ arbitrary. Fortunately, the almost $C$-representations that we are going
to deal with have special features and we are only going to use the results in
\S\,\ref{USED} for which we provide alternative proofs (working for general $K$).

\subsection{Notation}
A {\em banach} is a Banach space over $\Q_p$ (up to an equivalence of norms) and a {\em banach representation} of $\sg_K$ is a banach with a continuous  and linear action of $\sg_K$.
 Denote by $\sbb(\sg_K)$ the category of banach representations of $\sg_K$.
 It has a natural exact category structure: a {\em short exact sequence} in $\sbb(\sg_K)$ is a sequence
$$
0\to B_1\stackrel{f}{\to} B_2\stackrel{g}{\to} B_3\to 0,
$$
where $g$ is a strict epimorphism and $f$ is a
 kernel of $g$ (thanks to the open mapping theorem, this is equivalent to $f$, $g$ being continuous and
the above sequence to be exact algebraically).

 A {\em $\Q_p$-representation of $\sg_K$} is a finite dimensional $\Q_p$-vector space with a continuous and linear action of $\sg_K$.  Similarly, a {\em $C$-representation of $\sg_K$} is a finite dimensional $C$-vector space with a continuous and semilinear action of $\sg_K$. We will denote by $\Rep_{\Q_p}(\sg_K)$, resp. $\Rep_C(\sg_K)$,
  the category of $\Q_p$-representations, resp. $C$-representations. More generally, one can define the category $\Rep_{\B^+_{\dr}}(\sg_K)$ of finite length $\B^+_{\dr}$-representations.
  
   Fontaine proved the following surprising theorem:
 \begin{theorem}\label{surprise}{\rm (Fontaine, \cite[Th.\,$A$, Th.\,$A^{\prime}$]{Fo-Cp})}
 If $[K:\Q_p]<\infty$, the forgetful functors
 $$
 \Rep_{C}(\sg_K)\to  \sbb(\sg_K),\quad \Rep_{\B^+_{\dr}}(\sg_K)\to \sbb(\sg_K)
 $$
 are fully faithful.
  \end{theorem}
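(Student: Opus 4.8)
The plan is to deduce full faithfulness of the forgetful functors $\Rep_C(\sg_K)\to\sbb(\sg_K)$ and $\Rep_{\B^+_{\dr}}(\sg_K)\to\sbb(\sg_K)$ from a computation of $\Hom$'s in the category of banach representations, using Tate's foundational results on Galois cohomology of $C$ and Sen theory. Concretely, full faithfulness means: for $C$-representations $V_1,V_2$ (resp.\ finite-length $\B^+_{\dr}$-representations), every continuous $\Q_p[\sg_K]$-linear map $V_1\to V_2$ of underlying banach spaces is automatically $C$-linear (resp.\ $\B^+_{\dr}$-linear). The standard reduction is: such a map is an element of $\Hom_{\sg_K}(V_1,V_2)$, and by the tensor-hom adjunction it suffices to show that for any $C$-representation $W$ one has $\Hom_{\Q_p[\sg_K]}(C,W)=W^{\sg_K}$ inside the relevant ambient space, i.e.\ that $\sg_K$-equivariant $\Q_p$-linear self-maps of $C$-representations do not see anything beyond the $C$-linear ones.

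First I would set $W=\HOM_{\Q_p}(V_1,V_2)$, the internal hom of banach representations, which is again a banach representation carrying a semilinear $\sg_K$-action; it contains the $C$-linear homs $\Hom_C(V_1,V_2)$ as a closed sub-$C$-representation. The crux is then a vanishing/identification statement: $W^{\sg_K}=\Hom_C(V_1,V_2)^{\sg_K}$, equivalently that the quotient banach representation $W/\Hom_C(V_1,V_2)$ has no $\sg_K$-invariants. Here is where Sen theory and Tate's theorem enter: any $C$-representation decomposes, after restriction to the cyclotomic tower and passing to Sen modules, according to its Hodge--Tate--Sen weights, and Tate's computation gives $(C(n))^{\sg_K}=0$ for $n\neq 0$ while $(C(0))^{\sg_K}=K$. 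The banach representation $W/\Hom_C(V_1,V_2)$ is built (as a banach, not as a $C$-vector space) out of pieces that, after tensoring up, only involve $C(n)$ with $n\neq 0$ — this is the content of the fact that the ``non-$C$-linear part'' of a $\sg_K$-equivariant map shifts weights — so its invariants vanish. For the $\B^+_{\dr}$ case one argues the same way by dévissage along the $t$-adic filtration $0\to t\B^+_{\dr}/t^m\to \B^+_{\dr}/t^m\to C\to 0$, reducing finite-length $\B^+_{\dr}$-representations to successive extensions of $C$-representations and invoking $\Hom_{\sg_K}(C(n),C)=0$ for $n<0$ together with vanishing of the relevant $\Ext^1$'s (again Tate: $H^1(\sg_K,C(n))=0$ for $n\neq 0$).

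The key steps, in order: (1) reduce full faithfulness to computing $\Hom_{\sg_K}$ between underlying banach spaces of two objects and identifying it with the linear-homs; (2) realize this $\Hom_{\sg_K}$ as the $\sg_K$-invariants of an internal-hom banach representation; (3) apply Tate--Sen theory to show the only invariants come from the $C$-linear (resp.\ $\B^+_{\dr}$-linear) part, i.e.\ all ``extra'' contributions live in weight-$\neq 0$ pieces of $C$ and hence die under $(-)^{\sg_K}$; (4) in the $\B^+_{\dr}$ case, run a dévissage on $t$-adic length, using both the vanishing of invariants and of $H^1$ in nonzero weights to propagate the statement up the filtration. The use of $[K:\Q_p]<\infty$ enters precisely in (3): Tate's and Sen's theorems in the clean form $(C(n))^{\sg_K}=0$ for $n\neq 0$, $(C)^{\sg_K}=K$, $H^1(\sg_K,C)=K$, $H^1(\sg_K,C(n))=0$ for $n\neq 0$ are established for finite extensions of $\Q_p$, and the finiteness also guarantees that the invariant functor behaves well on the pseudocompact/banach level.

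The main obstacle I anticipate is step (2)--(3): one must be careful that $\Hom_{\sg_K}$ of the underlying banach spaces is genuinely computed by the $\sg_K$-invariants of a \emph{finite-dimensional-over-$C$} (or finite-length-over-$\B^+_{\dr}$) internal hom rather than some genuinely infinite-dimensional banach object — a priori a continuous $\Q_p$-linear map need not be $C$-linear, so $\HOM_{\Q_p}(V_1,V_2)$ as a banach representation is much larger than $\Hom_C(V_1,V_2)$. Controlling this requires knowing that the ``overconvergent'' or ``relative Sen'' structure of $\HOM_{\Q_p}(V_1,V_2)$ still only exhibits Sen weights that are differences of weights of $V_1$ and $V_2$ — a statement which for finite extensions follows from Fontaine's analysis but which genuinely breaks for infinite $K$, and this is exactly why Fontaine's theorem is restricted to $[K:\Q_p]<\infty$. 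In the body of the paper only the consequences in \S\ref{USED} are needed and are reproved for general $K$ by other means, so for the present statement it suffices to follow Fontaine's original argument along the lines sketched above.
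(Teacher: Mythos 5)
The paper does not actually prove this theorem: it is stated as Fontaine's result with a citation, and the only ``proof-relevant'' content the paper adds is Remark~\ref{surprise5}, which says that the argument rests on Sen's theory and applies more generally to $E$-linear maps between $E$-representations whose Hodge--Tate--Sen weights are algebraic over $E$, that condition being automatic for $E=\Q_p$ when $[K:\Q_p]<\infty$. The paper then deliberately \emph{avoids} relying on Theorem~\ref{surprise} in the body of the argument and only uses the weaker statements of \S\,\ref{USED} (most crucially ${\rm Hom}_{\G_K}(C,C)\simeq K$), which it reproves for arbitrary $K$. So your proposal is doing something the paper does not attempt, which is fine, but it should be measured against Fontaine's own argument rather than against the paper.

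As a reconstruction of Fontaine's proof, the skeleton (reduce to invariants of an internal hom; bring in Sen theory; d\'evissage along the $t$-adic filtration for the $\B^+_{\dr}$ case) is on the right track, but there is a genuine gap in two places. First, your step~(3) is circular as written: the assertion that the ``non-$C$-linear part'' of a $\sg_K$-equivariant map shifts weights so that its invariants vanish \emph{is} the theorem, not an input to it; the whole difficulty is that $\HOM_{\Q_p}(V_1,V_2)$ is an enormous banach and you have not given a mechanism for cutting its invariants down to the finite-dimensional $C$-linear part. Second, and more importantly, you mislocate where the hypothesis $[K:\Q_p]<\infty$ is used. Tate's computations ($C^{\sg_K}=K$, $C(n)^{\sg_K}=0$ for $n\neq0$, the analogous $H^1$ statements) are \emph{not} special to finite extensions: they hold for any $K$ as in the paper's conventions, and indeed the paper uses ${\rm Hom}_{\G_K}(C,C)\simeq K$ for general $K$ throughout \S\,\ref{USED}. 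What genuinely requires $[K:\Q_p]<\infty$ is the fact --- spelled out in Remark~\ref{surprise5} --- that the Hodge--Tate--Sen weights of any $C$-representation are then automatically algebraic over $\Q_p$ (because $K_\infty$ is algebraic over $\Q_p$), and this algebraicity is exactly what powers Fontaine's refined rigidity statement. Your final paragraph gestures at this in the ``relative Sen structure'' remark, but your explicit attribution of the finiteness hypothesis to Tate's theorem is incorrect and should be replaced by the algebraicity-of-weights condition.
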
 
  In other words, if $W_1, W_2$ are two $C$-representations of $\sg_K$, all $\Q_p$-linear continuous $\sg_K$-equivariant maps $W_1\to W_2$ are necessarily $C$-linear. Similarly,
   if $W_1, W_2$ are two $\B^+_{\dr}$-representations of $\sg_K$, all $\Q_p$-linear continuous $\sg_K$-equivariant maps $W_1 \to W_2$ are necessarily $\B^+_{\dr}$-linear.
\begin{remark}\label{surprise5}
The proof uses Sen's theory~\cite{sen} and gives a stronger result: one has the same statements 
for $E$-linear maps between $E$-representations 
for which the Hodge-Tate-Sen weights are algebraic over $E$
(a condition that is automatic for $E=\Q_p$, if $[K:\Q_p]<\infty$, whence the theorem).
In particular, we have the following fundamental result (where ${\rm Hom}_{\G_K}$ is
an abuse of notation for ${\rm Hom}_{\sbb(\G_K)}$), valid for arbitrary $K$:
$${\rm Hom}_{\G_K}(C,C)\simeq K$$
\end{remark} 
 \subsection{Almost $C$-representations}
\subsubsection{The general theory}
 Two banach representations $W_1$ and $W_2$ are {\em almost isomorphic} if there exist two finite dimensional $\Q_p$-vector spaces $V_i\subset W_i$, $i=1,2,$ stable under $\sg_K$ such that $W_1/V_1\simeq W_2/V_2$.
 An {\em almost $C$-representation} is a banach representation which is almost isomorphic to $C^d$ for some $d\in\N$.
 Denote by $\scc(\sg_K)$ the category of almost $C$-representations.
\begin{remark}
The above definition makes sense for arbitrary $K$, but the theory has a satisfactory shape only
for $[K:\Q_p]<\infty$;
maybe the point of view developped in \cite{Fonew} would lead to a satisfactory theory for
arbitrary $K$?
\end{remark}

From now on, assume that $[K:\Q_p]<\infty$. Then $\scc(\sg_K)$
is an abelian subcategory of the exact category $\sbb(\sg_K)$. 
If $W/V_2\simeq C^d/V_1$,
one sets $\dim (W)=d$ and  ${\rm ht}(W)=\dim_{\Q_p} (V_2)-\dim_{\Q_p}(V_1)$.
 This is independent of choices and yields additive functions \cite[Th.\,B]{Fo-Cp} -- a nontrivial fact whose proof uses the theory of BC's\footnote{More specifically, the proof uses the result which says that, 
if $S$ is an effective BC of dimension $1$ and height $h$ and if $f:S\to C$ is a morphism of BC's whose image is not finite-dimensional, then $f$ is surjective and its kernel has dimension $0$ and height $h$.} \cite{CB}, \cite{CF}.
 We have $\dim(C)=1, {\rm ht}(C)=0$ and $\dim(V)=0, {\rm ht}(V)=\dim_{\Q_p}(V)$ if $V$ is a $\Q_p$-representation. The category $\scc(\sg_K)$ contains all $\B^+_{\dr}$-representations and if $W$ is a $\B^+_{\dr}$-representation of length $d$ then it is almost isomorphic to $C^d$ \cite[Th.\,C]{Fo-Cp}; 
we have  $\dim(W)=d$ and ${\rm ht}(W)=0$. In particular, the  $\B^+_{\dr}$-representations  $C$ and $C(1)$ are almost isomorphic.
\begin{remark}
The category $\scc(\sg_K)$ modulo almost isomorphisms is semi-simple with a single isomorphism class of simple objects, the class of $C$. 
\end{remark}

 Fontaine reduced the computation of $\Ext$-groups in the category $\scc(\sg_K)$ to 
their computation in the categories $\Rep_{\B^+_{\dr}}(\sg_K)$ and
$\Rep_{\Q_p}(\sg_K)$ via the following fact (which relies on \cite[Prop.\,5.5]{Fo-Cp}
and \cite[Prop.\,5.6]{Fo-Cp}): 
 \begin{proposition}{\rm (Fontaine, \cite[Prop.\,6.4, Prop.\,6.5]{Fo-Cp})}\label{font0}
 
 {\rm (i)} Let $X,Y\in {\rm Rep}_{\Q_p}(\sg_K)$. Then we have a canonical isomorphism
 $$
 \Ext^i_{\sg_K}(X,Y)\stackrel{\sim}{\to}\Ext^i_{\scc(\sg_K)}(X,Y), \quad i\geq 0.
 $$
\hskip4mm {\rm (ii)}
 Let $X,Y\in {\rm Rep}_{\B^+_{\dr}}(\sg_K)$. Then we have a canonical isomorphism
 $$
 \Ext^i_{\B^+_{\dr}(\sg_K)}(X,Y)\stackrel{\sim}{\to}\Ext^i_{\scc(\sg_K)}(X,Y),\quad i\geq 0.
 $$
 \end{proposition}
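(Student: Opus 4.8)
\smallskip
\noindent\emph{Proof sketch.} The plan is to realize both sides of each isomorphism as cohomological $\delta$-functors and to compare them by d\'evissage. Identifying $\Ext^\bullet_{\sg_K}$ with Yoneda $\Ext$ in $\Rep_{\Q_p}(\sg_K)$ (resp.\ $\Ext^\bullet_{\B^+_{\dr}(\sg_K)}$ with Yoneda $\Ext$ in $\Rep_{\B^+_{\dr}}(\sg_K)$), the exact inclusions $\Rep_{\Q_p}(\sg_K)\hookrightarrow\scc(\sg_K)$ and $\Rep_{\B^+_{\dr}}(\sg_K)\hookrightarrow\scc(\sg_K)$ induce the maps in the statement, and the task is to prove they are isomorphisms. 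In degree $0$ there is nothing to prove in the $\Q_p$-case, while the $\B^+_{\dr}$-case is exactly Theorem~\ref{surprise}, which says that a continuous $\sg_K$-equivariant $\Q_p$-linear map between two $\B^+_{\dr}$-representations is automatically $\B^+_{\dr}$-linear. Moreover both $\delta$-functors vanish in large cohomological degree: the source because $\sg_K$ has finite cohomological dimension, the target because the structure theory of almost $C$-representations (ultimately Tate's computation of $H^\bullet(\sg_K,C(j))$, in particular $H^{\geq2}(\sg_K,C)=0$) bounds $\Ext^\bullet_{\scc(\sg_K)}$. So only a bounded range of degrees has to be treated.

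I would then perform two reductions. For part~(i), since $X$ is a finite-dimensional $\Q_p$-representation, tensoring with the contragredient $X^\dual$ is an exact operation compatible with both $\Ext$-theories, so $\Ext^i(X,Y)\simeq\Ext^i(\mathbf 1,X^\dual\otimes_{\Q_p}Y)$ on each side, compatibly; this reduces part~(i) to the case $X=\mathbf 1=\Q_p$, i.e.\ to identifying $\Ext^i_{\scc(\sg_K)}(\mathbf 1,Y)$ with continuous group cohomology $H^i(\sg_K,Y)$ for $Y\in\Rep_{\Q_p}(\sg_K)$. For part~(ii) no such duality is available, so instead I would induct on the lengths of $X$ and $Y$ over $\B^+_{\dr}$ by means of the short exact sequences $0\to tX\to X\to X/tX\to0$ and $0\to tY\to Y\to Y/tY\to0$, whose graded pieces are $C$-representations; both $\Ext$-theories fit into compatible long exact sequences for these, so part~(ii) reduces to comparing $\Ext^i_{\B^+_{\dr}(\sg_K)}(X,Y)$ with $\Ext^i_{\scc(\sg_K)}(X,Y)$ for $X,Y\in\Rep_C(\sg_K)$ --- which remains a statement about $\B^+_{\dr}$-linear (not merely $C$-linear) extensions, witnessed e.g.\ by the nonzero, non-$C$-linear class of $\B^+_{\dr}/t^2$ in $\Ext^1_{\scc(\sg_K)}(C,C(1))$.

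What is left is the low-degree comparison on these building blocks, and this is where Fontaine's Propositions~5.5 and~5.6 of \cite{Fo-Cp} are used --- and it is also the main obstacle. In degree~$1$, part~(i) is elementary: an extension of $\Q_p$-representations in $\scc(\sg_K)$ has middle term of dimension $0$, hence, directly from the definition of an almost $C$-representation, is a finite-dimensional $\Q_p$-representation, so the extension already lies in $\Rep_{\Q_p}(\sg_K)$. But the degree-$1$ statement for $C$-representations, and all the statements in degrees $\geq2$, require the structural input: Propositions~5.5 and~5.6 supply, for an arbitrary object of $\scc(\sg_K)$, enough elementary exact sequences relating it to a $\Q_p$-representation and to a $C$-representation that every higher $\Ext$-computation in $\scc(\sg_K)$ collapses to group cohomology with $\Q_p$- or $C$-coefficients, where Tate's and Sen's theorems conclude; equivalently, one must rule out any ``extra'' higher extensions in $\scc(\sg_K)$ beyond those already visible in $\Rep_{\Q_p}(\sg_K)$, resp.\ $\Rep_{\B^+_{\dr}}(\sg_K)$. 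The genuine difficulty lies inside those two propositions, and it rests on the finiteness invariants of almost $C$-representations --- additivity of $\dim$ and ${\rm ht}$ --- and hence, ultimately, on the theory of Banach--Colmez spaces \cite{CB,CF}.
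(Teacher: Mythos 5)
The paper gives no proof of this proposition; it is a direct citation of Fontaine's \cite[Prop.~6.4, Prop.~6.5]{Fo-Cp}, and the surrounding text only records that Fontaine's argument rests on \cite[Prop.~5.5, Prop.~5.6]{Fo-Cp}. So there is no in-paper proof to compare against; the comparison is necessarily against Fontaine's original argument.

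As a reconstruction your sketch has the right skeleton: exactness and full faithfulness of the inclusion functors give the comparison maps; degree $0$ is trivial for (i) and is Theorem~\ref{surprise} for (ii); the rest is d\'evissage, and the real work is carried by the structural results of \cite[\S5]{Fo-Cp}, which rest on additivity of $\dim$ and ${\rm ht}$ and hence on Banach--Colmez theory. Two points deserve flagging. First, you invoke vanishing of $\Ext^i_{\scc(\sg_K)}$ for $i\geq 3$ as an a priori bound, but that vanishing is Theorem~\ref{font1}(i), namely Fontaine's Theorem~6.1, which is \emph{proved using} Propositions~6.4 and~6.5. Taken as a logical ingredient this is circular; as a heuristic about where the effort should concentrate it is harmless, since the d\'evissage establishes each degree on its own, but the sketch should make that distinction explicit. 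Second, you defer the entire content to Propositions~5.5 and~5.6 without stating what they assert or how they enter. For instance, the surjectivity of $\Ext^1_{\B^+_{\dr}(\sg_K)}(X,Y)\to\Ext^1_{\scc(\sg_K)}(X,Y)$ requires knowing that a strict extension in $\scc(\sg_K)$ of two $\B^+_{\dr}$-representations carries a canonical $\B^+_{\dr}$-module structure, and this is precisely where the structure theory does work that Theorem~\ref{surprise} alone does not. You correctly label this as the main obstacle, but as written the sketch records where the proof lives rather than giving it.
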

He also proved the following result:
\begin{theorem} {\rm (Fontaine, \cite[Th.\,6.1, Prop.\,6.8, Prop.\,6.9]{Fo-Cp})}\label{font1}
Let $X,Y\in \scc({\G_K})$.

{\rm (i)} The $\Q_p$-vector spaces $\Ext^i_{\scc(\sg_K)}(X,Y)$ have finite rank and are trivial for $i\geq 3$.

{\rm (ii)} $\sum_{i=0}^2(-1)^i\dim_{\Q_p}\Ext_{\scc(\sg_K)}^i(X,Y)=-[K:\Q_p]\, {\rm ht}(X)\,{\rm ht}(Y)$.

{\rm (iii)} There exists a natural trace map $\Ext^2_{\scc(\sg_K)}(X,X(1))\to \Q_p$ and, for $0\leq i\leq2$, the map
$$
\Ext^i_{\scc(\sg_K)}(X,Y)\times  \Ext_{\scc(\sg_K)}^{2-i}(Y,X(1))\to \Ext^2_{\scc(\sg_K)}(X,X(1))\to \Q_p
$$
defines a perfect duality.
\end{theorem}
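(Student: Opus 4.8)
The plan is to reduce all three assertions, by d\'evissage, to a short list of ``pure'' cases and then feed in the classical computations of Tate and Sen. By the very definition of almost isomorphism, any $W\in\scc(\sg_K)$ fits, for a suitable $d$ and suitable $\Q_p$-representations $V\subset W$, $V'\subset C^d$, into short exact sequences $0\to V\to W\to W/V\to 0$ and $0\to V'\to C^d\to C^d/V'\to 0$ in $\scc(\sg_K)$ with $W/V\simeq C^d/V'$. Since $\scc(\sg_K)$ is abelian, each such sequence (in either variable) yields a long exact sequence of $\Ext^\bullet_{\scc(\sg_K)}$-groups; iterating, $\Ext^i_{\scc(\sg_K)}(X,Y)$ is assembled from the groups $\Ext^i_{\scc(\sg_K)}(A,B)$ with $A,B$ each a $\Q_p$-representation or a finite-length $\B^+_{\dr}$-representation, and the latter can be d\'evissaged further inside $\Rep_{\B^+_{\dr}}(\sg_K)$ into the Tate twists $C(a)$. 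So it is enough to treat: (1) $A,B$ both in $\Rep_{\Q_p}(\sg_K)$; (2) $(A,B)=(C(a),C(b))$; and (3) the mixed pairs $(\Q_p,C(a))$, $(C(a),\Q_p)$. For parts (ii) and (iii) one also notes that both sides behave additively: the Euler characteristic is additive along long exact sequences, ${\rm ht}$ is additive (so ${\rm ht}(X)\,{\rm ht}(Y)$ is biadditive), and the Tate twist, the cup products and the (anticipated) trace commute with the connecting maps --- so a five-lemma argument for perfect pairings will carry the duality from the pure cases to the general one.

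In case (1), Proposition~\ref{font0}(i) identifies $\Ext^i_{\scc(\sg_K)}(X,Y)$ with the continuous Galois cohomology $H^i(\sg_K,X^\dual\otimes_{\Q_p}Y)$. Since $[K:\Q_p]<\infty$, this is finite-dimensional and vanishes for $i\geq 3$ (the $p$-cohomological dimension of $\sg_K$ is $2$); Tate's local Euler characteristic formula gives $\sum_i(-1)^i\dim_{\Q_p}H^i(\sg_K,X^\dual\otimes Y)=-[K:\Q_p]\dim_{\Q_p}(X^\dual\otimes Y)=-[K:\Q_p]\,{\rm ht}(X)\,{\rm ht}(Y)$, since $d=0$ and ${\rm ht}=\dim_{\Q_p}$ on $\Q_p$-representations; and Tate local duality, together with the canonical trace $H^2(\sg_K,\Q_p(1))\simeq\Q_p$, supplies the perfect pairing.

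In case (2), Proposition~\ref{font0}(ii) identifies $\Ext^i_{\scc(\sg_K)}$ with $\Ext^i_{\Rep_{\B^+_{\dr}}(\sg_K)}$, which one computes using Sen theory and Tate's result that $H^i(\sg_K,C(j))$ equals $K$ for $(i,j)\in\{(0,0),(1,0)\}$ and vanishes otherwise. Thus $\Hom=H^0(\sg_K,C(b-a))$ is $K$ if $a=b$ and $0$ otherwise, while $\Ext^1$ and $\Ext^2$ are read off from the two sources of length-two $\B^+_{\dr}$-modules --- those killed by $t$, which contribute $H^1(\sg_K,C(b-a))$, and the cyclic ones, which are the twists of $\B^+_{\dr}/t^2$ --- together with $H^1$, $H^2$ of $\sg_K$. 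One finds all groups finite-dimensional over $K$, zero for $i\geq 3$, with Euler characteristic $0$ (matching ${\rm ht}(C(a))=0$) and a perfect self-duality whose trace on $\Ext^2_{\scc(\sg_K)}(C,C(1))\simeq K$ is ${\rm Tr}_{K/\Q_p}$; in particular the degree-two groups need not vanish --- e.g.\ $\Ext^2_{\scc(\sg_K)}(C,C(1))\simeq K$, which is exactly what pairs perfectly with $\Ext^0_{\scc(\sg_K)}(C,C)\simeq K$. In case (3), the groups $\Ext^\bullet_{\scc(\sg_K)}(\Q_p,C(a))$ and $\Ext^\bullet_{\scc(\sg_K)}(C(a),\Q_p)$ are precisely what Fontaine computes in Propositions~5.5 and~5.6 (the inputs to Proposition~\ref{font0}): the first is $H^\bullet(\sg_K,C(a))$, concentrated in degrees $0,1$ for $a=0$ and zero otherwise; the second is its Tate-twisted dual, concentrated in degrees $1,2$; and the two pair perfectly.

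The hard part will be two things. First, in case (2) one must correctly pin down the $\B^+_{\dr}$-side $\Ext^1$ and, especially, the \emph{nonvanishing} $\Ext^2$ in $\Rep_{\B^+_{\dr}}(\sg_K)$ together with the identification of its trace with ${\rm Tr}_{K/\Q_p}$; this ``transcendental'' part is where the remaining half of the duality in (iii) (beyond Tate's) comes from and where the subtleties in normalization live. Second, for (iii) one must make the trace map $\Ext^2_{\scc(\sg_K)}(X,X(1))\to\Q_p$ canonical and check that it and the cup product are compatible, in all three families at once, with the connecting homomorphisms of the d\'evissage --- this is what legitimizes the inductive five-lemma argument for perfect pairings. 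Once these are in place, parts (i) and (ii) reduce to the pure-case Galois-cohomology computations above, and (iii) is the assembly; everything else is bookkeeping with long exact sequences and the classical theorems of Tate and Sen.
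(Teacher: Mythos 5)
The paper does not prove this statement: it is quoted from Fontaine's \emph{Presque $C_p$-repr\'esentations} with explicit references, and no argument is given in the text. So there is no ``paper's own proof'' to compare against, only Fontaine's original one.

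Your outline is, in fact, a faithful summary of Fontaine's own strategy: d\'evissage via the two defining short exact sequences attached to an almost isomorphism, reduction to the three pure families (Galois cohomology of $\Q_p$-representations; $\Ext$'s in $\Rep_{\B^+_{\dr}}(\sg_K)$ computed via Tate--Sen; the mixed computations of his Propositions~5.5--5.6), and then an Euler-characteristic and duality bookkeeping compatible with the connecting maps. You also correctly identify the two genuinely nontrivial points: the nonvanishing $\Ext^2$ on the $\B^+_{\dr}$-side (the ``extra'' degree-two contribution beyond Tate duality for $\Q_p$-representations, with its identification of the trace as ${\rm Tr}_{K/\Q_p}$) and the compatibility of trace and cup product with the d\'evissage connecting homomorphisms, which is what makes the five-lemma argument for perfect pairings go through. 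Two small points worth flagging in a full write-up: (a) the intermediate object $W/V\simeq C^d/V'$ is itself neither a $\Q_p$-representation nor a finite-length $\B^+_{\dr}$-representation, so the d\'evissage must be iterated through the second exact sequence $0\to V'\to C^d\to C^d/V'\to 0$ before one truly lands in the pure families; and (b) the vanishing $\Ext^i=0$ for $i\geq 3$ in $\Rep_{\B^+_{\dr}}(\sg_K)$ is not automatic from the two-step complex of Proposition~\ref{font2} as stated --- it is part of what must be proved that this complex computes \emph{all} the $\Ext$'s. Neither is a gap in your plan, just places where the bookkeeping needs care.
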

\subsubsection{$\bdr^+$-representations}
$\bdr^+$-representations are objects of $\scc(\sg_K)$ and we have a recipe for computing
${\rm Ext}$ groups between these objects. We still assume $[K:\Q_p]<\infty$,
but the results below are valid in greater generality (see~Remark\,\ref{surprise5}).

 Let $\chi$ be the cyclotomic character.  Let $K_{\infty}\subset K(\mu_{p^\infty})$ 
denote the cyclotomic $\Z_p$-extension of $K$.
Let $\gamma$ be a topological generator of $\Gal(K_{\infty}/K)$.
We choose a sequence $\{\zeta_{p^n}\}_{n\geq 1} $ of primitive $p^n$'th roots of unity $\zeta_{p^n}, n\geq 1$, such that $\zeta_{p^{n+1}}^p=\zeta_{p^n}$. Let  $t\in \B^+_{\dr}$ be the uniformizer associated to $\{\zeta_{p^n}\}_{n\geq 1} $. 
We will also use its twisted form $t^{\prime}:=t/\pi_t$ defined in \cite[Sec. 2.1]{Fo-Cp};
it is a uniformizer of $\B^+_{\dr}$ as well, 
fixed by ${\rm Gal}(\ovk/K_\infty)$ whereas $t$ is only fixed by ${\rm Gal}(\ovk/K(\mu_{p^\infty}))$. 
\begin{proposition}\label{font2}
Let   $W$ be a $\bdr^+$-representation. The groups 
${\rm Ext}^i_{\scc(\sg_K)}(C,W)$ are computed by the complex 
$$\xymatrix@C=.5cm{W_{(0)}\ar[rrr]^-{x\mapsto(t^{\prime}x,(\gamma-1)x)}&&&
W_{(1)}\oplus W_{(0)}\ar[rrrrr]^-{(x,y)\mapsto(t^{\prime}y-(\chi^{-1}(\gamma)\gamma-1)x)}&&&&& W_{(1)},}$$
where  $W_{(0)}$ is the space of generalized invariants\footnote{The elements killed by 
$(g_1-1)\cdots(g_r-1)$ for all  $g_1,\dots,g_r\in\G_K$, for $r$ big enough.} 
and $W_{(1)}=t^{\prime}((t^{\prime})^{-1}W)_{(0)}$.
\end{proposition}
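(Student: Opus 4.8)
The plan is to realize the $\Ext$-groups $\Ext^i_{\scc(\sg_K)}(C,W)$ via the recipe of Proposition \ref{font0}(ii) and then unwind the cohomology of the relevant complex of Galois modules. By Proposition \ref{font0}(ii), since $C$ and $W$ are both $\bdr^+$-representations (recall $C=\bdr^+/t$), we have $\Ext^i_{\scc(\sg_K)}(C,W)\simeq \Ext^i_{\bdr^+(\sg_K)}(C,W)$ for all $i\geq 0$. So the first step is to compute $\Ext$-groups in the category $\Rep_{\bdr^+}(\sg_K)$ of finite-length $\bdr^+$-representations. The strategy there is the usual one: resolve $C=\bdr^+/t$ as a $\bdr^+[\sg_K]$-module. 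Because $\bdr^+$ is a complete DVR with uniformizer $t$ (or better, its twist $t'$, which has the advantage of being $\Gal(\ovk/K_\infty)$-fixed), the sequence $0\to \bdr^+(1)\xrightarrow{\,t'\,}\bdr^+\to C\to 0$ — or rather the $t'$-version, which twists only by the unit $\pi_t$ — is a length-two free resolution of $C$ in the category of $\bdr^+$-modules. Applying $\Hom_{\bdr^+}(-,W)$ and then taking continuous Galois cohomology gives a spectral sequence, or more concretely a double complex whose total cohomology computes $\Ext^\bullet_{\bdr^+(\sg_K)}(C,W)$.

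The second step is to make the Galois-cohomology part explicit. Here one uses that for the objects at hand the continuous cohomology $H^\bullet(\sg_K,-)$ is computed by a two-term complex built from the generalized invariants functor and the operators $\gamma-1$. This is exactly the mechanism behind \cite[Prop.\,5.5, Prop.\,5.6]{Fo-Cp} that Proposition \ref{font0} rests on: after passing to $K_\infty$ and using Sen/Tate–Sen theory, the cohomology of $\sg_K$ acting on a $\bdr^+$-representation is captured by $W_{(0)}$ (generalized invariants) together with the action of a topological generator $\gamma$ of $\Gal(K_\infty/K)$, via $x\mapsto(\gamma-1)x$, and one must keep track of the Tate twist by inserting the factor $\chi^{-1}(\gamma)$ in the second differential (this is where $W_{(1)}=t'((t')^{-1}W)_{(0)}$ and the twisted operator $\chi^{-1}(\gamma)\gamma-1$ enter). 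Concretely, I would form the total complex of the bicomplex
$$\xymatrix@C=.6cm@R=.6cm{
W_{(0)}\ar[r]^-{\gamma-1}\ar[d]_-{t'} & W_{(0)}\ar[d]^-{t'}\\
W_{(1)}\ar[r]^-{\chi^{-1}(\gamma)\gamma-1} & W_{(1)}}$$
where the vertical maps come from the resolution of $C$ by multiplication by $t'$ (suitably twisted so as to land in the generalized-invariants of the twist) and the horizontal maps compute Galois cohomology of $\Gal(K_\infty/K)$ on the respective $\sg_K$-modules. Taking the total complex of this square, with the sign/twist bookkeeping arranged so that the degree-$1$ term is $W_{(1)}\oplus W_{(0)}$ and the differentials are $x\mapsto (t'x,(\gamma-1)x)$ and $(x,y)\mapsto t'y-(\chi^{-1}(\gamma)\gamma-1)x$, yields precisely the three-term complex in the statement.

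The third and last step is to check that the generalized-invariants truncation is harmless, i.e.\ that replacing $W$ (resp.\ $(t')^{-1}W$) by $W_{(0)}$ (resp.\ by $t'((t')^{-1}W)_{(0)}$) does not change the cohomology. This follows because a $\bdr^+$-representation is, up to the $\Q_p$-subrepresentation controlling its non-semisimplicity, a successive extension of copies of $C$, and on each such layer the $\sg_K$-action is, after Tate–Sen, ``locally algebraic'' enough that all the relevant cohomology is already detected on generalized invariants — this is exactly the content of Remark \ref{surprise5} (the Hodge–Tate–Sen weights are algebraic, here integers), which is what makes the whole computation valid for arbitrary $K$ and not only $[K:\Q_p]<\infty$. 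I expect the main obstacle to be precisely this step together with the careful tracking of the twist by $\chi^{-1}(\gamma)$: one must verify that the composite ``resolve $C$ by $t'$, then take $\Gal(K_\infty/K)$-cohomology'' agrees as a complex (not merely on cohomology) with the stated three-term complex, including that the second differential has the right sign and that $W_{(1)}$ is the correct ambient space for the image of $t'$. Once the bicomplex is set up correctly, identifying $H^0$, $H^1$, $H^2$ of the total complex with $\Ext^0,\Ext^1,\Ext^2_{\scc(\sg_K)}(C,W)$ is formal, and $\Ext^i=0$ for $i\geq 3$ is immediate since the complex has length $2$ — consistent with Theorem \ref{font1}(i).
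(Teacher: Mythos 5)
Your first step coincides exactly with the paper's: invoke Proposition \ref{font0}(ii) to replace $\Ext^i_{\scc(\sg_K)}(C,W)$ by $\Ext^i_{\bdr^+(\sg_K)}(C,W)$. Where you diverge is the second step: the paper's proof is then a one-line citation of Fontaine's \cite[Th.\,2.14]{Fo-Cp}, which is precisely the statement that the $\Ext$-groups in $\Rep_{\bdr^+}(\sg_K)$ are computed by the three-term complex in the proposition, whereas you attempt to reconstruct that theorem from scratch via a resolution of $C$ and a bicomplex. Your plan is heuristically the right mechanism (it is essentially the one underlying Fontaine's own argument), and the picture of a Koszul-type resolution by $t'$ tensored against a two-term $\gamma-1$ complex does, after the dust settles, produce the stated differentials $x\mapsto(t'x,(\gamma-1)x)$ and $(x,y)\mapsto t'y-(\chi^{-1}(\gamma)\gamma-1)x$.

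However, as you yourself flag, the sketch has real gaps that you would have to fill to make it a proof rather than a plausibility argument. Two deserve explicit mention. First, the resolution $0\to\bdr^+(1)\xrightarrow{t'}\bdr^+\to C\to 0$ is not a resolution inside the abelian category $\Rep_{\bdr^+}(\sg_K)$, since $\bdr^+$ is not of finite length; one must either work with Yoneda $\Ext$ directly, or pass to a larger category and justify that the resulting $\Ext$-groups agree with those in $\Rep_{\bdr^+}(\sg_K)$, and this is not automatic. Second, and more seriously, the claim that continuous $\sg_K$-cohomology of a $\bdr^+$-representation is captured by the two-term complex $W_{(0)}\xrightarrow{\gamma-1}W_{(0)}$ on generalized invariants is not a formality: it is the hard Tate--Sen/generalized-invariants input, and it is exactly the content of Fontaine's Prop.\,5.5, 5.6 and Th.\,2.14, which the paper (via Proposition \ref{font0} and the direct citation) takes as given. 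So your route ends up deferring the substantive work to the same source the paper cites; the difference is that you spell out the shape of the argument while the paper treats it as a black box. If you want a self-contained write-up you would need to either prove the Tate--Sen descent statement or cite it at the same level of precision as the paper does.
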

\begin{proof}
By Proposition~\ref{font0},  the natural map 
\begin{equation}
\label{neg1}
{\rm Ext}^i_{\B^+_{\dr}(\sg_K)}(C,W)\to {\rm Ext}^i_{\scc(\sg_K)}(C,W)
\end{equation}
 from the $\Ext$-groups in the category of $\B^+_{\dr}$-representations  is an isomorphism. 
Our proposition is now  \cite[Th.\,2.14]{Fo-Cp}. 
\end{proof}

 We will state two simple consequences of the above proposition.
\begin{corollary}\label{font3}
If  $N\geq 0$ and  $1\leq i<j$, then 
$${\rm Hom}_{\scc(\sg_K)}(\bdr^+/t^i,t^{-N}\bdr^+/t^j)=0\quad{\rm and }\quad {\rm Ext}^1_{\scc(\sg_K)}(\bdr^+/t^i,t^{-N}\bdr^+/t^j)=0.$$
\end{corollary}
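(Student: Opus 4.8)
The plan is to compute both Hom and $\Ext^1$ groups directly using the explicit complex of Proposition~\ref{font2}, applied to the $\bdr^+$-representation $W := t^{-N}\bdr^+/t^j$, together with the long exact sequence associated to the filtration of $\bdr^+/t^i$. The key observation is that everything reduces to understanding, for each $k$, the groups $\Ext^\bullet_{\scc(\sg_K)}(C, C(m))$ for suitable Tate twists $m$, since $\bdr^+/t^i$ has a filtration with graded pieces $C(m)$ for $m=0,\dots,i-1$ and likewise $t^{-N}\bdr^+/t^j$ has graded pieces $C(m)$ for $m=-N,\dots,j-N-1$.

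First I would analyze the complex in Proposition~\ref{font2} when $W = C(m)$. Here the generalized invariants $W_{(0)}$ and the space $W_{(1)} = t'((t')^{-1}W)_{(0)}$ are concrete: using that the generalized invariants of $C(m)$ are nonzero only when $m=0$ (and then $\G_K$ acts through $\Gal(K_\infty/K)$, giving the Tate/Sen picture), the three-term complex computes $\Ext^\bullet_{\scc(\sg_K)}(C,C(m))$. The upshot I expect is that $\Hom_{\scc(\sg_K)}(C,C(m))$ and $\Ext^1_{\scc(\sg_K)}(C,C(m))$ both vanish unless $m=0$, where one gets copies of $K$; and by Theorem~\ref{font1}(ii) the Euler characteristic constraint $-[K:\Q_p]\,\mathrm{ht}(C)\,\mathrm{ht}(C) = 0$ is consistent since $\mathrm{ht}(C)=0$. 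This is the arithmetic heart: one needs that a Tate twist $C(m)$ with $m \neq 0$ has no $\G_K$-cohomology in degrees $0$ and $1$ in the relevant sense — essentially Tate's computation, phrased through Sen theory as in Remark~\ref{surprise5}.

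Next I would run a dévissage. Writing $\bdr^+/t^j$ as a successive extension of the $C(m)$'s, $0 \le m \le j-1$, gives $t^{-N}\bdr^+/t^j$ as a successive extension of $C(m)$ for $-N \le m \le j-N-1$. On the source side, $\bdr^+/t^i$ sits in exact sequences built from $C(\ell)$ for $0 \le \ell \le i-1$. So $\Hom_{\scc(\sg_K)}(\bdr^+/t^i, t^{-N}\bdr^+/t^j)$ and $\Ext^1_{\scc(\sg_K)}(\bdr^+/t^i, t^{-N}\bdr^+/t^j)$ are controlled, via the long exact sequences in each variable, by the groups $\Ext^a_{\scc(\sg_K)}(C(\ell), C(m)) = \Ext^a_{\scc(\sg_K)}(C, C(m-\ell))$ for $0 \le \ell \le i-1$ and $-N \le m \le j-N-1$. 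The constraint $1 \le i < j$ is used exactly here: it is what one needs to guarantee the shift $m - \ell$ never lands on $0$ in a way that would produce a nonzero contribution... actually this needs care, since $m-\ell$ can be $0$ (take $\ell=0, m=0$ when $N \ge 0$ and $j > N$). So the vanishing cannot come from avoiding $m=\ell$ alone; rather, the point must be that the differentials in the spectral sequence / iterated long exact sequences cancel these contributions. Concretely, I expect that one should instead directly use the three-term complex of Proposition~\ref{font2} with $W = t^{-N}\bdr^+/t^j$ as a whole, compute $W_{(0)}$ and $W_{(1)}$ (which are finite free modules over the appropriate ring), and check that the maps $x \mapsto (t'x, (\gamma-1)x)$ on $W_{(0)}$ and the second differential have the expected ranks; the condition $i < j$ then enters when one further takes the mapping cone coming from $\bdr^+/t^i$ in the first variable, because one is comparing $\Hom(\bdr^+/t^i, -)$ against $\Hom(\bdr^+, -) = \Hom(\bdr^+/t^j, -)$-type terms and the relevant connecting maps are multiplication-by-$t'$ maps which are injective precisely in the range $i < j$.

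The main obstacle I anticipate is bookkeeping the two-variable dévissage cleanly: keeping track of which connecting homomorphisms are isomorphisms versus which contribute, so that the alternating cancellation genuinely forces both $\Hom$ and $\Ext^1$ to vanish rather than just bounding their dimensions. I would try to sidestep the full double filtration by instead picking the more economical route: fix the first variable to be $\bdr^+/t^i$ and note $\Hom$ and $\Ext^1$ out of it are computed by applying $\R\!\Hom_{\scc(\sg_K)}(-, t^{-N}\bdr^+/t^j)$ to the exact triangle $\bdr^+/t^i \to C(?) \to \cdots$ — or, cleanest of all, use that $\bdr^+/t^i \cong (t^{\prime -i}\bdr^+/\bdr^+)$-type objects and reduce to computing $\Hom$ and $\Ext^1$ of $\bdr^+$-modules $\bdr^+/t^a$ against $\bdr^+/t^b$ via Proposition~\ref{font2} directly, where the explicit complex makes the condition $a < b$ visibly responsible for the vanishing (the map $t' \colon \bdr^+/t^{b} \to \bdr^+/t^{b}$ has kernel landing outside the image relevant to the $\bdr^+/t^a$-part). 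Once the shape of the answer for a single pair $(C(\ell), C(m))$ is pinned down and the role of $i<j$ identified, the rest is a short diagram chase.
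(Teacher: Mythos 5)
Your proposal circles around the right tools — Proposition~\ref{font2} plus d\'evissage — and in fact the route you eventually describe (``use the three-term complex with $W=t^{-N}\bdr^+/t^j$ as a whole, d\'evissage only the first variable'') is the paper's. But you never actually land the proof, and there are real gaps in the parts you do spell out.

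First, the double d\'evissage into pairs $(C(\ell),C(m))$, which you open with and come back to at the very end (``once the shape of the answer for a single pair $(C(\ell),C(m))$ is pinned down\ldots the rest is a short diagram chase''), does not give vanishing by inspection: as you yourself observe, $m-\ell=0$ does occur in the allowed range, and $\Ext^0_{\scc}(C,C)\simeq K$ and $\Ext^1_{\scc}(C,C)\simeq K$ are both nonzero. So the iterated long exact sequences produce genuinely nonzero terms which must be killed by connecting maps, and your proposal only gestures at ``the differentials cancel'' without identifying which connecting maps are isomorphisms. That is precisely the bookkeeping the paper avoids by \emph{not} filtering the second variable.

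Second, the single-variable d\'evissage is not stated at the level of precision that makes $i<j$ bite. Proposition~\ref{font2} computes $\Ext^\bullet_{\scc}(C,W)$, not $\Ext^\bullet_{\scc}(\bdr^+/t^i,W)$, so one must peel off the layers $C(\ell)$, $0\le\ell\le i-1$, of the source and twist by $\chi^{-\ell}$ to bring each back to $C$; this replaces $W$ by $t^{-(N+\ell)}\bdr^+/t^{j-\ell}$. The hypothesis $i<j$ is used exactly here, to guarantee $j-\ell\ge 2$ for every $\ell\le i-1$, i.e.\ the twisted target is never as short as $\bdr^+/t$. Your attribution of $i<j$ to ``multiplication-by-$t'$ maps being injective in the range $i<j$'' conflates this with the second, independent, point.

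Third — and this is the actual content of the lemma once one has reduced to $i=1$ — you never compute $W_{(0)}$ and $W_{(1)}$. For $W=t^{-N'}\bdr^+/t^{j'}$ with $N'\ge 0$ and $j'\ge 2$ one has $W_{(0)}=K$ and $W_{(1)}=Kt'$ (the $j'\ge 2$ is what keeps $W_{(1)}$ nonzero: for $j'=1$ one gets $W_{(1)}=0$ and then $\Hom$ and $\Ext^1$ are both $K$, not $0$). With $W_{(0)}=K$ and $W_{(1)}=Kt'$ the two $(\gamma-1)$-type maps vanish, multiplication by $t'$ is an isomorphism $W_{(0)}\to W_{(1)}$, and the three-term complex is acyclic. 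Without this concrete computation the proof is a plan, not an argument; and without tracking the $j'\ge 2$ constraint the plan would not detect that the statement fails for $i=j$.

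In short: right tools, but the d\'evissage should be run only on the source (after which $i<j$ guarantees every twisted target has $\bdr^+$-length at least $2$), and the explicit computation of $W_{(0)}$, $W_{(1)}$ together with the acyclicity of the complex has to be carried out — these are the two places where the proposal currently has holes.
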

\begin{proof}
By devissage  (and twisting by  $\chi^{-j}$ for  $0\leq j\leq i-1$), we can reduce to the case $i=1$.
If  $W=t^{-N}\bdr^+/t^j$, we have  $W_{(0)}=K$ and  $W_{(1)}=Kt'$,
and our result follows from Proposition~\ref{font2} (multiplication by  $t'$
induces an isomorphism of $W_{(0)}$ with  $W_{(1)}$ and the other maps are identically zero, 
and it follows that the complex in the proposition is acyclic).
\end{proof}
\begin{lemma} 
\label{fog1}
Let  $j\in\Z, k\geq 1$. Then 
\begin{align*}
&{\rm Ext}^0_{\scc(\sg_K)}(\bdr^+/t^k,C(j))\simeq \begin{cases} 0&{\text{if $j\neq 0$}}\\ K&{\text{if $j=0,$}}\end{cases}
\\
&{\rm Ext}^1_{\scc(\sg_K)}(\bdr^+/t^k,C(j))\simeq \begin{cases} 0&{\text{if $j\neq 0,k$}}\\ K&{\text{if $j=0,k,$}}\end{cases}
\\
&{\rm Ext}^2_{\scc(\sg_K)}(\bdr^+/t^k,C(j))\simeq \begin{cases} 0&{\text{if $j\neq k$}}\\ K&{\text{if $j=k.$}}\end{cases}
\end{align*}
\end{lemma}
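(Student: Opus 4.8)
The plan is to compute the three $\mathrm{Ext}$ groups $\mathrm{Ext}^i_{\scc(\sg_K)}(\bdr^+/t^k, C(j))$ directly via Proposition~\ref{font2}, applied to the $\bdr^+$-representation $W = C(j)$. Note that $C(j)$ is a length-one $\bdr^+$-representation (it is $(\bdr^+/t)(j)$), so the complex of Proposition~\ref{font2} applies, and the whole computation reduces to understanding the two spaces $W_{(0)}$ and $W_{(1)} = t'((t')^{-1}W)_{(0)}$ together with the three maps in that three-term complex.

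First I would identify $W_{(0)}$ for $W = C(j)$. Since $C$ itself has $C_{(0)} = C^{\sg_K}$, and by the Ax--Sen--Tate theorem (or Remark~\ref{surprise5}, $\mathrm{Hom}_{\sg_K}(C,C)\simeq K$) the generalized invariants of $C$ are just $K$ (there are no nontrivial generalized invariants in $C$ because the Hodge--Tate--Sen weight $0$ occurs with the trivial action only), one has $C_{(0)} = K$. For the twist, $C(j)_{(0)} = 0$ when $j \neq 0$, because $C(j)$ has Sen weight $-j \neq 0$ and so has no generalized invariants; and $C(0)_{(0)} = K$. Next, $(t')^{-1}W = (t')^{-1}C(j) \cong C(j-1)$ as $\sg_K$-modules (up to the scaling by $t'$, which shifts the relevant weight), so $((t')^{-1}W)_{(0)}$ is $K$ if $j = 1$ and $0$ otherwise — wait, more carefully: $W_{(1)} = t'((t')^{-1}W)_{(0)}$ computes the part of $W$ lying in the image of $t'$ whose quotient-invariants survive; for $W = C(j)$ of length one, $t'W = 0$, so $W_{(1)} = 0$ whenever $k \geq 1$ — hmm, I need to be careful here since $W_{(1)}$ is defined as $t'((t')^{-1}W)_{(0)}$ where $(t')^{-1}W$ is taken inside $(t')^{-1}\bdr^+ \otimes_{\bdr^+} W$. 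The cleanest route: use devissage as in the proof of Corollary~\ref{font3} to reduce $\bdr^+/t^k$ to its graded pieces $C(-\ell)$ for $0 \le \ell \le k-1$ (twisting by powers of $\chi$), so that it suffices to handle $\mathrm{Ext}^i_{\scc(\sg_K)}(C(-\ell), C(j)) = \mathrm{Ext}^i_{\scc(\sg_K)}(C, C(j+\ell))$, and then invoke Proposition~\ref{font2} with $W = C(m)$, $m = j+\ell$. For $W = C(m)$, one has $W_{(0)} = K$ if $m=0$ else $0$, and $W_{(1)} = Kt'$ if $m = 1$ else $0$ (here $W_{(1)}$ picks up the weight-one twist). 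Feeding this into the three-term complex gives: $\mathrm{Ext}^0 = K$ iff $m=0$; $\mathrm{Ext}^2 = K$ iff $m = 1$; and $\mathrm{Ext}^1 = K$ iff $m \in \{0,1\}$ (an Euler-characteristic / snake-lemma check, using that the boundary maps involving $t'$ and $\gamma-1$ are either isomorphisms or zero on these one-dimensional spaces).

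The bookkeeping step is then to assemble the devissage: the short exact sequences $0 \to C(-(k-1)) \to \bdr^+/t^k \to \bdr^+/t^{k-1} \to 0$ yield long exact sequences in $\mathrm{Ext}^\bullet_{\scc(\sg_K)}(-, C(j))$, and one feeds in the one-step computation above at each stage. The contribution $\mathrm{Ext}^i_{\scc(\sg_K)}(C(-\ell), C(j))$ is nonzero (and then $\simeq K$) exactly when $j + \ell \in \{0\}$ for $i=0$, $j+\ell \in \{0,1\}$ for $i=1$, $j + \ell \in \{1\}$ for $i = 2$. Summing over $\ell \in \{0,\dots,k-1\}$: for $i=0$ we get a nonzero contribution iff $j \le 0 \le j + (k-1)$ together with the vanishing of the connecting maps — but since the theory forces each Ext group here to be at most one-dimensional, one gets $\mathrm{Ext}^0 = K$ iff $j = 0$ (the only $\ell$ that can contribute is $\ell = -j$, needing $\ell = 0$, i.e.\ $j=0$); similarly $\mathrm{Ext}^2 = K$ iff $j = k$ (forcing $\ell = 1 - j$, which lies in $\{0,\dots,k-1\}$ iff $2-k \le j \le 1$; combined with weight considerations the surviving case is $j=k$ — here I must recheck the direction of the twist in the devissage, since $\bdr^+/t^k$ sits in weights $0,\dots,k-1$ but the "top" weight relevant for $\mathrm{Ext}^2$ duality is $k$); and $\mathrm{Ext}^1 = K$ iff $j \in \{0, k\}$, the two extremes.

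The main obstacle I expect is precisely the sign/twist bookkeeping: getting the weights in the devissage filtration of $\bdr^+/t^k$ lined up correctly with the two "active" weights $0$ and $1$ coming out of Proposition~\ref{font2}, and making sure the connecting homomorphisms in the long exact sequences vanish (so that the one-dimensional contributions do not cancel or stack up). Duality (Theorem~\ref{font1}(iii)) should serve both as a consistency check — $\mathrm{Ext}^i(\bdr^+/t^k, C(j))$ should be dual to $\mathrm{Ext}^{2-i}(C(j), \bdr^+/t^k \cdot (1)) = \mathrm{Ext}^{2-i}(C, \bdr^+/t^k(1-j))$, matching $\mathrm{Ext}^0 \leftrightarrow \mathrm{Ext}^2$ under $j \leftrightarrow k - j$ — and as a shortcut to deduce the $\mathrm{Ext}^2$ statement from the $\mathrm{Ext}^0$ one, and to pin down $\mathrm{Ext}^1$ via the Euler characteristic formula $\sum_i (-1)^i \dim_{\Q_p}\mathrm{Ext}^i = -[K:\Q_p]\,\mathrm{ht}(\bdr^+/t^k)\,\mathrm{ht}(C(j)) = 0$ (both heights are $0$), which forces $\dim \mathrm{Ext}^1 = \dim \mathrm{Ext}^0 + \dim \mathrm{Ext}^2$, immediately giving the claimed pattern for $\mathrm{Ext}^1$ once $\mathrm{Ext}^0$ and $\mathrm{Ext}^2$ are known.
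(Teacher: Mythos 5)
Your main line of attack --- devissage of $\bdr^+/t^k$ into one-dimensional graded pieces, applying Proposition~\ref{font2} to each piece --- differs from the paper's proof, which applies Proposition~\ref{font2} directly to the whole module $W=(\bdr^+/t^k)(1-j)$ (after dualizing via Theorem~\ref{font1}(iii)) and reads off the answer from the observation that the only potentially nonzero map in the resulting three-term complex is the multiplication $t':W_{(0)}\to W_{(1)}$. As written, your devissage route has two genuine problems.

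First, a sign error: the $t$-adic graded pieces of $\bdr^+/t^k$ are $C(\ell)$ for $\ell=0,\dots,k-1$ (the socle $t^{k-1}\bdr^+/t^k$ is $C(k-1)$), not $C(-\ell)$. With the correct signs, $\mathrm{Ext}^i_{\scc(\sg_K)}(C(\ell),C(j))=\mathrm{Ext}^i_{\scc(\sg_K)}(C,C(j-\ell))$ is nonzero precisely when $j-\ell=0$ (for $i=0$), $j-\ell\in\{0,1\}$ (for $i=1$), $j-\ell=1$ (for $i=2$). So for $i=0$ alone there is a potential contribution for every $j$ with $0\le j\le k-1$, not only for $j=0$. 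Second, and more fundamentally, once the signs are fixed the only way the lemma can be true is for the connecting maps in the long exact sequences to be nontrivial, cancelling those contributions. Your appeal to ``the theory forces each Ext group here to be at most one-dimensional'' is not something Theorem~\ref{font1} gives you (it only controls the Euler characteristic), and it would not by itself remove the unwanted contributions. For $k=2$, $j=1$ one sees concretely that in the sequence $0\to C(1)\to\bdr^+/t^2\to C\to0$ the boundary map $\mathrm{Hom}(C(1),C(1))\to\mathrm{Ext}^1(C,C(1))$ must be an isomorphism for $\mathrm{Hom}(\bdr^+/t^2,C(1))$ to vanish; nothing in your argument establishes this.

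The ``shortcut'' you sketch at the end is, however, salvageable and gives a valid alternative to the paper's proof. The anchor has to be computed directly, not by devissage: by full faithfulness (Theorem~\ref{surprise}) any morphism in $\scc(\sg_K)$ between $\bdr^+$-representations is $\bdr^+$-linear, so $\mathrm{Hom}_{\scc(\sg_K)}(\bdr^+/t^k,C(j))$ is determined by the image of $1$ in $C(j)^{\G_K}$, hence $K$ iff $j=0$. By Theorem~\ref{font1}(iii), $\mathrm{Ext}^2_{\scc(\sg_K)}(\bdr^+/t^k,C(j))$ is dual to $\mathrm{Hom}_{\scc(\sg_K)}(C(j),(\bdr^+/t^k)(1))=\mathrm{Hom}_{\scc(\sg_K)}(C,(\bdr^+/t^k)(1-j))$; again by full faithfulness this is the $\G_K$-invariants of the socle $C(k-j)$, hence $K$ iff $j=k$. (Note this is a $\mathrm{Hom}$ \emph{into} a twist of $\bdr^+/t^k$, not the one \emph{from} $\bdr^+/t^k$ with $j$ replaced by $k-j$; the pattern ``$j\leftrightarrow k-j$'' matches only after the computation is done.) Then the Euler characteristic formula of Theorem~\ref{font1}(ii) --- both heights vanish --- gives $\dim\mathrm{Ext}^1=\dim\mathrm{Ext}^0+\dim\mathrm{Ext}^2$ and hence the $\mathrm{Ext}^1$ statement. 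This route avoids computing the Sen-theoretic complex of Proposition~\ref{font2} entirely, at the cost of using Theorem~\ref{font1}(ii)--(iii); the paper's proof instead feeds the whole $W$ into Proposition~\ref{font2} in one shot.
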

\begin{proof}
We will use  Proposition~\ref{font2} with  $W=(\bdr^+/t^k)(1-j)$ as well as the duality between 
${\rm Ext}^i_{\scc(\sg_K)}(C,W)$  and ${\rm Ext}^{2-i}_{\scc(\sg_K)}(W,C(1))$ from Theorem~\ref{font1}.  
We have  $W_{(0)}= Kt^{j-1}(1-j)$ if $1\leq j\leq k$, 
$W_{(0)}=0$ if  $j\leq 0$ or $j\geq k+1$, and  $W_{(1)}\simeq Kt't^{j-1}(1-j)$ if $0\leq j\leq j-1$, 
$W_{(1)}=0$ if  $j\leq -1$ or  $j\geq k$.  In the complex from 
Proposition~\ref{font2} 
computing  the $\Ext$-groups ${\rm Ext}^i_{\scc(\sg_K)}(C,W)$, the only nonzero maps
are the multiplications by  $t^{\prime}:W_{(0)}\to W_{(1)}$ which are isomorphisms unless exactly  one of the two groups is trivial
 (i.e.,  $j=0$ or $j=k$).
Our result follows. 
\end{proof}
\begin{remark}\label{fog12}
Every non-trivial 
extension of  $\bdr^+/t^k$ by $C(k)$ is isomorphic to $\bdr^+/t^{k+1}$.
\end{remark}

\begin{example}\label{firstex}{\em Extensions of Tate twists.}
We have  ${\rm Ext}^1_{\scc(\sg_K)}(C,C)\simeq K$ (the  $K$-vector space is generated by the class of 
$C\otimes(\Q_p\oplus\Q_p\log t)$).
Since ${\rm Ext}^0_{\scc(\sg_K)}(C,C)\simeq K$ by Remark~\ref{surprise5}, 
this implies that ${\rm Ext}^2_{\scc(\sg_K)}(C,C)=0$ by Theorem~\ref{font1}.
We also have, by duality, ${\rm Ext}^1_{\scc(\sg_K)}(C,C(1))\simeq K$ (generated by the class of  $\bdr^+/t^2$)
and  ${\rm Ext}^2_{\scc(\sg_K)}(C,C(1))\simeq K$, and all the other  ${\rm Ext}^i_{\scc(\sg_K)}(C,C(j))$ are trivial.
\end{example}

\subsection{Morphisms of $\bdr^+$-representations}\label{USED}
In this section, $K$ is arbitrary.  We are going to derive consequences of the following two
results which are valid for such $K$.
These are the results that we will use in the rest  of the paper.
\begin{proposition}
{\rm (i)} ${\rm Hom}_{\G_K}(C,C)\simeq K$.

{\rm(ii)} $\overline K$ is dense in $\bdr^+$.
\end{proposition}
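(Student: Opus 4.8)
The plan is to prove the two statements separately, both from scratch, avoiding Sen theory so that the arguments work for arbitrary $K$. For (i), I would use the standard computation of $H^0(\G_K, C)$ via Tate's decomposition of continuous Galois cohomology of $C$. Concretely, let $K_\infty/K$ be the cyclotomic $\Z_p$-extension and $\G=\Gal(K_\infty/K)$; Tate's classical results give that the trace maps furnish a decomposition of $C$ (as a Banach $\G_K$-module) into $\widehat{K_\infty}$ and a complement on which $\gamma-1$ (for $\gamma$ a topological generator of $\G$) is invertible, and that $C^{\Gal(\overline K/K_\infty)}=\widehat{K_\infty}$. Then $C^{\G_K}=(\widehat{K_\infty})^{\G}$. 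The point is that on $\widehat{K_\infty}$, the operator $\gamma-1$ has a closed image with $\widehat{K_\infty}/(\gamma-1)\widehat{K_\infty}$ one-dimensional over $K$ and kernel exactly $K$ — this is again Tate's "normalized trace" computation (the normalized traces $R_n:\widehat{K_\infty}\to K_n$ converge and one controls $\gamma-1$ on the pieces $K_n$ and on the kernel of $R_n$). Hence $C^{\G_K}=K$, which is (i). Alternatively, and perhaps more cleanly, one can cite that $\mathrm{Hom}_{\G_K}(C,C)=(C^{\vee}\widehat\otimes C)^{\G_K}$ where the continuous dual of $C$ as a Banach space intertwines with $C$ itself, reducing to $H^0(\G_K,C)=K$; but I would just do the direct computation of $H^0(\G_K,C)=K$, which is what is actually needed.

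For (ii), density of $\overline K$ in $\bdr^+$, I would argue using the filtration $t^n\bdr^+\supset t^{n+1}\bdr^+$ with graded pieces $t^n\bdr^+/t^{n+1}\bdr^+\simeq C(n)$, together with the fact that $\bdr^+=\varprojlim_n \bdr^+/t^n$ is complete for the $t$-adic topology and that the topology on each $\bdr^+/t^n$ is the one making it an extension of $C$-Banach spaces. First I would recall that $\overline K$ surjects onto $\bdr^+/t\bdr^+=C$ with dense image — indeed $\overline K$ is dense in $C$ by definition of $C$ as the completion of $\overline K$. Then I would lift: given $x\in\bdr^+$ and $n\ge 1$, I want $\alpha\in\overline K$ with $x-\alpha\in t^n\bdr^+ + (\text{small})$. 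The key input is that $t$ itself, or rather the element $\pi=[\underline\epsilon]-1$, lies in $\bdr^+$ and that the subring generated over $W(\overline k)$... — actually the cleanest route: use that $\ainf[1/p]$ maps to $\bdr^+$ with dense image (standard), and that $W(\mathcal O_{\mathbb C^\flat})[1/p]$ is generated topologically in a way that reduces density to density of $\overline K$ in $C$ at each graded step via a diagonal/successive-approximation argument over the $t$-adic filtration. I expect to phrase it as: by completeness it suffices to show $\overline K + t^n\bdr^+$ is dense in $\bdr^+$ for every $n$; by induction on $n$, using that multiplication by $t^{n-1}$ identifies $t^{n-1}\bdr^+/t^n\bdr^+\simeq C$ and picking a fixed element of $\overline K$ (e.g. a suitable $p$-power-compatible sequence) mapping to a generator, one reduces the inductive step to density of $\overline K$ in $C$.

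The main obstacle is the second statement: getting the topology right in the successive-approximation argument for (ii), since $\bdr^+$ is not a Banach space but a Fréchet (inverse limit) space, and one must be careful that a sequence in $\overline K$ converging in each $\bdr^+/t^n$ can be upgraded — or rather, that one can produce a single sequence in $\overline K$ converging $t$-adically; this is where I would either invoke a known description (density of $\ainf[1/p]$, hence of $\overline K[\pi,1/p]$-type subrings, which themselves are handled by reduction mod $t^n$) or carefully run the diagonal argument. The first statement (i) should be entirely routine given Tate's theorems on $H^*(\G_K,C)$, which I would simply quote.
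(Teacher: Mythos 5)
Both parts of your proposal have genuine gaps; the paper itself only cites references (\cite{Fo00} for (i), \cite{dense} for (ii)), but neither of your arguments would close the distance to the actual statements.

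For (i), you compute $C^{\G_K}=K$ via Tate's normalized traces and then assert ``which is (i).'' But that is not statement (i). The proposition asserts that $\mathrm{Hom}_{\G_K}(C,C)\simeq K$, where $\mathrm{Hom}_{\G_K}$ denotes \emph{continuous $\Q_p$-linear} $\G_K$-equivariant maps (this is the Hom-set in $\sbb(\G_K)$, used throughout \S 2). Tate gives you the $H^0$, but a priori there could be continuous $\Q_p$-linear, $\G_K$-equivariant endomorphisms of $C$ that are not $C$-linear, and ruling these out is precisely the hard content of Fontaine's theorem. Your fallback identification $\mathrm{Hom}_{\G_K}(C,C)=(C^{\vee}\widehat\otimes C)^{\G_K}$ is false: for an infinite-dimensional Banach space, $C^{\vee}\widehat\otimes C$ is the space of nuclear (completed-finite-rank) operators, a proper subspace of all continuous operators. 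So you have proved the right-hand Tate computation but not the reduction to it; the reduction is the whole point, and for arbitrary $K$ it requires Fontaine's argument (the Sen-theoretic route only works for $[K:\Q_p]<\infty$, as Remark~\ref{surprise5} notes).

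For (ii), the inductive step as described cannot run. You propose to pick ``a fixed element of $\overline K$ mapping to a generator'' of $t^{n-1}\bdr^+/t^n\bdr^+$. But $\overline K\cap t\bdr^+=0$: $\overline K$ sits in $\bdr^+$ as a subfield (the Henselian lift of $\overline K\subset C$), so $\overline K\cap t\bdr^+$ is an ideal of the field $\overline K$ not containing $1$, hence is zero. Consequently no nonzero element of $\overline K$ lies in $t^{n-1}\bdr^+$ for $n\geq 2$, and the successive-approximation scheme that reduces step $n$ to density of $\overline K$ in $C$ has nothing to feed it at depth $\geq 1$. The actual density statement is more subtle than ``density in each graded piece'': every nonzero $\alpha\in\overline K$ has $t$-adic valuation $0$, yet the family $\overline K$ is nonetheless dense for the Fréchet topology because one can choose, inside $\overline K$, elements whose images in $\bdr^+/t^n$ approximate a given target to any accuracy — this is the content of \cite{dense} and hinges on the arithmetic of ramified extensions, not on a degree-by-degree lift. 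Citing that reference is the right move here; the diagonal argument you sketch does not produce it.
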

\begin{proof}
Fontaine's proof~\cite{Fo00} of (i) works
for arbitrary $K$ (the alternative proof in~\cite{FQ}, which uses class field theory,  works
only for $[K:\Q_p]<\infty$). For (ii), see~\cite{dense}.
\end{proof}

\begin{proposition} \label{twists}
We have 

{\rm (i)} ${\rm Hom}_{\G_K}(C,C)\simeq K$, ${\rm Hom}_{\G_K}(C,C(j))=0$ if  $j\neq 0$.

{\rm (ii)} ${\rm Hom}_{\G_K}(\bdr^+,t^{-N}\bdr^+/t^k\bdr^+)\simeq K$ 
for $N\geq 0$, $k\geq 1$; compatibly  in $k$. 

{\rm (iii)} ${\rm Hom}_{\G_K}(\bdr^+,\bdr^+)\simeq K$, ${\rm Hom}_{\G_K}(\bdr,\bdr)\simeq K$.

{\rm (iv)} ${\rm Hom}_{\G_K}(t^j\bdr^+/t^k\bdr^+,\bdr)=0$, 
${\rm Hom}_{\G_K}(\bdr^+/t^k\bdr^+,\bdr^+/t^\ell\bdr^+)=0$ if  $\ell>k$.
\end{proposition}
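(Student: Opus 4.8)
The plan is to reduce all four statements to $\bdr^+$-linear algebra on the finite-length $\bdr^+$-representations $t^a\bdr^+/t^b\bdr^+$ (whose graded pieces are the Tate twists $C(a),\dots,C(b-1)$, so whose Hodge--Tate--Sen weights are integers, in particular algebraic over $\Q_p$), using three ingredients. First, by the form of Sen's theorem recorded in Remark~\ref{surprise5}, valid for arbitrary $K$, any $\G_K$-equivariant continuous $\Q_p$-linear map between such representations is automatically $\bdr^+$-linear. Second, the computation $(t^{-N}\bdr^+/t^k\bdr^+)^{\G_K}=K$ for $N\geq0$, $k\geq1$: dévissage along the filtration with graded pieces $C(-N),\dots,C(k-1)$ shows, via $C(j)^{\G_K}=0$ for $j\neq0$ and $C^{\G_K}=K$ (Tate, Ax--Sen--Tate), that only the $C(0)$-piece contributes, and the resulting $C^{\G_K}$ lifts because $K\subset\bdr^+$ maps isomorphically onto $C^{\G_K}$ under $\theta$ while $K\cap t\bdr^+=\ker(\theta|_K)=0$; so $(t^{-N}\bdr^+/t^k\bdr^+)^{\G_K}$ is exactly the image of $K$. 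Third, two topological facts about the natural topologies: a continuous linear map from the Fréchet space $\bdr^+=\invlim_n\bdr^+/t^n\bdr^+$ to a Banach space is dominated by one of the defining seminorms $x\mapsto\|x\bmod t^n\bdr^+\|$, hence kills $t^n\bdr^+$ and factors through the Banach space $\bdr^+/t^n\bdr^+$; and (by Baire) a continuous linear map from such a Fréchet space to the strict colimit $\bdr=\colim_m t^{-m}\bdr^+$ has image in some $t^{-m}\bdr^+$.

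Granting this, (i) and (ii) are short. In (i), ${\rm Hom}_{\G_K}(C,C)\simeq K$ is the preceding proposition; for $j\neq0$, any $f\in{\rm Hom}_{\G_K}(C,C(j))$ is $C$-linear by Remark~\ref{surprise5}, hence is multiplication by $f(1)$, and $\G_K$-equivariance forces $f(1)\in C(j)^{\G_K}=0$. For (ii), given $f\in{\rm Hom}_{\G_K}(\bdr^+,t^{-N}\bdr^+/t^k\bdr^+)$, for $n\geq N+k$ large enough $f$ factors as $f=\bar f\circ({\rm proj})$ with $\bar f\colon\bdr^+/t^n\bdr^+\to t^{-N}\bdr^+/t^k\bdr^+$; by Remark~\ref{surprise5} $\bar f$ is $\bdr^+$-linear, so it is multiplication by $\bar f(1)=f(1)\in(t^{-N}\bdr^+/t^k\bdr^+)^{\G_K}=K$ (the condition $t^nf(1)=0$ being automatic, the target being killed by $t^{N+k}$), and conversely every element of $K$ occurs as $a\mapsto a\bmod t^k\bdr^+$. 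Thus $f\mapsto f(1)$ is an isomorphism onto $K$, compatibly in $k$ since the generator, the composite $\bdr^+\hookrightarrow t^{-N}\bdr^+\twoheadrightarrow t^{-N}\bdr^+/t^k\bdr^+$, is.

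For (iii): since $\bdr^+=\invlim_k\bdr^+/t^k\bdr^+$ topologically, (ii) with $N=0$ gives ${\rm Hom}_{\G_K}(\bdr^+,\bdr^+)=\invlim_k{\rm Hom}_{\G_K}(\bdr^+,\bdr^+/t^k\bdr^+)=\invlim_k K=K$, generated by $\mathrm{id}$ (all transition maps being the identity of $K$). A $\G_K$-endomorphism of $\bdr$ restricts on each $t^{-m}\bdr^+$ to a map with image in some $t^{-m'}\bdr^+$ ($m'\geq m$) by Baire; after twisting this becomes an element of ${\rm Hom}_{\G_K}(\bdr^+,t^{-s}\bdr^+)$ with $s=m'-m\geq0$, which equals $\invlim_n{\rm Hom}_{\G_K}(\bdr^+,t^{-s}\bdr^+/t^n\bdr^+)=K$ by (ii); assembling the colimit over $m'$ and the limit over $m$ (all transitions the identity of $K$, the relevant generators being the natural inclusions $\bdr^+\subset t^{-m}\bdr^+\subset\bdr$) gives ${\rm Hom}_{\G_K}(\bdr,\bdr)=K$. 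For (iv): given $f\colon t^j\bdr^+/t^k\bdr^+\to\bdr$, Baire gives $\mathrm{im}\,f\subseteq t^{-m}\bdr^+$ for some $m$; each reduction $f_n\colon t^j\bdr^+/t^k\bdr^+\to t^{-m}\bdr^+/t^n\bdr^+$ is a map of finite-length $\bdr^+$-representations, hence $\bdr^+$-linear by Remark~\ref{surprise5}, so $f=\invlim_n f_n$ is $\bdr^+$-linear; but a $\bdr^+$-linear map from the $t$-power-torsion module $t^j\bdr^+/t^k\bdr^+$ into the $t$-torsion-free module $t^{-m}\bdr^+$ is $0$. Finally, any $f\in{\rm Hom}_{\G_K}(\bdr^+/t^k\bdr^+,\bdr^+/t^\ell\bdr^+)$ is $\bdr^+$-linear by Remark~\ref{surprise5}, so $f(1)\in(\bdr^+/t^\ell\bdr^+)^{\G_K}=K$ with $t^kf(1)=0$ in $\bdr^+/t^\ell\bdr^+$; a nonzero element of $K$ is a unit of $\bdr^+$ (as $K\cap t\bdr^+=0$ and $t$ is the uniformizer), so $t^kf(1)$ generates $t^k\bdr^+\not\subseteq t^\ell\bdr^+$ when $\ell>k$, forcing $f(1)=0$, whence $f=0$.

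The main difficulty, and the only genuinely non-formal point once Remark~\ref{surprise5} is on hand, is to make the reduction to finite-length $\bdr^+$-representations airtight: the two topological ``factoring'' statements for the Fréchet topology on $\bdr^+$ (and on each $t^{-m}\bdr^+$) and the LF topology on $\bdr$, together with the clean identification $(t^{-N}\bdr^+/t^k\bdr^+)^{\G_K}=K$, i.e. the facts that no higher Tate twist contributes to the invariants and that the $C(0)$-invariants genuinely lift. Everything after that is bookkeeping with (co)limits and the observation that $\bdr$ and each $t^{-m}\bdr^+$ are $t$-torsion free; this is precisely what allows one to dispense, for general $K$, with the appeal to Fontaine's $\mathrm{Ext}$-computations in $\scc(\G_K)$ used when $[K:\Q_p]<\infty$.
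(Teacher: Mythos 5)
Your proof is correct, but it takes a genuinely different route from the paper's. The paper, having set the stage by announcing that it will only use two facts valid for arbitrary $K$ — namely ${\rm Hom}_{\G_K}(C,C)\simeq K$ and the density of $\overline K$ in $\bdr^+$ — proceeds entirely by elementary arguments: it exploits that the smooth vectors of $\bdr^+$ and of $t^{-N}\bdr^+/t^k\bdr^+$ are exactly $\overline K$ while $C(j)$, $j\neq 0$, has no nonzero smooth vectors, then uses continuity and density to propagate the rigidity. For instance, in (ii) it never factors through a finite quotient $\bdr^+/t^n$: it shows directly that $\lambda(\overline K)\subset\overline K$, then argues by induction on $k$ using $\theta$ and the vanishing of smooth vectors in $C(-1)$; and in (iv), instead of passing to $\bdr^+$-linearity, it uses a sequence in $\overline K$ converging to $t^k$ in $\bdr^+$ and to $0$ in $\bdr^+/t^k$ to force $\kappa t^k=0$. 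You instead reduce everything to $\bdr^+$-module linear algebra by invoking the stronger form of Sen's theory recorded in Remark~\ref{surprise5} (automatic $\bdr^+$-linearity of continuous equivariant $\Q_p$-linear maps between finite-length $\bdr^+$-representations with algebraic HTS weights), together with standard Fr\'echet/Banach/Baire facts to factor through finite quotients. Your route is arguably more conceptual and uniform — once you know the relevant maps are $\bdr^+$-linear, (i)--(iv) all collapse to computing $\bdr^+$-invariants and noting $t$-torsion constraints. But it outsources the real work to Remark~\ref{surprise5} for arbitrary $K$, which the paper deliberately refrains from relying on (it extracts only the single corollary ${\rm Hom}_{\G_K}(C,C)\simeq K$ and cites Fontaine's proof for that), whereas the paper's argument is self-contained given its two stated inputs. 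Your topological reductions (a continuous map $\bdr^+\to$ Banach kills some $t^n\bdr^+$; a continuous map from a Fr\'echet to the strict colimit $\bdr$ lands in some $t^{-m}\bdr^+$) and the d\'evissage computation $(t^{-N}\bdr^+/t^k\bdr^+)^{\G_K}=K$ are all correct. Note in passing that the third ingredient (continuity into $\bdr$ lands in some $t^{-m}\bdr^+$) is essentially what the paper packages separately as Lemma~\ref{truc2}, so you are rediscovering that lemma; it might be worth citing it to shorten the write-up.
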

\begin{proof}
The first claim of (i) is Fontaine's theorem.
The second claim of (i) follows from the fact that, thanks to Tate's theorem that $H^0(\sg_L,C(j))=0$
if $j\neq 0$ and $L$ is a finite extension of $K$, $C(j)$ does not have elements on which 
 the action of $\G_K$ factors through a finite quotient;
hence $\lambda\in\Hom_{\sg_K}(C,C(j))$ is identically 
zero on $\overline K$, and thus on $C$ by continuity and density of $\overline K$ in $C$.

Let us prove (ii).
Any map $\lambda:\bdr^+\to t^{-N}\bdr^+/t^k\bdr^+$ sends $\overline K$ to
$\overline K$ since elements of $\overline K$ are the smooth vectors for the action
of $\G_K$ in $\bdr^+$ and $t^{-N}\bdr^+/t^k\bdr^+$ (by Tate's theorem recalled above). 
By continuity, and density of $\overline K$
in $\bdr^+$ one sees that $\lambda(\bdr^+)\subset \bdr^+/t^k\bdr^+$.

Now, if $k=1$, $\lambda(t\overline K)$ is sent to $0$ in $\bdr^+/t=C$ since 
$C(-1)$ does not have elements on which the action of $\G_K$ factors through a finite quotient.
But $t\overline K$ is dense in $t\bdr^+$, hence $\lambda$ factors through $\bdr^+/t=C$.
It follows from (i) that $\lambda$ is the multiplication by $\kappa\in K$ (composed with
$\bdr^+\to C$).
Now, if $k\geq 1$, we can compose with $\bdr^+/t^k\to C$ to deduce that
there exist $\kappa\in K$ such that $\lambda-\kappa$ has values in $t\bdr^+/t^k\bdr^+$.
But the image of $\overline K$ by $\lambda-\kappa$ is identically $0$ for the same reasons as above;
hence $\lambda=\kappa$ and we are done.

The first statement of (iii) follows from (ii) since
$${\rm Hom}_{\G_K}(\bdr^+,\bdr^+)=\wlim_k {\rm Hom}_{\G_K}(\bdr^+,\bdr^+/t^k\bdr^+)$$
For the second statement,
let  $\lambda\in\Hom_{\sg_K}(\bdr,\bdr)$. 
By the same arguments as above, $\lambda(\overline K)\subset\overline K$; hence, by continuity, 
$\lambda(\bdr^+)\subset\bdr^+$ and the restriction of $\lambda$ to $\bdr^+$ is the multiplication
by an element $\kappa$ of $K$. But $\lambda_N$ defined by $\lambda_N(x)=t^{N}\lambda(t^{-N}x)$
also belongs to $\Hom_{\sg_K}(\bdr,\bdr)$ and is the multiplication by $\kappa$ on $t^N\bdr^+$.
It follows that $\lambda_N$ is the multiplication by $\kappa$ on $\bdr^+$, and $\lambda$ is the multiplication
by $\kappa$ on $t^{-N}\bdr^+$.   
Hence the second claim of  (iii).

Finally, for (iv) we may assume $j=0$ by twisting (if $\lambda\in {\rm Hom}_{\G_K}(t^j\bdr^+/t^k\bdr^+,\bdr)$,
then $x\mapsto t^{-j}\lambda(t^jx)\in {\rm Hom}_{\G_K}(\bdr^+/t^{k-j}\bdr^+,\bdr)$).  
Then the same arguments as for (ii) show that
$\lambda(\bdr^+/t^k)\subset\bdr^+$ for the first claim,
and that $\theta\circ\lambda$ is $\kappa\theta$ for some
$\kappa\in K$ for both claims. One deduces that $\lambda=\kappa$ on $\overline K$.
But we can find
a sequence  $(x_n)_{n\in\N}$ of elements of  $\overline K$ converging to  $t^k$
in  $\bdr^+$ hence to $0$ in  $\bdr^+/t^k$, and continuity of
 $\lambda$ implies that $0=\lambda(0)=\kappa t^k$. Hence $\kappa=0$ (since
$\ell>k$ for the second claim), and $\lambda=0$,
which finishes the proof.
\end{proof}
\section{The category $\cal{BC}$}\label{BCQBC}
In this chapter we recall the definition of BC's\footnote{Often called Banach-Colmez spaces.}, i.e., the objects of 
 the category $\cal{BC}$, and study their properties.
 In particular, we study the canonical filtration and its relation to the Harder-Narasimhan filtration.
 Moreover, we will partially categorify height of BC's and introduce a notion 
of acyclic $(\phi,N)$-modules that will play an important role later on in the paper. 
\subsection{The category $\cal{BC}$}\label{BACO} 
We will  discuss now basic properties 
of the category ${\cal BC}$ of BC's and the canonical filtration of its objects. 
\subsubsection{Definitions and basic properties} 
  Recall~\cite{CB} that a  BC\footnote{Called in \cite{CB}  "finite dimensional Banach Space".} 
${\mathbb W}$ is, 
morally, a finite dimensional $C$-vector space 
 up to  a finite dimensional $\Q_p$-vector space. 
It has a Dimension  ${\rm Dim}\,{\mathbb W}=(a,b)$, where $a=\dim {\mathbb W}\in\N$, 
the {\it dimension of ${\mathbb W}$}, is the dimension of the $C$-vector space 
and $b={\rm ht}\,{\mathbb W}\in\Z$, the {\it height of ${\mathbb W}$}, 
is the dimension of the $\Q_p$-vector space. 

\smallskip
   More precisely, a {\em Vector Space} (VS for short) 
${\mathbb W}$ is a functor $\Lambda\mapsto {\mathbb W}(\Lambda)$ from 
sympathetic algebras\footnote{In~\cite{CB}, these are defined as being
spectral and connected $C$-Banach algebras $\Lambda$ such that $x\mapsto x^p$ is surjective on 
$\{x,\ \Vert x-1\Vert_\Lambda<1\}$.  We will denote by $\so_{\Lambda}$ the unit ball in $\Lambda$. We are going to request additionally that the natural map
$\Lambda\to{\cal C}({\rm Spm}(\Lambda)\to C)$ is injective (here ${\rm Spm}(\Lambda)$
is the set of continuous morphisms $\Lambda\to C$, with pointwise convergence topology),
a property that was taken for granted in the arguments of~\cite{CB}, but which
fails for example if $\Lambda=\O_{C'}$ and $C'$ is a strictly bigger complete algebraic field
than $C$, like its spherical closure (another possibility would be to choose a big
field $\widehat C$ contaning $C$, algebraically closed and spherically complete, 
with group of valuation $\R$, and to define ${\rm Spm}(\Lambda)$
as the set of continuous morphisms $\Lambda\to \widehat C$). We are also going to restrict ourselves
to separable $\Lambda$'s (i.e.,~having a dense $C$-subspace of countable dimension), in order to be
able to use the Hahn-Banach theorem despite the fact that we are not assuming $C$ to be
spherically complete. Since we have assumed $\O_C/p$ 
to be countable, the sympathetic closure of a spectral and connected $C$-Banach algebra
that is separable is itself separable.}
to $\Q_p$-vector spaces.
By definition, a sequence $0\to {\bb W}_1\to {\bb W}\to {\bb W}_2\to 0$ is exact
if and only if $0\to {\bb W}_1(\Lambda)\to {\bb W}(\Lambda)\to {\bb W}_2(\Lambda)\to 0$
is exact for all $\Lambda$.

\smallskip
 Trivial examples of VS's are:

$\bullet$ 
finite dimensional $\Q_p$-vector spaces $V$:
 $\Lambda\mapsto V$, 

$\bullet$ ${\mathbb V}^d$, for $d\in\N$, with ${\mathbb V}^d(\Lambda)=\Lambda^d$, for
all $\Lambda$.

\noindent
More interesting examples are provided by Fontaine's rings:

$\bullet$ $\Bcris^+$, $\Bst^+$, $\Bdr^+$, $\Bcris$, $\Bst$, $\Bdr$ are naturally VS's (and even Rings).

$\bullet$ If $m\geq 1$, then ${\mathbb B}_m:=\Bdr^+/t^m\Bdr^+$ is a VS (and also a Ring).

$\bullet$ Let $h\geq 1$ and $d\in\Z$. Then ${\mathbb U}_{h,d}=(\Bcris^+)^{\varphi^h=p^d}$ if $d\geq 0$,
and ${\mathbb U}_{h,d}={\mathbb B}_{-d} /\Q_{p^h}$ if $d<0$, are VS's.

\smallskip
A Vector Space ${\mathbb W}$ is said to be finite Dimensional (a BC for short) 
 if it ``is equal to ${\mathbb V}^d$, for some $d\in\N$, up to
finite dimensional $\Q_p$-vector spaces'':
there exist finite dimensional $\Q_p$-vector spaces
$V_1,V_2$ and exact sequences
$$0\to V_1\to {\mathbb Y}\to {\mathbb V}^d\to 0,\quad
0\to V_2\to {\mathbb Y}\to {\mathbb W}\to 0,$$
so that ${\mathbb W}$ is obtained from ${\mathbb V}^d$ by ``adding $V_1$ and
moding out by $V_2$''.
Then 
$$\dim{\mathbb W}=d \quad{\rm and}\quad  {\rm ht}\,{\mathbb W}=\dim_{\Q_p}V_1-
\dim_{\Q_p}V_2$$ 
\begin{remark}
{\rm (i)} We are, in general, only interested in $W={\mathbb W}(C)$ but, without
the extra structure, it would be impossible to speak of its Dimension (for example, $C$ and $C\oplus\Q_p$ are isomorphic as topological $\Q_p$-vector spaces). 

{\rm (ii)} The functor ${\mathbb W}\mapsto {\mathbb W}(C)$
of $C$-points is faithful for BC's (this follows from Proposition~\ref{BS1}). 
Also, if ${\mathbb W}$ is a BC, then ${\mathbb W}(\Lambda)$ is a $\Q_p$-Banach, for all $\Lambda$, and
if ${\mathbb W}_1\to {\mathbb W}_2$ is a morphism of BC's, then ${\mathbb W}_1(\Lambda)\to {\mathbb W}_2(\Lambda)$
is continuous and strict,
for all $\Lambda$, and an exact sequence $0\to {\mathbb W}_1\to {\mathbb W}_2\to {\mathbb W}_3\to 0$ of BC's
induces a strictly exact sequence for all sympathetic $\Lambda$.
\end{remark}

 We quote \cite[Prop.\,5.16]{CN1}: 
\begin{proposition}\label{BS1}
{\rm (i)} The Dimension of a BC is independent
of the choices made in its definition.

{\rm (ii)} If $f:{\mathbb W}_1\to{\mathbb W}_2$ is a morphism of
BC's, then ${\rm Ker}\,f$, ${\rm Coker}\,f$ and $\im \,f$
are BC's, and
we have 
$${\rm Dim}\, {\mathbb W}_1={\rm Dim}\, {\rm Ker}\,f+{\rm Dim}\,\im \,f
\quad{\rm and}\quad
{\rm Dim}\, {\mathbb W}_2={\rm Dim}\, {\rm Coker}\,f+{\rm Dim}\,\im \,f.$$

{\rm (iii)} If $\dim{\mathbb W}=0$, then ${\rm ht}\,{\mathbb W}\geq 0$.

{\rm (iv)} If ${\mathbb W}$ has an increasing filtration such that the successive quotients
are ${\mathbb V}^1$, and if ${\mathbb W}'$ is a sub-BC of ${\mathbb W}$, then
${\rm ht}\,{\mathbb W}'\geq 0$.
\end{proposition}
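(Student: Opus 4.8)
The plan is to prove the four assertions in turn, using as the main engine the fact that morphisms of BC's have BC kernels, cokernels and images together with the behaviour of $\dim$ and $\mathrm{ht}$ under short exact sequences. First I would record the basic additivity: if $0\to {\mathbb W}_1\to {\mathbb W}_2\to {\mathbb W}_3\to 0$ is an exact sequence of BC's, then ${\rm Dim}\,{\mathbb W}_2={\rm Dim}\,{\mathbb W}_1+{\rm Dim}\,{\mathbb W}_3$ in $\N\times\Z$. This follows by splicing together the defining presentations of ${\mathbb W}_1$ and ${\mathbb W}_3$ into one for ${\mathbb W}_2$ (after a standard pullback/pushout manipulation with the finite-dimensional $\Q_p$-vector spaces $V_i$), using that ${\mathbb V}^d$ is stable under extensions by finite $\Q_p$-spaces up to enlarging $d$. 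Granting additivity, part (i) is immediate: two presentations of the same ${\mathbb W}$ both compute ${\rm Dim}$ via the same additive invariant applied to the same object, so the output $(\dim,\mathrm{ht})$ cannot depend on the presentation. (Concretely, compare two presentations through a common refinement ${\mathbb Y}$.)

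For (ii), given $f:{\mathbb W}_1\to{\mathbb W}_2$ I would invoke the cited structural input (from~\cite{lebras}, quoted in the footnote) that $\mathrm{Ker}\,f$, $\mathrm{Coker}\,f$, $\im f$ are again BC's and that $0\to \mathrm{Ker}\,f\to{\mathbb W}_1\to\im f\to 0$ and $0\to\im f\to{\mathbb W}_2\to\mathrm{Coker}\,f\to 0$ are (strictly) exact. Then the two Dimension identities are just two instances of the additivity from the first paragraph. For (iii): if $\dim{\mathbb W}=0$ then ${\mathbb W}$ sits in $0\to V_1\to{\mathbb Y}\to{\mathbb V}^0=0\to 0$ and $0\to V_2\to{\mathbb Y}\to{\mathbb W}\to 0$, so ${\mathbb Y}\simeq V_1$ is a finite $\Q_p$-space and $V_2\hookrightarrow V_1$, whence $\mathrm{ht}\,{\mathbb W}=\dim_{\Q_p}V_1-\dim_{\Q_p}V_2\geq 0$; here one must check $V_2$ really injects, which follows from strictness of the presentation (a subobject of a finite $\Q_p$-space is a finite $\Q_p$-space).

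Part (iv) is the substantive one and I expect it to be the main obstacle. Suppose ${\mathbb W}$ has an increasing filtration $0={\mathbb W}^{(0)}\subset{\mathbb W}^{(1)}\subset\cdots\subset{\mathbb W}^{(n)}={\mathbb W}$ with each ${\mathbb W}^{(j)}/{\mathbb W}^{(j-1)}\simeq{\mathbb V}^1$, and let ${\mathbb W}'\subset{\mathbb W}$ be a sub-BC. The idea is to induct on $n$. For $n=1$, ${\mathbb W}\simeq{\mathbb V}^1$ and a sub-BC ${\mathbb W}'$ is the kernel of ${\mathbb V}^1\to{\mathbb V}^1/{\mathbb W}'$; one checks that any proper quotient-then-sub situation for ${\mathbb V}^1={\mathbb B}_{\mathrm{cris}}^{+,\varphi=p}$ forces ${\mathbb W}'$ to be either $0$, a finite $\Q_p$-space, or all of ${\mathbb V}^1$ — in each case $\mathrm{ht}\,{\mathbb W}'\geq 0$ (using (iii) when $\dim{\mathbb W}'=0$, and $\mathrm{ht}\,{\mathbb V}^1=0$, $\mathrm{ht}$ of a finite $\Q_p$-space $\geq 0$). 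For the inductive step, set $\overline{{\mathbb W}}={\mathbb W}/{\mathbb W}^{(1)}$, which carries a length-$(n-1)$ filtration with quotients ${\mathbb V}^1$; let ${\mathbb W}'_1={\mathbb W}'\cap{\mathbb W}^{(1)}$, a sub-BC of ${\mathbb V}^1$, and let $\overline{{\mathbb W}'}$ be the image of ${\mathbb W}'$ in $\overline{{\mathbb W}}$, a sub-BC of $\overline{{\mathbb W}}$. From $0\to{\mathbb W}'_1\to{\mathbb W}'\to\overline{{\mathbb W}'}\to 0$ and additivity, $\mathrm{ht}\,{\mathbb W}'=\mathrm{ht}\,{\mathbb W}'_1+\mathrm{ht}\,\overline{{\mathbb W}'}$; the first term is $\geq 0$ by the $n=1$ case and the second is $\geq 0$ by the inductive hypothesis, so we are done. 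The delicate point — the heart of the argument — is the intersection step: one needs that ${\mathbb W}'\cap{\mathbb W}^{(1)}$ and the image $\overline{{\mathbb W}'}$ are genuinely BC's fitting into an exact sequence, and that taking intersections/images commutes suitably at the level of $C$-points and on sympathetic algebras; this is where I would lean on the exactness criterion in the footnote (a sequence of BC's is exact iff it is exact on all sympathetic $\Lambda$) and on (ii) applied to the composite ${\mathbb W}'\hookrightarrow{\mathbb W}\twoheadrightarrow\overline{{\mathbb W}}$, whose kernel is ${\mathbb W}'\cap{\mathbb W}^{(1)}$ and image is $\overline{{\mathbb W}'}$.
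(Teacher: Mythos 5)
You should note first that the paper itself does not give a proof of Proposition~\ref{BS1}: it is quoted from \cite[Prop.\,5.16]{CN1}, so there is no ``paper's own proof'' to compare against directly. That said, the proposal has a genuine gap at the key point, and a couple of smaller issues.

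\textbf{Part (i).} The common-refinement reduction is the right first move: set ${\mathbb Y}''={\mathbb Y}\times_{{\mathbb W}}{\mathbb Y}'$ and chase the presentations so that ${\mathbb Y}''$ carries two presentations with finite kernels, one over ${\mathbb V}^d$ and one over ${\mathbb V}^{d'}$. But the parenthetical ``compare two presentations through a common refinement'' is exactly where the content lies, and you do not close it. After the reduction you are left to prove the lemma: if $0\to V_1\to{\mathbb Y}''\to{\mathbb V}^d\to 0$ and $0\to V_2\to{\mathbb Y}''\to{\mathbb V}^{d'}\to 0$ with $V_1,V_2$ finite-dimensional over $\Q_p$, then $d=d'$ and $\dim_{\Q_p}V_1=\dim_{\Q_p}V_2$. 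This is not a formal consequence of ``additivity'' — on the contrary, any statement of additivity that applies to ${\mathbb Y}''$ already presupposes that ${\mathbb Y}''$ has a well-defined Dimension, which is what you are trying to prove. In fact the ``granting additivity, (i) is immediate'' sentence is circular: an additive invariant on short exact sequences only determines values on the pieces once you know it is well-defined in the first place. The real input needed is either the structure theory from \cite{CB} (Tate's theorem $C^{\G_K}=K$, the classification of sub-$\Q_p$-spaces of ${\mathbb V}^1$, and a dimension count showing a $C$-linear surjection ${\mathbb V}^d\to{\mathbb V}^{d'}$ with finite kernel forces $d=d'$ and kernel $0$), or the full equivalence with ${\rm Coh}^-_X$ from \cite{lebras}, under which ${\rm Dim}$ becomes $({\rm rk}^-,-\deg^-)$ and well-definedness is automatic. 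Either way there is a non-formal argument to supply, and it is the heart of the proposition.

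\textbf{Part (ii).} Here you do not prove anything — you cite \cite{lebras} for the fact that kernels, cokernels and images of morphisms of BC's are BC's, and then derive additivity. Note that the footnote you refer to only gives the exactness criterion (a sequence of BC's is exact iff it is so on sympathetic points); the abelianness of $\cal BC$ is a separate, deeper result (Theorem~\ref{HN6}). Citing Le Bras is a legitimate shortcut, but it is far heavier machinery than what \cite[Prop.\,5.16]{CN1} invokes, and the Dimension identities then again rely on the well-definedness you left open in (i) — so (i) and (ii) are genuinely intertwined, and the sketch treats them as more separable than they are.

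\textbf{Parts (iii) and (iv).} Part (iii) is correct as written: if $\dim{\mathbb W}=0$, the presentation has ${\mathbb V}^0=0$, so ${\mathbb Y}\cong V_1$ is finite over $\Q_p$, $V_2\hookrightarrow V_1$, and ${\rm ht}\,{\mathbb W}=\dim V_1-\dim V_2\geq 0$. Part (iv) is structurally correct and is essentially the induction the paper itself uses elsewhere (cf.~Remark~\ref{Mittag}(ii) and Remark~\ref{ppp2.7}): the base case relies on the classification of sub-BC's of ${\mathbb V}^1$ as $\{0\}$, finite $\Q_p$-spaces, or all of ${\mathbb V}^1$, which you may cite as known; and the inductive step via ${\mathbb W}'\cap{\mathbb W}^{(1)}$ and the image in ${\mathbb W}/{\mathbb W}^{(1)}$, plus additivity of ${\rm ht}$, is fine. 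One inaccuracy to fix: ${\mathbb V}^1$ is ${\mathbb B}_1=\Bdr^+/t$, not $(\Bcris^+)^{\varphi=p}$; the latter is ${\mathbb U}_1$, which has Dimension $(1,1)$ and fits in $0\to\Q_p t\to{\mathbb U}_1\to{\mathbb V}^1\to 0$, whereas ${\mathbb V}^1$ has Dimension $(1,0)$. The distinction matters precisely for the height count in the base case.
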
 

 We will denote by $\mathcal{BC}$ the category of BC's. It is an abelian category.  

\begin{example}
The Spaces ${\mathbb B}_m$ 
and ${\mathbb U}_{h,d}$ defined above are BC's. Their Dimensions are
$${\rm Dim}\,{\mathbb B}_m=(m,0),\quad
{\rm Dim}\,{\mathbb U}_{h,d}=\begin{cases}
 (d,h)&{\text{if $d\geq 0$,}}\\ (-d,-h)&{\text{if $d<0$.}}\end{cases}$$
\end{example}

\subsubsection{Canonical filtration}\label{BACO1}
In his thesis~\cite{plut}, \cite{plut1}, Pl\^ut introduced a filtration on objects of $\cal{BC}$ and
stated a number of results about this filtration.  We will review them briefly here. 
\begin{remark}
Most of these results can
be recovered from the relation of $\cal{BC}$ to vector bundles
on the Fargues-Fontaine curve and the Harder-Narasimhan filtration studied in Le Bras' thesis (see Section~\ref{HN5}).
\end{remark}
\begin{definition}\label{BACO1.1}
{\rm ({\em Curvature})}
Let ${\mathbb W}\in\cal{BC}$.  We say that
${\mathbb W}$ has:

$\bullet$ curvature~$>0$, if ${\rm Hom}({\mathbb W},{\mathbb V}^1)=0$,

$\bullet$ curvature~$\geq 0$, if  ${\rm Hom}({\mathbb W},\Bdr^+)=0$.

$\bullet$ curvature~$=0$, or is {\em affine}, if it is a successive extension of  ${\mathbb V}^1$,

$\bullet$ curvature~$<0$, if it injects into $\Bdr^d$ (or, equivalently, into $(\Bdr^+)^d$),

$\bullet$ curvature~$\leq 0$, if it injects into a  $\Bdr^+$-Module (i.e., a VS with an action of $\Bdr^+$),
\end{definition}
\begin{remark}\label{ppp1}
{\rm (i)} If  ${\mathbb W}$ has curvature~$>0$ (resp.~$\geq 0$)
and  ${\mathbb W}'$ has curvature~$\leq 0$ (resp.~$<0$),
then ${\rm Hom}_{\mathcal{BC}}({\mathbb W},{\mathbb W}')=0$. 

{\rm (ii)}
A sub-VS of an VS with curvature~$\leq 0$ (resp.~$<0$) has curvature ~$\leq 0$ (resp.~$<0$).  

{\rm (iii)}
A quotient  of an VS with curvature~$\geq 0$ (resp.~$>0$) has curvature~$\geq 0$ (resp.~$>0$). 
\end{remark}

\begin{proposition}{\rm ({The canonical filtration})}\label{ppp4}
Every ${\mathbb W}\in\cal{BC}$ has a unique filtration, called  {the canonical filtration},
$${\mathbb W}_{>0}\subset {\mathbb W}_{\geq 0}\subset {\mathbb W}$$ such that:

$\bullet$ ${\mathbb W}_{>0}$ has curvature~$>0$,

$\bullet$ ${\mathbb W}_{\geq 0}/{\mathbb W}_{>0}$ has curvature~$0$,

$\bullet$ ${\mathbb W}/{\mathbb W}_{\geq 0}$ has curvature~$<0$.
\end{proposition}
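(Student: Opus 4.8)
The plan is to deduce \emph{uniqueness} formally from the $\mathrm{Hom}$-vanishing recorded in Remark~\ref{ppp1}(i), and to derive \emph{existence} from the Harder--Narasimhan formalism on the Fargues--Fontaine curve that will be set up in Section~\ref{HN5}; this is the route I would take, although the statement is originally due to Pl\^ut~\cite{plut,plut1}, who obtained it by a direct analysis inside $\cal{BC}$.

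For uniqueness I would argue as follows. Given two filtrations ${\mathbb W}_{>0}\subset {\mathbb W}_{\geq 0}\subset {\mathbb W}$ and ${\mathbb W}'_{>0}\subset {\mathbb W}'_{\geq 0}\subset {\mathbb W}$ with the stated properties, the composites ${\mathbb W}_{>0}\hookrightarrow {\mathbb W}\twoheadrightarrow {\mathbb W}/{\mathbb W}'_{\geq 0}$ and then ${\mathbb W}_{>0}\hookrightarrow {\mathbb W}'_{\geq 0}\twoheadrightarrow {\mathbb W}'_{\geq 0}/{\mathbb W}'_{>0}$ both vanish, by Remark~\ref{ppp1}(i) together with the elementary implications among curvature conditions (a VS of curvature~$<0$, and an affine VS, both have curvature~$\leq 0$; a VS of curvature~$>0$ has curvature~$\geq 0$). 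Hence ${\mathbb W}_{>0}\subset {\mathbb W}'_{>0}$, and by symmetry ${\mathbb W}_{>0}={\mathbb W}'_{>0}$. Passing to ${\mathbb W}/{\mathbb W}_{>0}$ reduces matters to the case ${\mathbb W}_{>0}={\mathbb W}'_{>0}=0$, where the composite ${\mathbb W}_{\geq 0}\hookrightarrow {\mathbb W}\twoheadrightarrow {\mathbb W}/{\mathbb W}'_{\geq 0}$ (curvature~$\geq 0$ into curvature~$<0$) vanishes, giving ${\mathbb W}_{\geq 0}={\mathbb W}'_{\geq 0}$. So uniqueness is essentially automatic; the content is existence.

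For existence I would go through Section~\ref{HN5}. Following Le Bras~\cite{lebras}, one attaches to ${\mathbb W}$ its underlying ``modified vector bundle'': a torsion coherent sheaf ${\mathbb T}$ supported at the point $\infty$ of the Fargues--Fontaine curve (equivalently, a finite-length $\Bdr^+$-module, the $\Bdr^+$-modification), together with a genuine vector bundle $\sff$ on the curve. The middle layer ${\mathbb W}_{\geq 0}/{\mathbb W}_{>0}$ should be exactly the contribution of ${\mathbb T}$: a successive extension of skyscrapers at $\infty$ is, as a VS, a successive extension of ${\mathbb V}^1={\mathbb B}_1$, hence affine, and conversely every affine VS arises this way. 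On the bundle part, the Harder--Narasimhan filtration of $\sff$ separates a subbundle $\sff_{>0}$ with all slopes~$>0$ from a quotient $\sff_{\leq 0}$ with all slopes~$\leq 0$; matching the basic BC's (the ${\mathbb U}_{h,d}$ and the ${\mathbb B}_m$) to line bundles $\so(\lambda)$ and to skyscrapers, one checks that $\sff_{>0}$ contributes a VS of curvature~$>0$ (built from the ${\mathbb U}_{h,d}$ with $d>0$), while $\sff_{\leq 0}$ contributes a VS of curvature~$<0$ --- made of $\Q_p$-spaces for the slope-$0$ part and of the $H^1$-type BC's ${\mathbb U}_{h,d}$ with $d<0$ for the negative slopes, all of which embed into powers of $\Bdr^+$. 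Assembling the three pieces in the order ``positive slopes, then torsion at $\infty$, then non-positive slopes'' produces ${\mathbb W}_{>0}\subset {\mathbb W}_{\geq 0}\subset {\mathbb W}$.

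The hard part, and the main obstacle, is precisely this last translation: one must verify that the curvature trichotomy corresponds \emph{exactly} to ``positive Harder--Narasimhan slopes'' / ``torsion at $\infty$'' / ``non-positive slopes together with the $H^1$-contributions''. The delicate point is that the curvature-$0$ layer consists \emph{only} of the modification at $\infty$: the slope-$0$ part of $\sff$ has curvature~$<0$, not~$0$ (already $\Q_p$ embeds into $\Bdr^+$), so the filtration is genuinely this reorganization and not the naive ``$H^0$-positive / rest of $H^0$ / $H^1$'' decomposition. One also has to check that the connecting maps between consecutive layers vanish in the directions dictated by Remark~\ref{ppp1}(i), so that the three pieces do glue into an honest filtration with the required curvatures (which, by the uniqueness above, is then automatically \emph{the} canonical one). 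A variant would be to follow Pl\^ut and build ${\mathbb W}_{>0}$ directly as a maximal curvature-$>0$ sub-VS; there the obstacle is that $\cal{BC}$ has neither the ascending nor the descending chain condition on sub-objects, so maximality must be extracted from the boundedness of $\dim$ and the additivity of $\mathrm{Dim}$ in Proposition~\ref{BS1}.
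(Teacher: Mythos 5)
Your uniqueness argument via Remark~\ref{ppp1}(i) is clean and correct, and the idea of organizing existence around the Harder--Narasimhan data on the Fargues--Fontaine curve is a legitimate alternative to what the paper does (the paper constructs the filtration directly: ${\mathbb W}_{>0}$ as the common kernel of all maps ${\mathbb W}\to{\mathbb B}_m$, and ${\mathbb W}_{\geq 0}$ as the common kernel of all maps ${\mathbb W}\to\Bdr$, which matches the defining properties of curvature on the nose, and then relates this to the HN picture only afterwards in~\S\ref{HN13}).

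However, your translation between $\O$-slopes on the curve and curvature is \emph{exactly backwards} on the vector-bundle part, and this is not just a loose end --- it is where the sketch actually fails. You assert that $\sff_{>0}$ (positive $\O_X$-slopes) contributes curvature~$>0$, ``built from the ${\mathbb U}_{h,d}$ with $d>0$''. But ${\mathbb U}_{h,d}=(\Bcris^+)^{\varphi^h=p^d}$ for $d>0$ is a sub-VS of $\Bcris^+\subset\Bdr^+$, so by Definition~\ref{BACO1.1} it has curvature~$<0$, not~$>0$. Already ${\mathbb U}_{1,1}$ disproves your dichotomy. Dually, you place the $H^1$-type BC's ${\mathbb U}_{h,d}$ with $d<0$ in the curvature-$<0$ layer and claim they ``embed into powers of $\Bdr^+$''; they do not --- they are quotients of $\Bdr^+/t^{|d|}$, not sub-objects, and they have curvature~$>0$. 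The correct translation (recorded in~\S\ref{HN13}, in terms of the $\mu^-$-slopes, with $\mu^-({\mathbb U}_\lambda)=-1/\lambda$) is: ${\mathbb W}_{>0}$ consists of the ${\mathbb H}^1$-contributions of negative-$\O$-slope bundles together with the torsion at $x\neq\infty$; ${\mathbb W}_{=0}$ is the torsion at $\infty$; ${\mathbb W}_{<0}$ consists of the ${\mathbb H}^0$-contributions of $\O$-slope-$\geq 0$ bundles, i.e.\ the ${\mathbb U}_{h,d}$ with $d\geq 0$, including $\Q_p={\mathbb U}_{1,0}$. Intuitively, the farther a BC is from being a genuine $\Bdr^+$-module, the more ``curved'' it is, and on the sub-object side this means being a sub of $\Bcris^+$-type things, which sit inside $\Bdr^+$; so large positive $\O$-slopes push you \emph{towards} $\Bdr^+$, not away from it.

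One more structural remark: your proposal relies on~\S\ref{HN5}, which in the paper comes \emph{after} this proposition and is used to relate the canonical filtration to the HN filtration once both exist. That reordering is defensible, but it makes your sketch dependent on Theorem~\ref{HN6} (Le Bras) as a prerequisite, whereas the paper's one-line construction of ${\mathbb W}_{>0}$ and ${\mathbb W}_{\geq 0}$ does not. If you want to keep the HN route, you need (i) to fix the slope/curvature dictionary as above, and (ii) to verify the assembly step --- that the three layers, in the order you specify, form sub-objects (not just direct summands of a non-canonical splitting) with the stated curvatures; for that, Remark~\ref{ppp1}(ii)--(iii) and Corollary~\ref{ppp2.1} are the tools, but they appear only downstream of the proposition you are trying to prove.
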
 
\begin{proof}
One defines ${\mathbb W}_{>0}$ as the intersection of the kernels 
of all morphisms  $\alpha:{\mathbb W}\to {\mathbb B}_m$, for  $m\geq 1$,
and  ${\mathbb W}_{\geq 0}$  as the intersection of the kernels of all morphisms
$\alpha:{\mathbb W}\to \Bdr$.  That ${\mathbb W}_{>0}$ has curvature~$>0$ and
${\mathbb W}/{\mathbb W}_{\geq 0}$ curvature~$<0$ are clear.
That ${\mathbb W}_{\geq 0}/{\mathbb W}_{>0}$ has curvature~$0$ results from the description
of the canonical filtration in terms of the Harder-Narasimhan filtration (cf.~section~\ref{HN13}).
\end{proof}

\begin{remark}\label{ppp5}
(i) ${\mathbb W}_{\leq 0:}={\mathbb W}/{\mathbb W}_{>0}$ is the largest quotient of curvature~$\leq 0$
of~${\mathbb W}$.

(ii) ${\mathbb W}_{=0}:={\mathbb W}_{\geq 0}/{\mathbb W}_{>0}$ is the largest
affine sub-VS of  ${\mathbb W}_{\leq 0}$.
\end{remark}

\subsection{The category $\cal{BC}$ and coherent sheaves on the Fargues-Fontaine curve}\label{HN1}  
The canonical filtration of BC's is closely related to 
the Harder-Narasimhan filtration defined, using the   presentation of $\cal{BC}$
via coherent sheaves on the Fargues-Fontaine curve, by Le Bras \cite{lebras}. We will now explain this relation.
\subsubsection{The Fargues-Fontaine curve}\label{HN2}
The (algebraic) Fargues-Fontaine curve $X=X_{\rm FF}$ is the projective scheme attached to the
graded $\Q_p$-algebra $\oplus_{d\geq 0}{\rm U}_d$,
where  ${\rm U}_d=(\bcris^+)^{\varphi=p^d}$.
The closed points of $X$ are in bijection with the $\Q_p$-lines of ${\rm U}_1$;
if $x\in X$, we fix a basis $t_x$ of the corresponding line.
The field $C$ corresponds to a specific point $\infty$ of $X$, and $t=t_\infty$
is Fontaine's $p$-adic $2\pi i$.
We denote by $C_x$ the residue field at $x$; it is an algebraically close field,
complete for $v_p$,
and $C_x^\flat=C^\flat$, but $C_x$ is not necessarily~\cite{KT} isomorphic to $C$.
The residue field at $\infty$ is $C$ itself.

The completed local ring $\widehat\O_{X,x}$ at $x$ is the ring $\Bdr^+(C_x)$, and $t_x$
is a uniformizer (if $x=\infty$, then $\Bdr^+(C_x)=\bdr^+$).

\subsubsection{Harder-Narasimhan categories}
A {\em Harder-Narasimhan category} is an exact  category with two real valued functions ${\rm rk}$ and ${\rm deg}$
(rank and degree) -- which are additive in short exact sequences -- satisfying extra conditions (see \cite[5.5.1]{FF}, \cite{And}). 
This allows to define a slope function $\mu=\frac{{\rm deg}}{{\rm rk}}$ taking values
in ${\bf R}\coprod\{\pm\infty\}$ (endowed with the obvious ordering).

An object ${\cal E}$ is {\em of slope~$\lambda$} if 
$\mu({\cal E})=\lambda$. It is {\em semistable} 
if 
 $\mu({\cal E}')\leq \mu({\cal E})$
for all strict subobjects ${\cal E}'\subset {\cal E}$. It is {\em stable} 
if 
 $\mu({\cal E}') <  \mu({\cal E})$
for all strict subobjects ${\cal E}'\subset {\cal E}$.
Any object ${\cal E}$ has a canonical decreasing filtration (the {\it Harder-Narasimhan filtration})
by strict subobjects ${\cal E}^{\geq\lambda}$ (with ${\cal E}^{\geq\lambda}\subset {\cal E}^{\geq\mu}$ if
$\lambda\geq \mu$), such that, if ${\cal E}^{>\lambda}=\cup_{\mu>\lambda}{\cal E}^{\geq\mu}$,
then ${\cal E}^{\geq\lambda}/{\cal E}^{>\lambda}$ is semistable of slope~$\lambda$.

We say that ${\cal E}$ has slopes~$\geq \lambda$ (resp.~$>\lambda$) if
${\cal E}={\cal E}^{\geq\lambda}$ (resp.~${\cal E}={\cal E}^{>\lambda}$),
and has slopes $\leq \lambda$ (resp.~$<\lambda$)
if ${\cal E}^{>\lambda}=0$ (resp.~${\cal E}^{\geq\lambda}=0$).

This has a number of consequences:

$\bullet$ If ${\cal E}_1$ is of slopes~$\geq\lambda$ and ${\cal E}_2$ is of slopes~$<\lambda$,
then ${\rm Hom}({\cal E}_1,{\cal E}_2)=0$.

$\bullet$ A quotient of an object of slopes~$\geq\lambda$ has slopes~$\geq\lambda$.

$\bullet$ A subobject of an object of slopes~$\leq\lambda$ has slopes~$\leq\lambda$.

$\bullet$ Il $\lambda_1\leq\lambda_2$, if ${\cal E}_1$ is of slopes~$\geq\lambda_1$ and
${\cal E}_2$ of slopes~$\leq\lambda_2$, then an extension of ${\cal E}_2$ by ${\cal E}_1$
has slopes in $[\lambda_1,\lambda_2]$.

\subsubsection{Vector bundles}\label{HN3}
To a vector bundle
${\cal E}$ on $X$, one can attach
its rank ${\rm rk}({\cal E})\in\N$, its degree $\deg({\cal E})\in\Z$,
and its slope $\mu({\cal E})=\frac{\deg({\cal E})}{{\rm rk}({\cal E})}$.
These definitions can be extended to torsion coherent sheaves by additivity in short exact
sequences. In particular,
 torsion sheaves
have rank~$0$, degree~$>0$ and slope~$+\infty$.
Endowed with ${\rm rk}$ and  ${\rm deg}$, the category ${\rm Coh}_X$ of
coherent sheaves on $X$ is a Harder-Narasimhan category.

The following result~\cite[Th.\,8.2.10]{FF} is fundamental:
\begin{theorem} {\rm (Fargues-Fontaine)}\label{HN4}

{\rm (i)} If $\lambda=\frac{d}{h}$ {\rm(in lowest terms)}, 
there exists, up to isomorphism, a unique stable vector bundle
$\O(\lambda)$ of slope $\lambda$; its rank is $h$ and its degree $d$.

{\rm (ii)} Every vector bundle ${\cal E}$ on $X$ is a direct sum 
$${\cal E}=\O(\lambda_1)\oplus\cdots\oplus\O(\lambda_r).$$
In particular the Harder-Narasimhan filtration splits {\rm (non canonically)}.

{\rm (iii)} Every  coherent sheaf ${\cal E}$ on $X$ is a direct sum 
$${\cal E}=\O(\lambda_1)\oplus\cdots\oplus\O(\lambda_r)\oplus\big(\oplus_{x\in X}{\cal F}_x\big),$$
where ${\cal F}_x$ is a torsion coherent sheaf, supported at $x$ and zero for almost all $x$.
\end{theorem}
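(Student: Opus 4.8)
This is the central result of~\cite{FF}, so the proof is really a reference; I only indicate the shape of the argument, which reduces everything to sheaf cohomology on $X$. The key external input is that $X$ is a \emph{complete curve of genus~$0$}: it is a connected, regular, Noetherian scheme of dimension~$1$ on which $\deg\colon\operatorname{Pic}(X)\to\Z$ is an isomorphism, $H^0(X,\O_X)=\Q_p$, $H^1(X,\O_X)=0$, and Serre duality holds with dualizing sheaf $\omega_X$ of degree~$-2$. The vanishing $H^1(X,\O_X)=0$ is exactly the \emph{fundamental exact sequence} $0\to\Q_p\to\bcris^{\varphi=1}\to\bdr/\bdr^+\to 0$, read through the cover of $X$ by $\Spec\bcris^{\varphi=1}$ and $\Spec\bdr^+$ glued along $\Spec\bdr$. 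From these facts, Riemann--Roch reads $\chi(\mathcal E)=\deg\mathcal E+\rank\mathcal E$ for every coherent $\mathcal E$, and a line bundle $\mathcal L$ satisfies $H^0(X,\mathcal L)\neq 0\iff\deg\mathcal L\geq 0$ and $H^1(X,\mathcal L)=0\iff\deg\mathcal L\geq -1$.

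\emph{Construction and stability of $\O(\lambda)$.} For an unramified extension $E/\Q_p$ of degree~$h$ there is a finite \'etale Galois cover $\pi_h\colon X_E\to X$ of degree~$h$ with group $\Gal(E/\Q_p)\simeq\Z/h$, and $X_E$ is again a curve of the same type; writing $\O_{X_E}(1)$ for its degree-$1$ line bundle, for $\lambda=d/h$ in lowest terms one sets $\O(\lambda):=\pi_{h*}\O_{X_E}(d)$. The projection formula and the behaviour of $\deg$ and $\rank$ under $\pi_h$ give $\rank\O(\lambda)=h$ and $\deg\O(\lambda)=d$, while $\pi_h^*\O(\lambda)$ is a sum of the $\Z/h$-conjugates of $\O_{X_E}(d)$; a subbundle of $\O(\lambda)$ of slope~$\geq\lambda$ corresponds by adjunction to a $\Z/h$-stable subbundle of that sum of the same slope, forcing it to be everything, so $\O(\lambda)$ is stable. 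One also records that $H^0(X,\O(n))$ is the Banach--Colmez space $(\bcris^+)^{\varphi=p^n}$, nonzero precisely for $n\geq 0$, together with the line-bundle values of $H^1$ above.

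\emph{Classification of semistable bundles (the heart).} I claim a semistable bundle $\mathcal E$ of slope~$\lambda=d/h$ is $\simeq\O(\lambda)^{\oplus m}$, by induction on $\rank\mathcal E=hm$. Granting the lemma that a tensor product of semistable bundles on $X$ is semistable, $\O(\lambda)^{\dual}\otimes\mathcal E$ is semistable of slope~$0$; for any semistable $\mathcal G$ of slope~$0$, Serre duality gives $H^1(X,\mathcal G)=H^0(X,\mathcal G^{\dual}\otimes\omega_X)^{\dual}=0$ (the sheaf $\mathcal G^{\dual}\otimes\omega_X$ is semistable of slope~$-2<0$, hence has no section, a section being a map from $\O$ of strictly larger slope), so $\dim_{\Q_p}H^0(X,\mathcal G)=\chi(\mathcal G)=\rank\mathcal G>0$. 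Applied to $\mathcal G=\O(\lambda)^{\dual}\otimes\mathcal E$ this produces a nonzero map $\O(\lambda)\to\mathcal E$; since $\O(\lambda)$ is stable and $\mathcal E$ semistable of the same slope, its image is a subbundle isomorphic to $\O(\lambda)$, and the quotient $\mathcal E'$ is again semistable of slope~$\lambda$ (any subsheaf of $\mathcal E'$ lifts to a subsheaf of $\mathcal E$ containing $\O(\lambda)$ of the same degree). If $m=1$ then $\mathcal E'=0$ and $\mathcal E\simeq\O(\lambda)$; in general $\mathcal E'\simeq\O(\lambda)^{\oplus(m-1)}$ by induction and the extension $0\to\O(\lambda)\to\mathcal E\to\O(\lambda)^{\oplus(m-1)}\to 0$ splits because $\Ext^1(\O(\lambda),\O(\lambda))=H^1(X,\O(\lambda)\otimes\O(\lambda)^{\dual})=0$, the sheaf $\O(\lambda)\otimes\O(\lambda)^{\dual}$ being semistable of slope~$0$ (when $\lambda\in\Z$ this is literally the slope-$0$ case, with base case $\rank=1$ handled by $\deg\colon\operatorname{Pic}(X)\simeq\Z$). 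This proves~(ii) for semistable bundles, and the uniqueness in~(i): a stable bundle of slope~$\lambda$ has rank a multiple of~$h$, hence is $\O(\lambda)^{\oplus m}$ by the above, which is stable only for $m=1$.

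\emph{General bundles, coherent sheaves, and the main obstacle.} For an arbitrary vector bundle the Harder--Narasimhan graded pieces $\mathcal E^{\geq\lambda}/\mathcal E^{>\lambda}$ are semistable, hence direct sums of $\O(\lambda)$'s by the previous step, and the filtration splits: an extension built from the filtration extends a lower-slope quotient by a higher-slope sub, and $\Ext^1(\O(\mu),\O(\lambda))=H^1(X,\O(\mu)^{\dual}\otimes\O(\lambda))=0$ for $\mu\leq\lambda$ since $\O(\mu)^{\dual}\otimes\O(\lambda)$ is semistable of slope~$\lambda-\mu\geq 0\geq -1$; this gives~(ii). For~(iii), on the regular $1$-dimensional scheme $X$ the torsion subsheaf $\mathcal F_{\rm tors}\subseteq\mathcal F$ of a coherent sheaf is a finite direct sum of skyscrapers $\O_{X,x}/\mathfrak m_x^{k}$ supported at finitely many~$x$, $\mathcal F/\mathcal F_{\rm tors}$ is locally free hence covered by~(ii), and the extension $0\to\mathcal F_{\rm tors}\to\mathcal F\to\mathcal F/\mathcal F_{\rm tors}\to 0$ splits because $\Ext^1(\O(\lambda),\mathcal F_{\rm tors})=H^1(X,\O(-\lambda)\otimes\mathcal F_{\rm tors})=0$, a torsion coherent sheaf having no $H^1$. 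The real obstacle is the ``complete curve of genus~$0$'' input used above, chiefly the fundamental exact sequence $H^1(X,\O_X)=0$ (equivalently the surjectivity $\bcris^{\varphi=1}\twoheadrightarrow\bdr/\bdr^+$), together with the construction and Galois theory of the covers $X_E$ and the lemma that tensor products of semistable bundles stay semistable; each of these rests on genuine $p$-adic Hodge theory (Newton polygons of $\varphi$ on $\bcris$, Dwork's trick), and all of it is the content of~\cite{FF}, to which we refer for the details.
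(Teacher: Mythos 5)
The paper does not prove this result; it is quoted as an external citation to Fargues--Fontaine, so there is no internal proof to compare against. Your sketch does capture the overall shape of the argument in~\cite{FF}: the unramified covers $X_E\to X$ and $\O(\lambda)=\pi_{h*}\O_{X_E}(d)$, the semistability-of-tensor-products lemma, the induction producing a nonzero map $\O(\lambda)\to\mathcal E$, and the splitting of the Harder--Narasimhan filtration via ${\rm Ext}^1=H^1$ vanishing. That part is faithful.

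However, the ``complete curve of genus~$0$ with Serre duality and a degree-$(-2)$ dualizing sheaf'' framing is false for the Fargues--Fontaine curve, and it is not a cosmetic slip: you use it to derive the vanishing $H^1(X,\mathcal G)=0$ for semistable $\mathcal G$ of slope~$0$, which is the engine of the induction. The curve does satisfy $H^1(X,\O_X)=0$ via the fundamental exact sequence, but it does \emph{not} behave like $\mathbf P^1$: the paper's own formulas~(\ref{HN4.1}) record $H^1(X,\O(\lambda))\neq 0$ for every $\lambda<0$, so in particular $H^1(X,\O(-1))\neq 0$. This directly contradicts both your stated criterion ``$H^1(X,\mathcal L)=0\iff\deg\mathcal L\geq-1$'' (the correct threshold is $\deg\geq 0$) and the putative Serre duality $H^1(X,\mathcal L)\simeq H^0(X,\mathcal L^{\dual}\otimes\omega_X)^{\dual}$ with $\deg\omega_X=-2$, since the latter would force $H^1(X,\O(-1))=0$. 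Relatedly, the numerical Riemann--Roch $\chi(\mathcal E)=\deg\mathcal E+\rank\mathcal E$ cannot be read as an equality of integers: $H^0$ and $H^1$ are Banach--Colmez spaces, usually infinite-dimensional over $\Q_p$, and the correct statement is that the virtual BC Dimension of $[H^0]-[H^1]$ is the pair $(\deg\mathcal E,\rank\mathcal E)$. In~\cite{FF} the crucial vanishing $H^1(X,\mathcal E)=0$ for semistable $\mathcal E$ of slope~$\geq 0$ is \emph{not} obtained by Serre duality but is established directly (ultimately from the $\varphi$-module computations underlying the fundamental exact sequence and its twists), so you should replace the Serre-duality step with a reference to that computation, and delete the ``$\deg\geq-1$'' criterion.
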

The $\lambda_1,\dots,\lambda_r$ above are {\it the slopes of ${\cal E}$}
 (to which one has to add $+\infty$
if one of the ${\cal F}_x$ is non zero).

We have, by~\cite[Prop.\,8.2.3]{FF},
\begin{align}\label{HN4.1}
H^0(X,\O(\lambda))=(\bcris^+)^{\varphi^h=p^d},
\quad H^1(X,\O(\lambda))=0,\quad{\text{if $\lambda\geq 0$,}}\\
H^0(X,\O(\lambda))=0,
\quad H^1(X,\O(\lambda))=\bdr^+/(t^{-d}\bdr^+\oplus\Q_{p^h}),\quad{\text{if $\lambda< 0$.}}\notag
\end{align}
Note that $H^i(X,\O(\lambda))$, for $i=0,1$, is the space of
$C$-points of a BC ${\mathbb H}^i(X,\O(\lambda))$,
and we have
\begin{align*}
{\rm Dim}({\mathbb H}^0(X,\O(\lambda)))=(d,h),\quad {\rm Dim}({\mathbb H}^1(X,\O(\lambda)))=0,
\quad{\text{if $\lambda\geq 0$,}}\\
{\rm Dim}({\mathbb H}^0(X,\O(\lambda)))=0,\quad {\rm Dim}({\mathbb H}^1(X,\O(\lambda)))=(-d,-h),
\quad{\text{if $\lambda< 0$.}}
\end{align*}
Note also that
${\mathbb H}^i(X,\O(\lambda))$, for $i=0,1$, clearly depends only on
$C^\flat$ if $\lambda\geq 0$; this is less clear when $\lambda<0$ (since $t$ depends on
$C$) but it is still true.
\begin{remark}\label{EP}
It follows for the above formulas that 
$${\rm ht}({\mathbb H}^0(X,\O(\lambda)))-
{\rm ht}({\mathbb H}^1(X,\O(\lambda)))=h,\quad{\text{for all $\lambda$.}}$$
  By additivity, this implies the following Euler-Poincar\'e formula for any vector bundle ${\cal E}$
on $X$:
$${\rm ht}({\mathbb H}^0(X,{\cal E}))-{\rm ht}({\mathbb H}^1(X,{\cal E}))={\rm rk}\,{\cal E}$$
This extends to coherent sheaves on $X$.
\end{remark}

\subsubsection{The category ${\rm Coh}_X^-$}\label{HN5}
In his thesis, Le Bras shows (cf.~\cite[Prop.\,7.8]{lebras}) that, 
if ${\bb W}$ is a BC and if ${\bb W}^{\rm sh}$ stands
for the sheaf on ${\rm Perf}_C$ associated to ${\bb W}$, then 
${\bb W}^{\rm sh}(\Lambda)={\bb W}(\Lambda)$ for any sympathetic algebra~$\Lambda$.
He also proves that the functor ${\bb W}\mapsto{\bb W}^{\rm sh}$ identifies $\cal{BC}$
as the smallest sub-abelian category of the category of pro-\'etale sheaves
on ${\rm Perf}_C$, stable
by extensions, and containing $\Q_p$ and ${\mathbb V}^1$;  this gives
an efficient alternative definition of $\cal{BC}$.

We note ${\rm Coh}_X^-$ the sub-category of $D^b({\rm Coh}_X)$
of complexes ${\cal E}_\bullet$ such that $H^i({\cal E}_\bullet)=0$ if $i\neq -1,0$,
$H^{-1}({\cal E}_\bullet)$ has slopes~$<0$ and $H^{0}({\cal E}_\bullet)$ has slopes~$\geq 0$.
Any object of ${\rm Coh}_X^-$ can be represented by a 
complex ${\cal E}_{-1}\overset{0}{\to}{\cal E}_0$ of coherent sheaves such that
$H^0(X,{\cal E}_{-1})=0$ (i.e.,~${\cal E}_{-1}$ has slopes~$<0$) and $H^1(X,{\cal E}_0)=0$
(i.e.,~${\cal E}_0$ has slopes~$\geq 0$).
Le Bras defines an exact  functor
$${\rm BC}:{\rm Coh}_X^-\to {\cal {BC}}$$
 (denoted by $R^0\tau_\dual$ in~\cite[6.2]{lebras}): for a complex  of coherent sheaves $\sff$ on $X$, ${\rm{BC}}(\sff)$ is the sheaf associated to the presheaf sending $S$ to the hypercohomology in degree $0$
of $X_S$ with values in ${\cal F}_S$.
By definition of this functor\footnote{More generally,
${\rm BC}({\cal E}_\bullet)$ is an extension of ${\mathbb H}^0(X,H^0({\cal E}^\bullet))$ by
${\mathbb H}^1(X,H^{-1}({\cal E}^\bullet))$.}, 
one gets an exact sequence in ${\cal{BC}}$, in which ${\mathbb H}^i(X,{\cal E}_j)$
denotes the BC whose $C$-points are $H^i(X,{\cal E}_i)$:
$$0\to {\mathbb H}^1(X,{\cal E}_{-1})\to {\rm BC}({\cal E}_{-1}\overset{0}{\to}{\cal E}_0)\to
{\mathbb H}^0(X,{\cal E}_{0})\to 0.$$
If ${\cal E}_{-1}\overset{0}{\to}{\cal E}_0$ 
and ${\cal F}_{-1}\overset{0}{\to}{\cal F}_0$ are objects of
${\rm Coh}_X^-$, then
$${\rm Hom}_{\rm BC}({\rm BC}({\cal E}_{-1}\overset{0}{\to}{\cal E}_0),{\rm BC}({\cal F}_{-1}\overset{0}{\to}{\cal F}_0))=
\begin{pmatrix}
{\rm Hom}({\cal E}_{-1},{\cal F}_{-1}) & {\rm Ext}^1({\cal E}_{0},{\cal F}_{-1})\\
0 & {\rm Hom}({\cal E}_{0},{\cal F}_{0})
\end{pmatrix}
$$
The following theorem is the main result of~\cite{lebras}.
\begin{theorem}\label{HN6}
{\rm (Le Bras, \cite[Th.\,1.2]{lebras})}
The functor ${\rm BC}$ realizes an equivalence of categories 
$${\rm Coh}^-_X\simeq {\cal {BC}}.$$
\end{theorem}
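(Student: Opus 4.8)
The plan is to show that the functor ${\rm BC}$, which is already defined as an exact functor to the category of VS's (\S\ref{HN5}), is fully faithful and essentially surjective. For full faithfulness, fix $\mathcal E_\bullet=[{\cal E}_{-1}\overset{0}{\to}{\cal E}_0]$ and $\mathcal F_\bullet=[{\cal F}_{-1}\overset{0}{\to}{\cal F}_0]$ in ${\rm Coh}^-_X$ and compare the two sides of the map ${\rm Hom}_{{\rm Coh}^-_X}(\mathcal E_\bullet,\mathcal F_\bullet)\to{\rm Hom}_{\cal{BC}}({\rm BC}(\mathcal E_\bullet),{\rm BC}(\mathcal F_\bullet))$ induced by ${\rm BC}$. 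Since ${\rm Coh}^-_X$ is a full subcategory of $D^b({\rm Coh}_X)$ and the chosen representatives have vanishing differential, the source is ${\rm Hom}_{D^b}({\cal E}_{-1}[1]\oplus{\cal E}_0,\,{\cal F}_{-1}[1]\oplus{\cal F}_0)$; as $X$ is a regular Noetherian scheme of Krull dimension one, ${\rm Ext}^i_X$ of coherent sheaves vanishes for $i\notin\{0,1\}$ (the vanishing for $i\ge 2$ being standard on such a curve), so the cross term ${\rm Ext}^{-1}_X({\cal E}_{-1},{\cal F}_0)$ is $0$ and the source is the upper‑triangular matrix with entries ${\rm Hom}_X({\cal E}_{-1},{\cal F}_{-1})$, ${\rm Ext}^1_X({\cal E}_0,{\cal F}_{-1})$, ${\rm Hom}_X({\cal E}_0,{\cal F}_0)$. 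By the ${\rm Hom}$‑formula recorded just before the statement this is exactly the target, and one checks that the identification is the one induced by ${\rm BC}$ via the functorial identities ${\rm BC}({\cal G})={\mathbb H}^0(X,{\cal G})$ for slopes $\ge 0$, ${\rm BC}({\cal G}[1])={\mathbb H}^1(X,{\cal G})$ for slopes $<0$, and the construction of the extension‑class entry. Hence ${\rm BC}$ is fully faithful.

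For essential surjectivity I would use Le Bras's alternative description recalled in \S\ref{HN5}: $\cal{BC}$ is the smallest subcategory of VS's that is abelian, stable under extensions, and contains $\Q_p$ and ${\mathbb V}^1$. So it suffices to check that the essential image of ${\rm BC}$ has these three properties. It contains $\Q_p={\rm BC}([0\to\O_X])$ and ${\mathbb V}^1={\mathbb B}_1={\rm BC}([0\to{\cal T}])$, where ${\cal T}$ is the length‑one torsion sheaf at $\infty$ (both are legitimate objects of ${\rm Coh}^-_X$, the sheaf being placed in degree $0$ and of slopes $\ge 0$). For abelian‑ness and extension‑stability, since ${\rm BC}$ is fully faithful and exact it is enough to know that ${\rm Coh}^-_X$ is itself abelian and closed under extensions and that ${\rm BC}$ carries kernels, cokernels and extensions there to the corresponding notions among VS's; this one establishes directly from Theorem~\ref{HN4}, decomposing a given morphism (resp.\ extension) into its stable summands $\O(\lambda)$ and torsion parts and exhibiting the kernel, cokernel (resp.\ middle term) inside ${\rm Coh}^-_X$ by replacing the naive $D^b$‑construction with its Harder–Narasimhan truncation in cohomological degrees $-1$ and $0$. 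Minimality of $\cal{BC}$ then forces $\cal{BC}\subseteq{\rm Im}({\rm BC})$, and combined with full faithfulness this yields the equivalence.

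I expect the main obstacle to be in two places. First, the ${\rm Hom}$‑matrix formula itself (taken here as input, being recorded before the statement) is the technical heart of \cite{lebras}: it amounts to recovering ${\rm Hom}$ and ${\rm Ext}^1$ of coherent sheaves on $X$ from the associated VS's — typified by ${\rm Hom}_{\rm VS}({\mathbb H}^0(X,\O(\lambda)),{\mathbb H}^0(X,\O(\mu)))={\rm Hom}_X(\O(\lambda),\O(\mu))$ for $\lambda,\mu\ge 0$ and the curvature vanishings of Remark~\ref{ppp1}(i) — which are the geometric analogues of Fontaine's Theorem~\ref{font1} and are proved by testing morphisms of pro‑\'etale sheaves on perfectoid algebras, together with density and injectivity properties of the period rings. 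Second, the part not covered by that formula, namely the verification that ${\rm Coh}^-_X$ (equivalently ${\rm Im}({\rm BC})$) is abelian and closed under extensions, is genuinely delicate: the kernel or cokernel of a morphism in $D^b({\rm Coh}_X)$ need not remain in ${\rm Coh}^-_X$, must be replaced by its slope‑truncated version, and one must then check that ${\rm BC}$ still computes it correctly in the category of VS's. That slope‑truncation bookkeeping, and its compatibility with ${\rm BC}$, is where I would expect the real work to sit.
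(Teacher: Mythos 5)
The paper does not give a proof of Theorem~\ref{HN6}: it is quoted from Le Bras \cite[Th.\,1.2]{lebras} without further argument. There is therefore no in-paper proof to compare your sketch against.

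Evaluated on its own terms, your outline is a reasonable reconstruction of Le Bras's strategy, and you are right about where the real content sits. Two remarks. The ${\rm Hom}$-matrix formula displayed just before the theorem is not a lightweight lemma one can take as given and then separately ``check the identification is induced by ${\rm BC}$'' --- that formula already \emph{is} the full-faithfulness statement, so your first half is tautological modulo the input; since you acknowledge this, it is not a gap, but it is worth being clear that nothing has been reduced there. For essential surjectivity you do not need the minimality characterization of ${\cal BC}$ at full strength: the only direction you use is that every BC lies in the smallest sub-abelian category of VS's stable by extensions containing $\Q_p$ and ${\mathbb V}^1$, and that is immediate from the defining presentation $0\to V_1\to {\mathbb Y}\to{\mathbb V}^d\to 0$, $0\to V_2\to {\mathbb Y}\to{\mathbb W}\to 0$ --- one does not need the (harder, and potentially circular-looking) converse that ${\cal BC}$ is \emph{closed} under VS-extensions. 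The genuinely delicate step, which you correctly flag, is showing the essential image of ${\rm BC}$ is itself stable under kernels, cokernels, and extensions formed in the ambient category of VS's, which is again an ${\rm Hom}$/${\rm Ext}^1$-type computation of the same nature as the matrix formula. Finally, the parenthetical that ${\rm Ext}^i_X=0$ for $i\ge 2$ is not actually used in your ${\rm Hom}_{D^b}$ decomposition (only ${\rm Ext}^{-1}=0$ enters), and it is not the ``standard'' fact for a Noetherian regular scheme of dimension one --- the usual spectral sequence only kills ${\rm Ext}^{\ge 3}$. It does hold on $X_{\rm FF}$, but for reasons particular to that curve; better to drop the remark than to rely on it.
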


\subsubsection{Harder-Narasimhan filtration on BC's}\label{HN7}
One endows ${\rm Coh}_X^-$ with functions rank~${\rm rk}^-$, degree~${\rm deg}^-$
and slope $\mu^-=\frac{{\rm deg}^-}{{\rm rk}^-}$, by setting:
$${\rm rk}^-({\cal E}_{-1}\overset{0}{\to}{\cal E}_0)=\deg({\cal E}_0)-\deg({\cal E}_{-1}),
\quad
{\rm deg}^-({\cal E}_{-1}\overset{0}{\to}{\cal E}_0)={\rm rk}({\cal E}_{-1})-{\rm rk}({\cal E}_{0}),$$
which turn it into a
Harder-Narasimhan category.

By transport of structure, this endows also ${\cal{BC}}$ with functions
rank~${\rm rk}^-$, degree~${\rm deg}^-$
and slope $\mu^-$ that turn it into a Harder-Narasimhan category; we have
$${\rm rk}^-={\rm dim}\quad{\rm and}\quad \deg^-=-{\rm ht}.$$
If ${\cal F}_x$ is torsion, then $\mu^-({\rm BC}(0\to {\cal F}_x))=0$.
If ${\mathbb W}$ is a BC, we let ${\mathbb W}^{\geq\lambda}, {\mathbb W}^{>\lambda}$
denote the Harder-Narasimhan filtration, and we set ${\mathbb W}^{>-\infty}:=\cup_{\lambda\in\R}
{\mathbb W}^{\geq\lambda}$.

If $\lambda=\frac{d}{h}$ (in lowest terms),
denote by ${\mathbb U}_\lambda$ the BC defined by
${\mathbb U}_\lambda:={\mathbb U}_{h,d}$. Note that, if $\lambda=\frac{d}{h}$ is in lowest terms and $e\geq1$, then 
${\mathbb U}_{eh,ed}={\mathbb U}_{\lambda}^e$.
Then we have
$${\mathbb U}_\lambda=
\begin{cases}
{\mathbb H}^0(X,\O(\lambda)), &{\text{if $\lambda\geq 0$,}}\\
{\mathbb H}^1(X,\O(\lambda)), &{\text{if $\lambda< 0$.}}
\end{cases}
$$
Alternatively,
$${\mathbb U}_\lambda=
\begin{cases}
{\rm BC}(0\to\O(\lambda)), &{\text{if $\lambda\geq 0$,}}\\
{\rm BC}(\O(\lambda)\to0), &{\text{if $\lambda< 0$.}}
\end{cases}
$$
Then
$${\rm rk}^-({\mathbb U}_\lambda)={\rm sign}(\lambda)\,d,
\quad \deg^-({\mathbb U}_\lambda)=-{\rm sign}(\lambda)\,h,\quad
\mu^-({\mathbb U}_\lambda)=\frac{-1}{\lambda}.$$

\begin{remark}\label{HN8}
(i) 
Since
$\Q_p={\mathbb U}_0$,
we have 
$\mu^-(\Q_p)=-\infty$.

(ii) BC's are naturally diamonds (and were amongst the first non trivial examples of diamonds) and, as such,
have connected components. If ${\mathbb W}$ is a BC, then ${\mathbb W}^{>-\infty}$ is the connected component of $0$
and the quotient ${\mathbb W}^{-\infty}$ is the largest \'etale quotient (group of connected 
components, a finite dimensional $\Q_p$-vector space).

(iii) The Harder-Narasimhan filtration splits (non canonically), and every BC can be decomposed as
\begin{equation}\label{HNX9}
{\mathbb W}={\mathbb U}_{-1/\lambda_1}\oplus\cdots\oplus {\mathbb U}_{-1/\lambda_r}
\oplus\big(\oplus_x{\mathbb H}^0(X,{\cal F}_x)\big),
\end{equation}
where the $\lambda_i$ are non zero elements of $\Q\cup\{-\infty\}$,
${\mathbb U}_{-1/\lambda_i}$ is of slope~$\lambda_i$, and ${\cal F}_x$
is a torsion coherent sheaf, supported at $x$, zero for almost all $x$, and
${\mathbb H}^0(X,{\cal F}_x)$ is of slope~$0$.
The $\lambda_i$ are {\it the slopes of ${\mathbb W}$} (to which one has to add $0$ if
one of the ${\cal F}_x$ is non zero).

(iv) In the exact sequence
$$0\to {\mathbb H}^1(X,{\cal E}_{-1})\to {\rm BC}({\cal E}_{-1}\overset{0}{\to} {\cal E}_0)\to
{\mathbb H}^0(X,{\cal E}_{0})\to 0,$$
the term on the left represents the subspace of slopes~$>0$ of
${\rm BC}({\cal E}_{-1}\overset{0}{\to} {\cal E}_0)$.
\end{remark}

\begin{remark}\label{Mittag}
(i)
The existence of the exact sequence $0\to {\mathbb W}^{>-\infty}\to {\mathbb W}\to
{\mathbb W}^{-\infty}\to 0$ makes it possible to show that a decreasing sequence of BC's is stationary:
indeed, if $({\mathbb W}_n)_{n\in\N}$ is such a sequence, then $\dim({\mathbb W}_n)$ is decreasing and
bounded below by $0$, hence is constant for $n\geq N$. It follows that ${\mathbb W}_N/{\mathbb W}_n$
is of dimension~$0$ and hence is a quotient of ${\mathbb W}_N^{-\infty}$. Since
${\rm ht}({\mathbb W}_N/{\mathbb W}_n)$ is increasing
and bounded by ${\rm ht}({\mathbb W}_N^{-\infty})<\infty$, one concludes that 
${\mathbb W}_N/{\mathbb W}_n$ is constant for $n$ big enough, and that so is ${\mathbb W}_n$.

(ii) One can also use a presentation to reduce to the case ${\mathbb W}={\mathbb V}^d$, and then
argue by induction on $d$ using the fact that a sub-BC of ${\mathbb V}^1$ is either ${\mathbb V}^1$
or a finite dimensional $\Q_p$-vector space (\cite[Prop.\,7.13]{CB} ou~\cite[Prop.\,2.1]{CF}).
This proof applies verbatim to almost $C$-representations. 
\end{remark}

\begin{lemma} {\rm (\cite[Prop.\,2.4]{CF})}\label{HN17}
A sub-BC ${\mathbb W}$ of ${\mathbb V}^N$, containing no ${\mathbb V}^1$, satisfies
$\dim({\mathbb W})<{\rm ht}({\mathbb W})$ or, equivalently, has slopes~$<-1$.
\end{lemma}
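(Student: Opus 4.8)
\textbf{Proof plan for Lemma~\ref{HN17}.}
The plan is to use the structure theory of BC's via coherent sheaves on the Fargues--Fontaine curve (Theorem~\ref{HN6} and the HN formalism of \S\ref{HN7}). First I would reduce the equivalence of the two stated conclusions to a bookkeeping computation: a BC~${\mathbb W}$ with no sub-${\mathbb V}^1$ has, by Remark~\ref{HN8}(iii) and the definition of the canonical filtration (Proposition~\ref{ppp4}, Remark~\ref{ppp5}), no affine piece, so it decomposes as a sum of ${\mathbb U}_{-1/\lambda_i}$ with each $\lambda_i \ne 0$ together with torsion pieces; since ${\mathbb U}_{-1/\lambda}$ has $\mathrm{rk}^- = \dim$ and $\deg^- = -\mathrm{ht}$, and a torsion piece contributes $(0,\text{negative to }\mathrm{ht})$, the condition $\dim({\mathbb W}) < \mathrm{ht}({\mathbb W})$ translates into $\deg^-({\mathbb W}) < -\mathrm{rk}^-({\mathbb W})$, i.e. $\mu^-({\mathbb W}) < -1$; the presence of $\mathbb{V}^1$ is the case $\mu^- = 0$, which is excluded, and affineness/slope $0$ torsion is likewise ruled out. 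So the two formulations are genuinely equivalent once one knows $\dim({\mathbb W}) \le \mathrm{ht}({\mathbb W})$, and the real content is the strict inequality.

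The main step is the bound itself. I would argue as follows: ${\mathbb W} \hookrightarrow {\mathbb V}^N$, and ${\mathbb V}^N = \mathrm{BC}(0 \to \O(1)^N)$ corresponds under Le Bras's equivalence to the vector bundle $\O(1)^N$ placed in degree $0$; it has slopes all equal to $1$, hence $\mu^-({\mathbb V}^N) = -1$ by the formula $\mu^-({\mathbb U}_{-1/\lambda}) = -1/\lambda$. Now ${\mathbb W}$ is a sub-object of ${\mathbb V}^N$ in the HN category ${\rm Coh}_X^-$, so its slopes $\lambda_i$ (the slopes of the corresponding complex of sheaves) are constrained. Concretely, writing ${\mathbb W} = \mathrm{BC}({\cal E}_{-1} \overset{0}{\to} {\cal E}_0)$ with ${\cal E}_{-1}$ of slopes~$<0$ and ${\cal E}_0$ of slopes~$\ge 0$, the injection into $\O(1)^N$ (whose $H^1$ vanishes, so it lives purely in degree $0$) forces ${\cal E}_{-1}$ to inject into $\O(1)^N$ as well: a subsheaf of a semistable bundle of slope $1$ has slopes~$\le 1$, but ${\cal E}_{-1}$ is required to have slopes~$<0$, which is only consistent if... here the cleaner route is to use that $\mathrm{Hom}_{\mathcal{BC}}$ is controlled by the matrix of $\mathrm{Hom}$'s and $\mathrm{Ext}^1$'s of \S\ref{HN5}, together with the assumption $\mathrm{Hom}({\mathbb W}, {\mathbb V}^1) = 0$ (no quotient ${\mathbb V}^1$, equivalently in this abelian category no sub after noting ${\mathbb V}^1$ is both simple-ish and self-related appropriately).

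Let me restructure the core argument to avoid that confusion. The assumption is that ${\mathbb W}$ contains no sub-BC isomorphic to ${\mathbb V}^1$. Under $\mathrm{BC}^{-1}$, ${\mathbb V}^1$ corresponds to $\O(1)$ in degree $0$; a sub-BC of ${\mathbb W}$ corresponds to a subobject in ${\rm Coh}_X^-$. The object ${\mathbb V}^N$ corresponds to $\O(1)^N$ in degree $0$, whose only subobjects in ${\rm Coh}_X^-$ are of the form ${\cal F} \overset{0}{\to} {\cal F}'$ with ${\cal F}$ of slopes $<0$ and ${\cal F}' \subset \O(1)^N$ of slopes $\ge 0$; by semistability of $\O(1)^N$, such an ${\cal F}'$ has slopes in $[0,1]$ and ${\cal F} = 0$ (as $\O(1)^N$ has no subsheaf-in-degree-$(-1)$ contribution). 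So ${\mathbb W} = \mathrm{BC}(0 \to {\cal F}')$ with ${\cal F}' \subset \O(1)^N$ semistable-quotiented pieces of slopes in $[0,1]$; writing ${\cal F}' = \bigoplus \O(\lambda_j) \oplus (\text{torsion})$, each $\lambda_j \in [0,1]$. If some $\lambda_j = 1$, then $\O(1)$ is a sub of ${\cal F}'$ (split summand), so ${\mathbb V}^1 = \mathrm{BC}(0\to\O(1))$ is a sub-BC of ${\mathbb W}$ — excluded. Hence every $\lambda_j \in [0,1)$, and torsion is allowed (torsion gives slope-$0$ pieces, also not ${\mathbb V}^1$). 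Then $\dim({\mathbb W}) = \sum_j \deg\O(\lambda_j) = \sum_j (\text{numerator})$ and $\mathrm{ht}({\mathbb W}) = \sum_j(\text{denom}) - \deg(\text{torsion})$; wait, $\deg^- = -\mathrm{ht}$ and $\deg^-({\mathbb W}) = \mathrm{rk}({\cal F}') - 0 = \sum h_j$ plus nothing from torsion... I realize the torsion contributes positively to $\mathrm{rk}^-$? No: for torsion ${\cal F}_x$, $\mathrm{rk}^-(\mathrm{BC}(0\to{\cal F}_x)) = \deg({\cal F}_x) > 0$, so it adds to $\dim$; and $\deg^-= -\mathrm{ht}$ of it is $\mathrm{rk}({\cal F}_x) = 0$, so $\mathrm{ht} = 0$ for torsion pieces. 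So torsion pieces violate $\dim < \mathrm{ht}$ individually! That means the statement should read: once we also exclude slope-$0$ torsion... but the hypothesis only excludes ${\mathbb V}^1$. Let me reconcile: $\mathrm{ht}(\mathrm{BC}(0\to{\cal F}_x))=0$ and $\dim >0$, so such a ${\mathbb W}$ would have $\dim > \mathrm{ht}$; but ${\mathbb B}_m = \mathrm{BC}(0\to{\cal O}/t^m)$... no, ${\mathbb B}_m$ is $\Bdr^+/t^m$ which has $\mathrm{Dim}=(m,0)$, and does it inject into some ${\mathbb V}^N$? It has curvature $\le 0$ (it IS a $\Bdr^+$-module), so it can inject into $\Bdr^+$, not obviously into ${\mathbb V}^N$. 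I think the point is that ${\mathbb V}^N$ has curvature $\ge 0$ actually $=0$ is wrong; ${\mathbb V}^N$ has no map to $\Bdr^+$? Since $\mathrm{Hom}(\O(1)^N,\O(0))$ in degree... Actually the resolution: a sub-BC of ${\mathbb V}^N$ has slopes~$\ge -1$ automatically (it is a sub in ${\rm Coh}_X^-$ of something of slope $-1$... no, subobjects can have LARGER slope in HN). I should stop second-guessing and present the intended clean argument.

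\textbf{Revised concise plan.} I would proceed via the HN category structure on ${\mathbb W} \subset {\mathbb V}^N$ and the slope formula. The object ${\mathbb V}^N$ has $\mathrm{rk}^- = N$, $\deg^- = 0$. A subobject ${\mathbb W}$ in the abelian category $\mathcal{BC}$ need not be HN-sub, but the hypothesis $\mathrm{Hom}({\mathbb W},{\mathbb V}^1)=0$ (no ${\mathbb V}^1$ quotient — equivalently, by the structure of these categories, ruling out the relevant slope) should be leveraged together with the transported HN formalism: ${\mathbb W}$ having no ${\mathbb V}^1$ sub means, via $\mathrm{BC}^{-1}$ and the $\mathrm{Hom}$-matrix of \S\ref{HN5}, that in ${\mathbb W} = \mathrm{BC}({\cal E}_{-1}\overset{0}{\to}{\cal E}_0)$ the summand $\O(1)$ does not occur in ${\cal E}_0$; since ${\cal E}_0 \hookrightarrow \O(1)^N$ has slopes in $[0,1]$ and avoids the slope-$1$ summand, ${\cal E}_0$ has slopes~$<1$, hence $\deg {\cal E}_0 < \mathrm{rk}\,{\cal E}_0$ strictly (when $\mathrm{rk}>0$), while ${\cal E}_{-1}$ has slopes~$<0$ so $\deg {\cal E}_{-1} < 0 \le \mathrm{rk}\,{\cal E}_{-1}$... hmm but ${\cal E}_{-1}$ injects into $\O(1)^N$ too (from ${\mathbb W}\hookrightarrow{\mathbb V}^N$ and $H^1(\O(1)^N)=0$), so slopes of ${\cal E}_{-1}$ are in $[0,1]$, contradicting slopes~$<0$ unless ${\cal E}_{-1}=0$. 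Then $\dim({\mathbb W}) = \deg {\cal E}_0 < \mathrm{rk}\,{\cal E}_0 = -\deg^-({\mathbb W}) = \mathrm{ht}({\mathbb W})$, giving both the inequality and $\mu^-({\mathbb W}) = \deg^-/\mathrm{rk}^- = -\mathrm{rk}\,{\cal E}_0/\deg{\cal E}_0 < -1$. The step I expect to be delicate is justifying that ${\cal E}_{-1} = 0$ and that avoiding ${\mathbb V}^1$ as a sub-BC forces ${\cal E}_0$ to have no $\O(1)$-summand, i.e. slopes strictly less than $1$ — this requires combining the $\mathrm{Hom}$-matrix description with the splitting of the HN filtration (Theorem~\ref{HN4}(ii)).
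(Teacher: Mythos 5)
The central problem with your proposal is the identification of $\mathbb{V}^1$ with $\mathrm{BC}(0\to\O(1))$. These are different BC's: $\mathrm{BC}(0\to\O(1))={\mathbb H}^0(X,\O(1))=(\bcris^+)^{\varphi=p}={\mathbb U}_1$ has Dimension $(1,1)$, whereas $\mathbb{V}^1$ has Dimension $(1,0)$. The object $\mathbb{V}^1$ is not a vector bundle under Le Bras's equivalence but the \emph{torsion} sheaf at $\infty$: by the exact sequence $0\to\O\to\O(1)\to i_{\infty,*}{\mathbb B}_1\to 0$ and Section~\ref{HN10}, one has $\mathbb{V}^1={\mathbb B}_1={\mathbb U}_1/\Q_p t={\mathbb H}^0(X,i_{\infty,*}C)$, so $\mathbb{V}^N$ corresponds to $\iota_{\infty,*}C^N$, not $\O(1)^N$. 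Because of this, the slope bookkeeping you do --- ``${\cal E}_0\hookrightarrow\O(1)^N$, which is semistable of slope $1$, and avoiding the slope-$1$ summand forces slopes $<1$'' --- is not available: a nonzero vector bundle does not inject as a coherent sheaf into a torsion sheaf, so you do not get a subsheaf inclusion into $\O(1)^N$, and the semistability constraint you invoke never applies. The claim that $\O(1)$ ``does not occur in ${\cal E}_0$'' has no connection to the actual hypothesis ``no sub-$\mathbb{V}^1$''.

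The paper's proof gets around exactly this obstacle. After using the hypothesis and the splitting of the Harder--Narasimhan filtration to reduce to the stable pieces ${\mathbb U}_\lambda$ with $\lambda=d/h>0$ (torsion and affine summands are the ones containing $\mathbb{V}^1$, so they are excluded), it considers the corresponding map $f^\flat:\O(\lambda)\to\iota_{\infty,*}C^N$ (injective only on global sections, not as sheaves), and factors it through $\widehat\O_{X,\infty}\otimes\O(\lambda)\simeq\iota_{\infty,*}C^h$, where $h=\mathrm{rk}\,\O(\lambda)$. This replaces $N$ by the rank $h$ and gives an injective map of BC's ${\mathbb U}_\lambda\hookrightarrow{\mathbb V}^h$. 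A Dimension count in $\mathcal{BC}$ (using Proposition~\ref{BS1}: if $\dim=0$ then $\mathrm{ht}\geq 0$) then forces $d<h$, i.e.\ $\lambda<1$, i.e.\ slope of ${\mathbb U}_\lambda$ in $\mathcal{BC}$ strictly below $-1$. The factorization through the completed local ring is the key trick your plan is missing, and it cannot be replaced by the subsheaf-of-$\O(1)^N$ argument since that inclusion does not exist.
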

\begin{proof}
We will use  the equivalence ${\rm Coh}^-_X\simeq{\cal{BC}}$.
By decomposing ${\mathbb W}$ as a direct sum of stable BC's, it suffices to prove the statement for
${\mathbb U}_\lambda$, with
$\lambda=\frac{d}{h}>0$ (here we used the fact that ${\mathbb W}$ does not contain ${\mathbb V}^1$). This  amounts to showing  that, if $f: {\mathbb U}_\lambda\to {\mathbb V}^N$ is an injective map 
 then $h>d$.  Passing to the category ${\rm Coh}^-_X$, we see that we need to show that if a map $f^{\flat}: \so(\lambda)\to \iota_{\infty,*}C^N$ is injective on global sections then $h>d$. 
But this map factors as 
$$\xymatrix@R=.4cm@C=.5cm{
\O(\lambda)\ar[d]\ar[rr]^{f^{\flat}}&& \iota_{\infty,*}C^N\\
\widehat\O_{X,\infty}\otimes\O(\lambda)\ar[r]& (\iota_{\infty,*}C)\otimes \O(\lambda)\ar[r]^-{\sim}& \iota_{\infty,*}C^h\ar[u]}$$

  Tracing this diagram from the left upper corner first vertically then horizontally we obtain a map $\overline{f}: \O(\lambda)\to  \iota_{\infty,*}C^h$, 
  which is injective on global sections. Now, passing back (from the category ${\rm Coh}^-_X$) to the category $\cal{BC}$ we get an injective  map 
  $\wt{f}: {\mathbb U}_\lambda\to {\mathbb V}^h$. 
Since ${\mathbb U}_\lambda$ is of Dimension $(d,h)$ and ${\mathbb V}^h$ of Dimension $(h,0)$,
the existence of an injection implies $h<d$, as wanted.
\end{proof}

\subsubsection{Morphisms}\label{HN14}
We can describe 
${\rm Hom}$ and  ${\rm Ext}^1$ in the category $\cal{BC}$ using the curve.
For example:
\begin{enumerate}
\item  If $D_\lambda$
is the division algebra with center $\Q_p$ and  invariant $\lambda$,
we have 
$${\rm End}_{\cal BC}({\mathbb U}_{\lambda})={\rm End}(\O(\lambda))=D_\lambda.$$

 \item Recall that,  if  $\lambda_1=\frac{d_1}{h_1}$, $\lambda_2=\frac{d_2}{h_2}$, and 
$\lambda_1+\lambda_2=\frac{d}{h}$ in minimal form, 
we have $\O(\lambda_1)\otimes\O(\lambda_2)=\O(\lambda_1+\lambda_2)^{n(\lambda_1,\lambda_2)}$,
where $n(\lambda_1,\lambda_2)=\frac{h_1}{h_2}{h}$. Also: 
$${\rm Hom}(\O(\lambda_1),\O(\lambda_2))={\rm Hom}(\O,\O(\lambda_2-\lambda_1)),
\quad
{\rm Hom}(\O,\O(\lambda))=H^0(X,\O(\lambda)).$$

%

\item If   $\lambda=\frac{d}{h}\geq 0$ then ${\rm Hom}({\mathbb U}_\lambda,{\mathbb V}^1)$ is the  $C$-module of rank~$h$
generated by  $\theta\circ\varphi^i$, for  $0\leq i\leq h-1$.
\end{enumerate}

\subsubsection{Slope~$0$ and curvature~$0$}\label{HN10}
Let $x$ be a closed point of $X$.
The map ${\cal F}\mapsto {H}^0(X,{\cal F})$ induces an equivalence of categories
from the category of torsion coherent sheaves, supported
at $x$, to the category of finite length $\Bdr^+(C_x)$-modules.
A finite length $\Bdr^+(C_x)$-module is a direct sum of ${\mathbb B}_m(C_x)=
\Bdr^+(C_x)/t_x^m$, and the sheaf $i_{x,*}{\mathbb B}_m$ corresponding to ${\mathbb B}_m(C_x)$
(where $i_x$ denotes the inclusion of $x$ in $X$) lives in an exact sequence
$$0\to \O\overset{t_x^m}{\longrightarrow}\O(m)\to i_{x,*}{\mathbb B}_m\to 0$$
Passing to the sequence of $H^0$'s (which is exact as $H^1(X,\O)=0$), this gives an isomorphism
$${\mathbb H}^0(X,i_{x,*}{\mathbb B}_m)={\mathbb U}_m/\Q_pt_x^m.$$
   One deduces from the equivalence ${\rm Coh}_X^-\simeq \cal{BC}$ that
$${\rm End}_{\cal{BC}}({\mathbb U}_m/\Q_pt_x^m)\simeq{\rm End}_{{\rm Coh}_X}(i_{x,*}{\mathbb B}_m(C_x))\simeq
{\rm End}_{\Bdr^+(C_x)}({\mathbb B}_m(C_x))\simeq
{\mathbb B}_m(C_x).$$
In particular, for $m=1$, one obtains
$${\rm End}_{\cal{BC}}({\mathbb U}_1/\Q_pt_x)\simeq C_x.$$
Note also that, if $x\neq\infty$, then
$${\rm Hom}_{\cal{BC}}({\mathbb U}_1/\Q_pt_x,{\mathbb V}^1)\simeq
{\rm Hom}_{{\rm Coh}_X}(i_{x,*}{\mathbb B}_m,i_{\infty,*}{\mathbb B}_m)=0,$$
since the two sheaves are supported at distinct points.

In the special case $x=\infty$, which will be crucial for our results, 
$t_x=t$ and ${\mathbb U}_m/\Q_pt^m={\mathbb B}_m$, and
one can describe
directly the object ${\mathbb M}$ of $\cal{BC}$ attached to a $\bdr^+$-module of finite length $M$:
we have 
$${\mathbb M}=M\otimes_{\bdr^+}\Bdr^+.$$
This can be summarized by the following result.
\begin{proposition}\label{HN11}
The functor 
$M\mapsto {\mathbb M}=M\otimes_{\bdr^+}\Bdr^+$ defines an equivalence of categories between  the category
of $\bdr^+$-modules of finite length and the subcategory of $\cal{BC}$ of objects of
curvature~$0$.
\end{proposition}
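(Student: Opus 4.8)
The plan is to deduce the statement from Le Bras's equivalence ${\rm BC}\colon{\rm Coh}_X^-\overset{\sim}{\to}{\cal{BC}}$ (Theorem~\ref{HN6}) together with the two facts already recorded in \S\,\ref{HN10}: that ${\cal G}\mapsto H^0(X,{\cal G})$ is an equivalence from the category of torsion coherent sheaves on $X$ supported at $\infty$ onto the category of finite length $\bdr^+$-modules, and that, for such a sheaf ${\cal G}$ with $M:=H^0(X,{\cal G})$, one has ${\rm BC}(0\overset{0}{\to}{\cal G})=M\otimes_{\bdr^+}\Bdr^+$. So the functor $M\mapsto{\mathbb M}$ of the Proposition is nothing but the composite of that equivalence, of ${\cal G}\mapsto(0\overset{0}{\to}{\cal G})$ (which lands in ${\rm Coh}_X^-$ because a torsion sheaf has slopes~$\ge 0$ and vanishing $H^1$), and of ${\rm BC}$. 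The middle functor is fully faithful: putting the ``$-1$''-entries to zero in the ${\rm Hom}$-matrix formula of \S\,\ref{HN5} gives ${\rm Hom}_{{\cal{BC}}}({\rm BC}(0\to{\cal G}),{\rm BC}(0\to{\cal G}'))={\rm Hom}_{{\rm Coh}_X}({\cal G},{\cal G}')$. Hence $M\mapsto{\mathbb M}$ is fully faithful, and everything comes down to identifying its essential image with the full subcategory of objects of curvature~$0$ (i.e.\ affine objects).

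First, every ${\mathbb M}$ is affine: by the structure theorem over the discrete valuation ring $\bdr^+$ one has $M\cong\bigoplus_j\bdr^+/t^{m_j}$, hence ${\mathbb M}\cong\bigoplus_j{\mathbb B}_{m_j}$, and each ${\mathbb B}_m$ is a successive extension of ${\mathbb B}_1={\mathbb V}^1$ via the exact sequences $0\to{\mathbb B}_{m-1}\xrightarrow{\,t\,}{\mathbb B}_m\to{\mathbb B}_1\to 0$. Conversely, let ${\cal A}\subset{\cal{BC}}$ be the essential image, i.e.\ the objects of the form ${\rm BC}(0\overset{0}{\to}{\cal G})$ with ${\cal G}$ torsion at $\infty$. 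Because ${\rm BC}$ is exact and, restricted to such objects, fully faithful with ${\rm Hom}$'s (and, by the evident ${\rm Ext}^1$-extension of the formula of \S\,\ref{HN5} --- ${\rm Coh}_X^-$ being a tilt of ${\rm Coh}_X$, derived ${\rm Hom}$'s of the objects $(0\to{\cal G})$ agree with those of ${\rm Coh}_X$) equal to those of ${\rm Coh}_X$, every extension in ${\cal{BC}}$ of ${\rm BC}(0\to{\cal G}_2)$ by ${\rm BC}(0\to{\cal G}_1)$ is ${\rm BC}(0\to{\cal G})$ for the corresponding extension $0\to{\cal G}_1\to{\cal G}\to{\cal G}_2\to0$ of coherent sheaves; and ${\cal G}$, having a sub and a quotient supported at $\infty$, is again torsion at $\infty$. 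Thus ${\cal A}$ is closed under extensions in ${\cal{BC}}$. Since ${\mathbb V}^1={\mathbb B}_1=\Bdr^+/t\Bdr^+={\mathbb M}$ for $M=\bdr^+/t$ lies in ${\cal A}$, and affine objects are by definition successive extensions of ${\mathbb V}^1$, all affine objects lie in ${\cal A}$; combined with the previous point, ${\cal A}$ is exactly the subcategory of curvature~$0$ objects, and $M\mapsto{\mathbb M}$ is the asserted equivalence.

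The delicate point is this second inclusion, pinning down affine BC's as images of finite length $\bdr^+$-modules. I expect the main temptation to avoid is the (false) shortcut that a direct summand or a quotient of an affine object is affine: it need not be, e.g.\ ${\mathbb U}_{-1}$ is a quotient of ${\mathbb V}^1$ and has ${\rm ht}\,{\mathbb U}_{-1}=-1$, whereas affine objects have height~$0$. This forces one to go through stability of ${\cal A}$ under extensions together with the elementary fact that an extension of coherent sheaves supported at $\{\infty\}$ is again supported there. Along the way one should check the routine compatibilities: that ${\mathbb B}_1={\mathbb V}^1$ (both send $\Lambda$ to $\Lambda$, the isomorphism induced by $\theta$), that the sequences used in ${\mathbb M}\cong\bigoplus_j{\mathbb B}_{m_j}$ and in ${\cal A}$'s closedness under extensions are strictly exact in ${\cal{BC}}$ (checkable on sympathetic $\Lambda$-points, as in \S\,\ref{BACO}), and that $(0\overset{0}{\to}-)$ is exact on torsion sheaves at $\infty$.
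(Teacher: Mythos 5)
Your proof is correct, and on the key point it takes a genuinely different route from what the paper has in mind. The paper treats the Proposition as a summary of the discussion in \S\,\ref{HN10}: full faithfulness comes from Le Bras's equivalence (Theorem~\ref{HN6}) and the ${\rm Hom}$-matrix formula, exactly as you say; but the identification of the essential image with the curvature-$0$ objects is left implicit, resting on the Harder--Narasimhan splitting of Remark~\ref{HN8}(iii), which decomposes any BC as a direct sum of the ${\mathbb U}_{-1/\lambda_i}$'s and the ${\mathbb H}^0(X,{\cal F}_x)$'s, and whose interaction with the canonical (curvature) filtration is then recorded in \S\,\ref{HN13}. You bypass the splitting entirely: you show the essential image is closed under extensions in ${\cal{BC}}$ (extensions lift along the exact equivalence ${\rm BC}$, and an extension in ${\rm Coh}_X$ of sheaves supported at $\{\infty\}$ is again so supported), note it contains ${\mathbb V}^1={\mathbb B}_1$, and conclude from the definition of affine as the extension-closed hull of ${\mathbb V}^1$; the easy inclusion in the other direction is the structure theorem over the DVR $\bdr^+$. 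This is more economical within the proof --- you use Theorem~\ref{HN6} but never the existence or splitting of the HN filtration on ${\cal{BC}}$ --- and your observation that ${\mathbb U}_{-1}={\mathbb V}^1/\Q_p$ is a non-affine quotient of the affine ${\mathbb V}^1$ precisely locates why the ``curvature $0\Rightarrow$ torsion at $\infty$'' direction cannot be dispatched by passing to subobjects or quotients. The one step you rightly single out as needing care, the ${\rm Ext}^1$-comparison, is standard tilting formalism: both ${\rm Coh}_X$ and ${\rm Coh}_X^-$ are hearts of bounded $t$-structures on $D^b({\rm Coh}_X)$, so an extension of $(0\to{\cal G}_2)$ by $(0\to{\cal G}_1)$ in ${\rm Coh}_X^-$ is a distinguished triangle whose cohomology long exact sequence forces the middle term to be concentrated in degree~$0$, giving $(0\to{\cal G})$ with $0\to{\cal G}_1\to{\cal G}\to{\cal G}_2\to 0$ exact in ${\rm Coh}_X$.
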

\begin{corollary}\label{HN12}
{\rm (i)}
The kernel and cokernel of a morphism of objects of curvature~$0$ are of curvature~$0$.

{\rm (ii)} If ${\mathbb W}$ is a torsion $\Bdr^+$-Module, i.e., 
a $\Bdr^+$-Module annihilated by $t^r$ for some $r\geq 1$,  
then ${\rm Hom}_{\rm VS}({\mathbb W},\Bdr^+)=0$ and ${\rm Hom}_{\rm VS}({\mathbb W},\Bdr)=0$.
\end{corollary}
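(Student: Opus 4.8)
The plan is to deduce both statements from Proposition~\ref{HN11}, which identifies the objects of curvature~$0$ with the finite length $\bdr^+$-modules via $M\mapsto{\mathbb M}=M\otimes_{\bdr^+}\Bdr^+$. For~(i), given a morphism $f\colon{\mathbb W}_1\to{\mathbb W}_2$ of objects of curvature~$0$, the full faithfulness in Proposition~\ref{HN11} lets me write $f={\mathbb M}(f_0)$ for a unique morphism $f_0\colon M_1\to M_2$ of finite length $\bdr^+$-modules. The point I would then make is that the functor $M\mapsto{\mathbb M}$ is \emph{exact}: $\Bdr^+$ is $t$-torsion free and hence flat over the discrete valuation ring $\bdr^+$, and exactness of a sequence of BC's can be tested on $\Lambda$-points for $\Lambda$ sympathetic, on which ${\mathbb M}$ is simply $-\otimes_{\bdr^+}\Bdr^+(\Lambda)$ with $\Bdr^+(\Lambda)$ still $\bdr^+$-flat. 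An exact functor preserves kernels and cokernels, so ${\rm Ker}\,f={\mathbb M}({\rm Ker}\,f_0)$ and ${\rm Coker}\,f={\mathbb M}({\rm Coker}\,f_0)$ are again of curvature~$0$.

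For~(ii), write ${\mathbb W}={\mathbb M}=M\otimes_{\bdr^+}\Bdr^+$ with $M$ of finite length, say killed by $t^k$. For the vanishing of ${\rm Hom}_{\rm VS}({\mathbb W},\Bdr^+)$, I would use that $\Bdr^+=\varprojlim_m{\mathbb B}_m$ as VS's, so ${\rm Hom}_{\rm VS}({\mathbb W},\Bdr^+)=\varprojlim_m{\rm Hom}_{\rm VS}({\mathbb W},{\mathbb B}_m)$; each ${\mathbb B}_m$ is a BC of curvature~$0$, so by Proposition~\ref{HN11} the $m$-th term is ${\rm Hom}_{\bdr^+}(M,\bdr^+/t^m)$, compatibly with the transition maps induced by the reductions $\bdr^+/t^{m+1}\to\bdr^+/t^m$, and passing to the limit gives ${\rm Hom}_{\bdr^+}(M,\bdr^+)=0$ because $M$ is torsion while $\bdr^+$ is torsion free. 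For the vanishing of ${\rm Hom}_{\rm VS}({\mathbb W},\Bdr)$, I would use that ${\mathbb W}$ is a successive extension of copies of ${\mathbb V}^1$ (curvature~$0$ means affine, Definition~\ref{BACO1.1}) together with the left exactness of ${\rm Hom}_{\rm VS}(-,\Bdr)$ to reduce to the case ${\mathbb W}={\mathbb V}^1$. Writing $\Bdr=\Bdr^+[1/t]$ as the increasing union, i.e.\ filtered colimit, of the sub-VS's $t^{-n}\Bdr^+$, and noting that multiplication by $t^{-n}$ is a VS-isomorphism $\Bdr^+\overset{\sim}{\to}t^{-n}\Bdr^+$, each ${\rm Hom}_{\rm VS}({\mathbb V}^1,t^{-n}\Bdr^+)\simeq{\rm Hom}_{\rm VS}({\mathbb V}^1,\Bdr^+)$ already vanishes, so the only thing left to check is that ${\rm Hom}_{\rm VS}({\mathbb V}^1,-)$ commutes with this colimit.

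This last commutation is the step I expect to be the main obstacle: BC's are \emph{not} quasi-compact diamonds, so it is not automatic that a morphism from ${\mathbb V}^1$ to $\Bdr=\varinjlim_n t^{-n}\Bdr^+$ factors through a finite stage. My preferred route is to handle it on the Fargues--Fontaine curve: under ${\rm Coh}^-_X\simeq\mathcal{BC}$ (Theorem~\ref{HN6}) the cokernel $\Bdr/\Bdr^+$ of $\Bdr^+\hookrightarrow\Bdr$ is the image under the BC-functor of the filtered colimit of the torsion sheaves $i_{\infty,*}{\mathbb B}_n$ along the maps given by multiplication by $t$, and both $\mathcal F\mapsto H^0(X_S,\mathcal F)$ on the qcqs scheme $X_{\rm FF}$ and sheafification commute with filtered colimits; combined with the finiteness built into $\mathcal{BC}$ (decreasing sequences of BC's stabilize, Remark~\ref{Mittag}) this should supply the required factorization. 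The same content can be repackaged through the long exact sequence attached to $0\to\Bdr^+\to\Bdr\to\Bdr/\Bdr^+\to0$: once the first part gives ${\rm Hom}_{\rm VS}({\mathbb V}^1,\Bdr^+)=0$, the vanishing of ${\rm Hom}_{\rm VS}({\mathbb V}^1,\Bdr)$ becomes the injectivity of the connecting map to ${\rm Ext}^1_{\rm VS}({\mathbb V}^1,\Bdr^+)$, i.e.\ the non-triviality of the class of the extension $0\to\Bdr^+\overset{t}{\to}\Bdr^+\to{\mathbb V}^1\to0$, which itself follows from ${\rm Hom}_{\rm VS}({\mathbb V}^1,\Bdr^+)=0$ once the source of that connecting map has been computed — so this reformulation does not really evade the finiteness input, it only localizes it.
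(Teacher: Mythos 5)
Your treatment of part (i) is fine and is essentially what the paper means by ``direct consequence of Proposition~\ref{HN11}'', though spelling out exactness via flatness of $\Bdr^+$ over $\bdr^+$ and testing on $\Lambda$-points is a welcome expansion. Two things need attention in part~(ii). First, the hypothesis is only that ${\mathbb W}$ is a torsion $\Bdr^+$-module, not that it has finite length; the paper inserts an explicit reduction step, writing ${\mathbb W}$ as a filtered colimit of its finitely generated sub-Modules and using that ${\rm Hom}_{\rm VS}$ turns colimits in the source into limits, before invoking Proposition~\ref{HN11}/Remark~\ref{HN15.0}. You jump straight to $M$ of finite length; you should include that reduction.

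Second, and more importantly, you have correctly identified ``commuting ${\rm Hom}_{\rm VS}({\mathbb V}^1,-)$ with the colimit $\Bdr=\varinjlim_n t^{-n}\Bdr^+$'' as the real obstacle in your route, but the FF-curve workaround you sketch is shaky: $\Bdr$, $\Bdr/\Bdr^+$ are not BC's, so Theorem~\ref{HN6} does not apply to them, and quasi-compactness of $X_{\rm FF}$ controls cohomology of a single quasi-coherent colimit, not ${\rm Hom}$'s between pro-\'etale sheaves on ${\rm Perf}_C$; the long-exact-sequence reformulation, as you observe, is circular. The paper avoids this problem entirely and does \emph{not} d\'evisser to ${\mathbb V}^1$ for the $\Bdr$ case. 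Instead it uses the convention that ${\rm Hom}_{\rm VS}$ requires continuity on $C$-points: after the reduction to finitely generated $W$, the space $W={\mathbb W}(C)$ is a $\Q_p$-Banach, and $\bdr$ is the inductive limit of the Fr\'echet spaces $t^{-N}\bdr^+$; hence any continuous $\Q_p$-linear $W\to\bdr$ factors through some $t^{-N}\bdr^+$, so the VS-morphism ${\mathbb W}\to\Bdr$ factors through $t^{-N}\Bdr^+\simeq\Bdr^+$, and the $\Bdr^+$-case (which you and the paper handle the same way, via $\Bdr^+=\varprojlim_k\Bdr^+/t^k$ and Proposition~\ref{HN11} applied to each ${\mathbb B}_k$) concludes. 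This is both shorter and avoids any commutation-with-colimits claim at the sheaf level; you should switch to it, or, if you want to keep your route, you would need a genuine proof of the colimit commutation, not just the heuristic you gave.
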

\begin{proof}
Point (i) is a direct consequence of Proposition~\ref{HN11}.  To prove point (ii),
we can use Proposition~\ref{HN11}, Section~\ref{HN10}, and the decomposition \eqref{HNX9} to write
${\mathbb W}$ as $W\otimes_{\bdr^+}\Bdr^+$ for some  torsion $\bdr^+$-module $W$.
Now we have, using Proposition~\ref{HN11} and the fact that $\Bdr^+=\wlim_k\Bdr^+/t^k$,
$${\rm Hom}_{\rm VS}({\mathbb W},\Bdr^+)=\wlim_k {\rm Hom}_{\rm VS}({\mathbb W},\Bdr^+/t^k)
=\wlim_k {\rm Hom}_{\bdr^+}({W},\bdr^+/t^k)={\rm Hom}_{\bdr^+}({W},\bdr^+)=0$$
This proves the result for $\Bdr^+$. 

Now, by~\cite[\S\,7.3, Lemme~7.10]{CB}, a BC of dimension~$1$ is a quotient
of $\Q_p^r\oplus{\bb L}_\ell$ where ${\bb L}_\ell$ is the Graph of an additive element~$\ell$.
It follows that a BC ${\bb W}$ of dimension $d$ is a quotient of
$\Q_p^r\oplus{\bb L}_{\ell_1}\oplus\cdots\oplus{\bb L}_{\ell_d}$ (one can also use universal covers
of $p$-divisible groups instead of Graphs of additive elements~\cite{lebras}),
and that the image of 
$\alpha:{\mathbb W}\to\Bdr$ is generated by the images\footnote{That is, the
image of ${\bb W}(\Lambda)$ is generated by $\alpha(\Q_p^r)$ and $\tau(\alpha(\ell_i))-\alpha(\ell_i)$
for $1\leq i\leq d$ and $\tau\in \widetilde T_\Lambda$ in the notations of~\cite[\S\,7.3]{CB}.}
 of $\Q_p^r$ and $\ell_1,\dots\ell_d$, 
hence factors through $t^{-N}\Bdr^+$ for some $N$.
This makes it possible to use the case of $\Bdr^+$ to finish the proof.
\end{proof}

\subsubsection{Canonical and Harder-Narasimhan filtrations}\label{HN13}
Let ${\mathbb W}$ be a BC.
The relation between the decomposition (\ref{HNX9}) and the filtration of Proposition~\ref{ppp4} is given by:
\begin{align*}
{\mathbb W}_{>0}&\simeq \big(\oplus_{\lambda_i>0}{\mathbb U}_{-1/\lambda_i}\big)
\oplus\big(\oplus_{x\neq\infty} {\mathbb H}^0(X,{\cal F}_x)\big),\\
{\mathbb W}_{\leq 0}&\simeq
\big(\oplus_{\lambda_i<0}{\mathbb U}_{-1/\lambda_i}\big)\oplus {\mathbb H}^0(X,{\cal F}_\infty)
={\mathbb H}^0\big(X,{\cal F}_\infty\oplus\big(\oplus_{\lambda_i<0}\O(-1/\lambda_i)\big)\big),\\
{\mathbb W}_{<0}&\simeq \oplus_{\lambda_i<0}{\mathbb U}_{-1/\lambda_i},\quad {\mathbb W}_{=0}\simeq {\mathbb H}^0(X,{\cal F}_\infty).
\end{align*}
In particular, ${\bb W}$ is of curvature~$<0$ if and only if its Harder-Narasimhan slopes
are $<0$, and if its slopes are~$>0$ then ${\bb W}$ is of curvature~$>0$.

From the properties of Harder-Narasimhan filtrations and the above decompositions,
we can deduce the following results.

\begin{corollary}\label{ppp2.1}
{\rm (i)}
${\mathbb W}$ is of curvature~$<0$ {\rm(}resp.~$\leq 0${\rm )} if and only if ${\mathbb W}\simeq H^0(X,{\cal E})$,
where ${\cal E}$ is a vector bundle of slopes~$\geq 0$ {\rm(}resp.~the sum of a vector
bundle of slopes~$\geq 0$ and a torsion sheaf supported at~$\infty${\rm )}.

{\rm (ii)} An extension of two BC's of curvature~$<0$ {\rm(}resp.~$\leq 0${\rm )} is
of curvature~$<0$ {\rm(}resp.~$\leq 0${\rm )}.
\end{corollary}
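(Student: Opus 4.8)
Here is the plan. Everything is read off from the canonical filtration of Proposition~\ref{ppp4}, its dictionary with the Harder--Narasimhan decomposition~(\ref{HNX9}) recorded in \S\ref{HN13}, and the elementary incompatibilities of Remark~\ref{ppp1}. The one preliminary fact I would establish is that ${\mathbb W}$ has curvature $\leq 0$ if and only if ${\mathbb W}_{>0}=0$, and curvature $<0$ if and only if ${\mathbb W}_{\geq 0}=0$. For the first: if ${\mathbb W}$ has curvature $\leq 0$ then ${\mathbb W}_{>0}$, being a sub-VS of ${\mathbb W}$, also has curvature $\leq 0$ by Remark~\ref{ppp1}(ii), while it has curvature $>0$ by Proposition~\ref{ppp4}; so ${\rm Hom}({\mathbb W}_{>0},{\mathbb W}_{>0})=0$ by Remark~\ref{ppp1}(i), hence ${\mathbb W}_{>0}=0$. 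For the second: curvature $<0$ implies curvature $\leq 0$, hence ${\mathbb W}_{>0}=0$ and ${\mathbb W}_{\geq0}={\mathbb W}^{=0}$ is affine, thus of curvature $\geq 0$ by Corollary~\ref{HN12}(ii) (it is a torsion $\Bdr^+$-module); but ${\mathbb W}^{=0}$ is a sub-VS of ${\mathbb W}$, hence of curvature $<0$, and the same argument gives ${\mathbb W}^{=0}=0$. The converses are immediate from Proposition~\ref{ppp4} and Remark~\ref{ppp5}.

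Statement (i) is then obtained by transporting through the equivalence ${\rm Coh}^-_X\simeq{\cal BC}$ (Theorem~\ref{HN6}) and using the formulas of \S\ref{HN13}. Indeed ${\mathbb W}_{>0}=0$ means precisely that the decomposition~(\ref{HNX9}) of ${\mathbb W}$ has no slope $\lambda_i>0$ and no torsion sheaf ${\cal F}_x$ with $x\neq\infty$; in that case ${\mathbb W}={\mathbb H}^0\big(X,{\cal F}_\infty\oplus\bigoplus_{\lambda_i<0}\O(-1/\lambda_i)\big)$, the $H^0$ of a vector bundle of slopes $\geq 0$ (the $-1/\lambda_i$ being $\geq 0$, with $-1/(-\infty)=0$, so that $\Q_p={\mathbb U}_0={\mathbb H}^0(X,\O)$ is allowed) together with a torsion sheaf supported at $\infty$. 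Requiring moreover ${\mathbb W}_{\geq 0}=0$ kills the ${\cal F}_\infty$-summand, leaving ${\mathbb W}={\mathbb H}^0(X,\bigoplus_{\lambda_i<0}\O(-1/\lambda_i))$. Conversely, for ${\mathbb W}={\mathbb H}^0(X,{\cal E})$ with ${\cal E}=\bigoplus_j\O(\mu_j)$, $\mu_j\geq 0$, one has ${\mathbb W}=\bigoplus_j{\mathbb U}_{\mu_j}$, all of whose Harder--Narasimhan slopes are $-1/\mu_j<0$, so ${\mathbb W}={\mathbb W}_{<0}$ and ${\mathbb W}_{\geq 0}=0$; adjoining a torsion sheaf at $\infty$ contributes the summand ${\mathbb W}_{=0}={\mathbb H}^0(X,{\cal F}_\infty)$ but keeps ${\mathbb W}_{>0}=0$, whence curvature $\leq 0$.

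For (ii), take $0\to{\mathbb W}_1\to{\mathbb W}\to{\mathbb W}_2\to 0$ with ${\mathbb W}_1,{\mathbb W}_2$ of curvature $<0$ and show ${\mathbb W}_{\geq0}=0$. First, ${\mathbb W}_{\geq0}$ has curvature $\geq 0$: it is an extension of the affine ${\mathbb W}^{=0}$ by ${\mathbb W}_{>0}$, both of curvature $\geq 0$, and curvature $\geq 0$ is stable under extensions since ${\rm Hom}(-,\Bdr^+)$ is left exact. The image of ${\mathbb W}_{\geq0}$ in ${\mathbb W}_2$ is a quotient of ${\mathbb W}_{\geq0}$, hence of curvature $\geq 0$ by Remark~\ref{ppp1}(iii), and a sub-VS of ${\mathbb W}_2$, hence of curvature $<0$ by Remark~\ref{ppp1}(ii); an object simultaneously of curvature $\geq 0$ and of curvature $<0$ is zero by Remark~\ref{ppp1}(i), so ${\mathbb W}_{\geq0}\subseteq{\mathbb W}_1$. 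Then ${\mathbb W}_{\geq0}$ is a sub-VS of the curvature $<0$ object ${\mathbb W}_1$, hence of curvature $<0$, while also of curvature $\geq 0$; so ${\mathbb W}_{\geq0}=0$ and ${\mathbb W}$ has curvature $<0$. For the curvature $\leq 0$ statement one runs the same argument with ${\mathbb W}_{>0}$ (of curvature $>0$ by Proposition~\ref{ppp4}) in place of ${\mathbb W}_{\geq0}$ throughout.

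The only genuinely fiddly point is the bookkeeping in the second paragraph — lining up ${\mathbb W}_{>0}$, ${\mathbb W}_{=0}$, ${\mathbb W}_{<0}$ with the slope-positive, $\infty$-torsion, and slope-negative parts of~(\ref{HNX9}), and keeping in mind that $\Q_p$ (Harder--Narasimhan slope $-\infty$) is a slope-negative summand, so that the $H^0$ of a vector bundle containing a copy of $\O$ is still of curvature $<0$. I do not anticipate any real obstacle: the substance lies in Proposition~\ref{ppp4}, Theorem~\ref{HN6} and the identifications of \S\ref{HN13}, all of which we are free to invoke.
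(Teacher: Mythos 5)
Your proof is correct, and it is the route the paper intends: the paper gives no written proof, only the remark that the statement follows ``from the properties of Harder--Narasimhan filtrations and the above decompositions,'' and your argument supplies exactly that. The useful intermediate step you isolate --- that curvature $\leq 0$ is equivalent to ${\mathbb W}_{>0}=0$ and curvature $<0$ to ${\mathbb W}_{\geq 0}=0$ --- is a clean way to organize the dictionary of \S\ref{HN13}, and the contradiction mechanism via ${\rm Hom}_{\mathcal{BC}}({\mathbb W}_{>0},{\mathbb W}_{>0})=0$ is the right one.

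One small point you assert without justification, in both the preliminary paragraph and the proof of (ii): that ${\mathbb W}_{>0}$ has curvature $\geq 0$, i.e.\ that curvature $>0$ implies curvature $\geq 0$. This is true but is not one of the definitions or listed remarks, so it deserves a line. Two ways to see it: (a) devissage, since $\Bdr^+=\varprojlim_m\Bdr^+/t^m$ and each $\Bdr^+/t^m$ is a successive extension of copies of ${\mathbb V}^1$, whence ${\rm Hom}({\mathbb W},{\mathbb V}^1)=0$ forces ${\rm Hom}({\mathbb W},\Bdr^+)=0$; or (b) directly from the HN decomposition and \S\ref{HN13}: ${\mathbb W}_{>0}$ is a direct sum of pieces ${\mathbb U}_{-1/\lambda_i}$ with $\lambda_i>0$ (which are of the form $\bdr^+/(t^d\oplus\Q_{p^h})$ and admit no nonzero map to $\Bdr^+$) and of torsion pieces ${\mathbb H}^0(X,{\cal F}_x)$ at $x\neq\infty$ (which map trivially to anything supported at $\infty$). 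With that line added, the argument is complete. You also tacitly use Proposition~\ref{HN11} to pass from ``affine'' to ``torsion $\Bdr^+$-module'' before invoking Corollary~\ref{HN12}(ii); worth naming it.

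Finally, a remark on structure rather than correctness: for (ii) one is tempted to deduce the statement from (i) via the Le Bras equivalence (an extension of objects concentrated in degree $0$ of ${\rm Coh}^-_X$ stays in degree $0$, etc.), but your direct argument with the canonical filtration is actually cleaner, because it does not require unwinding what extensions look like in ${\rm Coh}^-_X$, and it handles the distinction between slope-$0$ torsion at $\infty$ and at other points automatically --- a distinction that a naive HN-slope argument would miss, since both have HN slope $0$ yet only the former has curvature $\leq 0$. Good to keep it as is.
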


\begin{corollary}\label{ppp2.2}
{\rm The sign of the curvature determines the sign of the height}:

{\rm (a)}
curvature~$0$ implies height~$0$;

{\rm (b)} curvature~$<0$ implies height~$>0$; 

{\rm (c)} curvature~$> 0$ implies\footnote{If $x\neq\infty$, then ${\rm ht}({\bb U}_1/\Q_pt_x)=0$,
but the curvature of ${\bb U}_1/\Q_pt_x$ is $>0$, which shows that we can't replace
``height~$\leq 0$'' by ``height~$<0$''.} height~$\leq 0$.
\end{corollary}

\begin{corollary}\label{ppp2.3}
 {\rm The curvature decreases by going to a subobject and increases by taking a quotient:}

{\rm (i)} A sub-BC of a BC of curvature~$\leq 0$ {\rm(}resp.~$<0${\rm )} has curvature~$\leq 0$ {\rm(}resp.~$<0${\rm )}.

{\rm (ii)}
A sub-BC of height~$0$ of a BC of curvature~$\leq 0$ has curvature~$0$.

{\rm (iii)}
A quotient of a BC of curvature~$\geq 0$ {\rm(}resp.~$>0${\rm )} has curvature~$\geq 0$ {\rm(}resp.~$>0${\rm )}.

{\rm (iv)}
A quotient of height~$0$ of a BC of curvature~$\geq 0$ has curvature~$0$.

\end{corollary}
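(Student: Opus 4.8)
The plan is to read off all four assertions from the dictionary of Section~\ref{HN13} between the canonical filtration of Proposition~\ref{ppp4} and the Harder--Narasimhan decomposition~\eqref{HNX9}, using that ${\rm ht}=-\deg^-$ is additive in short exact sequences and that the sign of the curvature controls the sign of the height (Corollary~\ref{ppp2.2}).

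Statements (i) and (iii) are already Remark~\ref{ppp1}(ii)--(iii); alternatively one argues through Le Bras' equivalence ${\rm Coh}^-_X\simeq\cal{BC}$ (Theorem~\ref{HN6}) together with Corollary~\ref{ppp2.1}(i). If ${\mathbb W}=H^0(X,{\cal E})$ with ${\cal E}$ a sum of a slopes-$\geq 0$ bundle and a torsion sheaf supported at $\infty$, and ${\mathbb W}'\subset{\mathbb W}$ is a sub-BC, then ${\mathbb W}'$ corresponds to a subobject of $(0\to{\cal E})$ in ${\rm Coh}^-_X$; the long exact sequence of cohomology sheaves of $0\to{\mathbb W}'\to{\mathbb W}\to{\mathbb W}/{\mathbb W}'\to 0$ forces $H^{-1}({\mathbb W}')=0$ and $H^0({\mathbb W}')$ to be a subsheaf of ${\cal E}$ of slopes~$\geq 0$, hence again of the above shape, so ${\mathbb W}'$ has curvature~$\leq 0$ by Corollary~\ref{ppp2.1}(i) (and one drops the torsion sheaf in the strict case). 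Dualizing --- i.e. passing to quotients and using the mirror characterization via ${\rm Hom}(-,\Bdr^+)$ --- gives (iii).

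For (ii): by (i), ${\mathbb W}'$ has curvature~$\leq 0$, so ${\mathbb W}'_{>0}=0$, and by Section~\ref{HN13} there is a (non-canonical) splitting ${\mathbb W}'={\mathbb W}'_{<0}\oplus{\mathbb W}'_{=0}$, where ${\mathbb W}'_{<0}\simeq\bigoplus_{\lambda_i<0}{\mathbb U}_{-1/\lambda_i}$ has curvature~$<0$ and ${\mathbb W}'_{=0}\simeq{\mathbb H}^0(X,{\cal F}_\infty)$ has curvature~$0$. By Corollary~\ref{ppp2.2}, ${\rm ht}({\mathbb W}'_{=0})=0$ while ${\rm ht}({\mathbb W}'_{<0})>0$ whenever ${\mathbb W}'_{<0}\neq 0$; additivity of ${\rm ht}$ gives ${\rm ht}({\mathbb W}')={\rm ht}({\mathbb W}'_{<0})$, so the hypothesis ${\rm ht}({\mathbb W}')=0$ forces ${\mathbb W}'_{<0}=0$, i.e. ${\mathbb W}'={\mathbb W}'_{=0}$ has curvature~$0$. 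Statement (iv) is the mirror image: a quotient ${\mathbb W}''$ of a curvature-$\geq 0$ BC has curvature~$\geq 0$ by (iii), so ${\mathbb W}''={\mathbb W}''_{>0}\oplus{\mathbb W}''_{=0}$ with ${\mathbb W}''_{>0}$ of curvature~$>0$ and ${\mathbb W}''_{=0}$ of curvature~$0$ and height~$0$; the positive-slope summands making up ${\mathbb W}''_{>0}$ have strictly negative height, so ${\rm ht}({\mathbb W}'')={\rm ht}({\mathbb W}''_{>0})\leq 0$ with equality only for ${\mathbb W}''_{>0}=0$, and ${\rm ht}({\mathbb W}'')=0$ yields curvature~$0$.

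The step I expect to be the real work is keeping the bookkeeping at slope/curvature~$0$ honest in Section~\ref{HN13}: one must carefully separate the torsion sheaves supported at $\infty$ --- which form the affine part ${\mathbb W}_{=0}$ and contribute height~$0$ --- from the strictly positive- (resp. negative-) slope bundle summands, which are exactly the pieces carrying a strictly negative (resp. positive) height, and one must check that no torsion supported away from $\infty$ survives the height-$0$ constraint in (iv). Once this separation is in place, (ii) and (iv) are pure additivity of ${\rm ht}$ against the sign dichotomy of Corollary~\ref{ppp2.2}, with nothing further to compute.
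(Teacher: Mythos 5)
Parts (i)--(iii) of your argument are correct and match what the paper intends: (i) and (iii) are verbatim Remark~\ref{ppp1}(ii)--(iii), and for (ii) the passage through the splitting ${\mathbb W}'={\mathbb W}'_{<0}\oplus{\mathbb W}'_{=0}$ of Section~\ref{HN13}, the observation that each ${\mathbb U}_{-1/\lambda_i}$ with $\lambda_i<0$ has strictly positive height while ${\mathbb H}^0(X,{\cal F}_\infty)$ has height~$0$, and additivity of ${\rm ht}$, is exactly the right accounting.

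For (iv) your reduction has a genuine gap, and it sits precisely at the point you flag at the end as ``the real work'' but then wave through. From the formula for ${\mathbb W}_{>0}$ in Section~\ref{HN13}, besides the summands ${\mathbb U}_{-1/\lambda_i}$ with $\lambda_i>0$ (which do have strictly negative height), ${\mathbb W}''_{>0}$ contains $\oplus_{x\neq\infty}{\mathbb H}^0(X,{\cal F}_x)$, and these pieces have HN-slope~$0$ and height~$0$. They are invisible to the height count, so ${\rm ht}({\mathbb W}'')=0$ kills only the ${\mathbb U}$-summands and does \emph{not} force ${\mathbb W}''_{>0}=0$. In fact (iv) as literally stated admits a counterexample: for a closed point $x\neq\infty$, the BC ${\mathbb U}_1/\Q_p t_x\simeq{\mathbb H}^0(X,i_{x,*}{\mathbb B}_1(C_x))$ has Dimension $(1,0)$, hence height~$0$, and by disjoint support (Section~\ref{HN10}) it has ${\rm Hom}(-,{\mathbb V}^1)={\rm Hom}(-,\Bdr^+)=0$, hence curvature~$>0$; it is its own quotient, yet it is not affine. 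The statement, and your additivity argument, become correct under the stronger hypothesis that the ambient BC has curvature \emph{exactly} $0$, i.e. is ${\rm BC}(0\to i_{\infty,*}{\cal F})$: then no quotient can have a direct summand supported at $x\neq\infty$ since ${\rm Hom}_{{\rm Coh}_X}(i_{\infty,*}{\cal F},i_{x,*}{\cal G})=0$, and this is the only form in which (iv) is invoked later (via Remark~\ref{version-qbc}, applied to quotients of the ${\mathbb B}_r$-Module ${\mathbb{DR}}^{r,r-1}$ in Lemma~\ref{pos10.1}). You should therefore either add that disjoint-support step and restrict the ambient to curvature~$0$, or note explicitly that (iv) as printed needs that extra hypothesis.
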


\begin{remark}\label{ppp2.7}
An important consequence of (ii) of Corollary~\ref{ppp2.3} is that a sub-BC ${\mathbb U}$ of
a torsion $\Bdr^+$-Module ${\mathbb W}$ satisfies ${\rm ht}({\mathbb U})\geq 0$ and
${\mathbb U}$ is itself a torsion $\Bdr^+$-Module if and only if ${\rm ht}({\mathbb U})= 0$.

This can be proven, without the Harder-Narasimhan decomposition, by induction on the length
of ${\mathbb W}$, using the fact that a sub-BC of ${\mathbb V}^1$ is either ${\mathbb V}^1$
or a finite dimensional $\Q_p$-vector space and the fact that an extension
of $\Bdr^+$-Modules is itself a $\Bdr^+$-Module. This proof extends verbatim
to almost $C$-representations thanks to Proposition~\ref{font0}.
\end{remark}

\begin{proposition}\label{lacompagnie}
Let $f:{\bb W}_1\to {\bb W}_2$ be a morphism of BC's with ${\bb W}_2$ a $\Bdr^+$-Module.
Assume that ${\rm Im}(f)$ generates ${\bb W}_2$ as a $\Bdr^+$-Module. Then, if ${\rm Coker}(f)\neq 0$,
it is of curvature~$>0$ and height~$<0$.
\end{proposition}
\begin{proof}
We can assume $f$ to be injective (mod out ${\bb W}_1$ by ${\rm Ker}(f)$ if this is not the case),
and not surjective (otherwise there is nothing to prove).
Then ${\bb W}_1={\bb H}^0(X,{\cal F}_1)$ and ${\bb W}_2={\bb H}^0(X,{\cal F}_2)$ where ${\cal F}_1,{\cal F}_2$
are coherent sheaves on $X_{\rm FF}$ with vanishing $H^1$, 
${\cal F}_2$ is supported at $\infty$,
and $f$ is induced from a morphism $f_X:{\cal F}_1\to{\cal F}_2$ of coherent sheaves.
The hypothesis that ${\rm Im}(f)$ generates ${\bb W}_2$ implies that $f_X$ is
surjective and the injectivity of $f$ implies that ${\bb H}^0(X,{\rm Ker}(f_X))=0$.  Vanishing of
${\bb H}^1(X,{\cal F}_1)$ gives an isomorphism ${\rm Coker}(f)\simeq {\bb H}^1(X,{\rm Ker}(f_X))$.
Now, ${\cal F}_1$ is not torsion (otherwise it would be supported at $\infty$ since
its ${\bb H}^0$ has curvature~$\leq 0$, and ${\bb W}_1$ would be a $\Bdr^+$-module, and $f$ would be
surjective which we assumed to not be the case); hence ${\rm Ker}(f_X)$ is not torsion.
Since its $H^0$ is $0$, its Harder-Narasimhan slopes are~$<0$,
 which implies that ${\rm Coker}(f)$ is of curvature~$>0$ and height~$<0$, as desired.
\end{proof}

\subsubsection{BC's of curvature~$\leq 0$}\label{BACO2}
\begin{lemma}\label{baco2.1}
The following conditions are equivalent: 

{\rm (i)}
${\mathbb W}$ is of curvature~$\leq 0$.

{\rm (ii)} There is an exact sequence
\begin{equation}
\label{kicia1}
0\to V\to {\mathbb W}\to {\mathbb M}\to 0,
\end{equation}
where ${\mathbb M}$ is of curvature~$0$
and  $V$ is finite dimensional over  $\Q_p$.
\end{lemma}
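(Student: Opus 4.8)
The plan is to prove the two implications separately. The direction (ii)$\Rightarrow$(i) is immediate: a finite-dimensional $\Q_p$-vector space $V$ is of curvature~$\le 0$ (in fact $<0$, since $\Q_p\hookrightarrow\Bdr^+$ gives $V\hookrightarrow(\Bdr^+)^{\dim_{\Q_p}V}$), and an object of curvature~$0$ is of curvature~$\le 0$ as well, being by Proposition~\ref{HN11} a torsion $\Bdr^+$-module and hence injecting into itself; so $\mathbb W$, being an extension of $\mathbb M$ by $V$, has curvature~$\le 0$ by Corollary~\ref{ppp2.1}(ii).

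For (i)$\Rightarrow$(ii) I would start from the canonical filtration. A sub-BC of a BC of curvature~$\le 0$ has curvature~$\le 0$ (Corollary~\ref{ppp2.3}(i)), whereas $\mathbb W_{>0}$ has curvature~$>0$; hence $\mathbb W_{>0}=0$, so $\mathbb W=\mathbb W_{\le 0}$, and the description of \S\ref{HN13} identifies $\mathbb W$, after a (non-canonical) splitting of the Harder--Narasimhan filtration, with a direct sum $\big(\bigoplus_i\mathbb U_{\mu_i}\big)\oplus\mathbb T$, where each $\mu_i\ge 0$ (with $\mathbb U_0=\Q_p$) and $\mathbb T={\mathbb H}^0(X,\mathcal F_\infty)$ is a torsion $\Bdr^+$-module, i.e.\ of curvature~$0$. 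It then suffices to handle each summand: for a copy of $\Q_p$ take $V=\Q_p$ and $\mathbb M=0$; for $\mathbb T$ take $V=0$ and $\mathbb M=\mathbb T$; and for $\mathbb U_\mu$ with $\mu=d/h>0$ in lowest terms I would use the fundamental exact sequence $0\to\Q_p^{\,h}\to\mathbb U_\mu\to\mathbb B_d\to0$. Summing over the summands and using Proposition~\ref{HN11}, one obtains $0\to V\to\mathbb W\to\mathbb M\to0$ with $\mathbb M$ a torsion $\Bdr^+$-module, as wanted.

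The one real point is the existence of that last exact sequence. Through the equivalence ${\rm Coh}^-_X\simeq\cal{BC}$ (Theorem~\ref{HN6}) and exactness of ${\rm BC}$, it amounts to writing $\O(d/h)$ as a length-$d$ upward modification at $\infty$ of the trivial bundle, i.e.\ to a short exact sequence of coherent sheaves $0\to\O^{\,h}\to\O(d/h)\to i_{\infty,*}M\to0$ with $M$ a $\bdr^+$-module of length~$d$; more generally the same holds with $\O(d/h)$ replaced by an arbitrary vector bundle of slopes~$\ge 0$, which is what is needed if one runs the argument directly from Corollary~\ref{ppp2.1}(i) instead of through the Harder--Narasimhan decomposition. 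For integral $\mu=m$ this is the sequence $0\to\O\overset{t^m}{\longrightarrow}\O(m)\to i_{\infty,*}\mathbb B_m\to0$ recalled in \S\ref{HN10}; in general one reduces to trivializing the restriction of the bundle to the affine curve $X\setminus\{\infty\}$ (a free module over a principal ideal domain) by its global sections, which is possible precisely because the slopes are $\ge 0$. Since this is part of the basic theory of Banach--Colmez spaces and of vector bundles on the Fargues--Fontaine curve, I would simply invoke it (\cite{CB,CF,lebras,FF}); the content of the lemma sits in that invocation rather than in any new argument.
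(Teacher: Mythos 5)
Your proof is correct and takes essentially the same route as the paper: decompose $\mathbb W$ of curvature~$\le 0$ via a (non-canonical) splitting of the Harder--Narasimhan filtration into a torsion-at-$\infty$ part and a sum of $\mathbb U_{d/h}$'s with $d/h\ge 0$, and use the fundamental exact sequence $0\to\Q_{p^h}\to\mathbb U_{d/h}\to\mathbb B_d\to 0$ to extract the finite-dimensional $V$ and the curvature-$0$ quotient $\mathbb M$; the converse is a slope (curvature) closure argument which, whether phrased directly as in the paper or via Corollary~\ref{ppp2.1}(ii) as you do, is the same observation. Your preliminary step that $\mathbb W_{>0}=0$ is not strictly needed (it is subsumed in the form of the decomposition from \S\ref{HN13}), and your closing paragraph on deriving the fundamental exact sequence as a modification at $\infty$ is a correct elaboration of a fact the paper simply invokes.
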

\begin{proof}Implication 
 (i)$\Rightarrow$(ii) 
follows from the fact that, if ${\mathbb W}$ is of curvature~$\leq 0$, then
$${\mathbb W}={\mathbb H}^0(X,{\cal F}_\infty)\oplus\big(\oplus_{d_i/h_i\geq 0}{\mathbb U}_{d_i/h_i}\big)$$
and we have an exact sequence $0\to \Q_{p^{h_i}}\to {\mathbb U}_{d_i/h_i}\to {\mathbb B}_{d_i}\to 0$.
Then $V=\oplus_{d_i/h_i}\Q_{p^{h_i}}$ gives what we want.

The converse implication (ii)$\Rightarrow$(i) follows from point (ii) of Corollary~\ref{ppp2.1}.
\end{proof}

\subsection{Categorification of height} 
We will introduce now a partial categorification of height of BC's. 
If  ${\mathbb W}$ is a BC, set  
$$h({\mathbb W}):={\rm Hom}_{\rm VS}({\mathbb W},\Bdr)$$
This is a $\bdr$-module; hence $h(-)$ is a functor 
from the category $\cal{BC}$ to the category of vector spaces over $\bdr$. 
Thanks to the arguments in the proof of Corollary~\ref{HN12},
we also have
$$h({\mathbb W})={\rm Hom}_{\rm VS}({\mathbb W},\Bdr)
\simeq \colim_{k\geq 0}{\rm Hom}_{\rm VS}({\mathbb W},t^{-k}\Bdr^+)
={\rm Hom}_{\rm VS}({\mathbb W},\Bdr^+)\otimes_{\bdr^+}\bdr$$
We recall (see \cite{ALB}) that we also have 
$${\rm Ext}^{1}_{\rm VS}({\mathbb W},\Bdr)\simeq 
\colim_{k\geq 0}{\rm Ext}^1_{\rm VS}({\mathbb W},t^{-k}\Bdr^+).
$$
This is a $\bdr$-module as well.

\begin{lemma}\label{baco5}
{\rm (i)} If  ${\mathbb W}$ is of curvature~$\leq 0$,
then ${\rm Ext}^{1}_{\rm VS}({\mathbb W},\Bdr)=0$.

{\rm (ii)}
If  $0\to {\mathbb W}_1\to {\mathbb W}\to {\mathbb W}_2\to 0$ is an exact sequence of BC's
of curvature~$\leq 0$,
the sequence $0\to h({\mathbb W}_2)\to h({\mathbb W})\to h({\mathbb W}_1)\to 0$ is exact.
\end{lemma}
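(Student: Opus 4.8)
The plan is to reduce both statements to the structure theory provided by the Harder--Narasimhan decomposition of a BC of curvature~$\leq 0$, using Lemma~\ref{baco2.1} and the computation of $h$ on the building blocks. First I would treat (i). By Lemma~\ref{baco2.1}, a BC ${\mathbb W}$ of curvature~$\leq 0$ sits in an exact sequence $0\to V\to {\mathbb W}\to {\mathbb M}\to 0$ with $V$ a finite-dimensional $\Q_p$-vector space and ${\mathbb M}$ of curvature~$0$, i.e.\ a torsion $\Bdr^+$-module. Applying ${\rm Hom}_{\rm VS}(-,t^{-k}\Bdr^+)$ and passing to the colimit over $k$, one gets an exact sequence relating ${\rm Ext}^{1,\natural}$ of the three terms; since ${\rm Ext}^1_{\rm VS}(V,t^{-k}\Bdr^+)=0$ (a finite-dimensional $\Q_p$-vector space is projective in the relevant sense — it is a finite sum of copies of $\Q_p$, and ${\rm Ext}^1_{\rm VS}(\Q_p,-)= H^1(\G,-)$-type groups vanish here because $t^{-k}\Bdr^+$ is "affine enough"; more simply $\Q_p={\mathbb U}_0$ and one uses the curve description), it suffices to show ${\rm Ext}^{1,\natural}({\mathbb M},\Bdr)=0$ for ${\mathbb M}$ of curvature~$0$. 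By Proposition~\ref{HN11}, ${\mathbb M}=M\otimes_{\bdr^+}\Bdr^+$ for a finite-length $\bdr^+$-module $M$; then by dévissage along the length of $M$ one reduces to $M=\bdr^+/t^m$, so ${\mathbb M}={\mathbb B}_m$, and here one computes ${\rm Ext}^1_{\rm VS}({\mathbb B}_m,t^{-k}\Bdr^+)$ directly from the exact sequence $0\to\O\to\O(m)\to i_{\infty,*}{\mathbb B}_m\to 0$ on the Fargues--Fontaine curve (together with Theorem~\ref{HN6}): the ${\rm Ext}^1$ in ${\cal{BC}}={\rm Coh}_X^-$ is controlled by ${\rm Ext}^1$ and ${\rm Hom}$ of coherent sheaves, and one checks the colimit over $k$ vanishes.

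For (ii), the long exact sequence for ${\rm Hom}_{\rm VS}(-,t^{-k}\Bdr^+)$ applied to $0\to{\mathbb W}_1\to{\mathbb W}\to{\mathbb W}_2\to 0$ gives, after taking $\colim_k$ (which is exact),
$$0\to h({\mathbb W}_2)\to h({\mathbb W})\to h({\mathbb W}_1)\to {\rm Ext}^{1,\natural}({\mathbb W}_2,\Bdr)\to\cdots,$$
and since all three BC's have curvature~$\leq 0$, part (i) gives ${\rm Ext}^{1,\natural}({\mathbb W}_2,\Bdr)=0$, so the three-term sequence on the left is exact. One small point to address is that $h({\mathbb W})={\rm Hom}_{\rm VS}({\mathbb W},\Bdr)$ really equals ${\rm Hom}^{\natural}({\mathbb W},\Bdr)=\colim_k{\rm Hom}_{\rm VS}({\mathbb W},t^{-k}\Bdr^+)$ here, which is exactly the content of Lemmas~\ref{truc2} and~\ref{truc25}: a BC has ${\mathbb W}(C)$ a $\Q_p$-Banach (or, for the $\Bdr^+$-module part, an inductive/projective limit of such), and any continuous map to $\bdr$ lands in some $t^{-N}\bdr^+$, so one may replace $\Bdr$ by $t^{-N}\Bdr^+$ throughout. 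Thus the colimit description is legitimate and the computation above applies verbatim.

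The main obstacle I expect is the honest verification of ${\rm Ext}^{1,\natural}({\mathbb B}_m,\Bdr)=0$, i.e.\ that the groups ${\rm Ext}^1_{\rm VS}({\mathbb B}_m,t^{-k}\Bdr^+)$, while possibly nonzero for fixed $k$, have vanishing transition maps into the colimit (or are themselves killed). This is where one must be careful about the difference between ${\rm Ext}$ in the abstract category ${\cal{BC}}$ and continuous ${\rm Ext}$ of VS's, and where the curve picture (Theorem~\ref{HN6} and the matrix formula for ${\rm Hom}_{\rm BC}$ in terms of coherent-sheaf ${\rm Hom}$'s and ${\rm Ext}^1$'s) does the real work: one writes $t^{-k}\Bdr^+$ as ${\rm BC}$ of a complex $\O(-k)\overset{0}{\to}0$ (a sheaf of slopes~$<0$, so ${\mathbb W}^{\leq 0}$-type), computes ${\rm Ext}^1({\rm BC}(i_{\infty,*}{\mathbb B}_m),{\rm BC}(\O(-k)\to 0))$, and shows that multiplying by $t$ (i.e.\ increasing $k$) eventually kills every class. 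An alternative, avoiding the curve, is the dévissage/induction suggested by Remark~\ref{ppp2.7}, reducing to sub-BC's of ${\mathbb V}^1$ and extensions of $\Bdr^+$-modules; I would include whichever is shorter once the bookkeeping is done.
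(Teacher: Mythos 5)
Your high-level plan is the right one, and matches the paper's: reduce via the presentation $0\to V\to {\mathbb W}\to {\mathbb M}\to 0$ of Lemma~\ref{baco2.1}, treat the $\Q_p$ and affine pieces separately by dévissage, then derive (ii) from (i) by the long exact sequence for $\colim_k{\rm Hom}_{\rm VS}(-,t^{-k}\Bdr^+)$ (with the $\Bdr\rightsquigarrow t^{-N}\Bdr^+$ replacement justified by Lemmas~\ref{truc2}--\ref{truc25}, as you say). But the step that actually carries the proof --- the vanishing of ${\rm Ext}^{1,\natural}({\mathbb V}^1,\Bdr)$ --- you leave open, and you flag it yourself as ``the main obstacle.'' The Fargues--Fontaine-curve route you sketch ($t^{-k}\Bdr^+={\rm BC}(\O(-k)\to 0)$, ${\mathbb B}_m={\rm BC}(0\to i_{\infty,*}{\mathbb B}_m)$) is plausible, but note the matrix formula quoted in \S\ref{HN5} is for ${\rm Hom}_{\cal{BC}}$, not for ${\rm Ext}^1$, so you would still need a separate input to control ${\rm Ext}^1$ in ${\rm Coh}_X^-$; the bookkeeping is not ``done once.'' Also, your dévissage ``reduces to $M=\bdr^+/t^m$'' should really read: dévissage along the length reduces to $M=\bdr^+/t$, i.e.\ ${\mathbb M}={\mathbb V}^1$.

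The paper's argument for that key step is shorter and avoids the curve entirely. For the $\Q_p$-piece it shows the stronger fact ${\rm Ext}^1_{\rm VS}(\Q_p,\Bdr^+)=0$ for fixed $k$ (not just after colimit), by dévissage from ${\rm Ext}^1_{\rm VS}(\Q_p,{\mathbb V}^1)=0$ (Le Bras) together with surjectivity of $\Bdr^+/t^{m+1}\to \Bdr^+/t^m$. For the affine piece it fixes the source as ${\mathbb V}^1$ and instead varies the \emph{target}: applying ${\rm Hom}_{\rm VS}({\mathbb V}^1,-)$ to
$$0\to t^{-k}\Bdr^+\to t^{-k-1}\Bdr^+\to t^{-k-1}{\mathbb V}^1\to 0$$
and using ${\rm Hom}_{\rm VS}({\mathbb V}^1,\Bdr^+)=0$ gives an exact sequence
$$0\to {\rm Hom}_{\rm VS}({\mathbb V}^1,t^{-k-1}{\mathbb V}^1)\to {\rm Ext}^1_{\rm VS}({\mathbb V}^1,t^{-k}\Bdr^+)\to {\rm Ext}^1_{\rm VS}({\mathbb V}^1,t^{-k-1}\Bdr^+),$$
and since the first two terms are both $\bdr^+/t$, the injection between them is onto, so the transition map in the colimit is zero. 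This exactly realizes the phenomenon you predicted (nonzero ${\rm Ext}^1$ for fixed $k$, but dying in the colimit) --- you identified the obstruction correctly, but did not produce the argument; to complete your write-up you should either carry out the curve computation in full or replace it with the exact-sequence argument above.
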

\begin{proof}
Claim  (ii) follows immediately from (i).   
 
Now,
the exact sequence (\ref{kicia1}) shows that it suffices to prove (i) 
for an affine and for $\Q_p$.
 That ${\rm Ext}^1_{\rm VS}(\Q_p,\Bdr^+)=0$  follows easily, by devissage, from the fact that the maps $\Bdr^+/t^{m+1}\to \Bdr^+/t^m$ are surjective and 
${\rm Ext}^1_{\rm VS}(\Q_p,{\mathbb V}^1)=0$ (see \cite[Th.\,4.1]{lebras}).  This implies  that $$
{\rm Ext}^{1}_{\rm VS}(\Q_p,\Bdr)\simeq\colim_{k\geq 0}{\rm Ext}^1_{\rm VS}(\Q_p,t^{-k}\Bdr^+)=0,
$$
as wanted.

 To show that ${\rm Ext}^{1}_{\rm VS}({\mathbb W},\Bdr)=0$ for ${\mathbb W}$ affine, it suffices,
again by devissage, to show that ${\rm Ext}^{1}_{\rm VS}({\mathbb V}^1,\Bdr)=0$. 
   To show the latter fact we use 
the exact sequence 
$$
0\to t^{-k}\Bdr^+
\to t^{-k-1}\Bdr^+\to t^{-k-1}{\mathbb V}^1\to 0
$$
and the fact that ${\rm Hom}_{\rm VS}({\mathbb V}^1, \Bdr^+)=0$. This gives us the exact sequence 
$$0\to {\rm Hom}_{\rm VS}({\mathbb V}^1,t^{-k-1}{\mathbb V}^1)\to {\rm Ext}^1_{\rm VS}({\mathbb V}^1,t^{-k}\Bdr^+)
\to {\rm Ext}^1_{\rm VS}({\mathbb V}^1,t^{-k-1}\Bdr^+).$$ 
Since the first two terms are isomorphic to 
$\bdr^+/t$ as $\bdr^+$-modules, 
${\rm Ext}^1_{\rm VS}({\mathbb V}^1,t^{-k}\Bdr^+)
\to {\rm Ext}^1_{\rm VS}({\mathbb V}^1,t^{-k-1}\Bdr^+)$ is  zero.
This finishes the proof.
\end{proof}
\begin{proposition}\label{baco3}
{\rm (i)} If  ${\mathbb W}$ is of curvature~$\leq 0$  then
${\rm rk}(h({\mathbb W}))= {\rm ht}({\mathbb W})$.

{\rm (ii)}
In general, ${\rm rk}(h({\mathbb W}))= {\rm ht}({\mathbb W})
+{\rm rk}({\rm Ext}^{1}_{\rm VS}({\mathbb W},\Bdr))$.
\end{proposition}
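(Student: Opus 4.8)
Looking at Proposition~\ref{baco3}, I need to prove two statements about the rank of $h(\mathbb{W}) = \operatorname{Hom}(\mathbb{W}, \Bdr)$ as a $\bdr$-module.

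\emph{Setup and part (i).} The plan is to reduce everything to the explicit description of $h$ together with a long exact $\operatorname{Ext}$-sequence. Since $\mathbb{W}\in\cal{BC}$, the space $\mathbb{W}(C)$ is a $\Q_p$-Banach, so Lemma~\ref{truc2}(i) shows that every continuous map $\mathbb{W}(C)\to\bdr$ factors through some $t^{-k}\bdr^+$; hence $h(\mathbb{W})={\rm Hom}^\natural(\mathbb{W},\Bdr)=\colim_k{\rm Hom}_{\rm VS}(\mathbb{W},t^{-k}\Bdr^+)$, and similarly I would compute with ${\rm Ext}^{1,\natural}$. Taking the filtered colimit over $k$ of the long exact $\operatorname{Ext}$-sequences in the variable $-\mapsto t^{-k}\Bdr^+$, a short exact sequence $0\to\mathbb{W}_1\to\mathbb{W}\to\mathbb{W}_2\to0$ of BC's yields an exact sequence
$$0\to h(\mathbb{W}_2)\to h(\mathbb{W})\to h(\mathbb{W}_1)\to{\rm Ext}^{1,\natural}(\mathbb{W}_2,\Bdr)\to{\rm Ext}^{1,\natural}(\mathbb{W},\Bdr)\to{\rm Ext}^{1,\natural}(\mathbb{W}_1,\Bdr)\to\cdots,$$
and all three quantities ${\rm ht}(-)$, ${\rm rk}(h(-))$, ${\rm rk}({\rm Ext}^{1,\natural}(-,\Bdr))$ are additive over finite direct sums. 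For (i) I would then apply Lemma~\ref{baco2.1} to write $0\to V\to\mathbb{W}\to\mathbb{M}\to0$ with $V$ finite-dimensional over $\Q_p$ and $\mathbb{M}$ of curvature~$0$. By Lemma~\ref{baco5}(ii) the sequence $0\to h(\mathbb{M})\to h(\mathbb{W})\to h(V)\to0$ is exact; here $h(\mathbb{M})=0$ because $\mathbb{M}$ is a torsion $\Bdr^+$-module (Proposition~\ref{HN11}, Corollary~\ref{HN12}(ii)), while $h(V)$ is free of rank $\dim_{\Q_p}V$ over $\bdr$. Comparing with ${\rm ht}(\mathbb{W})={\rm ht}(V)+{\rm ht}(\mathbb{M})=\dim_{\Q_p}V$ (Corollary~\ref{ppp2.2}) gives ${\rm rk}(h(\mathbb{W}))={\rm ht}(\mathbb{W})$.

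\emph{Part (ii).} By additivity and the Harder-Narasimhan decomposition~(\ref{HNX9}) it suffices to check the identity on the stable constituents $\mathbb{U}_\mu$ ($\mu\in\Q$) and $\mathbb{H}^0(X,\mathcal{F}_x)$ for torsion sheaves $\mathcal{F}_x$. If $\mu\geq0$, or if $x=\infty$, the object has curvature~$\leq0$, so ${\rm Ext}^{1,\natural}(\mathbb{W},\Bdr)=0$ by Lemma~\ref{baco5}(i) and the identity follows from (i). The remaining (curvature~$>0$) pieces are $\mathbb{U}_\mu$ with $\mu=d/h<0$ and $\mathbb{H}^0(X,\mathcal{F}_x)$ with $x\neq\infty$. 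For $\mathbb{U}_\mu$ I would use the defining presentation $0\to\Q_{p^h}\to\mathbb{B}_{-d}\to\mathbb{U}_\mu\to0$ (with $-d>0$): since $\mathbb{B}_{-d}$ is a torsion $\Bdr^+$-module, both $h(\mathbb{B}_{-d})$ and ${\rm Ext}^{1,\natural}(\mathbb{B}_{-d},\Bdr)$ vanish (Corollary~\ref{HN12}(ii), Lemma~\ref{baco5}(i)), so the six-term sequence collapses to $0\to h(\mathbb{U}_\mu)\to0\to h(\Q_{p^h})\to{\rm Ext}^{1,\natural}(\mathbb{U}_\mu,\Bdr)\to0$; thus $h(\mathbb{U}_\mu)=0$ and ${\rm rk}({\rm Ext}^{1,\natural}(\mathbb{U}_\mu,\Bdr))=h=-{\rm ht}(\mathbb{U}_\mu)$, which is what is wanted. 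For $\mathbb{H}^0(X,\mathcal{F}_x)$ with $x\neq\infty$, after a further direct-sum reduction we may take $\mathbb{W}=\mathbb{U}_m/\Q_pt_x^m$ (cf.~\S\ref{HN10}), which has ${\rm ht}(\mathbb{W})=0$; here I would use $0\to\Q_pt_x^m\to\mathbb{U}_m\to\mathbb{W}\to0$, noting that $\mathbb{U}_m=\mathbb{H}^0(X,\O(m))$ has curvature~$\leq0$ (so by (i) its $h$ has rank ${\rm ht}(\mathbb{U}_m)=1$, and its ${\rm Ext}^{1,\natural}$ vanishes by Lemma~\ref{baco5}(i)), and that $h(\mathbb{W})=0$ because ${\rm Hom}_{\cal{BC}}(\mathbb{W},\mathbb{B}_\ell)={\rm Hom}_{{\rm Coh}_X}(i_{x,*}\mathbb{B}_m,i_{\infty,*}\mathbb{B}_\ell)=0$ for sheaves supported at distinct points. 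The six-term sequence then forces ${\rm rk}({\rm Ext}^{1,\natural}(\mathbb{W},\Bdr))=0={\rm ht}(\mathbb{W})$.

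\emph{Main obstacle.} The hard part is getting the bookkeeping between $h$ and ${\rm Ext}^{1,\natural}$ exactly right on the curvature~$>0$ constituents: one must recognize that $h$ vanishes there and that the entire defect ${\rm ht}-{\rm rk}(h)$ is accounted for by $-{\rm rk}({\rm Ext}^{1,\natural}(\cdot,\Bdr))$. This becomes transparent only after (a) the identification $h={\rm Hom}^\natural$ for BC's, which makes the colimit six-term sequence legitimate, and (b) the two vanishing inputs $h(\text{torsion }\Bdr^+\text{-module})=0$ (Corollary~\ref{HN12}) and ${\rm Ext}^{1,\natural}(\text{curvature}\leq0)=0$ (Lemma~\ref{baco5}). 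A secondary point to be careful about is that every $\bdr$-module occurring has finite rank, so that the alternating-sum-of-ranks argument along the six-term sequence is valid; this holds because each such module is a subquotient of finitely many copies of $\bdr$ arising from the finite-dimensional $\Q_p$-spaces and torsion $\Bdr^+$-modules in the presentations used above.
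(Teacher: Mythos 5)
Your part (i) matches the paper's proof: both use the extension $0\to V\to\mathbb{W}\to\mathbb{M}\to0$ from Lemma~\ref{baco2.1}, the vanishing $h(\mathbb{M})=0$ for the affine piece, and Lemma~\ref{baco5}, together with ${\rm ht}(\mathbb{W})=\dim_{\Q_p}V$. For part (ii) you take a genuinely different route. The paper works directly with the definitional presentation of a BC, $0\to V\to\mathbb{W}'\to\mathbb{W}\to0$ with $\mathbb{W}'$ an extension of $\mathbb{V}^d$ by a finite-dimensional $\Q_p$-space $V'$; since $\mathbb{W}'$ has curvature~$\leq0$, part (i) gives ${\rm rk}(h(\mathbb{W}'))=\dim_{\Q_p}V'$ and Lemma~\ref{baco5} gives ${\rm Ext}^{1,\natural}(\mathbb{W}',\Bdr)=0$, so the six-term sequence attached to this one presentation yields (ii) in a single stroke, using only ${\rm ht}(\mathbb{W})=\dim_{\Q_p}V'-\dim_{\Q_p}V$. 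You instead invoke the Harder--Narasimhan splitting~(\ref{HNX9}) and verify the identity constituent by constituent: the curvature~$\leq0$ pieces are handled by (i), while for $\mathbb{U}_\mu$ with $\mu<0$ and for the torsion at $x\neq\infty$ you compute both sides from the defining short exact sequences. Both arguments are correct. The paper's is more economical and avoids the classification machinery (Theorem~\ref{HN4}, Le Bras's equivalence); yours costs more case analysis but has the merit of making explicit that $h$ vanishes on the curvature~$>0$ part and that the defect ${\rm rk}(h(\mathbb{W}))-{\rm ht}(\mathbb{W})$ is concentrated there, carried entirely by ${\rm Ext}^{1,\natural}$.
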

\begin{proof} For (i), 
by Lemma~\ref{baco2.1}, we have the exact sequence
$0\to V\to {\mathbb W}\to {\mathbb M}\to 0$, where ${\mathbb M}$ is  affine.
This  yields  the exact sequence
$$0\to h({\mathbb M})\to h({\mathbb W})\to h(V)\to {\rm Ext}^{1}_{\rm VS}({\mathbb M},\Bdr).$$
Since $h({\mathbb M})=0$ and  we have ${\rm Ext}^{1}_{\rm VS}({\mathbb M},\Bdr)=0$, by Lemma~\ref{baco5}, this sequence implies that
 ${\rm rk}(h({\mathbb W}))= \dim_{\Q_p}V$. We are done because 
 ${\rm ht}({\mathbb W})=\dim_{\Q_p}V$. 

(ii) Follows via the exact sequence
of  ${\rm Ext}$ of a presentation:
 if $0\to V\to {\mathbb W}'\to {\mathbb W}\to 0$ represents  ${\mathbb W}$, where ${\mathbb W}'$ is an extension 
of  ${\mathbb V}^d$ by a $\Q_p$-vector space $V^{\prime}$ of finite dimension,  arguing as for (1) we get the following diagram with exact row:
$$
\xymatrix@R=4mm@C=4mm{
0\ar[r] & h({\mathbb W})\ar[r] & h({\mathbb W}^{\prime})\ar[r] \ar[d]^{\wr} & h(V)\ar[r] &  \Ext^{1}_{\rm VS}({\mathbb W},\Bdr)\ar[r] & \Ext^{1}_{\rm VS}({\mathbb W}^{\prime},\Bdr) \ar[r] & 0\\
& & h(V^{\prime})
}
$$
Since  ${\mathbb W}'$ is of curvature~$\leq 0$, by  Lemma~\ref{baco5}, we have   $\Ext^{1}_{\rm VS}({\mathbb W}^{\prime},\Bdr)=0$. It follows that 
$${\rm rk}(h({\mathbb W}))={\rm rk}(h(V^{\prime}))-{\rm rk}(h(V))+ {\rm rk}(\Ext^{1}_{\rm VS}({\mathbb W},\Bdr)),
$$
which gives us what we wanted because ${\rm ht}({\mathbb W})={\rm rk}(h(V^{\prime}))-{\rm rk}(h(V))$.
\end{proof}

\section{The category $\qBC$}\label{HN15} 
We  need to enlarge the category $\cal{BC}$ to allow extensions by arbitrary ${\mathbb B}_m$-Modules, 
for $m\geq 1$. 

\subsection{Categories of Topological Vector Spaces}\label{dqbc1}
We start by putting topology on Vector Spaces. 
\subsubsection{Topological Vector Spaces and $\Bdr^+$-Pairs}
\begin{definition}\label{dqbc2}
A {\it Topological Vector Space} ${\mathbb W}$ 
is a functor $\Lambda\mapsto {\mathbb W}(\Lambda)$
from sympathetic algebras to ${\rm LH}(C_{\Q_p})$ 
with the property that the image of ${\rm Hom}(\Lambda,\Lambda')\times B$
is bounded in ${\bb W}(\Lambda')$ for $B$ bounded in ${\bb W}(\Lambda)$.

$\bullet$
A morphism ${\bb W}\to {\bb W}'$ of Topological Vector Spaces, is a natural transformation
of functors.

$\bullet$ A sequence
$0\to{\bb W}_1\to {\bb W}\to {\bb W}_2\to 0$ of Topological Vector Spaces is exact if and only if
$0\to{\bb W}_1(\Lambda)\to {\bb W}(\Lambda)\to {\bb W}_2(\Lambda)\to 0$ 
is exact in ${\rm LH}(C_{\Q_p})$, for every $\Lambda$.
\end{definition}

\begin{remark}
The condition on the image of ${\rm Hom}(\Lambda,\Lambda')\times B$ is automatic
if ${\bb W}$ is a BC. This follows form the decomposition~(\ref{HNX9}) or the existence of a covering
of ${\bb W}$ by a direct sum of universal covers of $p$-divisible groups (these universal covers
have natural lattices which are functorial in $\Lambda$).  
\end{remark}
\begin{remark}
Following \cite[Cor. 1.2.20]{Schn}, we will represent the category ${\rm LH}(C_{\Q_p})$ as the localization of the full subcategory of the homotopy category of $C_{\Q_p}$ consisting of complexes $E$ of the form 
$$
0\to E_1\lomapr{\delta_E} E_0\to 0,
$$
where $E_0$ is in degree $0$ and $\delta_E$ is a monomorphism, by the multiplicative system formed by morphisms $u:E\to F$ such that the square 
$$
\xymatrix@R=5mm{
F_1\ar[r]^{\delta_F} & F_0\\
E_1\ar[r]^{\delta_E} \ar[u]^{u_1}&E_0\ar[u]^{u_0}
}
$$
is bicartesian. Note that this implies that for $E\in {\rm LH}(C_{\Q_p})$ the (algebraic) quotient $E_0/E_1$ is well-defined. 
\end{remark}

We denote by ${\rm Hom}_{\rm TVS}(-,-)$ and ${\rm Ext}^i_{\rm TVS}(-,-)$ 
the $\Hom$ and Yoneda $\Ext$ groups in the
category of {Topological Vector Space}.

\begin{definition}\label{dqbc3}
(i) A {\it $\Bdr^+$-Module} ${\bb W}$ is a VS such that there exists a topological $\bdr^+$-module $W$ (i.e., a $\bdr^+$-module in the category $C_{\Q_p}$)
killed by a power of $t$ (hence $W$ is a $\B_m$-module for $m$ big enough) such that 
${\bb W}(\Lambda)=\Bdr^+(\Lambda)\wotimes_{\bdr^+}W$ for all $\Lambda$ (if $W$ is killed by $t^m$,
we also have\footnote{${\bb B}_m(\Lambda)\wotimes_{\B_m}W$ is defined as the quotient of
${\bb B}_m(\Lambda)\wotimes_{\Q_p}W$ by the closure of the subspace generated by the
$\lambda a\otimes w-a\otimes\lambda w$, for $a\in {\bb B}_m(\Lambda)$, $\lambda\in\B_m$
and $w\in W$.  If we choose a Banach basis $(e_i)_{i\in I}$ of $\Lambda$ over $C$ (note that
$I$ is countable by our asumptions on $\Lambda$'s), and a lifting $\hat e_i$ of
$e_i$ in ${\bb B}_m(\Lambda)$, then $(\hat e_i)_{i\in I}$ is a Banach basis
of ${\bb B}_m(\Lambda)$ over $\B_m$, and we get isomorphisms ${\bb B}_m(\Lambda)\simeq
\ell^\infty_0(I,\B_m)$ (bounded sequences tending to $0$ at $\infty$), and 
${\bb B}_m(\Lambda)\wotimes_{\B_m}W\simeq
\ell^\infty_0(I,\B_m)\wotimes_{\B_m}W\simeq \ell^\infty_0(I,W)$ of $\B_m$-modules
(to recover the structure of ${\bb B}_m(\Lambda)$-module, one needs to express multiplication
by $\lambda\in {\bb B}_m(\Lambda)$ in the basis $(\hat e_i)_{i\in I}$).}
 ${\bb W}(\Lambda)={\bb B}_m(\Lambda)\wotimes_{\B_m}W$ for all $\Lambda$).
In particular ${\bb W}(C)=W$.
A morphism ${\bb W}_1\to {\bb W}_2$ of $\Bdr^+$-Modules is a morphism of TVS's such that 
${\mathbb W}_1(\Lambda)\to {\mathbb W}_2(\Lambda)$ is $\Bdr^+(\Lambda)$-linear for every $\Lambda$.

(ii) A {\it $\Bdr^+$-Pair} is a Topological Vector Space $({\mathbb W}_1,{\mathbb W}_2)$ obtained from  a pair $(W_1,W_2)$ of topological $\B_m$-modules, for some $m$ 
(i.e., on $C$-points, we have an injection $W_1\to W_2$ of topological $\B_m$-modules),
by base change as above. Note that ${\mathbb W}_1,{\mathbb W}_2$ are $\Bdr^+$-Modules. A morphism of $\Bdr^+$-Pairs is a morphism of TVSs that is $\Bdr^+$-linear. 
\end{definition}
We denote by ${\rm Hom}_{\Bdr^+}(-,-)$ and ${\rm Ext}^i_{\Bdr^+}(-,-)$ the $\Hom$ and $\Ext$ groups in the
category of $\Bdr^+$-Pairs.
We have natural maps
$${\rm Hom}_{\bdr^+}(W,W')\to {\rm Hom}_{\Bdr^+}({\bb W},{\bb W}')\to {\rm Hom}_{\rm TVS}({\bb W},{\bb W}')$$
and the same for $\Ext$ groups.
It follows from Theorem~\ref{iso2} below that, if $W,W'$ are reasonable of the same type, these maps are isomorphisms.

%
%

\subsubsection{qBC's}
\begin{definition}\label{dqbc4}
(i) A {\it qBC} is a Topological Vector Space ${\mathbb W}$ admitting a filtration
$$0\subset {\mathbb W}_{>0}\subset {\mathbb W}_{\geq 0}\subset{\mathbb W}$$
with:

\quad $\bullet$ ${\mathbb W}_{>0}$ -- a BC of curvature~$>0$,

\quad $\bullet$ ${\mathbb W}_{=0}:={\mathbb W}_{\geq 0}/{\mathbb W}_{>0}$ --
a $\Bdr^+$-Pair,

\quad $\bullet$ ${\mathbb W}_{<0}:={\mathbb W}/{\mathbb W}_{\geq 0}$ --
a BC of curvature~$<0$.

(ii) A {\it morphism ${\bb W}\to{\bb W}'$ of qBC's} is a morphism of TVS's compatible with the 
filtrations and such that the induced map ${\bb W}_{=0}\to {\bb W}'_{=0}$ is a morphism
of $\Bdr^+$-Pairs.

(iii) The {\it height ${\rm ht}({\bb W})$ of a qBC} ${\bb W}$ is defined as:
$${\rm ht}({\mathbb W}):={\rm ht}({\mathbb W}_{>0})+{\rm ht}({\mathbb W}_{<0})$$
It only depends on the isomorphism class of ${\bb W}$.
\end{definition}

\begin{remark}\label{dqbc5} 
Assume that ${\mathbb W}_{=0}$ is a reasonable $\Bdr^+$-Pair 
(see Section~\ref{PAT3} for the definition). 

(i) As shown in Proposition~\ref{filt},
 ${\mathbb W}_{>0}$ is the largest sub-BC of ${\mathbb W}$ of curvature~$>0$,
and ${\mathbb W}_{<0}$ is the largest 
quotient-BC of ${\mathbb W}$ of curvature~$<0$.
Hence the filtration on ${\mathbb W}$ is uniquely
determined.

(ii) If ${\bb W}$ and ${\bb W}'$ are qBC's, and
${\mathbb W}_{=0}$ and ${\mathbb W}'_{=0}$ are reasonable $\Bdr^+$-Pairs of the same type,
we show (see~Corollary~\ref{iso5})
 that any morphism ${\bb W}\to{\bb W}'$ of TVS's is a morphism of qBC's
(the most difficult part of the proof is to show that the induced morphism
${\bb W}_{=0}\to {\bb W}'_{=0}$ is a morphism
de $\Bdr^+$-Pairs).
\end{remark}
\begin{definition}\label{dqbc6}
{\rm ({\em Curvature})}
Let ${\mathbb W}\in\qBC$.  We say that
${\mathbb W}$ has:

$\bullet$ curvature~$>0$, if ${\bb W}={\bb W}_{>0}$.

$\bullet$ curvature~$\geq 0$, if ${\bb W}={\bb W}_{\geq 0}$.

$\bullet$ curvature~$=0$, if ${\bb W}={\bb W}_{=0}$ (i.e.~if ${\bb W}_{>0}={\bb W}_{<0}=0$).

$\bullet$ curvature~$<0$, if ${\bb W}={\bb W}_{<0}$ (i.e.~if ${\bb W}_{\geq 0}=0$).

$\bullet$ curvature~$\leq 0$, if ${\bb W}={\bb W}_{\leq 0}$ (i.e.~if ${\bb W}_{> 0}=0$).
\end{definition}

\begin{remark}\label{nonsplit}
(i) It is conceivable that the filtration on a qBC splits and that a qBC can be written
(non canonically) as the direct sum of a BC and a $\Bdr^+$-Pair, but proving such a statement 
requires to understand ${\rm Ext}^2({\bb Y},{\bb W})$ where ${\bb Y}$ is a BC  and
${\bb W}$ is a $\Bdr^+$-Module.  Presumably\footnote{As we will show in a sequel to this paper,
this vanishing always holds but does not imply that any qBC can be written
as the direct sum of a BC and a $\Bdr^+$-Pair, only that a qBC can be written as
an extension of a $\Bdr^+$-Pair by a BC. On the other hand, it allows to prove that
$\qBC$ is an abelian category.}, this group is~$0$ (this is the case if ${\bb W}$
is of finite length), but this remains to be proved (and point (ii) below shows
that strange things can happen passing from $C_{\Q_p}$ to ${\rm LH}(C_{\Q_p})$).

(ii)
If $W\in C_K$ is  Hausdorff and $V$ is finite dimensional, 
then any strict exact sequence $0\to V\to W'\to W\to 0$ splits (\cite[Lemma 2.5]{CGN}).
  This is not always 
the case if $W\in{\rm LH}(C_K)$; indeed, if $W=(W_1\to W_2)$ with $W_1,W_2$ Hausdorff, then we have
an exact sequence
$$0\to {\rm Hom}(W,V)\to {\rm Hom}(W_2,V)\to {\rm Hom}(W_1,V)\to {\rm Ext}^1(W,V)\to 0$$
(The next term ${\rm Ext}^1(W_2,V)$ vanishes since $W_2$ is Hausdorff.)

If $W_1$ is dense in $W_2$, then ${\rm Hom}(W,V)=0$ and
${\rm Ext}^1(W,V)={\rm Hom}(W_1,V)/{\rm Hom}(W_2,V)$.
For example, if $K=\Q_p$, $W_1={\rm LA}(\Z_p,\Q_p)$, $W_2={\cal C}(\Z_p,\Q_p)$, and $V=\Q_p$,
then ${\rm Ext}^1(W,V)={\cal D}(\Z_p,\Q_p)/{\rm Mes}(\Z_p,\Q_p)$ which is a big space
in which $0$ is dense (it is better to view it as an element of ${\rm LH}(C_{\Q_p})$).
\end{remark}

\subsection{A perfectoid point of view on ${\rm Ext}^1_{\rm TVS}({\bb V}^1,{\bb V}^1)$}\label{PAT2}
\ 

If $\Lambda$ is a sympathetic algebra, we denote by $\overline{\Lambda\{X\}}$ the sympathetic
closure of $\Lambda\{X\}$, by 
$H_{\Lambda\{X\}}$ the group ${\rm Aut}_{\rm cont}(\overline{\Lambda\{X\}}/\Lambda\{X\})$
and by $T_\Lambda$ the extension of $\O_\Lambda$ by $H_{\Lambda\{X\}}$ considered 
in~\cite[Prop.\,5.23]{CB}
(it is the group of continuous automorphisms $\tau$ of $\overline{\Lambda\{X\}}$ whose
restriction to
$\Lambda\{X\}$ is of the form $X\mapsto X+x(\tau)$, with $x(\tau)\in \O_\Lambda$; the map
$\tau\mapsto x(\tau)$ is a surjective group morphism whose kernel is
$H_{\Lambda\{X\}}$). We denote by $0$ the neutral element of $T_\Lambda$.

We fix a morphism $\alpha_C:\overline{C\{X\}}\to C$ with $\alpha_C(X)=0$, and we define
$f(\tau)$, if $\tau\in T_C$ and $f\in \overline{C\{X\}}$, by $f(\tau):=\alpha_C(\tau(f))$;
in particular, $X(\tau)=x(\tau)$ and $f(0)=\alpha_C(f)$.

If $\alpha\in C$ is such that $p^n\alpha\in 2p\O_C$, let
 $E_\alpha\in \overline{C\{X\}}$ be the unique
$p^n$-th root of $e^{p^n\alpha\,X}$ with $E_\alpha(0)=1$.
There exists a character $\chi_\alpha:T_C\to C^\dual$ such that
$\tau(E_\alpha)=\chi_\alpha(\tau)E_\alpha$ and an additive character
$\psi_\alpha:H_{C\{X\}}\to\Q_p$ such that $\chi_\alpha(\sigma)^\flat=\epsilon^{\psi_\alpha(\sigma)}$
(note that $\chi_\alpha$ takes values in $\bmu_{p^\infty}$ on $H_{C\{X\}}$).
Since $\chi_{\alpha}$ is a character,  $\psi_\alpha(\tau^{-1}\sigma\tau)=\psi_{\alpha}(\sigma)$
if $\sigma\in H_{C\{X\}}$ and $\tau\in T_C$.

\begin{lemma}\label{patch2}
$\O_C\{X\}\big[\frac{E_{1/p^n}-1}{p^{1/p^{n+1}}},\ n\in\N\big]$ is integral over $\O_{C\{X\}}$
and perfectoid\footnote{I.e., there exists $r>0$ such that $x\mapsto x^p$ is surjective modulo~$p^r$.}.
\end{lemma}
\begin{proof}
Set $Y_n=\frac{E_{1/p^n}-1}{p^{1/p^{n+1}}}$. Then $(p^{1/p}Y_0+1)^p=e^{pX}$, hence
$$Y_0^p+p^{(p-1)/p}Y_0^{p-1}+\tfrac{p-1}{2}p^{(p-2)/p}Y_0^{p-2}+\cdots+p^{1/p}Y_0
=\tfrac{1}{p}(e^{px}-1)$$
which proves that $Y_0$ is integral over $\O_C\{X\}$ and that $Y_0^p=X$ modulo $p^{1/p}$.

The same computations, strating from
$(p^{1/p^{n+1}}Y_n+1)^p=p^{1/p^n}Y_{n-1}+1$, show that $Y_n$ is integral over $\O_C[Y_{n-1}]$
and that $Y_n^p=Y_{n-1}$ modulo $p^{1/p}$.

The result follows.
\end{proof}

\begin{proposition}\label{patch3}
Let $0\to {\bb W}_1\to {\bb E}\to {\bb W}_2\to 0$ be an exact sequence of TVS's
with fixed isomorphims ${\bb W}_1\cong {\bb V}^1$ and ${\bb W}_2\cong {\bb V}^1$.
Let $v\in C$ and 
$h\in {\bb E}(\overline{C\{X\}})$ lifting $vX\in {\bb V}^1(\overline{C\{X\}})$.

{\rm (i)} There exists $\lambda_{v}\in C$ depending only on $v$,
and $g\in {\bb W}_1(\overline{C\{X\}})$ with $\Vert g\Vert\leq
p\sup_{\tau\in T_C}\Vert \tau(h)-h-h(\tau)\Vert$,
such that, if $\ell=h-g$,
we have $(\sigma-1)\ell=\lambda_{v}\psi_1(\sigma)$ for all $\sigma\in H_{C\{X\}}$.

{\rm (ii)} There exists $\lambda_{\bb E}\in C$ such that
$\lambda_v=v\lambda_{\bb E}$ for all $v\in C$,
and ${\bb E}\mapsto\lambda_{\bb E}$ induces
an isomorphism ${\rm Ext}^1_{\rm TVS}({\bb V}^1,{\bb V}^1)\simeq C$.
\end{proposition}
\begin{proof}
$\sigma\mapsto h_\sigma:=(\sigma-1)h$ is a continuous $1$-cocycle on $H_{C\{X\}}$
with values in ${\bb W}_1(\overline{C\{X\}})=\overline{C\{X\}}$.
The standard ``almost \'etale descent'' and ``decompletion'' arguments give
$g \in {\bb W}_1(\overline{C\{X\}})$ with $\Vert g\Vert\leq
p\sup_{\tau\in T_C}\Vert h_\sigma\Vert$, and $\lambda_h\in C\{X\}$
such that $(\sigma-1)h=\lambda_h\psi_1(\sigma)+(\sigma-1)g$, for all $\sigma\in H_{C\{X\}}$;
moreover, $\lambda_h$ is uniquely determined (in other words, $H^1(H_{C\{X\}},\overline{C\{X\}})\simeq
C\{X\}$; the inverse isomorphism being $\lambda\mapsto \lambda\psi_1$).
If $h'$ is another lifting of $vX$, then $\sigma\mapsto h'_\sigma:=(\sigma-1)h'$
differs from $\sigma\mapsto h_\sigma$ by a coboundary, hence $\lambda_h=\lambda_{h'}$;
it follows that we can set $\lambda_v=\lambda_h$ for any choice of $h$.

Now, if $\tau\in T_C$, then $\tau\cdot h-h(\tau)$ is another lifting of $vX$; hence
$\sigma\mapsto (\sigma-1)(\tau\cdot h-h-h(\tau))$ is a coboundary.
We also have $(\sigma-1)(\tau\cdot h-h-h(\tau))=
\tau(\tau^{-1}\sigma\tau-1)h-(\sigma-1)h$; we deduce that
$\tau\cdot(\lambda_h\psi_1(\tau^{-1}\sigma\tau))=\lambda_h\psi_1(\sigma)$, and since
$\psi_1(\tau^{-1}\sigma\tau)=\psi_1(\sigma)$, this implies that $\lambda_h$ is invariant by
translation by $\O_C$, hence $\lambda_h\in C$.

This proves (i).  To prove (ii), we have to prove surjectivity and
injectivity. Surjectivity follows from the
fact that ${\rm Ext}^1_{\rm TVS}({\bb V}^1,{\bb V}^1)$ is a module over ${\rm End}_{\rm TVS}({\bb V}^1)=C$
and the fact that $\lambda_v\neq 0$ if ${\bb E}={\bb B}_2$ and $v\neq 0$,
the injection ${\bb V}^1\to {\bb B}_2$ being $x\mapsto tx$. 
In fact, one checks that
 $\lambda_{\bb E}=1$
for ${\bb E}={\bb B}_2$ by using 
$\ell:=\hat v\log[X_1^\flat]\in{\bb B}_2(\overline{C\{X\}})$
for a lifting
of $vX\in{\bb V}^1(C\{X\})$, where $X_1^\flat=(X_1,X_{1/p},X_{1/p^2},\dots)$ and $\hat v\in\B_2$
is a lifting of $v$: one has $(\sigma-1)\ell=v\psi_1(\sigma)$, if $\sigma\in H_{C\{X\}}$,
by definition of $\psi_1$.

To prove injectivity, it is enough to check that, if $v\neq 0$ and $\lambda_v=0$, then
${\bb E}$ is split;
this requires some preparation and is proven in Remark~\ref{sympa9}. Once injectivity
is proven, it follows that any nontrivial ${\bb E}$ is isomorphic to
${\bb B}_2$, and the existence of $\lambda_{\bb E}$ follows from its existence for ${\bb B}_2$.
\end{proof}

\begin{lemma}\label{sympa2}
For all $\Lambda$:

{\rm (i)} We have

\quad $\bullet$  
$c_\tau:=(\tau-1)\ell-\alpha_\Lambda(\tau(\ell))\in \Lambda\{X\}\subset
{\bb W}_1(\overline{\Lambda\{X\}})$ for all $\tau\in T_{\Lambda}$;

\quad $\bullet$ $c_{\tau}=c_{\sigma\tau}=c_{\tau\sigma}$
for all $\tau\in T_{\Lambda}$ and $\sigma\in H_{\Lambda\{X\}}$.

{\rm (ii)} There exists $u\in C\{X\}\subset {\bb W}_1(\overline{C\{X\}})$ such that
$\ell_v:=\ell-u$ satisfies $\alpha_\Lambda(\tau_1\tau_2(\ell_v)-
\tau_1(\ell_v)-\tau_2(\ell_v)+\ell_v)=0$ 
for all $\tau_1,\tau_2\in T_\Lambda$ {\rm (i.e.~$\ell_v$ is 
an additive lifting of $vX$)}.
\end{lemma}
\begin{proof}
If $\sigma\in H_{\Lambda\{X\}}$, we have $(\sigma-1)c_\tau=(\sigma-1)(\tau-1)\ell=
\tau(\tau^{-1}\sigma\tau-1)\ell-(\sigma-1)\ell=
\lambda_v(\psi_1(\tau^{-1}\sigma\tau)-\psi_1(\sigma))=0$,
which proves the first statement.

Now $c_{\tau\sigma}-c_\tau=\tau((\sigma-1)\ell)-\alpha_{\Lambda}(\tau\sigma(\ell))+
\alpha_{\Lambda}(\tau(\ell))$. But $\sigma(\ell)=\ell+\alpha_C(\sigma(\ell))$;
hence $\alpha_{\Lambda}(\tau\sigma(\ell))-
\alpha_{\Lambda}(\tau(\ell))=\alpha_C(\sigma(\ell))$
and $\tau((\sigma-1)\ell)=\alpha_C(\sigma(\ell))$, which proves
that $c_{\tau\sigma}= c_\tau$.
If one applies this to $\tau^{-1}\sigma\tau\in H_{\Lambda\{X\}}$ instead of $\sigma$,
one gets $ c_{\sigma\tau}= c_\tau$ which proves (i).

Now (i) allows to consider $\tau\mapsto c_\tau$ as a function on $\O_{\Lambda}$
with values in $\Lambda\{X\}$; let us denote $F(X,x(\tau))$ the function corresponding
to $ c_\tau$.
We let $F(X,Y)\in C\{X,Y\}$ be the function corresponding to $ c_\tau$,
with $\tau\in T_{C\{Y\}}$ satisfying $x(\tau)=Y$.
Then if $s_y:\overline{C\{X,Y\}}\to \overline{\Lambda\{X\}}$ sends $Y$ to $y\in\O_\Lambda$
and $X$ to $X$,
we have $F(X,y)=s_y(F(X,Y))$: indeed, from~\cite[Prop.\,5.20]{CB}, we infer that,
if $\tau_y\in T_{\Lambda}$ and $\alpha_\Lambda:\overline{\Lambda\{X\}}\to\Lambda$
are given, with $x(\tau_y)=y$, then there exists
unique $\tau_Y\in T_{C\{Y\}}$ with $x(\tau_Y)=Y$ and $\alpha_{C\{Y\}}:
\overline{C\{X,Y\}}\to \overline{C\{Y\}}$ making the following diagram commute
$$\xymatrix@R=5mm{
\overline{C\{X,Y\}}\ar[r]^-{\tau_Y}\ar[d]^-{s_y}
&\overline{C\{X,Y\}}\ar[r]^-{\alpha_{C\{Y\}}}\ar[d]^-{s_y} 
&\overline{C\{Y\}}\ar[d]^-{s_y}\\
\overline{\Lambda\{X\}}\ar[r]^-{\tau_y}
&\overline{\Lambda\{X\}}\ar[r]^-{\alpha_\Lambda}
&\Lambda
}$$
which gives
$$s_y(F(X,Y))=s_y((\tau_Y-1)\ell-\alpha_{C\{Y\}}(\tau_Y(\ell)))=
(\tau_y-1)\ell-\alpha_{\Lambda}(\tau_y(\ell))=F(X,y)$$
Now, we have 
\begin{equation}\label{ctau}
c_{\tau_1\tau_2}-\tau_1c_{\tau_2}-c_{\tau_1}
=-\alpha_\Lambda(\tau_1\tau_2(\ell)-
\tau_1(\ell)-\tau_2(\ell))
\end{equation}
By construction, $(\tau_1,\tau_2)\mapsto c_{\tau_1\tau_2}-\tau_1c_{\tau_2}-c_{\tau_1}$
is a $2$-cocycle which we view as a $2$-cocycle
on $\O_{\Lambda}$ with values in $\Lambda$.
Using the function $F(X,Y)$, this translates into
$$F(X,Y_1+Y_2)-F(X+Y_1,Y_2)-F(X,Y_1):=f(Y_1,Y_2)\in C\{Y_1,Y_2\}$$
 the $2$-cocycle condition
becoming $f(Y_2,Y_3)-f(Y_1+Y_2,Y_3)+f(Y_1,Y_2+Y_3)-f(Y_1,Y_2)=0$.
Lemma~\ref{sympa1} gives us $u\in C\{X\}$ such that, if 
$$F'(X,Y):=F(X,Y)-(u(X+Y)-u(X)-u(Y))$$ 
then
$F'(X,Y_1+Y_2)-F'(X+Y_1,Y_2)-F'(X,Y_1)=0$.
Setting $\ell_v=\ell-u$ and $c'_\tau:=(\tau-1)\ell_v-\alpha_\Lambda(\tau(\ell_v))$,
formula~(\ref{ctau}) implies $\alpha_\Lambda(\tau_1\tau_2(\ell_v)-
\tau_1(\ell_v)-\tau_2(\ell_v))=0$, as wanted.
\end{proof}

\begin{lemma}\label{sympa1}
If $f\in\O_C\{Y,Z\}$ satisfies $f(Y,Z)-f(X+Y,Z)+f(X,Y+Z)-f(X,Y)=0$,
there exists $u\in p^{-1}\O_C\{X\}$ such that $f(Y,Z)=u(Y+Z)-u(Y)-u(Z)$.
\end{lemma}
\begin{proof}
By substracting $f(0,0)$ to $f$ (and adding $f(0,0)$ to $u$), we can assume $f(0,0)=0$.
Now, setting $Y=0$ in the above cocycle relation, we get $f(0,Z)-f(X,0)=0$, hence
$f(0,Z)=0$ and $f(Y,0)=0$. So we can write
$f(Y,Z)=Yf_1(Z)+Y^2f_2(Z)+\cdots$ Taking derivative with respect to $Y$ and putting $Y=0$,
we get $u'=f_1$ if $u$ exists, 
hence we have to take for $u$ the solution in $C[[X]]$ with $u(0)=0$.
Substrating $u(Y+Z)-u(Y)-u(Z)$ to $f(Y,Z)$, we get $f^{(1)}$ still satisfying the
cocycle relation but with a Taylor expansion
$Y^2f_2^{(1)}(Z)+Y^3f_3^{(1)}(Z)+\cdots$.  An easy induction shows that $f_k^{(1)}=0$
for all $k$ (consider the cocycle relation modulo $(Y,Z)^{k+1}$). Hence
$f(Y,Z)=u(Y+Z)-u(Y)-u(Z)$.  

It remains to check that $u\in p^{-1}\O_C\{X\}$. If $u=\sum_{k\geq 1}a_kX^k$,
and $f(Y,Z)=\sum_{i,j\geq 1}b_{i,j}Y^iZ^j$, then $a_k=\frac{i!}{k(k-1)\cdots(k-i+1)}b_{i,k-i}$
for any $i\in\{1,\dots,k-1\}$.  By taking $i=1$ if $(k,p)=1$, or $i=p^r$ with $r=v_p(k)$ if $r\geq 1$,
 we obtain that $a_k= u_k b_{i,k-1}$ with $u_k\in\Q_p$ and $v_p(u_k)=0$ or $-1$.
Since $b_{i,j}\in\O_C$ and $b_{i,j}\to 0$ when $(i,j)\to\infty$,
it follows that $a_k\in p^{-1}\O_C$ and $a_k\to 0$ when $k\to\infty$. 
\end{proof}

\begin{remark}\label{sympa9}
The {\it rank} of an additive element $\ell$ is the rank of the $\Z_p$-module
$\{(\sigma-1)\ell,\ \sigma\in H_{C\{X\}}\}$.
We have shown that, for all $v\in C$, the additive element $vX\in {\bb W}_2(\overline{C\{X\}})$
(which is of rank $0$) admits an additive lifting $\ell_v$
in ${\bb E}(\overline{C\{X\}})$ of rank~$\leq 1$, and it is of rank $0$
if and only if $\lambda_v=0$, which
is the case if ${\bb E}$ is split.  

To prove injectivity in (ii) of Proposition~\ref{patch3},
it suffices to prove that conversely, if $v\neq 0$ and $\ell_v$ is 
 of rank $0$, then ${\bb E}$ splits.
But $x\mapsto s_v(x):=(\tau_{x/v}-1)\ell_v$, with $\tau_{x/v}\in T_\Lambda$, $x(\tau_{x/v})=x/v$,
gives a functorial splitting $v\O_\Lambda\to {\bb E}(\Lambda)$
which can be extended, by $\Q_p$-linearity to a splitting ${\bb W}_2(\Lambda)\to {\bb E}(\Lambda)$.
Indeed, since $\ell_v$ is of rank~$0$, $s_v(x):=(\tau_{x/v}-1)\ell_v$ only depends on
$x/v$ and is a lifting of $(\tau_{x/v}-1)\cdot vX=x$; functoriality
comes from the fact that, if $\overline u:\overline{\Lambda\{X\}}\to \overline{\Lambda'\{X\}}$
is a lifting of $u:\Lambda\to\Lambda'$ (if $\overline u_1,\overline u_2$ are two
such liftings, there exists $\sigma\in H_{\Lambda\{X\}}$ 
with $\overline u_2=\overline u_1\circ\sigma$; this follows from (iii) of~\cite[Prop.\,5.20]{CB})
and if $\tau'\in T_{\Lambda'}$ there exists (a unique) $\tau\in T_\Lambda$
such that $\overline u\circ\tau=\tau'\circ\overline u$
(again by~\cite[Prop.\,5.20]{CB}).  Applying this to $\tau'$
with $x(\tau')=u(x)/v$
produces $\tau$ with $x(\tau)=x/v$, and functoriality of $s_v$ follows
from
$$u(s_v(x))=\overline u((\tau-1)\ell_v)=(\tau'-1)\ell_v=s_v(u(x))$$
\end{remark}

\begin{remark}\label{patch4}
(i)
Privileging $\psi_1$ as we did can seem fishy, but $\psi_\alpha-\alpha\psi_1$ is a coboundary
in $\overline{C\{X\}}$ (there exists an additive element $f_\alpha\in\overline{C\{X\}}$ such that
$(\sigma-1)f_\alpha=\psi_\alpha(\sigma)-\alpha\psi_1(\sigma)$ for all $\sigma\in H_{C\{X\}}$;
this is a consequence of~\cite[Th.\,10.5]{CB}).

(ii) The space ${\rm Sp}(C\{X\})$ is a $K(\pi,1)$ (it is the closed unit ball, an affinoid),
and $\overline{C\{X\}}$ is (the completion of) a pro-\'etale perfectoid extension of
$C\{X\}$;
hence almost \'etale descent gives
an isomorphism $H^1(H_{C\{X\}},\overline{C\{X\}})\simeq H^1_{\proeet}({\rm Sp}(C\{X\}),\O)$,
and the proof gives an isomorphism
$${\rm Ext}^1_{\rm TVS}({\bb V}^1,{\bb V}^1)
\simeq H^0(T_C,H^1_{\proeet}({\rm Sp}(C\{X\}),\O))\simeq H^0(T_C,C\{X\})=C.$$

%
\end{remark}

\subsection{Extensions of $\Bdr^+$-Modules}\label{PAT3}
\ 

We say that a topological $C$-vector space is {\em reasonable} 
if it is a fr\'echet or a compact inductive
limit of banachs, and if it is {\it separable} (i.e.,~ it contains a dense
sub-$C$-vector space of countable dimension).

Let $W\in C_C$  and let ${\bb W}:={\bb B}_1\wotimes_CW$ be the
functor $\Lambda\mapsto \Lambda\wotimes_CW$ (as a TVS, ${\bb B}_1\simeq{\bb V}^1$, but we are
going to use ${\bb B}_1$ to emphasize that it is also, naturally, a $\Bdr^+$-Module). 
\begin{remark}\label{exact}
If $0\to W_1\to W_2\to W_3\to 0$ is a strict  exact sequence  of reasonable topological $C$-vector spaces then the induced sequence
$$
0\to {\bb W}_1\to {\bb W}_2\to {\bb W}_3\to 0
$$
is an exact sequence of Topological Vector Spaces. Indeed, it suffices to check that, for sympathetic $\Lambda$, the sequence 
$$
0\to \Lambda\wotimes_CW_1\to \Lambda\wotimes_CW_2\to \Lambda\wotimes_CW_3\to 0
$$
is strict exact in $C_{\Q_p}$. But this follows since $W_*$'s are reasonable and $\Lambda$ is banach (see \cite[Sec. 2.1.3]{CDN3} for a discussion of exactness of the tensor product). 
\end{remark}
\subsubsection{Extensions of ${\bb B}_1$ by ${\bb W}$}\label{PAT4}

\begin{remark}\label{patch5}
 If $W\in C_C$ and $w\in W$, one builds the pushout extension $E_w$ of $C$ by $W$, using the
diagram
$$\xymatrix@R=4mm@C=5mm{0\ar[r]&C\ar[r]\ar[d]^-{w}& \B_2\ar[r]\ar[d]&C\ar[r]\ar@{=}[d]&0\\
0\ar[r]&W\ar[r]&E_w \ar[r]&C\ar[r]&0}$$
i.e., $E_w =(\B_2\oplus W)/\{(c,cw),\ c\in C\}$.
It is clear from the description that $E_w $ is a $\bdr^+$-module;
the above construction gives an isomorphism $W\stackrel{\sim}{\to}{\rm Ext}^1_{\bdr^+}(C,W)$.
The associated TVS extension is given by the pushout diagram
$$\xymatrix@R=4mm@C=5mm{0\ar[r]&{\bb B}_1\ar[r]\ar[d]^-{w}
& {\bb B}_2\ar[r]\ar[d]&{\bb B}_1\ar[r]\ar@{=}[d]&0\\
0\ar[r]&{\bb W}\ar[r]&{\bb E}_w \ar[r]&{\bb B}_1\ar[r]&0}$$
\end{remark}
\begin{proposition}\label{patch6}Let $W\in C_C$ be reasonable. 
We have natural isomorphims
$${\rm Hom}_{\rm TVS}({\bb B}_1,{\bb W})\stackrel{\sim}{\leftarrow}{\rm Hom}_{\bdr^+}(C,W)\stackrel{\sim}{\leftarrow}W
\quad{\rm and}\quad
{\rm Ext}^1_{\rm TVS}({\bb B}_1,{\bb W})\stackrel{\sim}{\leftarrow}{\rm Ext}^1_{\bdr^+}(C,W)\stackrel{\sim}{\leftarrow}W.$$
\end{proposition}
\begin{proof}
To prove the statement for $\Hom$, since $W\hookrightarrow {\rm Hom}_{\rm TVS}({\bb B}_1,{\bb W})$, it suffices to show that $W\stackrel{\sim}{\to} {\rm Hom}_{\rm VS}({\bb B}_1,{\bb W})$. Let $\alpha:{\bb B}_1\to{\bb W}$
be a morphism, and let $w=\alpha(1)\in W$. Then, for each $\lambda\in W^\dual$,
the composition $\lambda\circ\alpha:{\bb B}_1\to{\bb B}_1$ is the multiplication by
some element of $C$ and, evaluating at $1$, one gets that $\lambda\circ\alpha$
is the multiplication by $\lambda(w)$.  It follows that $\alpha$ is the map
$a\mapsto aw$ which allows to conclude.

Let us turn to ${\rm Ext}^1$.
As in the proof of Proposition~\ref{patch3}, a TVS-extension
$0\to {\bb W}\to{\bb E}\to{\bb B}_1\to 0$ is controled by the
cohomology class of the $1$-cocycle $\sigma\mapsto (\sigma-1)h$ 
on $H_{C\{X\}}$, where $h\in {\bb E}(\overline{C\{X\}})$
is a lifting of $X\in{\bb B}_1(\overline{C\{X\}})$.
Since all the steps in the proof of Proposition~\ref{patch3} (including Lemma~\ref{sympa1})
involve bounded denominators,
the proof of Proposition~\ref{patch3} goes through, using orthonormal bases,
to treat the case of  ${\bb W}$ being a (separable) banach.

The case of nuclear fr\'echets follows from
the case of banachs by taking projective limits (one can write a fr\'echet $W$
as a projective limits of banachs $W_n$), noticing that 
${\rm Rlim}^1{\rm Hom}_{\rm TVS}({\bb B}_1,{\bb B}_1\wotimes W_n)={\rm Rlim}^1 W_n$ 
vanishes by the topological
Mittag-Leffler criterium.

If $W=\varinjlim_nW_n$, use the fact that $H_{C\{X\}}$ is profinite which implies that any
continuous $1$-cocycle $\sigma \mapsto c_\sigma $ 
on $H_{C\{X\}}$ with values in $W$ takes values in $W_{n_0}$ for some big enough $n_0$
(Lemma~\ref{patch7}); this
allows to reduce this case to the banach case.
\end{proof}

\begin{lemma}\label{patch7}
Let $H$ be a profinite group, and $W=\varinjlim_nW_n$ a compact inductive limit of $C$-banachs,
with $H$ acting continuously on $W_n$ by isometries. 
If $\sigma\mapsto c_\sigma$ is a continuous cocycle on $H$ with values in $W$, there exists
$n_0\in\N$ such that $c_\sigma\in W_{n_0}$ for all $\sigma\in H$.
\end{lemma}
\begin{proof}
The topological closure $B_n$ inside $W$ of the unit ball of $W_n$ is $c$-compact, hence
$U_n:=\sum_{i=0}^np^{i-n}B_n$ is a closed subgroup of $W$, and $W=\cup_n U_n$.
If $H_n=\{\sigma \in H, c_\sigma \in U_n\}$, then $H_n$ is subgroup of $H$
(because of the cocycle condition and the fact that $H$ acts via isometries), which is closed
since $U_n$ is closed,
and $H=\cup_n H_n$. It follows, by Baire, 
that there exists $n_1$ such that $H_{n_1}$ has a non empty
interior, hence is open since it is a subgroup,
 and is of finite index since $H$ is profinite.
Take $n_0\geq n_1$ such that $W_{n_0}$ contains $c_\sigma $ 
for $\sigma $ in a system of representatives
of $H/H_{n_1}$; the cocycle relation implies that $c_\sigma \in W_{n_0}$ 
for all $\sigma \in H$.
\end{proof}

\subsubsection{Extensions of ${\mathbb B}_d$ by a $\Bdr^+$-Module}
Note that $\B_m$ is naturally a banach algebra (with ring of integers the image of $\ainf$).
We say that a topological $\B_m$-module is reasonable
if it is a $\B_m$-fr\'echet 
(i.e., the topology is defined by a countable family of norms of $\B_m$-modules)
or a compact (or rather $c$-compact) inductive
limit of $\B_m$-banachs.

Let $W$ be a  topological $\B_m$-module,
and let ${\bb W}:={\bb B}_m\wotimes_{\B_m}W$ be the
functor $\Lambda\mapsto {\bb B}_m(\Lambda)\wotimes_{\B_m}W$.

\begin{remark}
If $0\to W_1\to W_2\to W_3\to 0$ is a strict  exact sequence  of reasonable topological $\B_m$-modules then the induced sequence
$$
0\to {\bb W}_1\to {\bb W}_2\to {\bb W}_3\to 0
$$
is an exact sequence of ${\bb B}_m$-Modules. Indeed, it suffices to check that, for sympathetic $\Lambda$, the sequence 
$$
0\to  {\bb B}_m(\Lambda)\wotimes_{\B_m}W_1
\to  {\bb B}_m(\Lambda)\wotimes_{\B_m}W
\to  {\bb B}_m(\Lambda)\wotimes_{\B_m}W_2\to 0
$$
is strict exact in $C_{\Q_p}$.  But this follows by reduction to $m=1$, where it is clear because  $W_*$'s are reasonable and $\Lambda$ is banach. 
\end{remark}

\begin{remark}\label{patch13}
 If $W$ is a topological $\B_m$-module and $w\in W$, one builds the pushout extension $E_w$ of $\B_d$ by $W$, using the
diagram
$$\xymatrix@R=4mm@C=5mm{0\ar[r]&\B_m\ar[r]\ar[d]^-{w}& \B_{m+d}\ar[r]\ar[d]&\B_d\ar[r]\ar@{=}[d]&0\\
0\ar[r]&W\ar[r]&E_w \ar[r]&\B_d\ar[r]&0}$$
i.e., $E_w =(\B_{m+d}\oplus W)/\{(t^dc,cw),\ c\in \B_m\}$.
It is clear from the description that $E_w $ is a $\bdr^+$-module; the extension splits if
$w=t^dw'$ and
the above construction gives an isomorphism $W/t^d\stackrel{\sim}{\to}{\rm Ext}^1_{\bdr^+}(\B_d,W)$.
The associated TVS extension is given by the diagram
$$\xymatrix@R=4mm@C=5mm{0\ar[r]&{\bb B}_m\ar[r]\ar[d]^-{w}
& {\bb B}_{m+d}\ar[r]\ar[d]&{\bb B}_d\ar[r]\ar@{=}[d]&0\\
0\ar[r]&{\bb W}\ar[r]&{\bb E}_w \ar[r]&{\bb B}_d\ar[r]&0}$$
\end{remark}

\begin{proposition}\label{patch14}Let $W$ be a reasonable $\B_m$-module. 
We have natural  isomorphisms
$${\rm Hom}_{\rm TVS}({\bb B}_d,{\bb W})\stackrel{\sim}{\leftarrow}{\rm Hom}_{\bdr^+}(\B_d, W)\stackrel{\sim}{\leftarrow}W[t^d]
\quad{\rm and}\quad
{\rm Ext}^1_{\rm TVS}({\bb B}_d,{\bb W})\stackrel{\sim}{\leftarrow}{\rm Ext}^1_{\bdr^+}(\B_d, W)\stackrel{\sim}{\leftarrow}W/t^d,$$  
where $W[t^i]$ denotes the $t^i$-torsion submodule of $W$.
\end{proposition}
\begin{proof}
We argue by d\'evissage, first on $m$ (we have the exact sequence
$0\to W[t]\to W\to W/W[t]\to 0$; if $W$ is a reasonable topological $\B_m$-module,
$W[t]$ and $W/W[t]$ are reasonable topological $\B_{m-1}$-modules), then on $d$.
The case $d=m=1$ is covered by Proposition~\ref{patch6} and Remark~\ref{patch13}.

$\bullet$ Let $0\to W_1\to W\to W_2\to 0$ be an exact sequence of reasonable $\bdr^+$-modules
with $W_1,W_2$ killed by $t^{m-1}$,
and let $0\to {\bb W}_1\to {\bb W}\to {\bb W}_2\to 0$ be the corresponding
sequence of TVS's.  Set $h^i(-)={\rm Ext}^i_{\rm TVS}({\bb B}_1,-)$.
We have a commutative diagram with exact rows:
$$\xymatrix@R=4mm@C=4mm{
0\ar[r]&W_1[t]\ar[r]\ar[d]^-{\wr}&W[t]\ar[r]\ar[d]&W_2[t]\ar[r]\ar[d]^-{\wr}
&W_1/t\ar[r]\ar[d]^-{\wr}&W/t\ar[r]\ar[d]&W_2/t\ar[r]\ar[d]^-{\wr}\ar[r]&0\\
0\ar[r]& h^0({\bb W}_1)\ar[r]& h^0({\bb W})\ar[r]& h^0({\bb W}_2)
\ar[r]& h^1({\bb W}_1)\ar[r]& h^1({\bb W})\ar[r]& h^1({\bb W}_2)}$$
where the vertical isomorphisms come from the induction hypothesis.
It follows that the remaining two vertical arrows  are also isomorphisms, which allows to run the induction on $m$ for $d=1$.

$\bullet$ Now, use the 
long exact sequence of ${\rm Ext}$ groups for the exact sequence 
$0\to {\bb B}_1\lomapr{t^{d-1}}{\bb B}_d\to{\bb B}_{d-1}\to 0$. 
and set $h^i(-)={\rm Ext}_{\rm TVS}^i(-,{\bb W})$.
We have a commutative diagram with exact rows:
$$\xymatrix@R=4mm@C=4mm{
0\ar[r]&W[t^{d-1}]\ar[r]\ar[d]^-{\wr}&W[t^d]\ar[r]\ar[d]&W[t]\ar[r]\ar[d]^-{\wr}
&W/t^{d-1}\ar[r]\ar[d]^-{\wr}&W/t^d\ar[r]\ar[d]&W/t\ar[r]\ar[d]^-{\wr}\ar[r]&0\\
0\ar[r]& h^0({\bb B}_{d-1})\ar[r]& h^0({\bb B}_d)\ar[r]& h^0({\bb B}_1)
\ar[r]& h^1({\bb B}_{d-1})\ar[r]& h^1({\bb B}_d)\ar[r]& h^1({\bb B}_1)}$$
where the vertical isomorphisms come from the induction hypothesis.
It follows that the remaining two vertical arrows are also isomorphisms, which allows to run the induction on $d$ with fixed $m$.

This concludes the proof.
\end{proof}

\subsection{The filtration on a qBC}
Let ${\mathbb W}=({\mathbb W}_1,{\mathbb W}_2)$ be a reasonable $\Bdr^+$-Pair. That is, both ${\bb W}_1$ and ${\bb W}_2$ are reasonable $\Bdr^+$-Modules and of the same type
(i.e., both fr\'echets or both of compact type). 

\begin{lemma}\label{patch19}We have a natural isomorphism 
${\rm Hom}_{\rm TVS}({\bb B}_d,{\bb W})\stackrel{\sim}{\leftarrow}(W_2/W_1)[t^d]$.
\end{lemma}
\begin{proof}
We have the following diagram with exact rows, in which the middle row is
computed in the left heart of the category of topological $\bdr^+$-modules.
The first row is the
the beginning of the long exact sequence
of ${\rm Ext}_{\bdr^+}(\B_d,-)$ (without topology)
associated to the exact sequence $0\to W_1\to W_2\to W_2/W_1\to 0$.
And ${\rm Hom}$ and ${\rm Ext}^1$ in the last row stand for
${\rm Hom}_{\rm TVS}$ and ${\rm Ext}_{\rm TVS}^1$.
$$\xymatrix@R=4mm@C=5mm{
W_1[t^d]\ar[r]\ar[d]^-{\wr}
&W_2[t^d]\ar[r]\ar[d]^-{\wr}
&(W_2/W_1)[t^d]\ar[r]\ar[d]
&W_1/t^d\ar[r]\ar[d]^-{\wr}
&W_2/t^d\ar[d]^-{\wr}\\
{\rm Hom}({\B}_d,{W}_1)\ar[r]\ar[d]^{\wr}
&{\rm Hom}({\B}_d,{W}_2)\ar[r]\ar[d]^{\wr}
& {\rm Hom}({\B}_d,{W})\ar[r]\ar[d]
&{\rm Ext}^1({\B}_d,{W}_1)\ar[r]\ar[d]^{\wr}
&{\rm Ext}^1({\B}_d,{W}_2)\ar[d]^{\wr}\\
{\rm Hom}({\bb B}_d,{\bb W}_1)\ar[r]
&{\rm Hom}({\bb B}_d,{\bb W}_2)\ar[r]
& {\rm Hom}({\bb B}_d,{\bb W})\ar[r]
&{\rm Ext}^1({\bb B}_d,{\bb W}_1)\ar[r]
&{\rm Ext}^1({\bb B}_d,{\bb W}_2)
}$$
By Proposition~\ref{patch14} (and Remark~\ref{patch13}), 
all vertical arrows are isomorphisms except the middle ones.
We conclude by the 5 Lemma that the same is true for the middle arrows.
\end{proof}

\begin{lemma}\label{patch14.5}
{\rm (i)}
If ${\bb W}$ is a reasonable $\Bdr^+$-Module and $W={\bb W}(C)$, then
$${\rm Hom}_{\rm TVS}({\bb U}_d,{\bb W})\stackrel{\sim}{\leftarrow}W, \quad {\rm Ext}^1_{\rm TVS}({\bb U}_d,{\bb W})\stackrel{\sim}{\leftarrow}W, \quad d\in\N.$$

{\rm (ii)} If ${\bb W}=({\bb W}_1,{\bb W}_2)$ is a reasonable $\Bdr^+$-Pair,
then $${\rm Hom}_{\rm TVS}({\bb U}_d,{\bb W})\stackrel{\sim}{\leftarrow}W_2/W_1.$$
\end{lemma}
\begin{proof}For (i), 
choose $m$ such that $t^m$ kills ${\bb W}$. The injection ${\bb U}_d\hookrightarrow{\bb B}_{m+d}$
gives rise to a commutative diagram with exact rows, in which $h^0(-):={\rm Hom}_{\rm TVS}(-,{\bb W})$
and $h^1(-):={\rm Ext}^1_{\rm TVS}(-,{\bb W})$
$$\xymatrix@R=4mm@C=4mm{
0\ar[r]& h^0({\bb B}_d)\ar[r]&h^0({\bb U}_d)\ar[r]&h^0(\Q_pt^d)\ar[r]&
h^1({\bb B}_d)\ar[r]&h^1({\bb U}_d)\ar[r]&h^1(\Q_pt^d)\\
0\ar[r]& h^0({\bb B}_d)\ar[r]\ar@{=}[u]&h^0({\bb B}_{d+m})\ar[r]\ar[u]&h^0(t^d{\bb B}_m)\ar[r]\ar[u]&
h^1({\bb B}_d)\ar[r]\ar@{=}[u]&h^1({\bb B}_{d+m})\ar[r]\ar[u]&h^1(t^d{\bb B}_m)\ar[u]\\
0\ar[r]&W[t^d]\ar[u]_-{\wr}\ar[r]&W\ar[u]_-{\wr}\ar[r]^-{t^d}&W\ar[u]_-{\wr}\ar[r]
&W/t^d\ar[u]_-{\wr}\ar[r]&W\ar[u]_-{\wr}\ar[r]&W\ar[u]_-{\wr}
}$$
The isomorphisms between the second and third rows come from Proposition~\ref{patch14}
and the assumption that $t^m$ kills ${\bb W}$.

Now, $h^1(\Q_pt^d)=0$
and $h^0(t^d{\bb B}_{m})\to h^0(\Q_pt^d)$ is an isomorphism. 
Hence $h^0(\Q_pt^d)\to h^1({\bb B}_d)$ is surjective, and $h^1({\bb U}_d)=0$.
An application of the $5$ lemma shows that $h^0({\bb B}_{d+m})\to h^0({\bb U}_d)$ is
an isomorphism which finishes the proof of (i).

Since (ii) is a direct consequence of (i) and the exact sequence of Ext's, this concludes
the proof.
\end{proof}

\begin{proposition}\label{patch20}
If ${\mathbb Y}$ is a BC of curvature~$>0$ and if ${\mathbb W}$ is a
reasonable $\Bdr^+$-Pair,
then ${\rm Hom}_{\rm TVS}({\mathbb Y},{\mathbb W})=0$.
\end{proposition}
\begin{proof}
${\mathbb Y}$ is a direct sum of BC's of the form ${\mathbb B}_d/\Q_{p^h}$ or ${\bb U}_d/\Q_pt_x^d$ 
with $x\neq \infty$ (i.e.~$\theta(t_x)\neq 0$), hence we may assume
${\mathbb Y}$ to be of one of these forms.  

$\bullet$ If ${\bb Y}={\mathbb B}_d/\Q_{p^h}$, we
have an exact sequence:
$$0\to {\rm Hom}_{\rm TVS}({\mathbb Y},{\mathbb W})\to 
{\rm Hom}_{\rm TVS}({\mathbb B}_d,{\mathbb W})\to {\rm Hom}_{\rm TVS}(\Q_{p^h},{\mathbb W})$$
and it is enough to show the injectivity of the last map.
We have ${\rm Hom}_{\rm TVS}(\Q_{p^h},{\mathbb W})={\mathbb W}(C)^{\oplus h}$
and Lemma~\ref{patch19} shows that
${\rm Hom}_{\rm TVS}({\mathbb B}_d,{\mathbb W})=
{\mathbb W}(C)[t^d]$ which injects into ${\mathbb W}(C)={\rm Hom}_{\rm TVS}(\Q_{p},{\mathbb W})$
which is a quotient of ${\rm Hom}_{\rm TVS}(\Q_{p^h},{\mathbb W})$.

$\bullet$ If ${\bb Y}={\bb U}_d/\Q_pt_x^d$, we
have an exact sequence:
$$0\to {\rm Hom}_{\rm TVS}({\mathbb Y},{\mathbb W})\to
{\rm Hom}_{\rm TVS}({\mathbb U}_d,{\mathbb W})\to {\rm Hom}_{\rm TVS}(\Q_{p}t_x^d,{\mathbb W})$$
The induced map $W\to W$ coming from ${\rm Hom}_{\rm TVS}({\mathbb U}_d,{\mathbb W})
\simeq {\rm Hom}_{\rm TVS}(\Q_{p},{\mathbb W})=W$ is multiplication by $t_x^d$, hence
is an isomorphism since $\theta(t_x)\neq 0$. It follows 
that ${\rm Hom}_{\rm TVS}({\mathbb Y},{\mathbb W})=0$, as wanted.
\end{proof}

\begin{proposition}\label{filt}
Let ${\bb W}$ be a  reasonable qBC, i.e., ${\bb W}_{=0}$ is a reasonable $\Bdr^+$-pair. 

{\rm (i)} ${\bb W}_{>0}$ is the largest sub-BC of ${\bb W}$ of curvature~$>0$.

{\rm (ii)} ${\bb W}_{<0}$ is the largest quotient-BC of ${\bb W}$ of curvature~$<0$.
\end{proposition}
\begin{proof}
Let ${\bb Y}\hookrightarrow{\bb W}$ be a BC of curvature~$>0$.
The map ${\bb Y}\to {\bb W}_{<0}$ is $0$ by Remark~\ref{ppp1}, hence ${\bb Y}\hookrightarrow
{\bb W}_{\geq 0}$. But the induced map ${\bb Y}\to {\bb W}_{=0}$ is $0$
by Proposition~\ref{patch20}, hence ${\bb Y}\hookrightarrow{\bb W}_{>0}$,
which proves (i).

Now, let ${\bb W}\to {\bb Y}$ be a surjective map of TVS's, with ${\bb Y}$
a BC of curvature~$<0$.  As above, the restriction ${\bb W}_{>0}\to{\bb Y}$
is $0$, hence ${\bb W}\to {\bb Y}$ factors through ${\bb W}_{\leq 0}$.

If ${\bb W}_{=0}=({\bb W}_1,{\bb W}_2)$, we
have ${\rm Hom}_{\rm TVS}({\bb W}_{=0},{\bb Y})
\hookrightarrow {\rm Hom}_{\rm TVS}({\bb W}_2,{\bb Y})$,
and a morphism ${\mathbb W}_{=0}\to {\mathbb Y}$ as above is $0$
on any sub-$\Bdr^+$-Module of finite length of ${\mathbb W}_2$; since the union of
these modules is dense in ${\mathbb W}_2$, the induced morphism
${\mathbb W}_2\to {\mathbb Y}$ is identically $0$, 
and so is ${\mathbb W}_{=0}\to {\mathbb Y}$;
hence ${\bb W}\to {\bb Y}$ factors through ${\bb W}_{<0}$,
which proves (ii).
\end{proof}

\subsection{Morphisms of qBC's}
We show here that morphisms between reasonable qBCs of the same type behave as expected. 
\subsubsection{Extensions of reasonable $\Bdr^+$-Modules}
\begin{proposition}\label{iso1a}
If ${W}_1$ and ${W_2}$ are reasonable topological $\B_m$-modules of the same type,
and ${\bb W}_1$ and ${\bb W_2}$ are the associated topological ${\bb B}_m$-Modules,
the natural maps 
\begin{align}
\label{IAS1}
{\rm Hom}_{\bdr^+}(W_2,W_1)\to {\rm Hom}_{\Bdr^+}({\bb W}_2&,{\bb W}_1)
\to{\rm Hom}_{\rm TVS}({\bb W}_2,{\bb W}_1)\\
{\rm Ext}^1_{\bdr^+}(W_2,W_1)\to {\rm Ext}^1_{\Bdr^+}({\bb W}_2&,{\bb W}_1)
\to{\rm Ext}^1_{\rm TVS}({\bb W}_2,{\bb W}_1)\notag
\end{align}
are isomorphisms. 
\end{proposition}
\begin{proof}
Let $\alpha:{\bb W}_2\to {\bb W}_1$ be a morphism of TVS's. To prove
the statement
about Hom's we just have to check that $\alpha$
is $\Bdr^+$-linear.  Linearity can be tested on finite rank
submodules, and a finite rank sub-$\bdr^+$-module $Y$ of $W_2$ is of the form
$\B_{d_1}e_1\oplus\cdots\oplus\B_{d_s}e_s$, hence the sub-$\Bdr^+$-module ${\bb Y}$ generated
by $Y$ is ${\bb B}_{d_1}e_1\oplus\cdots\oplus {\bb B}_{d_s}e_s$. It follows
from Proposition~\ref{patch14} that the restriction of $\alpha$ to ${\bb Y}$
is $\Bdr^+$-linear and  the restriction of $\alpha_C$ to $Y$ is
$\bdr^+$-linear.  This proves that $\alpha_C$ is $\bdr^+$-linear,
and that for any sympathetic $\Lambda$ the restriction of $\alpha_\Lambda$
to $\Bdr^+(\Lambda)\otimes_{\bdr^+}W_2$ is $\Bdr^+(\Lambda)$-linear.
Since $\Bdr^+(\Lambda)\otimes_{\bdr^+}W_2$ is dense in ${\bb W}_2(\Lambda)$,
and since $\alpha_\Lambda$ is continuous, it follows that $\alpha_\Lambda$
is $\Bdr^+(\Lambda)$-linear, and that $\alpha$ is $\Bdr^+$-linear, as wanted.

The statement about ${\rm Ext}^1$'s is much more delicate.
Let ${\bb W}={\bb W}_0$ be a TVS-extension of ${\bb W}_2$ by ${\bb W}_1$, and set $W:={\bb W}(C)$.
It follows from Proposition~\ref{patch14} that $W$ is naturally a $\bdr^+$-module:
indeed it is enough to check that, for elements  $v_1,\dots, v_r$  of $W_2$, if
$Y$ is the $\bdr^+$-submodule they generate, then the inverse image $\widetilde Y$ of $Y$
in $W$ is a $\bdr^+$-module; but $Y=\B_{d_1}e_1\oplus\cdots\oplus \B_{d_s}e_s$
and if we let ${\bb Y}$ be the sub-$\Bdr^+$-Module 
${\bb B}_{d_1}e_1\oplus\cdots\oplus {\bb B}_{d_s}e_s$ of ${\bb W}_2$,
Proposition~\ref{patch14} implies that its inverse image $\widetilde{\bb Y}$ in
${\bb W}$ is a $\Bdr^+$-Module; hence $\widetilde Y=\widetilde{\bb Y}(C)$ is a $\bdr^+$-module.
This gives a map
$${\rm Res}_C:{\rm Ext}^1_{\rm TVS}({\bb W}_2,{\bb W}_1)\to {\rm Ext}^1_{\bdr^+}(W_2,W_1)$$
inverse to the natural map, which proves that ${\rm Res}_C$ is surjective.

It  remains to show that ${\rm Res}_C$ is injective. That is to say: if
${\bb W}$ is an TVS-extension of ${\bb W}_2$ by ${\bb W}_1$, and if 
$W\to W_2$ admits a continuous $\bdr^+$-linear section, then ${\bb W}$ splits.


First assume that $W_1,W_2$ are killed by $t$ (i.e.~are  topological $C$-vector spaces).
If $i=0,1,2$, let $\ell({\bb W}_i)$ be the set of additive elements of rank $0$ of
${\bb W}_i(\overline{C\{X\}})$, i.e. the set of $\ell\in {\bb W}_i(\overline{C\{X\}})$,
such that $\ell(\tau)=0$ if $\tau\in H_{C\{X\}}$ and $\ell(\sigma\tau)=\ell(\sigma)+\ell(\tau)$
for all $\sigma,\tau\in T_C$.  
Since $\ell({\bb B}_1)=\{c X,\ c\in C\}$, the map $v\mapsto \ell_v=Xv$ gives an isomorphism
$W_i\overset{\sim}{\to} \ell({\bb W}_i)$, if $i=1,2$.

 In the other direction, 
note that, if $\ell\in\ell({\bb W}_i)$, then
 $\tau\mapsto\ell(\tau)$ is a continuous group morphism from
$T_C$ to $W_i$, which factors through $T_C/H_{C\{X\}}=\O_C$, hence can be
seen as a continuous morphism of $\Z_p$ modules $\ell:\O_C\to W_i$.
If $i=1,2$, then $\ell\mapsto \ell(1)$ from $\ell({\bb W}_i)$ to $W_i$
is the inverse of the above map.

Now, 
$\ell({\bb W})\to \ell({\bb W}_2)$ is surjective (if $v\in W_2$ and $\ell_v=Xv$,
Remark~\ref{sympa9} -- adapted to the setting of Proposition~\ref{patch6} --
 produces $\ell\in {\bb W}(\overline{C\{X\}})$ lifting $\ell_v$,
additive of rank~$\leq 1$, and of rank $1$ if and only if the extension
of $Cv$ by $W_1$ is not split (as a $\bdr^+$-module)).
Hence $\ell\mapsto \ell(1)$ gives rise to an isomorphism
$\ell({\bb W})\overset{\sim}{\to} W$.
But ${\bb W}(\overline{C\{X\}})$ is reasonable (cf.~Appendix~\ref{extlh}),
and $\ell({\bb W})$ is closed in ${\bb W}(\overline{C\{X\}})$
as can be seen from the definition. It follows that $\ell({\bb W})$
is also reasonable, and the open mapping theorem implies that 
 the above isomorphism is a topological isomorphism.
Composing the section $W_2\to W$ with the inverse of the above isomorphism
produces a continuous $\Q_p$-linear map
$v\mapsto \ell_v$ from $W_2$ to $\ell({\bb W})$,
with the property that $\ell_v(1)=v\in W_2\subset W$.
This also defines a splitting $\ell({\bb W})=\ell({\bb W}_1)\oplus\ell({\bb W}_2)$,
which allow to 
define a $C$-module structure on $\ell({\bb W})$ by setting $c\cdot\ell_v=\ell_{cv}$
($\ell({\bb W}_1)$ is already a $C$-module).
Then, by definition, $(c\cdot\ell_v)(x)=x\,cv=c\,xv=c\,\ell_v(x)\in W_2$, if $x\in\O_C$.

Let now $\Lambda$ be a sympathetic algebra.
The action of $H_{\Lambda\{X\}}$ on $\ell_v$ factors through $H_{C\{X\}}$ since
$\ell_v\in {\bb W}(\overline{C\{X\}})$; hence it is trivial. 
It follows that
$\tau\mapsto s_0(\tau(\ell_v))$ factors through $\O_\Lambda$ and defines
a continuous function $x\mapsto\ell_v(x)$ on $\O_\Lambda$, with values
in ${\bb W}(\Lambda)$.  
We will need the following lemma.

\begin{lemma}\label{sympa3}
$\ell_v:\O_\Lambda\to {\bb W}(\Lambda)$ is a continuous group morphism;
moreover it is $C$-linear in the sense that
$\ell_v(cx)=(c\cdot\ell_v)(x)$ for all $c\in \O_C$
and $x\in\O_\Lambda$.
\end{lemma}
\begin{proof}
Let $\iota_\Lambda:\overline{C\{X\}}\to \overline{\Lambda\{X\}}$ be the
natural injection. Then
$\tau\mapsto (\tau-1)\iota_\Lambda(\ell_v)$ is a continuous $1$-cocycle 
on $T_\Lambda$ with values in
${\bb W}(\overline{\Lambda\{X\}})$.  Now, if $\alpha:\Lambda\to C$ is continuous,
we can extend $\alpha$ (non uniquely) to $\tilde \alpha:\overline{\Lambda\{X\}}\to
\overline{C\{X\}}$ with $\tilde \alpha(X)=X$.
In particular $\tilde \alpha\circ\tau\circ\iota_\Lambda\in T_C$ and its image in $\O_C$
is $\alpha(x(\tau))$ (remember that $x(\tau)\in\Lambda$).

Then $\tilde \alpha((\tau-1)\circ\iota_\Lambda(\ell_v))=\ell_v(\alpha(x(\tau))$.
It follows that $(\tau-1)\iota_\Lambda(\ell_v)-\ell_v(x(\tau))$ is killed by
$\tilde \alpha$ for all $\tilde \alpha$, which implies that it is $0$.
Hence the above cocycle is $\tau\mapsto\ell(x(\tau))$ and is a morphism of
groups since $T_\Lambda$ acts trivially on ${\bb W}(\Lambda)$.
This proves the first statement.

To prove the second statement,
let $c\in\O_C^\dual$, and choose 
an isomorphism $\tilde c:\overline{\Lambda\{X\}}\to \overline{\Lambda\{X\}}$
with $\tilde c(X)=cX$ (this implies that $\tilde c$ induces an automorphism
of $\overline{C\{X\}}$).  Note that, if $\tilde c_1,\tilde c_2$ are two such isomorphisms,
then $\tilde c_1^{-1}\circ\tilde c_2\in H_{\Lambda\{X\}}$. It follows that $\tilde c(\ell_v)$
only depends on $c$ and not on the choice of $\tilde c$.
Fix also $\alpha_\Lambda: \overline{\Lambda\{X\}}\to \Lambda$ with $\alpha_\Lambda(X)=0$

Let us check that $\tilde c(\ell_v)=\ell_{cv}$.
If $\tau\in T_\Lambda$, then $\tilde c^{-1}\tau\tilde c\in T_\Lambda$ and 
\begin{align*}
x(\tilde c^{-1}\tau\tilde c)&=\alpha_\Lambda(\tilde c^{-1}\tau\tilde c(X))=
\alpha_\Lambda(\tilde c^{-1}\tau(cX))\\
&=\alpha_\Lambda(\tilde c^{-1}(c(X+x(\tau))))
=\alpha_\Lambda(X+c\,x(\tau))=c\,x(\tau)
\end{align*}
Hence 
$$\tau(\tilde c(\ell_v))=\tilde c(\tilde c^{-1}\tau\tilde c(\ell_v))=
\tilde c(\ell_v+\ell_v(c\,x(\tau)))=\tilde c(\ell_v)+\ell_v(c\,x(\tau))$$
Now, if $\Lambda=C$, then $\ell_v(c\,x(\tau))=c\,x(\tau)\,v=\ell_{cv}(x(\tau))$
It follows that $\tilde c(\ell_v)-\ell_{cv}$ is invariant by $T_C$, hence belongs to
$W$. Moreover, $\alpha_C\circ \tilde c:\overline{C\{X\}}\to C$ sends $X$ to $0$,
hence there existe $\sigma\in H_{C\{X\}}$ such that $\alpha_C\circ\tilde c=\alpha_C\circ\sigma$;
it follows that $\alpha_C(\tilde c(\ell_v))=\alpha_C(\ell_v)=0$ since $\ell_v$ is additive.
Since $\alpha_C(\ell_{cv})=0$ because $\ell_{cv}$ is also additive, we get that
$\alpha_C(\tilde c(\ell_v)-\ell_{cv})=0$, hence $\tilde c(\ell_v)-\ell_{cv}=0$.
Hence
$$\ell_v(c\, x(\tau))=(\tau-1)\cdot(\tilde c(\ell_v))=(\tau-1)\cdot\ell_{cv}=\ell_{cv}(x(\tau))$$
which concludes the proof of the lemma.
\end{proof}

Let us come back to the proof of the proposition:
we are going to use the lemma to split ${\bb W}(\Lambda)\to{\bb W}_2(\Lambda)$.
Let $(e_i)_{i\in I}$ be an ``orthornormal'' basis of $\O_\Lambda$ over $\O_C$
(i.e.~$\inf_i{v_p(x_i)}\leq v_{\Lambda}(\sum_ix_ie_i)\leq 1+\inf_i{v_p(x_i)}$).
Then any element of ${\bb W}_2(\Lambda)$ can be written uniquely as
$\sum_i e_i v_i$, with $v_i\in W_2$ tending to $0$ when $i\to\infty$.
Now $\ell_{v_i}(e_i)\to 0$ in ${\bb W}(\Lambda)$ (one can find $k_i\in\N$, with $k_i\to\infty$,
such that $p^{-k_i}v_i$ is bounded in $W_2$, and $s_i:\overline{C\{X\}}\to \Lambda$
with $s_i(X)=e_i$, then $p^{-k_i}\ell_{v_i}(e_i)=s_i(\ell_{p^{-k_i}v_i})$ is bounded
in ${\bb W}(\Lambda)$, and $\ell_{v_i}(e_i)\to 0$).
Hence we can set $s(\sum_i e_i v_i)=\sum_i \ell_{v_i}(e_i)$. 
Then we have $s(\sum_j f_jw_j)=\sum_j\ell_{w_j}(f_j)$ for any
family $(f_j)_{j\in J}$ bounded in $\Lambda$ and any family
$(w_j)_{j\in J}$ converging to $0$ in $W_2$. Indeed,
write $f_j=\sum_ia_{i,j}e_i$ and $v_i=\sum_ia_{i,j}w_j$ (then $a_{i,j}$ is bounded
and, for all $j$, $a_{i,j}\to 0$ when $i\to\infty$, which implies that
$v_i\to 0$ when $i\to\infty$).
Then $\sum_j f_jw_j=\sum_{i,j}a_{i,j}e_iw_j=\sum_ie_iv_i$.
But 
$$\sum_j\ell_{w_j}(f_j)=\sum_j\ell_{w_j}(\sum_ia_{i,j}e_i)=
\sum_{i,j}\ell_{a_{i,j}w_j}(e_i)=
\sum_i\ell_{\sum_j a_{i,j}w_j}(e_i)=\sum_i\ell_{v_i}(e_i)$$ 
This shows,
in particular, that $s$ does not depend on the choice of $(e_i)_i$.
It also implies that $s$ is functorial in $\Lambda$.  This finishes the proof
of injectivity of ${\rm Res}_C$
in the case $W_1,W_2$ killed by $t$.

Injectivity of ${\rm Res}_C$ in
the general case follows by d\'evissage, using the long exact sequences of
${\rm Ext}$'s and arguing by induction first on $m_1$, then on $m_2$,
if $W_i$ is killed by $t^{m_i}$.
\end{proof}

\subsubsection{Morphisms of reasonable $\Bdr^+$-Pairs}
\begin{theorem}\label{iso2}
Let ${\bb W}=({\bb W}_1,{\bb W}_2)$ and ${\bb W}'=({\bb W}'_1,{\bb W}'_2)$ be
reasonable $\Bdr^+$-Pairs of the same type, and let $W,W'$ be the associated $\bdr^+$-pairs. Then the natural maps
$${\rm Hom}_{\bdr^+}(W,W')\to{\rm Hom}_{\Bdr^+}({\bb W},{\bb W}')\to{\rm Hom}_{\rm TVS}({\bb W},{\bb W}')$$
 are isomorphisms.
\end{theorem}
\begin{proof}The short exact sequence (in ${\rm LH}(C_{\Q_p})$)
$$
0\to W_1\to W_2\to W \to 0
$$
yields the isomorphism
$${\rm Hom}_{\bdr^+}(W,W')\stackrel{\sim}{\to}{\rm Ker}\big({\rm Hom}_{\bdr^+}(W_2,W')\to {\rm Hom}_{\bdr^+}(W_1,W')\big).$$
Similarly  for ${\rm Hom}_{\Bdr^+}$ and ${\rm Hom}_{\rm TVS}$.  
Hence, we are reduced to showing the claim of the theorem  for $W_1,W_2$;
in other words, we may suppose $W$ to be a reasonable $\bdr^+$-module. 

  Now we have a map of long exact sequences (in the first line we work in the category of $\bdr^+$-pairs
and in the second in the category of Topological Vector Spaces):
$$\xymatrix@R=4mm@C=5mm{
0\ar[r] &  {\rm Hom}({W},{W'}_1)\ar[r]\ar[d]^-{\wr}
&{\rm Hom}({W},{W'}_2)\ar[r]\ar[d]^-{\wr}
& {\rm Hom}({W},{W'})\ar[r]\ar[d]
&{\rm Ext}^1({W},{W'}_1)\ar[r]\ar[d]^-{\wr}
&{\rm Ext}^1({W},{W'}_2)\ar[d]^-{\wr}\\
0\ar[r] & {\rm Hom}({\bb W},{\bb W'}_1)\ar[r]
&{\rm Hom}({\bb W},{\bb W'}_2)\ar[r]
& {\rm Hom}({\bb W},{\bb W'})\ar[r]
&{\rm Ext}^1({\bb W},{\bb W'}_1)\ar[r]
&{\rm Ext}^1({\bb W},{\bb W'}_2)
}$$
with all vertical arrows isomorphisms by Proposition~\ref{iso1a} except the middle one.
It follows, thanks to the 5 Lemma, that the middle
arrow is also an isomorphism.
\end{proof}

\begin{corollary}\label{iso5}
Let ${\bb W}'\to{\bb W}''$ be a morphism of TVS's where ${\bb W}',{\bb W}''$ are qBCs of the same type.
Then ${\bb W}'_{>0}$ maps to ${\bb W}''_{>0}$, ${\bb W}'_{\geq0}$ maps to ${\bb W}''_{\geq0}$,
and the induced map ${\bb W}'_{=0}\to{\bb W}''_{=0}$ is $\Bdr^+$-linear.
\end{corollary}
\begin{proof}
The first part of the claim is proved in the same way as Proposition~\ref{filt}.
The last part follows from Theorem~\ref{iso2}.
%
%
\end{proof}
 
\section{Filtered $(\varphi,N)$-modules}\label{FN}
In this chapter we study  filtered $(\varphi,N,\sg_K)$-modules
over $K$ or $C$ and their relations to the categories of almost $C$-representations
and BC's. In particular, 
we introduce the notion of acyclic $(\phi,N,\G_K)$-modules 
as a generalization of weakly-admissible $(\phi,N,\G_K)$-modules. 
While the $(\phi,N,\G_K)$-modules $(H^i_{\rm HK}(X_C),H^i_{\rm dR}(X))$ 
coming from algebraic geometry 
tend to be weakly admissible those coming from overconvergent geometry tend to  
be only acyclic (as shown later in this paper).

The results of this chapter will be crucial for the proofs of our results towards
the $C_{\rm st}$-conjecture. In particular, Theorem~\ref{new-tate2} and its corollaries
(resp.~Proposition~\ref{baco3.7}) 
will be used to study the pro-\'etale-to-de Rham part of the $C_{\rm st}$-conjecture
for varieties over $K$ (resp.~over $C$). 

\subsection{Filtered $(\varphi,N,\sg_K)$-modules over $K$} \label{FN1}

\subsubsection{Filtered $(\varphi,N,\G_K)$-modules}\ \label{FN2}

$\bullet$ {\it $(\varphi,N,\G_K)$-modules}.
A $(\varphi,N)$-module over $F$ or $F^{\rm nr}$
is a finite dimensional $F$-module or $F^{\rm nr}$-module  $M$ endowed with a bijective Frobenius
$\varphi:M\to M$, semilinear with respect to the absolute Frobenius on $F$ or $F^{\rm nr}$, and a 
linear map $N:M\to M$ satisfying $N\varphi=p\,\varphi N$.

More generally,  a $(\varphi,N,\G_K)$-module over $F^{\rm nr}$ 
is a $(\varphi,N)$-module over $F^{\rm nr}$ endowed
with a smooth\footnote{This means that the stabilizers of elements of $M$ are
open in $\G_K$.} semilinear action of $\G_K$ which commutes with $\varphi$ and $N$.

If $M$ is a $(\varphi,N,\G_K)$-module over $F$ or $F^{\rm nr}$, we define its dual $M^\dual$
as ${\rm Hom}_F(M,F)$ endowed with actions of 
$\varphi$, $N$ and $\G_K$ given by
$$\langle\varphi(\mu),v\rangle=\varphi(\langle\mu,\varphi^{-1}(v)\rangle),
\quad \langle N(\mu),v\rangle=-\langle\mu,N(v)\rangle,\quad
\langle\sigma(\mu),v\rangle=\sigma(\langle\mu,\sigma^{-1}(v)\rangle),
\ {\text{if $\sigma\in\G_K$.}}$$

$\bullet$ {\it Filtered modules}.
A filtered module $(M,{\rm Fil}^\bullet)$ over $K$ is a $K$-module $M$
together with a descending filtration ${\rm Fil}^\bullet$ on $M$ by sub-$K$-modules
${\rm Fil}^iM$, with ${\rm Fil}^iM=M$ if $i\ll 0$
and ${\rm Fil}^iM=0$ if $i\gg 0$.

If $(M, {\rm Fil}^\bullet)$ is a filtered module over $K$,
we define the dual filtered module $(M^\dual, {\rm Fil}_\perp^\bullet)$ 
by endowing 
the $K$-dual $M^\dual={\rm Hom}_K(M,K)$ of $M$, with the filtration
$${\rm Fil}_\perp^iM^\dual=({\rm Fil}^{1-i}M)^{\perp}.$$ 
If the filtration is obvious from the context, we don't indicate it in the notation; for example,
the de Rham cohomology of a variety $X$ over $K$ is a filtered module over $K$ if we
endow it with the Hodge filtration, and will just be denoted by $H^\bullet_{\rm dR}(X)$,
the Hodge filtration being taken for granted.

$\bullet$ {\it Filtered $(\varphi,N,\G_K)$-modules}.
A filtered
$(\varphi,N)$-module $(M,{\rm Fil}^\bullet)$ over $K$ 
is a $(\varphi,N)$-module $M$ over $F$ with
a structure of filtered module over $K$ on $M_K=M\otimes_F K$.

A filtered $(\varphi,N,\G_K)$-module $(M,{\rm Fil}^\bullet)$ over $K$ 
is a $(\varphi,N,\G_K)$-module $M$ over $F^{\rm nr}$ with
a structure of filtered module over $K$ on $M_K=(M\otimes_{F^{\rm nr}}\overline K)^{\G_K}$.

If $(M,{\rm Fil}^\bullet)$ is a filtered $(\varphi,N,\G_K)$-module over $K$,
we define its dual $(M^\dual, {\rm Fil}_\perp^\bullet)$ as the
dual $(\varphi,N,\G_K)$-module $M^\dual$ with the module
$M_K^\dual=(M_K)^\dual$ endowed with the filtration ${\rm Fil}_\perp^\bullet$.

As before, if the filtration is obvious from the context, we will use sometimes
just $M$ to denote a filtered
$(\varphi,N)$-module $(M,{\rm Fil}^\bullet)$ over $K$ or sometimes
$(M,M_K)$ as in the case of de Rham cohomology:
if $X$ is a smooth quasi-compact dagger variety over $K$ then $(H^i_{\rm HK}(X_C),H^i_{\rm dR}(X))$ is a
filtered $(\varphi,N,\G_K)$-module over $K$ thanks to the Hyodo-Kato isomorphism.

\subsubsection{Acyclicity and admissibility}\label{FN3}
If $M$ is a filtered $(\varphi,N,\G_K)$-module over $K$, {\it the rank ${\rm rk}(M)$
of  $M$} is the dimension of $M$ over $F^{\rm nr}$.
If $M$ has rank~$1$, one
defines the {\it degree $\deg(M)$ of $M$} by the formula
$$\deg( M):=t_N(M)-t_H(M),$$
where $t_N(M)$ et $t_H(M)$ are defined by choosing a basis $e$ of $M$ over $F^{\rm nr}$:

\quad $\bullet$ there exists
$\lambda\in (F^{\rm nr})^\dual$ such that $\varphi(e)=\lambda e$, and we set $t_N(M)=v_p(\lambda)$;

\quad $\bullet$
there exists $i\in\Z$, unique, such that $e\in M_K^i-M_K^{i+1}$, and we set
$t_H(M)=i$.

If $M$ has rank $r\geq 2$, then $\det M=\wedge^rM$ is of rank~$1$, and one defines 
{\it the degree of $M$} by:
\begin{align*}
\deg(M):=&\ \deg(\det M)=
t_N(M)-t_H(M),\\
t_N(M):=t_N(\det(M)),\quad&
t_H(M):=t_H(\det M)=\sum_{i\in\Z}i\dim_KM_K^i/M_K^{i+1}.
\end{align*}
Endowed with the rank and degree functions,
the category of filtered $(\varphi,N,\G_K)$-modules over $K$ is a
 Harder-Narasimhan $\otimes$-category.

\begin{definition}
A filtered $(\varphi,N,\G_K)$-module over $K$ is said to be {\it weakly admissible}
if it is semi-stable of slope~$0$ (a reformulation~\cite{faltings}
of the original notion~\cite{Fo78}).
It is said to be {\it acyclic} if its Harder-Narasimhan slopes are~$\geq 0$.
\end{definition}
\begin{remark}
A weakly admissible filtered $(\varphi,N,\G_K)$-module is acyclic; conversely
an acyclic filtered $(\varphi,N,\G_K)$-module is weakly admissible if and only if
it is of degree $0$.
\end{remark}

\begin{lemma}\label{FN4}
The following conditions are equivalent for a filtered $(\varphi,N,\G_K)$-module 
$(M,{\rm Fil}^\bullet)$ over $K$:

{\rm (a)} $(M,{\rm Fil}^\bullet)$ is acyclic.

{\rm (b)} There exists a filtration ${\rm Fil}_1^\bullet$ on $M_K$ such that
${\rm Fil}_1^iM_K\subset {\rm Fil}^iM_K$ for all $i$, and $(M,{\rm Fil}_1^\bullet)$
is weakly admissible.
\end{lemma}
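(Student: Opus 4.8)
The plan is to prove the equivalence of (a) acyclicity of $(M,\mathrm{Fil}^\bullet)$ and (b) the existence of a smaller filtration $\mathrm{Fil}_1^\bullet$ that is weakly admissible, by a standard Harder--Narasimhan slope argument, pushing the filtration down until the slopes become exactly zero.

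\textbf{From (b) to (a).} Suppose $\mathrm{Fil}_1^\bullet$ exists with $\mathrm{Fil}_1^i M_K\subseteq\mathrm{Fil}^i M_K$ for all $i$ and $(M,\mathrm{Fil}_1^\bullet)$ weakly admissible. For any $(\varphi,N,\G_K)$-submodule $M'\subseteq M$, the induced filtration satisfies $\mathrm{Fil}_1^i M'_K\subseteq\mathrm{Fil}^i M'_K$, so $t_H^{\mathrm{Fil}}(M')\geq t_H^{\mathrm{Fil}_1}(M')$ (a bigger filtration has bigger Hodge number, since $t_H=\sum_i i\dim(M_K^i/M_K^{i+1})$ increases when each step grows). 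Hence $\deg_{\mathrm{Fil}}(M')=t_N(M')-t_H^{\mathrm{Fil}}(M')\leq t_N(M')-t_H^{\mathrm{Fil}_1}(M')=\deg_{\mathrm{Fil}_1}(M')\leq 0$ by weak admissibility. Wait — that gives slopes $\leq 0$, the wrong sign; so the correct bookkeeping is that enlarging the filtration \emph{decreases} the degree, and one should phrase it as: passing from $\mathrm{Fil}_1$ to the larger $\mathrm{Fil}$ can only decrease $\deg$ of every subobject, hence the maximal slope of $(M,\mathrm{Fil})$ is $\leq$ that of $(M,\mathrm{Fil}_1)$, which is $0$; but acyclicity asks for slopes $\geq 0$, so one instead checks that the \emph{total} degree is preserved forces the minimal slope up. The clean route: weak admissibility of $(M,\mathrm{Fil}_1)$ means all HN slopes are $0$; enlarging the filtration to $\mathrm{Fil}$ decreases $t_H$ of the whole module, hence \emph{increases} $\deg(M)$, and decreases $t_H$ of each quotient, hence increases $\deg$ of each quotient, so all HN slopes of quotients go up from $0$, i.e. all HN slopes of $(M,\mathrm{Fil})$ are $\geq 0$. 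That is acyclicity. (One must use the quotient formulation of the HN slope condition, which is equivalent to the subobject one.)

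\textbf{From (a) to (b).} Assume $(M,\mathrm{Fil}^\bullet)$ is acyclic, i.e. all HN slopes are $\geq 0$, equivalently $\deg(M'')\geq 0$ for every $(\varphi,N,\G_K)$-quotient $M''$, equivalently $\deg(M')\leq \deg(M)$ for every subobject $M'$ (with $\mu(M)=\deg(M)/\mathrm{rk}(M)\geq 0$). The idea is to shrink the filtration on $M_K$ so as to raise the Hodge numbers enough to bring every subobject's degree down to $0$ while keeping the total degree at $0$ — but since enlarging lowers degree and shrinking raises it, and we want to \emph{lower} $\deg(M)$ to $0$ and \emph{lower} $\deg(M')$ for subobjects, we actually need to \emph{enlarge} the filtration, not shrink it; so condition (b) as stated with $\mathrm{Fil}_1^i\subseteq\mathrm{Fil}^i$ is consistent precisely because $(M,\mathrm{Fil}_1)$ weakly admissible has $\deg=0$ which is $\leq\deg_{\mathrm{Fil}}(M)$, and we need $\mathrm{Fil}_1$ \emph{smaller}. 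Concretely: since acyclic means $\deg\geq 0$ everywhere on quotients, take $\mathrm{Fil}_1$ to be $\mathrm{Fil}$ with the top nonzero step pushed down (decrease the jumps) until $t_H^{\mathrm{Fil}_1}(M)=t_N(M)$, i.e. $\deg_{\mathrm{Fil}_1}(M)=0$; concretely remove "$\deg_{\mathrm{Fil}}(M)$ worth" of Hodge jumps. One must do this compatibly so that every sub/quotient also lands on slope $0$; here one invokes the structure of HN filtrations for $\otimes$-HN-categories: the HN filtration of $(M,\mathrm{Fil})$ is by $(\varphi,N,\G_K)$-submodules, and on the top (maximal-slope) piece one decreases the filtration to kill exactly the excess degree; iterate downward along the HN filtration. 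The bookkeeping that this yields a weakly admissible $\mathrm{Fil}_1$ with $\mathrm{Fil}_1^i\subseteq\mathrm{Fil}^i$ is the routine part.

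\textbf{Main obstacle.} The delicate point is the construction in (a)$\Rightarrow$(b): one must shrink the Hodge filtration in a way that simultaneously normalizes \emph{every} piece of the HN filtration to slope $0$, not just the total module, and one needs that such a shrinking can be realized by an honest decreasing filtration by $K$-subspaces (it can — one just decreases the integers labelling the jumps, keeping the same flag of subspaces, which is why $\mathrm{Fil}_1^i\subseteq\mathrm{Fil}^i$ holds with equality of underlying flags up to reindexing). The inductive descent along the HN filtration, together with the fact (already in the paper's framework, \S\ref{FN3}) that the category of filtered $(\varphi,N,\G_K)$-modules is a Harder--Narasimhan $\otimes$-category, makes this work; I expect the write-up to amount to: reduce to the graded pieces of the HN filtration, on each of which $\mu\geq 0$, and on each decrease $t_H$ by $\deg$ so that it becomes weakly admissible, then reassemble. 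I would carry out (b)$\Rightarrow$(a) first (it is immediate from monotonicity of $t_H$ under enlarging the filtration plus the definitions), then (a)$\Rightarrow$(b) by the HN-descent just sketched.
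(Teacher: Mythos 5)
Your overall plan---a Harder--Narasimhan slope argument, pushing the filtration down in the direction (a)$\Rightarrow$(b) and observing monotonicity of slopes in the direction (b)$\Rightarrow$(a)---is the same strategy the paper uses, and (b)$\Rightarrow$(a) is essentially fine modulo the sign-bookkeeping (your statement that passing to the larger $\mathrm{Fil}$ \emph{decreases} $t_H$ is backwards: enlarging the filtration can only raise the jump of each element, hence increases $t_H$; the two sign slips happen to cancel and your conclusion is the right one, but the chain of inequalities as written is not correct under any consistent convention). That direction should simply be: enlarging the filtration can only increase the degree, hence the slopes, of every quotient, so if all HN slopes of $(M,\mathrm{Fil}_1)$ are $0$ then all HN slopes of $(M,\mathrm{Fil})$ are $\geq 0$.

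The genuine gap is in (a)$\Rightarrow$(b). You propose to decrease the filtration ``by $\deg_{\mathrm{Fil}}(M)$ worth of Hodge jumps'' in one go, and to do this along the HN graded pieces, then reassemble. This does not work as stated, for two reasons. First, the pieces on which you propose to modify the filtration are \emph{subobjects} of $M$, and a filtration on $M_K^1$ induced from $M_K$ cannot be shrunk independently without specifying how to extend back to $M_K$; the paper instead modifies the filtration on the \emph{quotient} $M^2=M/M^1$ (where $M^1$ is the maximal subobject of slope $0$) and pulls back, which is a well-defined operation. Second and more importantly, decreasing the degree all at once can fail to preserve acyclicity: if some quotient $M^2/N$ has degree $1$ while $\deg(M^2)=3$, then removing three dimensions from the filtration could push $\deg(M^2/N)$ to $-2$, giving a negative HN slope. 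The paper's key device is to decrease the degree by \emph{exactly one dimension at a time} on $M^2$; since all HN slopes of $M^2$ are strictly positive and degrees are integers, every quotient of $M^2$ had degree $\geq 1$ before, hence degree $\geq 0$ after, so acyclicity survives the step and one iterates until the total degree reaches $0$. Without the one-step-at-a-time argument (and the reduction to the quotient $M^2$), your construction could exit the class of acyclic filtered modules midway, which is precisely what the iteration needs to avoid. I would not call this ``routine bookkeeping''; it is the heart of the lemma.
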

\begin{proof}
To prove (a)$\Rightarrow$(b),
it is enough to show that one can find a filtration 
such that ${\rm Fil}_1^iM_K\subset {\rm Fil}^iM_K$ for all $i$, there exist $i$
with ${\rm Fil}_1^iM_K\neq {\rm Fil}^iM_K$, and $(M,{\rm Fil}_1^\bullet)$ is acyclic.
Indeed the degree of $(M,{\rm Fil}_1^\bullet)$ is then strictly smaller than that
of $(M,{\rm Fil}^\bullet)$. Hence by repeating the process one ends up with
a filtration such that ${\rm Fil}_n^iM_K\subset {\rm Fil}^iM_K$ for all $i$, 
$(M,{\rm Fil}_n^\bullet)$ is acyclic and of degree $0$, hence it  is of
Harder-Narasimhan slope $0$, i.e., it  is weakly admissible.

To construct such a ${\rm Fil}_1^\bullet$, let $M^1$ be the largest subobject
of Harder-Narasimhan slope $0$, and let $M^2$ be the quotient $M/M_1$.
Then $M^2$ has Harder-Narasimhan slope~$>0$, and if we pick up any filtration
${\rm Fil}_1^\bullet$ on $M^2_K$, such that ${\rm Fil}_1^iM^2_K={\rm Fil}^iM^2_K$ for
all $i$ except $i_0$, for which ${\rm Fil}^{i_0}M^2_K/{\rm Fil}^{i_0}_1M^2_K$ is of
dimension~$1$, then $(M^2,{\rm Fil}_1^\bullet)$ has 
Harder-Narasimhan slope~$\geq 0$ since the degree of any subobject 
has decreased by at most $1$ and hence is~$\geq 0$. 
Then defining ${\rm Fil}_1^iM_K$ as the
inverse image of ${\rm Fil}_1^iM^2_K$ in ${\rm Fil}^iM_K$ gives a filtration
with the desired properties.

The converse implication is obvious: the Harder-Narasimhan slope of $(M,{\rm Fil}^\bullet)$
is greater or equal to that of $(M,{\rm Fil}_1^\bullet)$.
\end{proof}

\begin{lemma}\label{FN4.1}
Let $(M,{\rm Fil}^\bullet)$ be an acyclic filtered $(\varphi,N,\G_K)$-module over $K$,
with $\varphi$-slopes in $[0,r]$, and ${\rm Fil}^0M_K=M_K$, ${\rm Fil}^{r+1}M_K=0$.
Then there exists a filtration ${\rm Fil}_1^\bullet$ on $M_K$
such that $(M,{\rm Fil}_1^\bullet)$ is weakly admissible and 
${\rm Fil}_1^iM_K\subset {\rm Fil}^iM_K$ for all $i$, 
${\rm Fil}_1^0M_K=M_K$, ${\rm Fil}_1^{r+1}M_K=0$.
\end{lemma}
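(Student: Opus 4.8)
The plan is to re-run the inductive construction from the proof of Lemma~\ref{FN4}, keeping track of the range of Hodge slopes at each step. First observe that the condition ${\rm Fil}_1^{r+1}M_K=0$ is automatic: any filtration with ${\rm Fil}_1^iM_K\subset {\rm Fil}^iM_K$ for all $i$ satisfies ${\rm Fil}_1^{r+1}M_K\subset {\rm Fil}^{r+1}M_K=0$. So the real content is to arrange ${\rm Fil}_1^0M_K=M_K$, i.e.\ to keep all Hodge slopes $\geq 0$ throughout the construction, which amounts to never disturbing the index $0$ of the filtration during the inductive procedure.

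Recall that the proof of Lemma~\ref{FN4} produces ${\rm Fil}_1^\bullet$ by a finite sequence of elementary moves. At a given stage one has a current filtration ${\rm Fil}_{(n)}^\bullet$ on $M_K$ that is acyclic but not yet weakly admissible; writing $M^1$ for the largest subobject of slope~$0$ and $M^2=M/M^1$, the quotient $M^2$ is acyclic with $\deg(M^2)=\deg(M^1)+\deg(M^2)-0=\deg(M,{\rm Fil}_{(n)}^\bullet)>0$ (the inequality because an acyclic module of degree $0$ is weakly admissible), and the move replaces the filtration induced on $M^2_K$ — and then, via preimages, on $M_K$ — by one agreeing with it except at a single index $i_0$, where a one-dimensional graded piece ${\rm gr}^{i_0}$ is pushed down to degree $i_0-1$. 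Each move strictly decreases the degree while preserving acyclicity, so, the degree being a nonnegative integer, the process terminates at a weakly admissible filtration. The only freedom is the choice of $i_0$ among the jumps of the current filtration on $M^2_K$; I claim one can always take $1\leq i_0\leq r$.

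To see this, maintain the invariant that the current filtration on $M_K$ has all Hodge slopes in $[0,r]$, i.e.\ ${\rm Fil}_{(n)}^0M_K=M_K$ and ${\rm Fil}_{(n)}^{r+1}M_K=0$; it holds at the start by hypothesis, and a move with $1\leq i_0\leq r$ pushes a piece of degree $i_0$ down to degree $i_0-1\in[0,r-1]$ and leaves the indices $0$ and $r+1$ untouched, so it is preserved. Such an $i_0$ always exists: $M^2$ is a $\varphi$-quotient of $M$, hence its Newton slopes lie in $[0,r]$ and $t_N(M^2)\geq 0$; moreover the filtration induced on $M^2_K$ satisfies ${\rm Fil}^0M^2_K=M^2_K$, being the image of ${\rm Fil}_{(n)}^0M_K=M_K$. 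If this filtration had no jump of positive degree, i.e.\ ${\rm Fil}^1M^2_K=0$, then $t_H(M^2)=0$, whence $\deg(M^2)=t_H(M^2)-t_N(M^2)=-t_N(M^2)\leq 0$, contradicting $\deg(M^2)>0$. Hence there is a jump at some $i_0\geq 1$, and since the Hodge slopes are $\leq r$ we may take $1\leq i_0\leq r$ and perform the move there. Iterating yields a weakly admissible ${\rm Fil}_1^\bullet$ with ${\rm Fil}_1^iM_K\subset {\rm Fil}^iM_K$ for all $i$, ${\rm Fil}_1^0M_K=M_K$ and ${\rm Fil}_1^{r+1}M_K=0$, as wanted.

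The main (and essentially the only) obstacle is this last point — guaranteeing that there is always a positive-degree jump available to shrink — and it is precisely here that the hypothesis "$\varphi$-slopes in $[0,r]$" enters, through the inequality $t_N(M^2)\geq 0$: without control on the Newton slopes one could be forced, at some stage, to shrink ${\rm Fil}^0$ and thereby lose the property ${\rm Fil}_1^0M_K=M_K$.
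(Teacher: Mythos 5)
Your proof is correct and takes essentially the same route as the paper's: iterate the elementary move from Lemma~\ref{FN4}, observing that $\deg(M^2)>0$ together with $t_N(M^2)\geq 0$ forces ${\rm Fil}^\bullet M^2_K$ to jump at some $i_0\in[1,r]$, so the index~$0$ (and hence the condition ${\rm Fil}_1^0M_K=M_K$) is never disturbed. One small remark: you compute with $\deg=t_H-t_N$, whereas the definition printed in \ref{FN3} reads $\deg=t_N-t_H$; your sign is the one under which shrinking the filtration actually decreases the degree (as the proof of Lemma~\ref{FN4} requires), so the displayed formula in \ref{FN3} appears to be a misprint and your usage is the intended one.
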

\begin{proof}
In the proof of Lemma~\ref{FN4}, the $\varphi$-slopes of $M^2$ are in $[0,r]$, and since
$\deg(M^2)>0$, this implies that ${\rm Fil}^1M_K\neq 0$. Let ${i_0}$ be the largest integer
with ${\rm Fil}^{i_0}M_K\neq 0$; define ${\rm Fil}_1^\bullet$ by ${\rm Fil}_1^{i_0}M_K$
of codimension~$1$ in ${\rm Fil}^{i_0}M_K$, and ${\rm Fil}_1^iM_K={\rm Fil}^iM_K$ is $i\neq 0$.
Then, as in the proof of Lemma~\ref{FN4},
$(M,{\rm Fil}_1^\bullet)$ is acyclic,
${\rm Fil}_1^0M_K=M_K$, ${\rm Fil}_1^{r+1}M_K=0$, and $\deg(M,{\rm Fil}_1^\bullet)<
\deg(M,{\rm Fil}^\bullet)$. Iterating the process gives the wanted filtration.
\end{proof}

\subsubsection{The complex attached to a filtered $(\varphi,N,\sg_K)$-module}\label{FN5}
If $(M,{\rm Fil}^\bullet)$ is a filtered $(\varphi,N,\G_K)$-module over $K$, we set
$$X_{\rm st}(M,{\rm Fil}^\bullet):=(M\otimes_{F^{\rm nr}}\bst)^{N=0,\varphi=1},\quad
X_{\rm dR}(M,{\rm Fil}^\bullet):=(M_K\otimes_K\bdr)/{\rm Fil}^0(M_K\otimes_K\bdr).$$
Then $X_{\rm st}(M,{\rm Fil}^\bullet)$ and $X_{\rm dR}(M,{\rm Fil}^\bullet)$ are
inductive limits of the $X^{(r)}_{\rm st}(M,{\rm Fil}^\bullet)$ 
and $X^{(r)}_{\rm dR}(M,{\rm Fil}^\bullet)$ (defined by replacing $\bst$ and $\bdr$ by
$t^{-r}\bst^+$ and $t^{-r}\bdr^+$),
which are objects of ${\cal C}(\G_K)$.

The complex $X_{\rm st}(M,{\rm Fil}^\bullet)\to X_{\rm dR}(M,{\rm Fil}^\bullet)$
is called~\cite[\S\,5.3]{CCF} the ``fundamental complex associated to $M$''.
Its $H^0$ is denoted by $V_{\rm st}(M,{\rm Fil}^\bullet)$. Hence, we have
an exact sequence
$$0\to V_{\rm st}(M,{\rm Fil}^\bullet)\to X_{\rm st}(M,{\rm Fil}^\bullet)
\to X_{\rm dR}(M,{\rm Fil}^\bullet)$$

\begin{remark}\label{new-tate12}
(i) The cohomology of the fundamental complex is equal
to that of the complex
$X^{(r)}_{\rm st}(M,{\rm Fil}^\bullet)\to X^{(r)}_{\rm dR}(M,{\rm Fil}^\bullet)$, for
$r$ big enough. It follows that its cohomology groups are objects of ${\cal C}(\G_K)$.

(ii) The pair $(X_{\rm st}(M,{\rm Fil}^\bullet), X_{\rm dR}(M,{\rm Fil}^\bullet))$
is a $B$-pair in the sense of Berger~\cite{Be2,Be4}; attached to it is a $\G_K$-equivariant
vector bundle ${\cal E}(M,{\rm Fil}^\bullet)$ on the Fargues-Fontaine curve~\cite[\S\,10.1]{FF}
and the fundamental complex computes the cohomology (not the $\G_K$-equivariant)
of this vector bundle. Since the HN-slope of ${\cal E}(M,{\rm Fil}^\bullet)$ is
that of $(M,{\rm Fil}^\bullet)$, the vector bundle ${\cal E}(M,{\rm Fil}^\bullet)$ has vanishing
$H^1$ if and only if $(M,{\rm Fil}^\bullet)$ is acyclic (see~Theorem~\ref{HN4} and formulas~(\ref{HN4.1})). It follows that,
if $(M,{\rm Fil}^\bullet)$ is acyclic, we have an exact sequence
$$0\to V_{\rm st}(M,{\rm Fil}^\bullet)\to X_{\rm st}(M,{\rm Fil}^\bullet)
\to X_{\rm dR}(M,{\rm Fil}^\bullet)\to 0$$

(iii) If $(M,{\rm Fil}^\bullet)$ is acyclic, then $V_{\rm st}(M,{\rm Fil}^\bullet)$
is a finite dimensional $\Q_p$-vector space if and only if $(M,{\rm Fil}^\bullet)$ is
of slope $0$ (i.e.,~is weakly admissible). This implies it
{\it is admissible}: the natural maps give $\G_K$-equivariant isomorphisms
\begin{align*}
&V_{\rm st}(M,{\rm Fil}^\bullet)\otimes \bst\overset{\sim}{\to} M\otimes_F\bst&&{\text{
of $\bst$-modules commuting with $\varphi$ and $N$,}}\\
&V_{\rm st}(M,{\rm Fil}^\bullet)\otimes \bdr\overset{\sim}{\to} M_K\otimes_K\bdr&&{\text{
of filtered $\bdr$-modules,}}
\end{align*}
and $V:=V_{\rm st}(M,{\rm Fil}^\bullet)$ is a potentially semi-stable representation of $\G_K$, with
$D_{\rm dR}(V)=M_K$ and $D_{\rm pst}(V)=M$.

(iv) If $(M,{\rm Fil}^\bullet)$ is acyclic, but not admissible, the
natural map 
$$ V_{\rm st}(M,{\rm Fil}^\bullet)\otimes \bdr^+\to {\rm Fil}^0(M_K\otimes_K\bdr)$$
is not an isomorphism (the kernel is huge), but it is surjective. Indeed, one can pick a filtration
${\rm Fil}^\bullet_1$ on $M_K$, such that ${\rm Fil}^i_1M_K\subset {\rm Fil}^iM_K$
for all $i$, and $(M,{\rm Fil}_1^\bullet)$ is admissible.
Then we have an exact sequence
\begin{equation}\label{new-tate13}
0\to V_{\rm st}(M,{\rm Fil}_1^\bullet)\to V_{\rm st}(M,{\rm Fil}^\bullet)\to
{\rm Fil}^0(M_K\otimes_K\bdr)/{\rm Fil}_1^0(M_K\otimes_K\bdr)\to 0
\end{equation}
and $\bdr^+\cdot V_{\rm st}(M,{\rm Fil}^\bullet)$
contains $\bdr^+\cdot V_{\rm st}(M,{\rm Fil}_1^\bullet)$ which, 
by admissibility of $(M,{\rm Fil}_1^\bullet)$,
is equal to ${\rm Fil}_1^0(M_K\otimes_K\bdr)$ 
and the above exact sequence gives the desired surjectivity.
\end{remark}

\subsection{Filtered $(\varphi,N,\sg_K)$-modules and almost $C$-representations}\label{FN6}
\subsubsection{The functors $D_{\rm dR}^\dual$ and $D_{\rm st}^\dual$}\label{FN7}
The  definitions of the classical contravariant functors $D_{\rm dR}^\dual$,
 $D_{\rm st}^\dual$ and $D_{\rm pst}^\dual$ for finite dimensional $\Q_p$-representations
of $\G_K$ extend to objects of ${\cal C}(\G_K)$ contrarily to those
of the (more commonly used) covariant functors $D_{\rm dR}$,
 $D_{\rm st}$ and $D_{\rm pst}$.
If $W\in{\cal C}(\G_K)$, set 
$$D_{\rm st}^\dual(W):={\rm Hom}_{\G_K}(W,\bst),\quad
D_{\rm dR}^\dual(W):={\rm Hom}_{\G_K}(W,\bdr).$$
Then $D_{\rm st}^\dual(W)$ is a $(\varphi,N)$-module\footnote{Finite dimensionality comes from the
fact that ${\rm Hom}_{\G_K}(C,\bst)\hookrightarrow {\rm Hom}_{\G_K}(C,\bdr)$ and the last
group is $0$ by point (iv) of Proposition~\ref{twists}. This reduces finite dimensionality
for $W\in{\cal C}(\G_K)$ to the case of finite dimensional $\Q_p$-representations.}
 over $K$ (with
$\langle\varphi(\mu),v\rangle=\varphi(\langle\mu,v\rangle)$ and
$\langle N(\mu),v\rangle=N(\langle\mu,v\rangle)$) 
and $D_{\rm dR}^\dual(W)$ is a filtered module over $K$ (with
${\rm Fil}^iD_{\rm dR}^\dual(W)=\{\mu,\ \mu(W)\subset t^i\bdr^+\}$).
We also define $D_{\rm pst}^\dual(W)$ as:
$$D_{\rm pst}^\dual(W):={\rm Hom}_{\G_K}^{\rm sm}(W,\bst):=\varinjlim_{[L:K]<\infty}
{\rm Hom}_{\G_L}(W,\bst)$$
This is a $(\varphi,N,\G_K)$-module over $F^{\rm nr}$.
\begin{remark}\label{new-tate10}
The natural maps 
$$D_{\rm st}^\dual(W)\otimes_FK\to D_{\rm dR}^\dual(W),
\quad (D_{\rm pst}^\dual(W)\otimes_{F^{\rm nr}}\overline K)^{\G_K}\to D_{\rm dR}^\dual(W)$$
 induced
by the injections $\bst\otimes_FK\hookrightarrow\bdr$ 
and $\bst\otimes_{F^{\rm nr}}\overline K\hookrightarrow\bdr$ are
injective.
\end{remark}

The following result is an extension to acyclic filtered $(\varphi,N)$-modules
of a classical result for admissible filtered $(\varphi,N)$-modules.

\begin{theorem}\label{new-tate2}
If $(M,{\rm Fil}^\bullet)$ is an acyclic filtered $(\varphi,N)$-module over $K$,
then
\begin{align*}
&D_{\rm st}^\dual(V_{\rm st}(M,{\rm Fil}^\bullet))
\simeq M^\dual&&{\text{as $(\varphi,N)$-modules over $F$,}}\\
&D_{\rm dR}^\dual(V_{\rm st}(M,{\rm Fil}^\bullet))
\simeq (M_K^\dual,{\rm Fil}_\perp^\bullet) &&{\text{as filtered $K$-modules.}}
\end{align*}
\end{theorem}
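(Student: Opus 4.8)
The plan is to reduce to the classical comparison for admissible filtered $(\varphi,N)$-modules and then bootstrap to the acyclic case, controlling the torsion contributions by the results of \S\ref{USED} and \S\ref{SS1}. Write $V:=V_{\rm st}(M,{\rm Fil}^\bullet)$, realized inside $M\otimes_F\bst\subset M_K\otimes_K\bdr$ as $X_{\rm st}(M)\cap{\rm Fil}^0(M_K\otimes_K\bdr)$. By Lemma~\ref{FN4} I would choose ${\rm Fil}_1^\bullet\subseteq{\rm Fil}^\bullet$ on $M_K$ making $(M,{\rm Fil}_1^\bullet)$ weakly admissible, and set $V_1:=V_{\rm st}(M,{\rm Fil}_1^\bullet)\subseteq V$. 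Then the sequence (\ref{new-tate13}) reads
$$0\to V_1\to V\to T\to 0,\qquad T={\rm Fil}^0(M_K\otimes_K\bdr)/{\rm Fil}_1^0(M_K\otimes_K\bdr),$$
where $T$ is a finite-length $\bdr^+$-module whose $\G_K$-action comes only from $\bdr$ (as $\G_K$ acts trivially on $M_K$); its $t$-adic graded pieces are $\G_K$-equivariant subquotients of those of $M_K\otimes_K\bdr$, hence sums of Tate twists $C(n)$, so $T$ is a successive extension of such. Since $(M,{\rm Fil}_1^\bullet)$ is admissible, $V_1$ is semistable with $D_{\rm st}(V_1)=M$ and $D_{\rm dR}(V_1)=(M_K,{\rm Fil}_1^\bullet)$ (Remark~\ref{new-tate12}(iii), \cite{CCF}); dualizing the comparison isomorphisms gives $D_{\rm st}^\dual(V_1)\simeq M^\dual$ and $D_{\rm dR}^\dual(V_1)\simeq(M_K^\dual,{\rm Fil}_{1,\perp}^\bullet)$, where $\mu_0\in M^\dual$ (resp.\ $M_K^\dual$) corresponds to the restriction to $V_1$ of the $\bst$-linear (resp.\ $\bdr$-linear) pairing map $\langle\mu_0,-\rangle\colon M\otimes_F\bst\to\bst$ (resp.\ $M_K\otimes_K\bdr\to\bdr$).

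Next I would apply ${\rm Hom}_{\G_K}(-,\bst)$ and ${\rm Hom}_{\G_K}(-,\bdr)$ to the displayed sequence. Since $T$ is a successive extension of Tate twists of $C$, Proposition~\ref{twists}(i),(iv), together with the $\G_K$-equivariant inclusion $\bst\hookrightarrow\bdr$ and the exhaustions $\bst=\bigcup_r t^{-r}\bst^+$, $\bdr=\bigcup_r t^{-r}\bdr^+$, yield ${\rm Hom}_{\G_K}(T,\bst)={\rm Hom}_{\G_K}(T,\bdr)=0$; so restriction along $V_1\hookrightarrow V$ is injective on Hom's. It is surjective because every $\mu_0\in{\rm Hom}_{\G_K}(V_1,\bst)=M^\dual$ extends to $V$ by restricting the same map $\langle\mu_0,-\rangle\colon M\otimes_F\bst\to\bst$ to $V\subseteq M\otimes_F\bst$ (and likewise with $\bdr$), so the connecting map into ${\rm Ext}^1_{\G_K}(T,-)$ vanishes. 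Hence $D_{\rm st}^\dual(V)\simeq D_{\rm st}^\dual(V_1)\simeq M^\dual$ and $D_{\rm dR}^\dual(V)\simeq D_{\rm dR}^\dual(V_1)\simeq M_K^\dual$, via $\nu\mapsto\mu_0$ with $\nu=\langle\mu_0,-\rangle|_V$; the first isomorphism, being restriction of $\bst$-linear maps, is $\bst^{\G_K}$-linear, and the identity $\varphi_{\bst}(\langle\mu_0,w\rangle)=\langle\varphi(\mu_0),(\varphi_M\otimes\varphi_{\bst})(w)\rangle$ on $M\otimes_F\bst$ shows, since $V\subseteq(M\otimes_F\bst)^{N=0,\varphi=1}$, that it intertwines $\varphi$ and $N$ with the dual Frobenius and monodromy on $M^\dual$.

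Finally I would identify the Hodge filtration on $D_{\rm dR}^\dual(V)$, which I expect to be the delicate step: passing to ${\rm Fil}_1^\bullet$ discards precisely the filtration jumps where ${\rm Fil}_1$ and ${\rm Fil}$ differ, and one must check they reappear on $D_{\rm dR}^\dual(V)$. The key point is that $\bdr^+\cdot V={\rm Fil}^0(M_K\otimes_K\bdr)$ (the \emph{original} one): indeed $\bdr^+\cdot V\supseteq\bdr^+\cdot V_1={\rm Fil}_1^0(M_K\otimes_K\bdr)$ by admissibility of $(M,{\rm Fil}_1^\bullet)$, while the surjection $V\twoheadrightarrow T$ shows $\bdr^+\cdot V$ surjects onto $T={\rm Fil}^0(M_K\otimes_K\bdr)/{\rm Fil}_1^0(M_K\otimes_K\bdr)$, and the two facts force equality. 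Then, for $\mu_0\leftrightarrow\nu$ as above,
$${\rm Fil}^iD_{\rm dR}^\dual(V)=\{\nu:\nu(V)\subseteq t^i\bdr^+\}=\{\mu_0:\langle\mu_0,{\rm Fil}^0(M_K\otimes_K\bdr)\rangle\subseteq t^i\bdr^+\},$$
and writing ${\rm Fil}^0(M_K\otimes_K\bdr)=\bigoplus_k Ke_k\otimes_K t^{-j_k}\bdr^+$ in a ${\rm Fil}^\bullet$-adapted basis $(e_k)$ of $M_K$ (so $e_k\in{\rm Fil}^{j_k}M_K\setminus{\rm Fil}^{j_k+1}M_K$), this reads $\langle\mu_0,e_k\rangle=0$ whenever $j_k\geq 1-i$, i.e.\ $\mu_0\in({\rm Fil}^{1-i}M_K)^\perp={\rm Fil}_\perp^iM_K^\dual$, as required. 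Apart from this bookkeeping, the points needing care are the vanishing ${\rm Hom}_{\G_K}(T,\bdr)=0$ (and nullity of the connecting map) and the passage from $\bst^+,\bdr^+$ to $\bst,\bdr$; both rest on Proposition~\ref{twists} and the description of finite-length $\bdr^+$-representations as iterated extensions of Tate twists of $C$.
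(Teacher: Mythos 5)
Your proof is correct and follows the same skeleton as the paper's: replace ${\rm Fil}^\bullet$ by a weakly admissible refinement ${\rm Fil}_1^\bullet$ (Lemma~\ref{FN4}), work with the short exact sequence $0\to V_1\to V\to W\to 0$ from (\ref{new-tate13}), kill the torsion $\bdr^+$-module $W$ via Proposition~\ref{twists}, and then identify the filtration using the surjectivity $\bdr^+\cdot V={\rm Fil}^0(M_K\otimes_K\bdr)$ from Remark~\ref{new-tate12}(iv).

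There are two genuine differences. First, where the paper establishes $D_{\rm dR}^\dual(V)\simeq M_K^\dual$ by sandwiching: $M_K^\dual\hookrightarrow D_{\rm dR}^\dual(V)\hookrightarrow D_{\rm dR}^\dual(V_1)$ and then invoking a dimension count $\dim_K D_{\rm dR}^\dual(V_1)\leq\dim_{\Q_p}V_1=\dim_K M_K$, you instead prove that restriction $D_{\rm dR}^\dual(V)\to D_{\rm dR}^\dual(V_1)$ is surjective by exhibiting an explicit preimage: each $\mu_0\in M^\dual$ (resp.\ $M_K^\dual$) gives the same pairing map on all of $M\otimes_F\bst$, which one simply restricts to $V\supseteq V_1$. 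This replaces a numerology argument with a functorial section, and arguably explains more transparently why the answer is exactly $M^\dual$. Second, the paper proves the $\bdr$-statement first and deduces the $\bst$-statement from it via the injectivity of $K\otimes_F D_{\rm st}^\dual\to D_{\rm dR}^\dual$ (Remark~\ref{new-tate10}); you treat both in parallel, using ${\rm Hom}_{\G_K}(W,\bst)=0$ (which you correctly derive from the containment $\bst\subset\bdr$). Both routes buy the same thing; yours is slightly more uniform, the paper's is slightly more economical in that the Hom-vanishing is only needed over $\bdr$. Your filtration computation, via $\nu(V)\subseteq t^i\bdr^+\iff\langle\mu_0,{\rm Fil}^0(M_K\otimes_K\bdr)\rangle\subseteq t^i\bdr^+$ and then the explicit decomposition in a ${\rm Fil}^\bullet$-adapted basis, is the same as the paper's final paragraph.

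Two small points worth being explicit about. Your identification of $T$ as a successive extension of Tate twists of $C$ is correct, but it is cleaner (and what the paper actually uses) to observe that two filtrations on a finite-dimensional vector space can be simultaneously split, so $W$ is a \emph{direct sum} of $t^{k_1}\bdr^+/t^{k_2}\bdr^+$'s, to which Proposition~\ref{twists}(iv) applies term by term. And the phrase about the ``connecting map into ${\rm Ext}^1$'' vanishing is harmless but unnecessary: once you have exhibited the explicit preimage, left-exactness of ${\rm Hom}_{\G_K}(-,\bdr)$ together with ${\rm Hom}_{\G_K}(T,\bdr)=0$ already closes the argument without any reference to $\Ext$.
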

\begin{proof}
Let $V:=V_{\rm st}(M,{\rm Fil}^\bullet)$.
We start by noticing that, thanks to point (iv) of Remark~\ref{new-tate12}, the natural pairing
injects $M^\dual$ into ${\rm Hom}_{\G_K}(V,\bst)$ (as a $(\varphi,N)$-module)
and  $ M_K^\dual$ into ${\rm Hom}_{\G_K}(V,\bdr)$. 
Moreover, the natural map from  $K\otimes_{F}{\rm Hom}_{\G_K}(V,\bst)$
to  ${\rm Hom}_{\sg_K}(V,\bdr)$ is injective (see Remark~\ref{new-tate10}),
and one can deduce that $D_{\rm st}^\dual\to M^\dual$ 
is an isomorphism from the same result for $D_{\rm dR}^\dual$.

Hence, we just have to prove the result for $D_{\rm dR}^\dual$. To do so,
pick  a filtration ${\rm Fil}_1^\bullet$ as in (iv) of Remark~\ref{new-tate12}:
${\rm Fil}^i_1M_K\subset {\rm Fil}^iM_K$ for all $i$ and $(M,{\rm Fil}_1^\bullet)$
is admissible. Let $V:=V_{\rm st}(M,{\rm Fil}^\bullet)$, $V_1:=V_{\rm st}(M,{\rm Fil}_1^\bullet)$,
and $W={\rm Fil}^0(M_K\otimes_K\bdr)/{\rm Fil}_1^0(M_K\otimes_K\bdr)$, so that
the exact sequence~(\ref{new-tate13}) becomes $0\to V_1\to V\to W\to 0$.

Now, $W$ is a direct sum
of factors of the form $t^{k_1}\bdr^+/t^{k_2}\bdr^+$. Hence
$D_{\rm dR}^\dual(W)=0$ thanks to Proposition~\ref{twists},
and we get injections 
$M_K^\dual\hookrightarrow D_{\rm dR}^\dual(V)
\hookrightarrow D_{\rm dR}^\dual(V_1)$.
But $\dim_KD_{\rm dR}^\dual(V_1)\leq\dim_{\Q_p}V_1=\dim_K M_K$ (the first inequality is true
for any finite dimensional $\Q_p$-representation of $\G_K$ and the second equality is true because
$V_1$ is de Rham).
It follows that these injections are in fact isomorphisms, which proves what we
want except for the equality of the filtrations on $M_K^\dual$ and $D^\dual_{\rm dR}(V)$.

So let $\mu\in M_K^\dual$. One can extend $\mu$ to a $\bdr$-linear map
$\bdr\otimes_KM_K\to\bdr$.  Thanks to (iv) of Remark~\ref{new-tate12},
one sees that $\mu(V)\subset t^i\bdr^+$ is equivalent to
$\mu({\rm Fil}^0(M_K\otimes_K\bdr))\subset t^i\bdr^+$.
But ${\rm Fil}^0(M_K\otimes_K\bdr)=\sum_n {\rm Fil}^nM_K\otimes_K t^{-n}\bdr^+$,
and $\mu(M_K)\subset K$, hence $\mu({\rm Fil}^0(M_K\otimes_K\bdr))\subset t^i\bdr^+$
if and only if $\mu({\rm Fil}^nM_K)=0$ for $n\geq 1-i$.  By definition this
translates into $\mu\in {\rm Fil}^i_\perp M_K^\dual$, as wanted.
\end{proof}
\begin{corollary}\label{new-tate2.5}
If $(M,{\rm Fil}^\bullet)$ is an acyclic filtered $(\varphi,N,\G_K)$-module over $K$,
then
\begin{align*}
&D_{\rm pst}^\dual(V_{\rm st}(M,{\rm Fil}^\bullet))
\simeq M^\dual&&{\text{as $(\varphi,N,\G_K)$-modules over $F^{\rm nr}$,}}\\
&D_{\rm dR}^\dual(V_{\rm st}(M,{\rm Fil}^\bullet))
\simeq (M_K^\dual,{\rm Fil}_\perp^\bullet) &&{\text{as filtered $K$-modules.}}
\end{align*}
\end{corollary}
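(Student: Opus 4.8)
The plan is to rerun the proof of Theorem~\ref{new-tate2}, now allowing $M$ to be a $(\varphi,N,\G_K)$-module over $F^{\nr}$ equipped with its given smooth $\G_K$-action and keeping track of this action throughout; the point is that every ingredient used in the proof of Theorem~\ref{new-tate2} has already been recorded in the $\G_K$-equivariant generality we need. Recall that $X_{\rm st}(M)=(M\wotimes_{F^{\nr}}\bst)^{N=0,\varphi=1}$ and $V:=V_{\rm st}(M,{\rm Fil}^\bullet)\subset X_{\rm st}(M)$ already carry a $\G_K$-action by construction (Section~\ref{FN5}).

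First I would extract from acyclicity an admissible sub-filtration: Lemma~\ref{FN4}, which is stated for $(\varphi,N,\G_K)$-modules over $K$, provides a filtration ${\rm Fil}_1^\bullet$ with ${\rm Fil}_1^iM_K\subset{\rm Fil}^iM_K$ for all $i$ and $(M,{\rm Fil}_1^\bullet)$ weakly admissible, hence admissible by Remark~\ref{new-tate12}(iii). Set $V_1:=V_{\rm st}(M,{\rm Fil}_1^\bullet)$ and $W:={\rm Fil}^0(M_K\wotimes_K\bdr)/{\rm Fil}_1^0(M_K\wotimes_K\bdr)$. Then Remark~\ref{new-tate12}(iv) gives a $\G_K$-equivariant exact sequence $0\to V_1\to V\to W\to 0$ in which $W$ is a direct sum of copies of $t^{k_1}\bdr^+/t^{k_2}\bdr^+$.

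For the de Rham statement, since $D_{\rm dR}^\dual(W)=0$ by Proposition~\ref{twists}(iv), the above exact sequence yields injections $M_K^\dual\hookrightarrow D_{\rm dR}^\dual(V)\hookrightarrow D_{\rm dR}^\dual(V_1)$, the first coming from the natural pairing, exactly as in the proof of Theorem~\ref{new-tate2}. Admissibility of $(M,{\rm Fil}_1^\bullet)$ makes $V_1$ potentially semi-stable with $\dim_{\Q_p}V_1={\rm rk}(M)=\dim_K M_K$, and $\dim_K D_{\rm dR}^\dual(V_1)\le\dim_{\Q_p}V_1$ for any finite-dimensional $\Q_p$-representation; hence both injections are isomorphisms, and the identification of the filtration on $D_{\rm dR}^\dual(V)$ with ${\rm Fil}_\perp^\bullet$ on $M_K^\dual$ goes through word for word, using ${\rm Fil}^0(M_K\wotimes_K\bdr)=\sum_n{\rm Fil}^nM_K\wotimes_K t^{-n}\bdr^+$ and Remark~\ref{new-tate12}(iv). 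This yields $D_{\rm dR}^\dual(V)\simeq(M_K^\dual,{\rm Fil}_\perp^\bullet)$ as filtered $K$-modules. For the $D_{\rm pst}^\dual$-statement, the natural pairing gives a morphism $M^\dual\to D_{\rm pst}^\dual(V)$ of smooth $(\varphi,N,\G_K)$-modules over $F^{\nr}$; applying $(-\wotimes_{F^{\nr}}\overline K)^{\G_K}$ and using $(M^\dual\wotimes_{F^{\nr}}\overline K)^{\G_K}=M_K^\dual$ (Galois descent, from $M\wotimes_{F^{\nr}}\overline K=M_K\wotimes_K\overline K$) together with the injection $(D_{\rm pst}^\dual(V)\wotimes_{F^{\nr}}\overline K)^{\G_K}\hookrightarrow D_{\rm dR}^\dual(V)$ of Remark~\ref{new-tate10}, the de Rham isomorphism just obtained forces this functor to turn $M^\dual\to D_{\rm pst}^\dual(V)$ into an isomorphism. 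Since $(-\wotimes_{F^{\nr}}\overline K)^{\G_K}$ is exact and rank-preserving on smooth finite-rank $(\varphi,N,\G_K)$-modules over $F^{\nr}$ (Galois descent and $H^1$-vanishing), it is conservative, so $M^\dual\to D_{\rm pst}^\dual(V)$ is itself an isomorphism of $(\varphi,N,\G_K)$-modules over $F^{\nr}$.

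I expect the main difficulty to be purely bookkeeping: replacing the scalar-extension steps ``$K\otimes_F(-)$'' of the proof of Theorem~\ref{new-tate2} by the Galois-descent functor $(-\wotimes_{F^{\nr}}\overline K)^{\G_K}$, and deducing the $D_{\rm pst}^\dual$-statement from the de Rham one via this conservative functor rather than directly. An alternative that sidesteps this is to use smoothness to descend $M$: for $L/K$ finite Galois large enough one has $M\simeq F^{\nr}\wotimes_{F_L}M^{\G_L}$ with $M_L:=M^{\G_L}$ a $(\varphi,N)$-module over $F_L$; one checks, using Lemma~\ref{FN4} and stability of weak admissibility under finite base change, that $(M_L,{\rm Fil}^\bullet)$ is acyclic over $L$, applies Theorem~\ref{new-tate2} over $L$ to $V_{\rm st}(M_L)=V$, and then recovers the two statements over $K$ by passing to the colimit over $L$ (for $D_{\rm pst}^\dual$) and to $\Gal(L/K)$-invariants (for $D_{\rm dR}^\dual$).
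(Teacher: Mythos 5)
Your alternative route at the end is exactly the paper's proof: descend $M$ to a $(\varphi,N)$-module $M(L)$ over $F_L$ for a finite extension $L/K$ over which the $\G_K$-action on $M$ becomes unramified, apply Theorem~\ref{new-tate2} over $L$, and recover the two statements by taking the colimit over $L$ (for $D_{\rm pst}^\dual$) and $\Gal(L/K)$-invariants (for $D_{\rm dR}^\dual$); the paper compresses all of this to the phrase ``bound the dimensions of the left hand sides.'' Your main route --- rerunning the proof of Theorem~\ref{new-tate2} $\G_K$-equivariantly over $K$ --- is genuinely different and also works, but the conservativity step for $D_{\rm pst}^\dual$ is stated imprecisely: you invoke conservativity of $(-\wotimes_{F^{\nr}}\overline K)^{\G_K}$ on ``smooth finite-rank'' modules, yet $D_{\rm pst}^\dual(V)$ is not a priori of finite rank over $F^{\nr}$ --- that is part of what the corollary asserts. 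What you actually need, and it is true, is exactness and faithfulness of $(-\wotimes_{F^{\nr}}\overline K)^{\G_K}$ on arbitrary smooth semilinear $\G_K$-modules over $F^{\nr}$: exactness because everything is a filtered colimit of $\Gal(L/K)$-invariants of $L$-vector spaces and the relevant $H^1$ vanishes, and faithfulness because a nonzero smooth vector is fixed by some open $\G_L$ and one can average over $\Gal(L/K)$ in characteristic $0$. Granting that, the cokernel $N$ of $M^\dual\to D_{\rm pst}^\dual(V)$ satisfies $(N\otimes_{F^{\nr}}\overline K)^{\G_K}=0$, hence $N=0$. The trade-off is clear: the paper's route is one line because it reduces immediately to the already-proved Theorem~\ref{new-tate2}; your equivariant route avoids the descend-and-recover bookkeeping and stays over $K$ throughout, at the cost of needing this faithfulness lemma for possibly infinite-rank smooth modules.
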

\begin{proof}
As in the proof of the above theorem, 
we have an injection of the right-hand sides into the left-hand sides,
and to check that these are isomorphisms, we just have to bound the dimensions of the left-hand sides.
This can be achieved by passing to a finite extension $L$ of $K$ such that the action of $\G_L$ on
$M$ is unramified, and applying the proposition to a $(\varphi,N)$-module $M(L)$ 
over $F_L$ such that $M=F^{\rm nr}\otimes_{F_L}M(L)$.
\end{proof}

\begin{corollary}\label{new-tate2.7}
Let $(M,{\rm Fil}^\bullet)$ 
be an acyclic filtered $(\varphi,N,\G_K)$-module over $K$, with $\varphi$-slopes in $[0,r]$,
and ${\rm Fil}^0M_K=M_K$, ${\rm Fil}^{r+1}M_K=0$.
Set $$V^r_{\rm st}(M,{\rm Fil}^\bullet)
:={\rm Ker}\big((M\otimes_{F^{\rm nr}}\bst^+)^{N=0,\varphi=p^r} 
\to (M_K\otimes_K\bdr^+)/{\rm Fil}^r\big)$$
Then\footnote{The notations $M\{r\}$ and ${\rm Fil}_\perp^\bullet\{r\}$
mean that the action of $\varphi$ is multiplied by $p^{r}$ and that the filtration
is shifted by $r$: we have ${\rm Fil}_\perp^i\{r\}={\rm Fil}_\perp^{i-r}$.}
\begin{align*}
&D_{\rm pst}^\dual(V^r_{\rm st}(M,{\rm Fil}^\bullet))
\simeq M^\dual\{r\}&&{\text{as $(\varphi,N,\G_K)$-modules over $F^{\rm nr}$,}}\\
&D_{\rm dR}^\dual(V^r_{\rm st}(M,{\rm Fil}^\bullet))
\simeq (M_K^\dual,{\rm Fil}_\perp^\bullet\{r\}) &&{\text{as filtered $K$-modules.}}
\end{align*}
\end{corollary}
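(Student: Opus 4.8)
The plan is to reduce the corollary to Corollary~\ref{new-tate2.5} by a Tate twist, the only extra content being a comparison between the ``positive'' period rings $\bst^+,\bdr^+$ and the full ones $\bst,\bdr$. First I would introduce the filtered $(\varphi,N,\G_K)$-module $M(r)$ with the same underlying $(\varphi,N,\G_K)$-module as $M$ but with $\varphi$ rescaled to $p^{-r}\varphi$ and ${\rm Fil}^iM(r)_K:={\rm Fil}^{i+r}M_K$. Rescaling $\varphi$ lowers $t_N$ by $r\,{\rm rk}(M)$ and shifting the filtration lowers $t_H$ by the same amount, so $M(r)$ has the same Harder--Narasimhan filtration as $M$; in particular it is again acyclic, with $\varphi$-slopes in $[-r,0]$ and ${\rm Fil}^{-r}M(r)_K=M(r)_K$, ${\rm Fil}^1M(r)_K=0$. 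Writing $\iota_{\rm dR}$ for the natural map $M\otimes\bst\to M_K\otimes\bdr$ and unwinding definitions, one gets $(M(r)\otimes_{F^{\rm nr}}\bst)^{N=0,\varphi=1}=(M\otimes_{F^{\rm nr}}\bst)^{N=0,\varphi=p^r}$ (and the same with $\bst^+$), while, using ${\rm Fil}^0M_K=M_K$ and ${\rm Fil}^{r+1}M_K=0$, one has ${\rm Fil}^0(M(r)_K\otimes_K\bdr)={\rm Fil}^r(M_K\otimes_K\bdr)={\rm Fil}^r(M_K\otimes_K\bdr^+)$, this last group sitting inside $M_K\otimes_K\bdr^+$. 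Consequently $V_{\rm st}(M(r),{\rm Fil}^\bullet)=\{x\in(M\otimes\bst)^{N=0,\varphi=p^r}: \iota_{\rm dR}(x)\in{\rm Fil}^r(M_K\otimes\bdr^+)\}$, and $V^r_{\rm st}(M,{\rm Fil}^\bullet)$ is the subspace cut out by the additional condition $x\in(M\otimes\bst^+)^{N=0,\varphi=p^r}$; in particular $V^r_{\rm st}(M,{\rm Fil}^\bullet)\subset V_{\rm st}(M(r),{\rm Fil}^\bullet)$.

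Granting that this inclusion is an equality, the corollary follows: Corollary~\ref{new-tate2.5} applied to the acyclic module $M(r)$ gives $D^\dual_{\rm pst}(V^r_{\rm st}(M,{\rm Fil}^\bullet))\simeq M(r)^\dual$ and $D^\dual_{\rm dR}(V^r_{\rm st}(M,{\rm Fil}^\bullet))\simeq(M(r)_K^\dual,{\rm Fil}_\perp^\bullet)$, and one checks that $M(r)^\dual=M^\dual\{r\}$ (dualizing turns $\varphi\mapsto p^{-r}\varphi$ into $\varphi\mapsto p^r\varphi$) and ${\rm Fil}_\perp^i(M(r)_K^\dual)=({\rm Fil}^{1-i}M(r)_K)^\perp=({\rm Fil}^{1-i+r}M_K)^\perp={\rm Fil}_\perp^{i-r}M_K^\dual={\rm Fil}_\perp^i\{r\}M_K^\dual$, which is the asserted answer. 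So everything comes down to the equality $V^r_{\rm st}(M,{\rm Fil}^\bullet)=V_{\rm st}(M(r),{\rm Fil}^\bullet)$, i.e. to showing that an $x\in(M\otimes\bst)^{N=0,\varphi=p^r}$ with $\iota_{\rm dR}(x)\in M_K\otimes\bdr^+$ automatically lies in $M\otimes\bst^+$.

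To prove this I would reduce to the weakly admissible case using Lemma~\ref{FN4.1}: choosing ${\rm Fil}_1^\bullet\subset{\rm Fil}^\bullet$ with $(M,{\rm Fil}_1^\bullet)$ weakly admissible, ${\rm Fil}_1^0M_K=M_K$ and ${\rm Fil}_1^{r+1}M_K=0$, the acyclicity and the degree bounds give compatible short exact sequences $0\to V^r_{\rm st}(M,{\rm Fil}_1^\bullet)\to V^r_{\rm st}(M,{\rm Fil}^\bullet)\to{\rm Fil}^r(M_K\otimes\bdr^+)/{\rm Fil}_1^r(M_K\otimes\bdr^+)\to 0$ and the analogue for $V_{\rm st}(M(r),-)$ with the same cokernel (again using ${\rm Fil}^r(M_K\otimes\bdr)={\rm Fil}^r(M_K\otimes\bdr^+)$), so it is enough to treat $(M,{\rm Fil}^\bullet)=(M,{\rm Fil}_1^\bullet)$ weakly admissible. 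Then $V:=V_{\rm st}(M,{\rm Fil}^\bullet)$ is a genuine finite-dimensional semistable $\G_K$-representation with $D_{\rm st}(V)=M$ (Remark~\ref{new-tate12}(iii)), $V_{\rm st}(M(r),{\rm Fil}^\bullet)=V(r)$ is semistable with Hodge filtration on $D_{\rm dR}$ concentrated in degrees $[-r,0]$, and the desired equality is the classical statement that for such a representation the period intersection can be computed inside $\bst^+$ and $\bdr^+$ — because ${\rm Fil}^0(D_{\rm dR}(V(r))\otimes_K\bdr)\subset D_{\rm dR}(V(r))\otimes_K\bdr^+$ and, $V(r)$ being semistable, $\bdr^+\!\cdot V(r)={\rm Fil}^0(D_{\rm dR}(V(r))\otimes_K\bdr)$ and $\bst^+\!\cdot V(r)=(D_{\rm st}(V(r))\otimes_F\bst^+)^{N=0,\varphi=1}$.

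The main obstacle is this last step — the passage from $\bst,\bdr$ to $\bst^+,\bdr^+$ without introducing denominators in $t$. This is exactly where the two hypotheses ``$\varphi$-slopes in $[0,r]$'' and ``Hodge filtration in degrees $[0,r]$'' enter, and they genuinely cannot be dropped (already for a rank-one $M$ of $\varphi$-slope $>r$ with ${\rm Fil}$ concentrated in degree $0$ one has $V^r_{\rm st}(M)=0\neq V_{\rm st}(M(r))$). One also has to justify that the reduction to the weakly admissible case is legitimate — that the ``positive'' truncated fundamental complex of an acyclic module with these degree bounds is still exact on the right, and that the two short exact sequences above match up — which again rests on the degree bounds together with the vanishing of $H^1$ of the associated vector bundle in the acyclic case (Remark~\ref{new-tate12}(ii) and~(\ref{HN4.1})).
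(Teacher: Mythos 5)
Your plan is essentially the paper's proof, repackaged as a Tate twist. The paper's argument is a one-liner: observe that the hypotheses force $V^r_{\rm st}(M,{\rm Fil}^\bullet)=t^rV_{\rm st}(M,{\rm Fil}^\bullet)$, deduce ${\rm Hom}_{\G_K}(V^r_{\rm st}(M,{\rm Fil}^\bullet),{\bf B}_?)=t^r{\rm Hom}_{\G_K}(V_{\rm st}(M,{\rm Fil}^\bullet),{\bf B}_?)$, and conclude from Theorem~\ref{new-tate2} and Corollary~\ref{new-tate2.5}. Your object $V_{\rm st}(M(r),{\rm Fil}^\bullet)$ is exactly $t^rV_{\rm st}(M,{\rm Fil}^\bullet)$ (multiplication by $t^r$ identifies the two, since $t$ is a $\varphi$-eigenvector of eigenvalue $p$ and ${\rm Fil}^0(M(r)_K\otimes_K\bdr)={\rm Fil}^r(M_K\otimes_K\bdr)$), so the identity you isolate as the crux, $V^r_{\rm st}(M,{\rm Fil}^\bullet)=V_{\rm st}(M(r),{\rm Fil}^\bullet)$, is the same one the paper uses. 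Applying Corollary~\ref{new-tate2.5} directly to $M(r)$, rather than to $M$ and then twisting by $t^r$, is a cosmetic choice; your bookkeeping $M(r)^\dual=M^\dual\{r\}$ and ${\rm Fil}_\perp^\bullet(M(r)_K^\dual)={\rm Fil}_\perp^\bullet\{r\}$ is correct.

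The only substantive divergence is how much justification is offered for the key equality. The paper treats $V^r_{\rm st}=t^rV_{\rm st}$ as immediate from the slope and filtration bounds (it is the $K$-analogue of the $k=0$ case of Remark~\ref{quasi5}, where the paper points to a ``Dimension of BC'' argument), whereas you spell out a reduction to the weakly admissible case via Lemma~\ref{FN4.1} and then appeal to classical facts about $\bst^+$- and $\bdr^+$-periods of a semistable representation whose $\varphi$-slopes lie in $[0,r]$ and whose Hodge--Tate weights of $V(r)$ lie in $[-r,0]$. That reduction is sound, and your caveats --- that both degree bounds are used essentially, and that the two short exact sequences obtained from ${\rm Fil}_1^\bullet$ must be checked to have the same cokernel --- are exactly the points needing verification. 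Nothing in your sketch is wrong; it simply supplies more of the detail that the paper elides. No gap.
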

\begin{proof}
The conditions imply that $V^r_{\rm st}(M,{\rm Fil}^\bullet)=t^rV_{\rm st}(M,{\rm Fil}^\bullet)$.
Hence $${\rm Hom}_{\G_K}(V^r_{\rm st}(M,{\rm Fil}^\bullet),{\bf B}_?)=
t^{r}{\rm Hom}_{\G_K}(V_{\rm st}(M,{\rm Fil}^\bullet),{\bf B}_?)$$
 and the result follows.
\end{proof}
\begin{example}\label{new-tate23}
If $M$ in Corollary~\ref{new-tate2.7} satisfies ${\rm Fil}^rM_K=M_K$,
then $(M_K\otimes_K\bdr^+)/{\rm Fil}^r=0$. Hence 
$$V^r_{\rm st}(M,{\rm Fil}^\bullet)=
X^r_{\rm st}(M):=(M\otimes_{F^{\rm nr}}\bst^+)^{N=0,\varphi=p^r}$$
The corollary then becomes:
\begin{align*}
&{\rm Hom}_{\G_K}^{\rm sm}(X^r_{\rm st}(M),\bst)\simeq  M^\dual, \quad 
{\text{as $(\varphi,N,\G_K)$-modules over $F^{\rm nr}$,}}\\
&{\rm Hom}_{\G_K}(X^r_{\rm st}(M),t^j\bdr^+)\simeq 
\begin{cases} M_K &{\text{if $j\leq 0$,}}\\ 0 &{\text{if $j\geq 1$.}} \end{cases}
\end{align*} 
\end{example}
  
\begin{lemma}\label{kicius1}
Under the hypothesis of Corollary~\ref{new-tate2.7},
we have 
${\rm Hom}_{\G_K}(V_{\rm st}^r(M,{\rm Fil}^\bullet),C(j))=0$, for  $j\geq r+1$.
\end{lemma}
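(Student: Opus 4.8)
The plan is to split the computation via the fundamental exact sequence into a ``Hodge'' part and a ``Frobenius'' part, the point being that on both parts only Tate twists $C(m)$ with $0\le m\le r$ occur, whereas $j\ge r+1$. First I would record the basic short exact sequence. Under the hypotheses of Corollary~\ref{new-tate2.7}, the equality $V^r_{\rm st}(M,{\rm Fil}^\bullet)=t^rV_{\rm st}(M,{\rm Fil}^\bullet)$ from the proof of that corollary, combined with the acyclicity statement of Remark~\ref{new-tate12}(ii), gives a short exact sequence of objects of $\scc(\G_K)$
$$0\longrightarrow V^r_{\rm st}(M,{\rm Fil}^\bullet)\longrightarrow X^r_{\rm st}(M)\longrightarrow Y^r\longrightarrow 0,\qquad Y^r:=(M_K\otimes_K\bdr^+)/{\rm Fil}^r(M_K\otimes_K\bdr),$$
where $X^r_{\rm st}(M)=(M\otimes_{F^{\rm nr}}\bst^+)^{N=0,\varphi=p^r}$; here one uses that ${\rm Fil}^{r+1}M_K=0$ forces ${\rm Fil}^r(M_K\otimes_K\bdr)\subset M_K\otimes_K\bdr^+$ and that $X^r_{\rm st}(M)=t^rX_{\rm st}(M)\cap(M_K\otimes_K\bdr^+)$, which together with the surjectivity of $X_{\rm st}(M)\to X_{\rm dR}(M)$ (Remark~\ref{new-tate12}(ii)) give the surjectivity on the right. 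Applying ${\rm Hom}_{\G_K}(-,C(j))$ yields the exact sequence
$${\rm Hom}_{\G_K}(X^r_{\rm st}(M),C(j))\longrightarrow{\rm Hom}_{\G_K}(V^r_{\rm st}(M,{\rm Fil}^\bullet),C(j))\longrightarrow{\rm Ext}^1_{\scc(\G_K)}(Y^r,C(j)),$$
so it is enough to prove, for $j\ge r+1$, that \textup{(A)} ${\rm Ext}^1_{\scc(\G_K)}(Y^r,C(j))=0$ and \textup{(B)} ${\rm Hom}_{\G_K}(X^r_{\rm st}(M),C(j))=0$.

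For \textup{(A)}, I would choose a $K$-basis $e_1,\dots,e_d$ of $M_K$ adapted to the Hodge filtration; since ${\rm Fil}^0M_K=M_K$ and ${\rm Fil}^{r+1}M_K=0$, each $e_\ell$ lies in ${\rm Fil}^{i_\ell}M_K\setminus{\rm Fil}^{i_\ell+1}M_K$ with $0\le i_\ell\le r$, and, $\G_K$ acting trivially on $M_K$, a direct computation gives ${\rm Fil}^r(M_K\otimes_K\bdr)=\bigoplus_\ell t^{\,r-i_\ell}\bdr^+e_\ell$, whence $Y^r\simeq\bigoplus_\ell\bdr^+/t^{\,r-i_\ell}\bdr^+$ as $\G_K$-modules. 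Thus $Y^r$ is a successive extension of the Tate twists $C(m)$ with $0\le m\le r-1$. By Example~\ref{firstex}, ${\rm Ext}^1_{\scc(\G_K)}(C(m),C(j))\simeq{\rm Ext}^1_{\scc(\G_K)}(C,C(j-m))=0$ as soon as $j-m\notin\{0,1\}$, and $j-m\ge 2$ for $j\ge r+1$ and $m\le r-1$; d\'evissage then gives \textup{(A)}.

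The heart of the matter is \textup{(B)}, and this is where the bound on the $\varphi$-slopes is used. A sub- or quotient $(\varphi,N,\G_K)$-module of $M$ again has $\varphi$-slopes in $[0,r]$, and $M\mapsto X^r_{\rm st}(M)$ is exact on the category of such modules (the only obstruction to right-exactness is the vanishing of $(M\otimes_{F^{\rm nr}}\bcris^+)/(\varphi-p^r)$, which holds precisely because the slopes are $\le r$). Taking the slope filtration of $M$ reduces us to $M$ isoclinic of slope $s\in[0,r]$; then $N=0$ on $M$ (as $N$ lowers slopes by $1$) and $X^r_{\rm st}(M)=(M\otimes_{F^{\rm nr}}\bcris^+)^{\varphi=p^r}$. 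Passing to a finite extension $L/K$ over which the $\G_K$-action on $M$ becomes unramified (so that $M=F^{\rm nr}\otimes_{F_L}M(L)$ for a $(\varphi,N)$-module $M(L)$ over $F_L$, as in the proof of Corollary~\ref{new-tate2.5}), one identifies $X^r_{\rm st}(M)$, $\G_L$-equivariantly, with a finite direct sum of copies of ${\mathbb U}_{r-s}$, $0\le r-s\le r$. By the Fargues--Fontaine description of the ${\mathbb U}_\lambda$ (the iterated fundamental exact sequences $0\to{\mathbb U}_{\mu}(1)\to{\mathbb U}_{\mu'}\xrightarrow{\theta}C\to 0$), the object ${\mathbb U}_{r-s}$ is, in $\scc(\G_L)$, a successive extension of Tate twists $C(m)$ and of $\Q_p(m)$ with $0\le m\le\lceil r-s\rceil\le r$. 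Since ${\rm Hom}_{\G_L}(C(m),C(j))={\rm Hom}_{\G_L}(C,C(j-m))=0$ for $j\ne m$ by Proposition~\ref{twists}(i), and ${\rm Hom}_{\G_L}(\Q_p(m),C(j))=C(j-m)^{\G_L}=0$ for $j\ne m$, and $j\ge r+1>r\ge m$ in all cases, left-exactness of ${\rm Hom}_{\G_L}(-,C(j))$ along the d\'evissage gives ${\rm Hom}_{\G_L}(X^r_{\rm st}(M),C(j))=0$, hence a fortiori ${\rm Hom}_{\G_K}(X^r_{\rm st}(M),C(j))=0$; together with \textup{(A)} and the first step this finishes the proof. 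The main obstacle is the input used in \textup{(B)}: identifying $X^r_{\rm st}(M)$, for $M$ isoclinic of slope $s\in[0,r]$, with a sum of copies of ${\mathbb U}_{r-s}$, and knowing that the twists appearing in the Jordan--H\"older constituents of ${\mathbb U}_{r-s}$ are $\le\lceil r-s\rceil\le r$ --- this is exactly where the hypothesis ``$\varphi$-slopes $\le r$'' is essential.
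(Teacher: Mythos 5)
Your proof takes a genuinely different route from the paper. The paper resolves $V:=V^r_{\rm st}(M,{\rm Fil}^\bullet)$ as an extension
$0\to V_1\to V\to W\to 0$
coming from a weakly admissible sub-filtration ${\rm Fil}_1^\bullet\subset{\rm Fil}^\bullet$ (Lemma~\ref{FN4.1}), so that $V_1$ is a bona fide de Rham representation with Hodge--Tate weights in $[0,r]$ and $W$ is a finite-length $\bdr^+$-module; the vanishing then falls out from Lemma~\ref{fog1} and the weight bound on $V_1$.  You instead resolve $V$ from the other side, using the acyclicity short exact sequence $0\to V\to X^r_{\rm st}(M)\to Y^r\to 0$ and splitting into a torsion $\bdr^+$-part $Y^r$ and a ``Frobenius'' part $X^r_{\rm st}(M)$.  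Your step (A) is fine: the identification $Y^r\simeq\bigoplus_\ell\bdr^+/t^{\,r-i_\ell}\bdr^+$ and the appeal to Lemma~\ref{fog1} (or to Example~\ref{firstex}) are correct.

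The problem is in step (B), which you rightly identify as the heart of the matter.  The ``iterated fundamental exact sequence'' you invoke, $0\to{\mathbb U}_{\mu}(1)\to{\mathbb U}_{\mu'}\xrightarrow{\theta}C\to 0$, is only correct when the slope is an integer (i.e.\ $h=1$, where ${\mathbb U}_{1,d}$ really is a successive extension of $C,C(1),\dots,C(d-1)$ by $\Q_p(d)$).  For $M$ isoclinic of slope $s=a/h$ with $h\ge 2$, the BC $X^r_{\rm st}(M)\simeq{\mathbb U}_{h,rh-a}$ does \emph{not} admit this d\'evissage: the natural exact sequence coming from the fiber at $\infty$ is $0\to{\mathbb U}_{h,d-h}(1)\to{\mathbb U}_{h,d}\to C^h\to 0$ (quotient $C^h$, not $C$), and it only iterates down to ${\mathbb U}_{h,r_0}(q)$ with $q=\lfloor d/h\rfloor$ and $0\le r_0<h$ --- and ${\mathbb U}_{h,r_0}$ for $0<r_0<h$ is \emph{not} a successive extension of $C$'s and $\Q_p$'s via that sequence, but requires a separate argument (e.g.\ embedding it into $C^h$ via the map $x\mapsto(\theta(x),\theta(\varphi x),\dots)$ as in Lemma~\ref{quasi121}).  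The good news is that the conclusion you need --- that the Jordan--H\"older constituents have Hodge--Tate twists $\le\lfloor r-s\rfloor\le r$ --- does hold once the d\'evissage is done correctly, so (B) is true; but as written the argument has a genuine gap at precisely the slope denominators $h>1$.  The cleanest repair is in fact the paper's own device: apply Lemma~\ref{FN4.1} to a trivial acyclic filtration to produce a weakly admissible one, exhibit $X^r_{\rm st}(M)$ as an extension of a torsion $\bdr^+$-module by a de Rham representation with weights in $[0,r]$, and conclude --- but at that point your route and the paper's coincide.
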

\begin{proof}
We can write  $V:= V_{\rm st}^r(M,{\rm Fil}^\bullet)$ in the form 
$0\to V_1{\to} V\to W\to 0$, as in the proof of Theorem~\ref{new-tate2},
with  $V_1$ of dimension  ${\rm dim}(M)$, de Rham with Hodge-Tate weights in $[0,r]$,
 and  $W$ a finite type  $\bdr^+$-module, sum of
$t^a\bdr^+/t^b\bdr^+$'s, with $0\leq a\leq b\leq r+1$.

   We have an exact sequence
\begin{align*}
0\to {\rm Hom}_{\scc(\sg_K)}  (W,C(j))  \to {\rm Hom}_{\scc(\sg_K)}(V,C(j))\to
{\rm Hom}_{\scc(\sg_K)}(V_1,C(j))
 \end{align*}
Since $j\geq r+1$, by Lemma~\ref{fog1}, we have  
${\rm Hom}_{\scc(\sg_K)}(t^a\bdr^+/t^b\bdr^+,C(j))=0$ if $b\leq r+1$; hence
${\rm Hom}_{\scc(\sg_K)}  (W,C(j))=0$.
Since the  Hodge--Tate weights of 
$V_1$ are~$\leq r$, we also have ${\rm Hom}_{\scc(\sg_K)}(V,C(j))=0$.

This concludes the proof.
\end{proof}

\subsection{Filtered $(\varphi,N)$-modules over $C$}
\subsubsection{Filtered $(\varphi,N)$-modules and vector bundles}
A filtered $(\phi,N)$-module $(M,M^+_{\dr})$ over $C$
is a pair $(M,M^+_{\dr})$, where:
\begin{enumerate}
\item $M$ is a $(\varphi,N)$-module over $F^{\nr}$;
 \item  $M^+_{\dr}\subset M\otimes_{F^{\nr}}\B_{\dr}$ is a $\B^+_{\dr}$-lattice. 
\end{enumerate}

\vskip.2cm
To such an $(M,M^+_{\dr})$ one can attach a vector bundle ${\cal E}(M,M^+_{\dr})$ on $X_{\rm FF}$, characterized by
$$H^0(X_{\rm FF}\moins\{\infty\},{\cal E}(M,M^+_{\dr}))=(M\otimes_{F^{\rm nr}}\bst)^{N=0,\varphi=1},
\quad{\cal E}(M,M^+_{\dr})\otimes \widehat\O_{X_{\rm FF},\infty} =M_{\rm dR}^+.$$
We set
$${V}_{\rm st}(M,M^+_{\dr}):=H^0(X_{\rm FF},{\cal E}(M,M^+_{\dr}))=
{\rm Ker}\big((M\otimes_{F^{\rm nr}}\bst)^{N=0,\varphi=1}
\to (M\otimes_{F^{\nr}}\B_{\dr})/M^+_{\dr}\big)$$

\begin{definition}
$(M,M^+_{\dr})$ is {\it weakly admissible} if 
${\cal E}(M,M^+_{\dr})$ is semistable, of slope~$0$.
It is {\it acyclic} if the slopes
of ${\cal E}(M,M_{\dr}^+)$
are~$\geq 0$.
\end{definition}

\begin{remark}\label{quasi4.1}
(i) If $(M,{\rm Fil}^\bullet) $ 
is a weakly admissible (resp.~acyclic) filtered  $(\varphi,N)$-module over~$K$,
the induced filtered $(\phi, N)$-module $(M\otimes_FF^{\rm nr}, {\rm Fil}^0(M_K\otimes_{K}\bdr))$ 
over $C$ is weakly admissible (resp.~acyclic).

(ii) It follows from the classification of vector bundles on $X_{\rm FF}$ that
$(M,M_{\rm dR}^+)$ is weakly admissible
if and only if it is admissible (i.e., ${\cal E}(M,M_{\dr}^+)$ is 
a direct sum of  trivial line bundles $\O_{X_{\rm FF}}$).
This translates into the following: $(M,M^+_{\dr})$ is weakly admissible
if and only if ${V}_{\rm st}(M,M^+_{\dr})$ is finite dimensional over $\Q_p$ and
the sequence 
$$0\to {V}_{\rm st}(M,M^+_{\dr})\to (M\otimes_{F^{\nr}}\bst)^{N=0,\varphi=1}\to (M\otimes_{F^{\nr}}\bdr)/M^+_{\dr}\to 0$$
is exact.  Moreover, if this is the case, the triviality of ${\cal E}(M,M^+_{\dr})$
implies that the natural maps
$${V}_{\rm st}(M,M^+_{\dr})\otimes_{\Q_p}\bst\to M\otimes_{F^{\nr}}\bst,\quad
{V}_{\rm st}(M,M^+_{\dr})\otimes_{\Q_p}\bdr^+\to M^+_{\dr}$$
are isomorphisms (for the second map this is also equivalent to 
the map ${V}_{\rm st}(M,M^+_{\dr})\otimes_{\Q_p}\bdr\to M\otimes_{F^{\nr}}\bdr$
being  a filtered isomorphism). 

(iii) The following conditions are equivalent:

\quad $\bullet$ $(M,M_{\dr}^+)$ is acyclic,

\quad $\bullet$
$H^1(X,{\cal E}(M,M_{\dr}^+))=0$ (and hence  ${\cal E}(M,M_{\dr}^+)$ is acyclic),

\quad $\bullet$ $(M\otimes_{F^{\nr}}\bst)^{N=0,\varphi=1}\to (M\otimes_{F^{\nr}}\bdr)/M^+_{\dr}$ 
is surjective.

\quad $\bullet$ There exists a sub-$\bdr^+$ lattice $N_{\dr}^+\subset M_{\dr}^+$
such that $(M,N_{\dr}^+)$ is weakly admissible.

(The first two points are equivalent by Theorem~\ref{HN4} and formulas~(\ref{HN4.1}),
the second and the third are
equivalent because 
$$H^1(X,{\cal E}(M,M_{\dr}^+))={\rm Coker}\big(
(M\otimes_{F^{\nr}}\bst)^{N=0,\varphi=1}\to (M\otimes_{F^{\nr}}\bdr)/M^+_{\dr}\big)$$
 the first
and last points are equivalent by the same arguments as in Lemma~\ref{FN4}.)
\end{remark}

\begin{remark}\label{quasi6}
Since  $(M,M_{\dr}^+)\mapsto {\cal E}(M,M_{\dr}^+)$ commutes with tensor products, and since the
slope of  ${\cal E}\otimes{\cal E}'$ is equal to the sum of slopes of 
 ${\cal E}$ and  ${\cal E}'$, the tensor product of two acyclic filtered 
$\varphi$-modules  is again acyclic.
\end{remark}

\begin{remark}\label{quasi5}
Assume that the $\varphi$-slopes are in $[0,r]$ and 
$M\otimes t^r\bdr^+\subset t^rM_{\rm dR}^+\subset M\otimes \bdr^+$.  
Then the  following conditions are equivalent: 

$\bullet$ $(M,M_{\dr}^+)$ is {acyclic},

$\bullet$ $(M\otimes_{F^{\nr}}\bst)^{N=0,\varphi=p^r}\to (M\otimes_{F^{\nr}}\bdr)/t^rM^+_{\dr}$ is surjective,  

$\bullet$ for all $k\geq 0$, 
$(M\otimes_{F^{\nr}} t^{-k}\bst^+)^{N=0,\varphi=p^r}\to (M\otimes_{F^{\nr}} t^{-k}\bdr^+)/t^rM^+_{\dr}$ is  surjective.

This follows from the fact that $\frac{(M\otimes t^{-k}\bst^+)^{N=0,\varphi=p^r}}{(M\otimes t^{1-k}\bst^+)^{N=0,\varphi=p^r}}
\simeq  \frac{M\otimes t^{-k}\bdr^+}{M\otimes t^{1-k}\bdr^+}$, for $k\geq 1$,
as can be shown, for exemple, by a Dimension of BC argument (we have an injective natural map
from the left to the right that extends to a morphism of BC's of the same Dimension $(\dim M,0)$:
the computation of the Dimension of the term on the right is immediate, that of the term on the left
follows from the Dimension computations of~\cite[Ex.\,5.18]{CN1}; it follows that this is an isomorphism of BC's). 
For the same reasons,
for all $k\geq 0$,
$$t^r{V}_{\rm st}(M,M_{\dr}^+)={\rm Ker}\big((M\otimes_{F^{\nr}} t^{-k}\bst^+)^{N=0,\varphi=p^r}
\to (M\otimes_{F^{\nr}} t^{-k}\bdr^+)/t^rM^+_{\dr}\big)$$
\end{remark}

\subsection{Filtered $(\varphi,N)$-modules and BC's}
The following computations supply  key arguments in the proofs of our pro-\'etale-to-de Rham  comparison theorems in Chapter~\ref{SS7}.  
 
\subsubsection{$(\varphi,N)$-modules}\label{BACO5} 
The following results supply  key arguments 
in the proof of our main comparison theorem (Theorem~\ref{quasi100}).
\begin{lemma}\label{quasi121}
Let  $M$ be a $(\varphi,N)$-module over $F^{\rm nr}$ whose $\varphi$-slopes are in $[0,r]$.
Then the $\Q_p$-module $(M\otimes_{F^{\nr}}\bst^+)^{N=0,\varphi=p^r}$ generates the  $\bdr^+$-module
$M\otimes_{F^{\nr}}\bdr^+$.
\end{lemma}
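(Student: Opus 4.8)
The plan is to prove the equivalent statement that the natural map $\big((M\otimes_{F^{\nr}}\bst^+)^{N=0,\varphi=p^r}\big)\otimes_{\Q_p}\bdr^+\to M\otimes_{F^{\nr}}\bdr^+$ is surjective, and to do so in three moves. First a preliminary reduction: since $\bdr^+$ is a complete DVR with uniformiser $t$ and residue field $C$ and $M\otimes\bdr^+$ is free of finite rank, Nakayama shows that a $\Q_p$-subspace $S\subset M\otimes\bdr^+$ generates $M\otimes\bdr^+$ over $\bdr^+$ if and only if the image of $S$ in $M\otimes_{F^{\nr}}C=(M\otimes\bdr^+)/t$ spans over $C$; so I only need to understand $(M\otimes\bst^+)^{N=0,\varphi=p^r}$ modulo $t$.

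\emph{Step 1 — reduction to $N=0$.} Writing $\bst^+=\bcris^+[u]$ with $\varphi(u)=pu$, $N(u)=-1$, on $M\otimes\bst^+=\bigoplus_j(M\otimes\bcris^+)u^j$ the operator $N$ is $N_M\otimes1+1\otimes N$; solving $N(\sum_j m_ju^j)=0$ forces $m_j=\tfrac1{j!}N_M^j(m_0)$ (a finite sum, $N_M$ nilpotent), so $m_0\mapsto\exp(uN_M)(m_0):=\sum_j\tfrac{u^j}{j!}N_M^j(m_0)$ is a bijection $M\otimes\bcris^+\xrightarrow{\sim}(M\otimes\bst^+)^{N=0}$. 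Using $N_M\varphi=p\varphi N_M$ one checks $\varphi(\exp(uN_M)(m_0))=\exp(uN_M)(\varphi(m_0))$ with $\varphi=\varphi_M\otimes\varphi_{\bcris^+}$ on the right, so this bijection carries $(M\otimes\bcris^+)^{\varphi=p^r}$ onto $(M\otimes\bst^+)^{N=0,\varphi=p^r}$. Finally, under $\bst^+\hookrightarrow\bdr^+$ the element $u$ lands in $t\bdr^+$ (it is $\log([\tilde p]/p)$ with $[\tilde p]/p\equiv1$ mod $\ker\theta$), whence $\exp(uN_M)(m_0)\equiv m_0\pmod{t(M\otimes\bdr^+)}$. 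Thus $(M\otimes\bst^+)^{N=0,\varphi=p^r}$ and $(M\otimes\bcris^+)^{\varphi=p^r}$ have the same image in $M\otimes C$, and by the preliminary remark it is enough to prove that $(M\otimes_{F^{\nr}}\bcris^+)^{\varphi=p^r}$ (with $\varphi=\varphi_M\otimes\varphi_{\bcris^+}$, $N_M$ now irrelevant) generates $M\otimes\bdr^+$.

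\emph{Step 2 — passage to the Fargues--Fontaine curve.} I let $M'$ be the isocrystal $(M,p^{-r}\varphi_M)$, whose $\varphi$-slopes lie in $[-r,0]$, and I take $\mathcal E:=\mathcal E(M',M'\otimes\bdr^+)$ the attached vector bundle on $X_{\mathrm{FF}}$; its Harder--Narasimhan slopes are the negatives of the $\varphi$-slopes of $M'$ (Theorem~\ref{HN4}), hence lie in $[0,r]$, so $\mathcal E$ is acyclic by Remark~\ref{quasi4.1}(iii). By construction $\mathcal E\otimes_{\mathcal O_X}\widehat{\mathcal O}_{X,\infty}=M'\otimes\bdr^+=M\otimes\bdr^+$ and $H^0(X_{\mathrm{FF}},\mathcal E)=V_{\mathrm{st}}(M',M'\otimes\bdr^+)=(M'\otimes\bcris^+)^{\varphi=1}=(M\otimes\bcris^+)^{\varphi=p^r}$ (using, for the last identification of intersections, that $\bcris^+=\bcris\cap\bdr^+$ after tensoring with $M$). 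So the remaining task is to show $H^0(X_{\mathrm{FF}},\mathcal E)\otimes_{\Q_p}\bdr^+\to\mathcal E\otimes\widehat{\mathcal O}_{X,\infty}$ is surjective.

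\emph{Step 3 — global generation.} It suffices that $\mathcal E$ be globally generated, for then $H^0(X,\mathcal E)\otimes_{\Q_p}\mathcal O_X\to\mathcal E$ is surjective, and taking the stalk at $\infty$ and tensoring over the Noetherian local ring $\mathcal O_{X,\infty}$ with the flat extension $\widehat{\mathcal O}_{X,\infty}=\bdr^+$ preserves surjectivity, giving exactly what we want. Now any vector bundle on $X_{\mathrm{FF}}$ with slopes $\geq0$ is globally generated: by Theorem~\ref{HN4}(ii) it is a direct sum of $\mathcal O(\lambda)$'s with $\lambda\geq0$, and each such is globally generated by induction on the rank, using $0\to\mathcal O\to\mathcal O(\lambda)\to\mathcal O(\lambda)/\mathcal O\to0$ (a nonzero section in $H^0(X,\mathcal O(\lambda))\neq0$ embeds $\mathcal O$; the quotient has slopes $\geq\lambda\geq0$ and strictly smaller rank, or is torsion, hence is globally generated by induction; and $H^0(\mathcal O(\lambda))\twoheadrightarrow H^0(\mathcal O(\lambda)/\mathcal O)$ because $H^1(X,\mathcal O)=0$ by \eqref{HN4.1}), followed by Nakayama at each point. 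Since $\mathcal E$ has slopes in $[0,r]$ it is globally generated, and this finishes the proof. I expect Step 3 — the global generation fact and the flatness/Nakayama bookkeeping — to be the real content, while Step 1 is routine and Step 2 is a matter of matching the excerpt's definitions of $\mathcal E(M,M_{\mathrm{dR}}^+)$ and $V_{\mathrm{st}}$ with the $\varphi=p^r$-eigenspace and the lattice $M\otimes\bdr^+$.
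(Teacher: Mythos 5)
Your proof is correct, but it takes a genuinely different route from the one in the paper. After the common initial reductions (Nakayama over the local ring $\bdr^+$, and replacing $(M\otimes\bst^+)^{N=0}$ by $M\otimes\bcris^+$ via the monodromy exponential), the paper works directly at the level of $C$-points: it applies Dieudonn\'e--Manin to reduce to an elementary isocrystal, writes down an explicit parametrization of $(M\otimes\bcris^+)^{\varphi=p^r}$ by $(\bcris^+)^{\varphi^h=p^{rh-a}}$, and finishes with linear independence of the characters $\varphi^i|_{\Q_{p^h}}$ to show that the image under $\theta$ spans $M\otimes C$. You instead stay on the Fargues--Fontaine curve: after twisting $\varphi$ by $p^{-r}$, you identify $(M\otimes\bcris^+)^{\varphi=p^r}$ with $H^0$ of the acyclic bundle $\mathcal E=\mathcal E(M',M'\otimes\bdr^+)$ whose slopes lie in $[0,r]$, reduce the lemma to global generation of nonnegative-slope bundles (proved by induction on rank from $H^1(X,\O)=0$), and then base-change surjectivity to the completed stalk at $\infty$. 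The paper's route is more elementary and self-contained; yours makes the underlying mechanism transparent (global generation of nonnegative-slope bundles) at the cost of invoking more of the Fargues--Fontaine formalism, which the paper uses heavily elsewhere in any case. One small imprecision worth flagging: the equality $\bcris^+=\bcris\cap\bdr^+$ you cite parenthetically is not true as stated (the intersection is strictly larger --- this failure of $t$-saturation is precisely why $\bmax$ was introduced); what you actually need, and what does hold, is the eigenspace identity $(\bcris)^{\varphi^h=p^d}\cap\bdr^+=(\bcris^+)^{\varphi^h=p^d}$ for $d\geq 0$, which is formula~(\ref{HN4.1}) in disguise. With that substitution the identification $H^0(X_{\rm FF},\mathcal E)=(M\otimes\bcris^+)^{\varphi=p^r}$ is sound and the rest of your argument goes through.
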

\begin{proof}
Since  $\bdr^+$ is a local ring, with residue field $C$,
it suffices to show that 
$(M\otimes_{F^{\nr}}\bst^+)^{N=0,\varphi=p^r}$ generates the  $C$-module
$M\otimes_{F^{\nr}} C$.

Now,  the map $x\mapsto x-u\,Nx+\frac{u^2}{2!}\,N^2x-\frac{u^3}{3!}\,N^3x+\cdots$, 
for $u\in \B^+_{\st}$ maping to $\log([p^\flat]/p)$ in $\bdr^+$,  induces
a $\varphi$-equivariant isomorphism $M\otimes_{F^{\nr}}\bcris^+\to (M\otimes_{F^{\nr}}\bst^+)^{N=0}$.
Since  $u$ has image~$0$ in  $C$, we are reduced to proving that 
$(M\otimes_{F^{\nr}}\bcris^+)^{\varphi=p^r}$ generates the  $C$-module
$M\otimes_{F^{\nr}} C$.

Since $\bcris^+$ contains $W(k_C)$, the theorem of  Dieudonn\'e-Manin allows us to reduce to the case 
where $M$ is elementary with slopes  $\frac{a}{h}\leq r$, i.e.,~it is generated 
by  $e_1,\dots,e_h$ with  
$$\varphi(e_1)=e_2,\, \varphi(e_2)=e_3, \dots,\, \varphi(e_h)=p^ae_1.$$
The map  $x\mapsto xe_1+p^{-r}\varphi(x)e_2+\cdots+p^{-(h-1)r}\varphi^{h-1}(x)e_h$
induces an isomorphism of $U_{h,rh-a}:=(\bcris^+)^{\varphi^h=p^{rh-a}}$ with 
$(M\otimes_{F^{\nr}}\bcris^+)^{\varphi=p^r}$.  Hence we are reduced to proving that the image of 
$U_{h,rh-a}$ by  the map $x\mapsto(\theta(x),\theta(\varphi(x)),\dots,\theta(\varphi^{h-1}(x)))$
does not lie inside a proper sub-$C$-module of $C^h$.
Assume that it does. Then there exists $\lambda_0,\dots,\lambda_{h-1}\in C$, not all zero,
such that $\lambda_0\theta(x)+\lambda_1\theta(\varphi(x))+\cdots+\lambda_{h-1}\theta(\varphi^{h-1}(x))=0$
for all $x\in U_{h,rh-a}$.  In particular, 
one can apply this to  $\alpha x$, for  $\alpha\in\Q_{p^h}$.
Setting  $\mu_i(x)=\lambda_i\theta(\varphi^i(x))$, we get 
$\mu_0(x)\alpha+\mu_1(x)\varphi(\alpha)+\cdots+\mu_{h-1}(x)\varphi^{h-1}(\alpha)=0$,
for all $\alpha\in\Q_{p^h}$. Linear independence of characters implies that 
$\mu_i(x)=0$ for all $i$ and  $x$, and we get our contradiction.
\end{proof}

\begin{proposition}\label{new-baco3.5}
Let $M$ be a $(\varphi,N)$-module with $\varphi$-slopes in $[0,r]$, 
and let $${\mathbb X}_{\rm st}^r(M):=(M\otimes_{F^{\nr}}\Bst^+)^{N=0,\varphi=p^r}.$$
Then: 
$${\rm Hom}_{\rm VS}({\mathbb X}_{\rm st}^r(M),\Bdr)\simeq M^\dual\otimes_{F^{\rm nr}}\bdr,
\quad
{\rm Hom}_{\rm VS}({\mathbb X}_{\rm st}^r(M),\Bst)\simeq M^\dual\otimes_{F^{\rm nr}}\bst$$
\end{proposition}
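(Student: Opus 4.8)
The plan is to deduce both isomorphisms from Theorem~\ref{new-tate2} and its corollaries, which already handle the analogous statement with $\G_K$-equivariant $\Hom$'s in place of $\Hom_{\rm VS}$, combined with the basic dictionary between VS's and almost $C$-representations. The point is that ${\mathbb X}_{\rm st}^r(M)$ is, via its $C$-points, attached to the acyclic filtered $(\varphi,N,\G_K)$-module obtained from $M$ with the trivial filtration shifted so that ${\rm Fil}^r = M_K$ (i.e., the case treated in Example~\ref{new-tate23}), and the fundamental complex gives it the structure of a BC whose height is $\dim_{F^{\rm nr}}M \cdot [F^{\rm nr}:\Q_p]$-worth of $\Q_p$-lines — more precisely, by Remark~\ref{quasi4.1} applied over $C$, the vector bundle ${\cal E}(M, t^{-r}(M\otimes\bdr^+))$ is acyclic with slopes $\geq 0$, so ${\mathbb X}_{\rm st}^r(M) = {\mathbb H}^0(X_{\rm FF}, {\cal E})$ is a genuine BC of curvature $\geq 0$. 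This puts us squarely in the setting where the categorification of height from Proposition~\ref{baco3} and the functor $h(-) = \Hom_{\rm VS}(-,\Bdr)$ are available.

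First I would establish the $\Bdr$-statement. Since ${\mathbb X}_{\rm st}^r(M)$ decomposes (non-canonically, by Theorem~\ref{HN4} and Remark~\ref{HN8}(iii)) as a direct sum of ${\mathbb U}_\lambda$'s with $\lambda \geq 0$ — the slopes $\geq 0$ coming precisely from the hypothesis that the $\varphi$-slopes of $M$ lie in $[0,r]$, via the Dieudonné–Manin computation used in Lemma~\ref{quasi121} — we have $\Hom_{\rm VS}({\mathbb X}_{\rm st}^r(M),{\mathbb V}^1) = 0$, so it has curvature $\geq 0$ and its ${\rm Ext}^{1,\natural}$ against $\Bdr$ vanishes (the relevant vanishing is the computation underlying Lemma~\ref{baco5} for the pieces ${\mathbb U}_\lambda$, $\lambda \geq 0$, which inject into no $\Bdr^+$-module but have no higher $\Hom$-obstruction). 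Then $h({\mathbb X}_{\rm st}^r(M)) = \Hom_{\rm VS}({\mathbb X}_{\rm st}^r(M),\Bdr)$ is a free $\bdr$-module whose rank equals ${\rm ht}({\mathbb X}_{\rm st}^r(M)) = \dim_{F^{\rm nr}} M$ by Proposition~\ref{baco3}(i) (using that ${\mathbb X}_{\rm st}^r(M)$ has curvature $\geq 0$, hence we need the dual bound: a BC of curvature $\geq 0$ has $h$ of rank equal to $-{\rm deg}^- = {\rm ht}$ when ${\rm Ext}^{1,\natural}$ vanishes). On the other hand $M^\dual \otimes_{F^{\rm nr}}\bdr$ is free of the same rank over $\bdr$, and the natural pairing $M^\dual \to \Hom_{\rm VS}({\mathbb X}_{\rm st}^r(M),\bst) \subset \Hom_{\rm VS}({\mathbb X}_{\rm st}^r(M),\Bdr)$ (evaluated on $C$-points this is the map of Theorem~\ref{new-tate2}, which is injective there) extends $\bdr$-linearly to an injection $M^\dual\otimes_{F^{\rm nr}}\bdr \hookrightarrow h({\mathbb X}_{\rm st}^r(M))$; a map of free $\bdr$-modules of equal finite rank that is injective on $C$-points is an isomorphism. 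This gives the first isomorphism.

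For the $\Bst$-statement I would argue that $\Hom_{\rm VS}({\mathbb X}_{\rm st}^r(M),\Bst)$ sits inside $\Hom_{\rm VS}({\mathbb X}_{\rm st}^r(M),\Bdr)$ via the inclusion $\Bst \subset \Bdr$ (after choosing $\log[p^\flat]$; concretely $\Bst = \Bcris[u]$ and $\Bdr^+$ contains the image), and that under the identification just obtained its image is exactly $M^\dual\otimes_{F^{\rm nr}}\bst$. One direction is the same pairing map; for the reverse inclusion one uses that a VS-morphism ${\mathbb X}_{\rm st}^r(M) \to \Bst$ must, after composing with $\Bst \to \Bdr$, land in $M^\dual\otimes\bdr$, and then one checks compatibility with $N$ and $\varphi$: the condition that the image actually lies in $\Bst$ (equivalently, is killed by a power of $N$ and has bounded $\varphi$-slopes) forces the corresponding element of $M^\dual\otimes\bdr$ to lie in $M^\dual\otimes\bst$. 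The clean way to see this is to use that ${\mathbb X}_{\rm st}^r(M)$ is the $H^0$ of ${\cal E}(M,\cdot)$ on $X_{\rm FF}$ and that $\Hom_{\rm VS}(-,\Bst) = H^0(X_{\rm FF}\setminus\{\infty\},-)^\dual$-type data is computed by the $(\varphi,N)$-module structure, so $\Hom_{\rm VS}({\mathbb X}_{\rm st}^r(M),\Bst) = \Hom$ of $\varphi$-modules over $\B_{\rm cris}$ which is $M^\dual\otimes\bst$ by a slope computation analogous to Lemma~\ref{quasi121}.

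\textbf{Main obstacle.} The delicate point is the $\Bst$-case: unlike $\Bdr$, there is no height/curvature categorification for $\Hom_{\rm VS}(-,\Bst)$, so I cannot simply count ranks. The honest argument has to track the $(\varphi,N)$-action — showing that a $\Q_p$-linear VS-morphism to $\Bst$ is automatically $N$-compatible and has controlled slopes, so that it corresponds to a genuine element of $M^\dual\otimes_{F^{\rm nr}}\bst$ rather than merely of $M^\dual\otimes_{F^{\rm nr}}\bdr$. I expect this to require working on the Fargues–Fontaine curve with the vector bundle ${\cal E}(M,\cdot)$ and using that $\B_{\rm st} = \B_{\rm cris}[u]$ together with the identification of global sections over $X_{\rm FF}\setminus\{\infty\}$, plus the Dieudonné–Manin reduction to elementary $\varphi$-modules exactly as in the proof of Lemma~\ref{quasi121}; the bound on $\varphi$-slopes (the hypothesis $[0,r]$) is what makes the relevant $\Hom$ finite-dimensional over $\bst$ and forces equality.
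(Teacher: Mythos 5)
Your overall strategy for the $\Bdr$-isomorphism is the same as the paper's: compute the rank of $h({\mathbb X}_{\rm st}^r(M))=\Hom_{\rm VS}({\mathbb X}_{\rm st}^r(M),\Bdr)$ via the height/categorification of Proposition~\ref{baco3}, produce the natural map from $M^\dual\otimes_{F^{\nr}}\bdr$, show it is injective (Lemma~\ref{quasi121}), and conclude by equality of ranks. But you have a persistent sign error that corrupts several of the justifications. The BC ${\mathbb X}_{\rm st}^r(M)$ has curvature~$\leq 0$, not $\geq 0$: it is a sub-VS of the $\Bdr^+$-Module $M\otimes_{F^{\nr}}\Bdr^+$ (via $\Bst^+\hookrightarrow\Bdr^+$), which is exactly the definition of curvature~$\leq 0$. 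Equivalently, it is the $H^0$ of a vector bundle of slopes~$\geq 0$ on the Fargues--Fontaine curve, which by Corollary~\ref{ppp2.1} again means curvature~$\leq 0$. In particular your claim that $\Hom_{\rm VS}({\mathbb X}_{\rm st}^r(M),{\mathbb V}^1)=0$ is false --- already for $M=F^{\nr}$ with $\varphi=1$, ${\mathbb X}_{\rm st}^r(M)={\mathbb U}_{1,r}$ and $\Hom({\mathbb U}_{1,r},{\mathbb V}^1)\simeq C$ by \S\,\ref{HN14}(5). Your parenthetical that the ${\mathbb U}_\lambda$ for $\lambda\geq 0$ ``inject into no $\Bdr^+$-module'' is also false for the same reason. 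Fortunately the sign error is harmless to the structure: once you flip the sign, Proposition~\ref{baco3}(i) applies \emph{directly} (no ``dual bound'' is needed), and with ${\rm ht}({\mathbb X}_{\rm st}^r(M))={\rm rk}\,M$ (which is where the slope hypothesis on $M$ enters) the rank-count-plus-injectivity argument closes the first isomorphism as you describe.

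For the $\Bst$-isomorphism you correctly identify that there is no height categorification of $\Hom_{\rm VS}(-,\Bst)$ to lean on, but the replacement argument you sketch (tracking the $(\varphi,N)$-action on the curve and Dieudonn\'e--Manin again) is not carried out and I do not think it lands cleanly. The paper's route is both simpler and more robust. Granting the first isomorphism, one must show: if $\lambda\in M^\dual\otimes_{F^{\nr}}\bdr$ satisfies $\lambda(X_{\st}^r(M))\subset\bst$, then $\lambda\in M^\dual\otimes_{F^{\nr}}\bst$. To see this, pick any weakly admissible lattice $M_{\dr}^+$ in $M\otimes_{F^{\nr}}\bdr$ (possible because the $\varphi$-slopes are bounded) and set $V:=V_{\st}(M,M_{\dr}^+)$; by Remark~\ref{quasi4.1}(ii) this $V$ is finite-dimensional over $\Q_p$ with $V\otimes_{\Q_p}\bst\overset{\sim}{\to} M\otimes_{F^{\nr}}\bst$. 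Then $t^rV\subset X_{\st}^r(M)$, so $\lambda(V)\subset\bst$; and since $V$ generates $M\otimes_{F^{\nr}}\bst$ as a $\bst$-module, $\lambda(M)\subset\bst$, i.e., $\lambda\in M^\dual\otimes_{F^{\nr}}\bst$. You should replace your vague appeal to the curve by this concrete weak-admissibility argument, which is exactly what makes the second isomorphism go through.
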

\begin{proof}
${\mathbb X}_{\rm st}^r(M)$ is of curvature~$\leq 0$,
and the conditions on the slopes imply (\cite[Ex.\,5.18]{CN1}) that 
${\rm ht}({\mathbb X}_{\rm st}^r(M))={\rm rk}(M)$.
It follows from Proposition~\ref{baco3} that ${\rm rk}(h({\mathbb X}_{\rm st}^r(M)))={\rm rk}(M)$.
On the other hand, the inclusion $\Bst^+\hookrightarrow\Bdr$ induces a natural map 
$M^\dual\otimes_{F^{\rm nr}}\bdr\to h({\mathbb X}_{\rm st}^r(M))$. 
Lemma~\ref{quasi121} implies that this map is injective. Since the two modules have the same rank
over the field $\bdr$, this natural map is an isomorphism, which provides our first isomorphism.

For the second isomorphism, the injection of the right-hand side into the left-hand side is obvious (it follows, for example, from the first isomorphism).
To prove the converse inclusion, it is enough, granting the first isomorphism,
to show that if $\lambda\in M^\dual\otimes_{F^{\rm nr}}\bdr$ satisfies
$\lambda({X}_{\st}^r(M))\subset \bst$, then $\lambda \in M^\dual\otimes_{F^{\rm nr}}\bst$.
For this, pick a weakly admissible filtration $M_{\dr}^+$ on $M\otimes_{F^{\nr}}\bdr$, and
let $V:=V_{\st}(M,M_{\dr}^+)$ so that 
$V\otimes_{\Q_p}\bst\simeq M\otimes_{F^{\rm nr}}\bst$ (see ~Remark\,\ref{quasi4.1}).
Since $t^rV\subset X_{\st}^r(M)$, we have $\lambda(V)\subset\bst$, and
since $V$ generates $M\otimes_{F^{\rm nr}}\bst$, we have $\lambda(M)\subset \bst$.
This implies $\lambda \in M^\dual\otimes_{F^{\rm nr}}\bst$, as wanted.
\end{proof}

\begin{remark}\label{baco3.51}
The map $x\mapsto x-\frac{u}{1!}Nx+\frac{u^2}{2!}N^2x-\frac{u^3}{3!}N^3x+\cdots$
induces isomorphisms
\begin{align*}
{\mathbb X}_{\rm st}^r(M)\overset{\sim}{\to}
{\mathbb X}_{\rm cris}^r(M):=(M\otimes \Bcris^+)^{\varphi=p^r},\quad
{\rm Hom}_{\rm VS}({\mathbb X}_{\rm st}^r(M),\Bcris^+)\overset{\sim}{\to}
{\rm Hom}_{\rm VS}({\mathbb X}_{\rm cris}^r(M),\Bcris^+)
\end{align*}
Since $\Bst=\Bcris^+[u,\frac{1}{t}]$, we obtain:
\begin{align*}
{\rm Hom}_{\rm VS}({\mathbb X}_{\rm st}^r(M),\Bst)=
&{\rm Hom}_{\rm VS}({\mathbb X}_{\rm st}^r(M),\Bcris^+)\otimes_{\bcris^+}\bst\\
M^\dual\otimes_{F^{\nr}} \bcris^+\subset\ 
&{\rm Hom}_{\rm VS}({\mathbb X}_{\rm cris}^r(M),\Bcris^+)\subset
M^\dual\otimes_{F^{\nr}} t^{-r}\bcris^+
\end{align*}
(For the inclusion on the right, use the fact that $V$ in the proof
of Proposition~\ref{new-baco3.5} is included in $t^{-r}X^r_{\rm cris}(M)$
and $M\subset V\otimes\bcris^+$.)
\end{remark}

 \subsubsection{Filtered $(\phi,N)$-modules} 
\begin{proposition}\label{baco3.7}
Let $(M,M_{\rm dR}^+)$ be an acyclic filtered $(\varphi,N)$-module 
over $C$ with $\varphi$-slopes in $[0,r]$,
with $t^r\bdr^+\otimes M\subset t^rM_{\dr}^+\subset \bdr^+\otimes M$.
Set ${\mathbb M}_{\dr}^+:=\Bdr^+\otimes_{\bdr^+}M_{\dr}^+$ and
$${\mathbb V}_{\rm st}^r(M,M_{\rm dR}^+):=
{\rm Ker}((M\otimes_{F^{\nr}}\Bst)^{N=0,\varphi=p^r}
\to (M\otimes_{F^{\nr}}\Bdr^+)/t^r{\mathbb M}^+_{\dr}).$$
Then 
$${\rm Hom}_{\rm VS}({\mathbb V}_{\rm st}^r(M,M_{\rm dR}^+),\Bdr)
\simeq M^\dual\otimes_{F^{\rm nr}}\bdr,
\quad
{\rm Hom}_{\rm VS}({\mathbb V}_{\rm st}^r(M,M_{\rm dR}^+),\Bst)
\simeq M^\dual\otimes_{F^{\rm nr}}\bst$$
\end{proposition}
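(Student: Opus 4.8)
The plan is to mirror the proof of Proposition~\ref{new-baco3.5}, exploiting the acyclicity of $(M,M_{\rm dR}^+)$ and the exactness of the functor $h(-)={\rm Hom}_{\rm VS}(-,\Bdr)$ on short exact sequences of qBC's of curvature~$\leq 0$ (Corollary~\ref{HN21}). The whole argument will reduce the two asserted isomorphisms to the already-established Proposition~\ref{new-baco3.5}, and the only genuinely delicate point will be the exactness of an auxiliary sequence of qBC's — specifically the surjectivity of its third map, which has to be deduced from the curvature-$0$ alternative of Proposition~\ref{pos5} together with the generation statement of Lemma~\ref{quasi121}.

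First I would produce a short exact sequence of qBC's
$$0\to {\mathbb V}_{\rm st}^r(M,M_{\rm dR}^+)\to {\mathbb X}_{\rm st}^r(M)\overset{\pi}{\longrightarrow}(M\otimes_{F^{\rm nr}}\Bdr^+)/t^r{\mathbb M}_{\rm dR}^+\to 0,$$
where ${\mathbb X}_{\rm st}^r(M)=(M\otimes_{F^{\rm nr}}\Bst^+)^{N=0,\varphi=p^r}$ is as in Proposition~\ref{new-baco3.5}. That ${\mathbb V}_{\rm st}^r(M,M_{\rm dR}^+)$ is the kernel of $\pi$ is a matter of unwinding definitions: the hypothesis $t^r\bdr^+\otimes M\subset t^rM_{\rm dR}^+\subset\bdr^+\otimes M$ forces the elements of $(M\otimes\Bst)^{N=0,\varphi=p^r}$ that map into $t^r{\mathbb M}_{\rm dR}^+$ to already lie in ${\mathbb X}_{\rm st}^r(M)$, and it makes $(M\otimes\Bdr^+)/t^r{\mathbb M}_{\rm dR}^+$ a torsion $\Bdr^+$-Module, hence of curvature~$0$ (compare Remark~\ref{quasi5}). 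The substantive point is the surjectivity of $\pi$: since $(M,M_{\rm dR}^+)$ is acyclic we are, by Remark~\ref{quasi5}, in case~(a) and not case~(b) of Proposition~\ref{pos5}, and the proof of that proposition shows that then the image of $\pi$ has curvature~$0$, so it is a $\Bdr^+$-Module, while Lemma~\ref{quasi121} forces this image to contain every $C$-point of the target; surjectivity on $C$-points then upgrades to surjectivity of qBC's by Remark~\ref{qbc100}.

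Finally I would apply $h(-)$. All three terms have curvature~$\leq 0$ (${\mathbb X}_{\rm st}^r(M)$ by Proposition~\ref{new-baco3.5}; the right-hand term by the previous step; and ${\mathbb V}_{\rm st}^r(M,M_{\rm dR}^+)$, as a sub-qBC of ${\mathbb X}_{\rm st}^r(M)$, by Remark~\ref{version-qbc}), so Corollary~\ref{HN21} yields an exact sequence $0\to h\big((M\otimes_{F^{\rm nr}}\Bdr^+)/t^r{\mathbb M}_{\rm dR}^+\big)\to h({\mathbb X}_{\rm st}^r(M))\to h({\mathbb V}_{\rm st}^r(M,M_{\rm dR}^+))\to 0$; its first term vanishes by Corollary~\ref{HN12} (applied to a torsion $\Bdr^+$-Module), and by Proposition~\ref{new-baco3.5} the middle term is $M^\dual\otimes_{F^{\rm nr}}\bdr$, which gives the first isomorphism. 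For the $\Bst$-statement I would argue as in the second half of the proof of Proposition~\ref{new-baco3.5}: the natural map $M^\dual\otimes_{F^{\rm nr}}\bst\to{\rm Hom}_{\rm VS}({\mathbb V}_{\rm st}^r(M,M_{\rm dR}^+),\Bst)$ is injective (via $\Bst\hookrightarrow\Bdr$ and the first isomorphism), and conversely, given $\lambda\in M^\dual\otimes_{F^{\rm nr}}\bdr={\rm Hom}_{\rm VS}({\mathbb V}_{\rm st}^r(M,M_{\rm dR}^+),\Bdr)$ with $\lambda\big({\mathbb V}_{\rm st}^r(M,M_{\rm dR}^+)\big)\subset\Bst$, I would choose a weakly admissible sub-$\bdr^+$-lattice $N_{\rm dR}^+\subset M_{\rm dR}^+$ (it exists by Remark~\ref{quasi4.1}), set $V:=V_{\rm st}(M,N_{\rm dR}^+)$ so that $V\otimes_{\Q_p}\bst\overset{\sim}{\to}M\otimes_{F^{\rm nr}}\bst$, observe $t^rV\subset{\mathbb V}_{\rm st}^r(M,M_{\rm dR}^+)$, and deduce $\lambda(V)=t^{-r}\lambda(t^rV)\subset t^{-r}\bst=\bst$; since $V$ generates $M\otimes_{F^{\rm nr}}\bst$ over $\bst$ this forces $\lambda(M)\subset\bst$, i.e.\ $\lambda\in M^\dual\otimes_{F^{\rm nr}}\bst$. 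Everything after the construction of the exact sequence is routine bookkeeping with heights and curvatures, so the main obstacle really is the surjectivity of $\pi$.
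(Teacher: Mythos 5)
Your proof is correct and takes essentially the same route as the paper: the same auxiliary short exact sequence $0\to {\mathbb V}_{\rm st}^r\to {\mathbb X}_{\rm st}^r(M)\to (M\otimes\Bdr^+)/t^r{\mathbb M}_{\rm dR}^+\to 0$, reduction to Proposition~\ref{new-baco3.5} via the vanishing of $h$ (and $\Ext^{1,\natural}$) on torsion $\Bdr^+$-Modules, and the same ``pick a weakly admissible sublattice, look at $V_{\rm st}$'' trick for the $\Bst$-statement. Two remarks. First, where you invoke Corollary~\ref{HN21} the paper instead cites Lemma~\ref{baco5} and Corollary~\ref{HN12} directly; these are the same content (Corollary~\ref{HN21} is just the qBC packaging of Lemma~\ref{baco5}), so this is a cosmetic difference, though your route requires noting the curvature condition on all three terms whereas the paper only needs the vanishings for the torsion quotient. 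Second, and more substantively, you are right to take the weakly admissible lattice $N_{\rm dR}^+$ \emph{contained in} $M_{\rm dR}^+$ — this is what makes $V_{\rm st}(M,N_{\rm dR}^+)\subset {\mathbb M}_{\rm dR}^+$ and hence $t^rV\subset{\mathbb V}_{\rm st}^r(M,M_{\rm dR}^+)$, and it is what Remark~\ref{quasi4.1}(iii) actually provides. The paper's proof text says ``containing $M_{\rm dR}^+$,'' which appears to be a slip (the opposite containment would not give $t^rV\subset{\mathbb V}_{\rm st}^r$); your version is the correct one.
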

\begin{proof}
Let ${\mathbb X}_{\rm dR}^r(M):=(M\otimes_{F^{\nr}}\Bdr^+)/t^r{\mathbb M}^+_{\dr}$;
this is a $\Bdr^+$-module killed by $t^r$.
The hypotheses give us an exact sequence
$$0\to {\mathbb V}_{\rm st}^r(M,M_{\rm dR}^+)\to {\mathbb X}_{\rm st}^r(M)\to {\mathbb X}_{\rm dR}^r(M)\to 0$$
The first isomorphism is then a consequence of Proposition~\ref{new-baco3.5} and vanishing
of ${\rm Hom}_{\rm VS}({\mathbb X}_{\rm dR}^r(M),\Bdr)$ and 
${\rm Ext}^{1,\natural}_{\rm VS}({\mathbb X}_{\rm dR}^r(M),\Bdr)$
(Corollary~\ref{HN12} and Lemma~\ref{baco5}).

For the second isomorphism pick up a weakly admissible filtration $N_{\rm dR}^+$ containing
$M_{\rm dR}^+$ (this is possible because $(M,M_{\rm dR}^+)$ is acyclic),
set $V:={V}_{\rm st}(M,N_{\rm dR}^+)$, and argue as in Proposition~\ref{new-baco3.5}. 
\end{proof}
\begin{remark}\label{baco3.91}
As in Remark~\ref{baco3.51}, we have
\begin{align*}
{\rm Hom}_{\rm VS}({\mathbb V}_{\rm st}^r(M,M_{\dr}^+),\Bst)=
&{\rm Hom}_{\rm VS}({\mathbb V}_{\rm st}^r(M,M_{\dr}^+),\Bcris^+)\otimes_{\bcris^+}\bst\\
M^\dual\otimes_{F^{\nr}}\bcris^+\subset\ 
&{\rm Hom}_{\rm VS}({V}_{\rm st}^r(M,M_{\dr}^+),\Bcris^+)\subset
M^\dual\otimes_{F^{\nr}} t^{-r}\bcris^+
\end{align*}
\end{remark}

\begin{remark}
We could also have argued as in the proof of Theorem~\ref{new-tate2} 
to prove Propositions~\ref{new-baco3.5} and~\ref{baco3.7}.
\end{remark}

\begin{proposition}\label{baco3.9}
Let $(M,M_{\rm dR}^+)$ be an acyclic filtered $(\varphi,N)$-module
over $C$ with $\varphi$-slopes in $[0,r]$, and such that
$M\otimes_{F^{\nr}} t^r\bdr^+\subset t^rM_{\dr}^+\subset M\otimes_{F^{\nr}} \bdr^+$.
Let $k\geq 2r$. Then the natural map of $\bdr^+$-modules
$$M^\dual\otimes_{F^{\nr}}(\bdr^+/t^k)
\to {\rm Hom}_{\rm VS}({\mathbb V}_{\rm st}^r(M,M_{\rm dR}^+),\Bdr^+/t^k)$$
induced by the inclusion ${\mathbb V}_{\rm st}^r(M,M_{\rm dR}^+)\subset M\otimes\Bdr^+$,
has kernel and cokernel killed by $t^{2r}$.
\end{proposition}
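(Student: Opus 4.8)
The statement asserts that the natural comparison map for $\Bdr^+/t^k$-valued homomorphisms is an isomorphism up to $t^{2r}$-torsion on both kernel and cokernel. The plan is to reduce to the two cases already handled — namely $\Bdr^+$ (or $\Bdr$) in Proposition~\ref{baco3.7}, and the full $\Bdr$ via Proposition~\ref{new-baco3.5} — and to measure the failure via the long exact sequences attached to $0\to t^k\Bdr^+\to\Bdr^+\to\Bdr^+/t^k\to 0$ and to the defining sequence $0\to {\mathbb V}_{\rm st}^r(M,M_{\rm dR}^+)\to {\mathbb X}_{\rm st}^r(M)\to {\mathbb X}_{\rm dR}^r(M)\to 0$ of Proposition~\ref{baco3.7}, where ${\mathbb X}_{\rm dR}^r(M)=(M\otimes_{F^{\nr}}\Bdr^+)/t^r{\mathbb M}^+_{\dr}$ is a $\Bdr^+$-module killed by $t^r$.

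First I would apply ${\rm Hom}_{\rm VS}({\mathbb V}_{\rm st}^r(M,M_{\rm dR}^+),-)$ to the sequence $0\to t^k\Bdr^+\to\Bdr^+\to\Bdr^+/t^k\to 0$. This gives an exact sequence whose relevant portion reads
$${\rm Hom}_{\rm VS}({\mathbb V},\Bdr^+)\to {\rm Hom}_{\rm VS}({\mathbb V},\Bdr^+/t^k)\to {\rm Ext}^1_{\rm VS}({\mathbb V},t^k\Bdr^+)\to {\rm Ext}^1_{\rm VS}({\mathbb V},\Bdr^+),$$
writing ${\mathbb V}={\mathbb V}_{\rm st}^r(M,M_{\rm dR}^+)$. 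On the source side, the analogous sequence for $M^\dual\otimes_{F^{\nr}}(-)$ is visibly short exact since $M^\dual\otimes_{F^{\nr}}\bdr^+$ is $t$-torsion-free. By Proposition~\ref{baco3.7}, ${\rm Hom}_{\rm VS}({\mathbb V},\Bdr^+)=M^\dual\otimes_{F^{\nr}}\bdr^+$ (this is the $\Bdr^+$-case, which the proof of~\ref{baco3.7} establishes alongside the $\Bdr$-statement, via Corollary~\ref{HN12} and Lemma~\ref{baco5} applied to the torsion module ${\mathbb X}_{\rm dR}^r(M)$). So the cokernel of the map in question injects into ${\rm Ext}^1_{\rm VS}({\mathbb V},t^k\Bdr^+)$, and the kernel is controlled by comparing $M^\dual\otimes_{F^{\nr}}t^k\Bdr^+$ with ${\rm Hom}_{\rm VS}({\mathbb V},t^k\Bdr^+)$: by the $\Bdr$-isomorphism in Propositions~\ref{new-baco3.5} and~\ref{baco3.7} one has ${\rm Hom}_{\rm VS}({\mathbb V},\Bdr)\simeq M^\dual\otimes_{F^{\nr}}\bdr$, and intersecting with $t^k\Bdr^+$ via the inclusion ${\mathbb V}\subset M\otimes_{F^{\nr}}\Bdr^+$ together with $t^r{\mathbb M}^+_{\dr}\subset M\otimes_{F^{\nr}}\Bdr^+$ and the reverse containment $M\otimes_{F^{\nr}}t^r\Bdr^+\subset t^r{\mathbb M}^+_{\dr}$ pins down ${\rm Hom}_{\rm VS}({\mathbb V},t^k\Bdr^+)$ to within $t^{2r}$ of $M^\dual\otimes_{F^{\nr}}t^k\Bdr^+$. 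This handles the kernel.

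For the cokernel, I would bound ${\rm Ext}^1_{\rm VS}({\mathbb V},t^k\Bdr^+)$, or rather its image, using the defining exact sequence $0\to {\mathbb V}\to {\mathbb X}_{\rm st}^r(M)\to {\mathbb X}_{\rm dR}^r(M)\to 0$. Applying ${\rm Hom}_{\rm VS}(-,t^k\Bdr^+)$ yields
$${\rm Hom}_{\rm VS}({\mathbb X}_{\rm st}^r(M),t^k\Bdr^+)\to {\rm Hom}_{\rm VS}({\mathbb V},t^k\Bdr^+)\to {\rm Ext}^1_{\rm VS}({\mathbb X}_{\rm dR}^r(M),t^k\Bdr^+)\to {\rm Ext}^1_{\rm VS}({\mathbb X}_{\rm st}^r(M),t^k\Bdr^+),$$
and since ${\mathbb X}_{\rm st}^r(M)$ has curvature~$\leq 0$, Lemma~\ref{baco5} (in its $\natural$-form, after inverting $t$) gives that ${\rm Ext}^{1,\natural}_{\rm VS}({\mathbb X}_{\rm st}^r(M),\Bdr)=0$, so the ${\rm Ext}^1$ contributions from ${\mathbb X}_{\rm st}^r(M)$ are themselves killed by a bounded power of $t$ (at worst $t^r$, by the twist built into $t^k\Bdr^+\subset \Bdr$ versus $\Bdr^+$). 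The term ${\rm Ext}^1_{\rm VS}({\mathbb X}_{\rm dR}^r(M),t^k\Bdr^+)$ is computed directly: ${\mathbb X}_{\rm dR}^r(M)$ is a finite-type $\Bdr^+$-module killed by $t^r$, built out of ${\mathbb B}_m$'s with $m\leq r$, and ${\rm Ext}^1_{\rm VS}({\mathbb B}_m,t^k\Bdr^+)$ is killed by $t^m$ — this follows from devissage along $0\to {\mathbb V}^1\to {\mathbb B}_m\to {\mathbb B}_{m-1}\to 0$ together with the computation in the proof of Lemma~\ref{baco5} that ${\rm Ext}^1_{\rm VS}({\mathbb V}^1,t^{-j}\Bdr^+)\to {\rm Ext}^1_{\rm VS}({\mathbb V}^1,t^{-j-1}\Bdr^+)$ vanishes, i.e.\ ${\rm Ext}^1_{\rm VS}({\mathbb V}^1,t^\ell\Bdr^+)$ is killed by $t$. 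Assembling: the cokernel of the comparison map is a subquotient of a $t^{r}$-torsion module ($m\leq r$), so is killed by $t^{r}$, hence a fortiori by $t^{2r}$.

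\textbf{Main obstacle.} The delicate point is not any single vanishing statement but the careful bookkeeping of the \emph{powers} of $t$: one must track simultaneously (i) the $t^r$-shift coming from the hypothesis $M\otimes_{F^{\nr}}t^r\bdr^+\subset t^rM^+_{\dr}\subset M\otimes_{F^{\nr}}\bdr^+$, which is what makes $M^\dual\otimes_{F^{\nr}}\bdr^+$ and ${\rm Hom}_{\rm VS}({\mathbb V},\Bdr^+)$ differ only up to $t^r$-torsion rather than agree on the nose, (ii) the $t^r$ annihilating ${\mathbb X}_{\rm dR}^r(M)$, which contributes to the ${\rm Ext}^1$ bound, and (iii) the hypothesis $k\geq 2r$, which is exactly what guarantees $t^k\Bdr^+\subset t^{2r}\Bdr^+$ so that the various error terms, each individually killed by $t^r$, do not accumulate past $t^{2r}$. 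I would organize the argument so that each of the two error contributions (one from the kernel/``${\rm Hom}$ up to $t^r$'' discrepancy, one from the cokernel/``${\rm Ext}^1$ killed by $t^r$'' discrepancy) is separately shown to be $t^r$-torsion, and only then combine. A secondary subtlety is ensuring the relevant ${\rm Ext}^1_{\rm VS}$ groups with $t^k\Bdr^+$-coefficients are computed correctly; here I would reduce to finite-type coefficients $t^k\Bdr^+/t^{k+\ell}\Bdr^+$ (which are Banach VS's, so Lemma~\ref{baco5}'s techniques apply directly) and pass to the limit using that ${\mathbb V}$ and ${\mathbb X}^r_{\rm st}(M)$ are nice enough (finite-dimensional BC data) for $\R^1\lim$ to vanish, exactly as in the proof of Corollary~\ref{HN12}.
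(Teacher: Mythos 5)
Your approach takes a genuinely different route from the paper's: you try to run the long exact ${\rm Hom}/{\rm Ext}$ sequences attached to the defining presentation $0\to {\mathbb V}\to {\mathbb X}_{\rm st}^r(M)\to {\mathbb X}_{\rm dR}^r(M)\to 0$ and to $0\to t^k\Bdr^+\to\Bdr^+\to\Bdr^+/t^k\to 0$. The paper instead chooses a weakly admissible sub-lattice $N_{\dr}^+$ with $M\otimes\bdr^+\subset N_{\dr}^+\subset M_{\dr}^+$, sets $V_1:={\mathbb V}_{\rm st}^r(M,N_{\dr}^+)$ (a finite-dimensional $\Q_p$-vector space with ${\mathbb N}_{\dr}^+=V_1\otimes t^{-r}\Bdr^+$) and $W:=V/V_1=t^r{\mathbb M}_{\dr}^+/t^r{\mathbb N}_{\dr}^+$ (killed by $t^r$), and then chases a five-term diagram around $h_k(V)\to h_k(V_1)$, exploiting that $h_k(V_1)$ and $h_k(t^r{\mathbb N}_{\dr}^+/M\otimes t^k\Bdr^+)$ can be computed explicitly and that both the kernel and cokernel discrepancies are sandwiched by the multiplication-by-$t^r$ map on $M^\dual\otimes(\bdr^+/t^k)$. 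That decomposition is what makes the $t^{2r}$ bound drop out cleanly.

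However, as written your argument has genuine gaps.

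\textbf{(1) The claim that ${\rm Hom}_{\rm VS}({\mathbb V},\Bdr^+)=M^\dual\otimes_{F^{\nr}}\bdr^+$ is not established by Proposition~\ref{baco3.7}, and is most likely false.} The proof of~\ref{baco3.7} only computes ${\rm Hom}_{\rm VS}({\mathbb V},\Bdr)$, using vanishing of ${\rm Ext}^{1,\natural}$ (a colimit over $t^{-j}\Bdr^+$), and says nothing about $\Bdr^+$-valued homomorphisms on the nose. Indeed, ${\mathbb V}(C)=t^rV_{\rm st}(M,M_{\dr}^+)$ generates (as a $\bdr^+$-module) something that is squeezed between $M\otimes t^r\bdr^+$ and $M\otimes\bdr^+$, so ${\rm Hom}_{\rm VS}({\mathbb V},\Bdr^+)$ is only squeezed between $M^\dual\otimes\bdr^+$ and $M^\dual\otimes t^{-r}\bdr^+$ --- compare Remark~\ref{baco3.91}, which records precisely this $t^r$-ambiguity for $\Bcris^+$. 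You do acknowledge this discrepancy in your ``main obstacle'' paragraph, but your proof sketch nonetheless relies on the exact equality to conclude that the cokernel injects into ${\rm Ext}^1_{\rm VS}({\mathbb V},t^k\Bdr^+)$; the two halves of the writeup contradict each other.

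\textbf{(2) The bound on the ${\rm Ext}^1$-contribution from ${\mathbb X}_{\rm st}^r(M)$ is not justified.} Vanishing of ${\rm Ext}^{1,\natural}_{\rm VS}({\mathbb X}_{\rm st}^r(M),\Bdr)$ is a statement about a colimit: each class dies after twisting by $t^{-j}$ for \emph{some} $j$, not for a $j$ bounded in terms of $r$. If one instead runs the d\'evissage from Lemma~\ref{baco5} (using that ${\mathbb X}_{\rm st}^r(M)$ is an extension of an affine BC by a finite-dimensional $\Q_p$-space), the resulting annihilator of ${\rm Ext}^1_{\rm VS}({\mathbb X}_{\rm st}^r(M),t^j\Bdr^+)$ is $t^d$ with $d=\dim{\mathbb X}_{\rm st}^r(M)=r\,{\rm rk}(M)-t_N(M)$, which can be much larger than $2r$. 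So your assertion that these ${\rm Ext}^1$ terms are ``killed by at worst $t^r$'' does not follow from anything cited, and the final ``assembling'' sentence (which in any case tracks ${\rm Ext}^1_{\rm VS}({\mathbb X}_{\rm dR}^r(M),t^k\Bdr^+)$ rather than the piece of ${\rm Ext}^1_{\rm VS}({\mathbb V},t^k\Bdr^+)$ coming from ${\rm Ext}^1_{\rm VS}({\mathbb X}_{\rm st}^r(M),t^k\Bdr^+)$) does not close the argument. The paper sidesteps both difficulties by comparing $V$ to the ``nearby'' $V_1$ of a weakly admissible sub-lattice rather than to ${\mathbb X}_{\rm st}^r(M)$: the quotient $W=V/V_1$ is a $\Bdr^+$-module killed by $t^r$, and $V_1$ is a genuine $\Q_p$-vector space, for which $h_k(V_1)=V_1^\dual\otimes(\bdr^+/t^k)$ is exact and explicit; both error terms in the resulting diagram chase are visibly subobjects or subquotients of the kernel/cokernel of multiplication by $t^r$ on $M^\dual\otimes(\bdr^+/t^k)$, which is what yields the clean $t^{2r}$.
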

\begin{proof}
Choose a $N_{\dr}^+$ with $M\otimes_{F^{\nr}}\bdr^+\subset N_{\dr}^+\subset M_{\dr}^+$
such that $(M,N_{\dr}^+)$ is weakly admissible.
(This is possible by an adaptation of Lemma~\ref{FN4.1} to $(\varphi,N)$-modules over $C$.)
Let $V_1={\mathbb V}_{\rm st}^r(M,N_{\rm dR}^+)$, $V={\mathbb V}_{\rm st}^r(M,M_{\rm dR}^+)$,
and $W=V/V_1$, so that $V_1$ is a finite dimensional $\Q_p$-vector space, 
and $W=t^r{\mathbb M}_{\dr}^+/t^r{\mathbb N}_{\dr}^+$ is a $\Bdr^+$-module killed by $t^r$.

Set $h_k(-):={\rm Hom}_{\rm VS}(-,\Bdr^+/t^k)$. 
Since (we skipped the subscripts $F^{\nr}$)
$$M\otimes \tfrac{t^r\Bdr^+}{t^k\Bdr^+}\subset \tfrac{t^r{\mathbb N}_{\dr}^+}{M\otimes t^k\Bdr^+}
\subset \tfrac{t^r{\mathbb M}_{\dr}^+}{M\otimes t^k\Bdr^+}\subset
M\otimes\tfrac{\Bdr^+}{t^k\Bdr^+}$$ 
and $V_1\subset \tfrac{t^r{\mathbb N}_{\dr}^+}{M\otimes t^k\Bdr^+},$
 $V\subset \tfrac{t^r{\mathbb M}_{\dr}^+}{M\otimes t^k\Bdr^+}$,
we have a commutative diagram
$$\xymatrix@R=4mm@C=6mm{
M^\dual\otimes (t^r\bdr^+/t^k)\ar[r] & h_k(\tfrac{t^r{\mathbb M}_{\dr}^+}{M\otimes t^k\Bdr^+})
\ar[r]\ar[d]&
h_k(\tfrac{t^r{\mathbb N}_{\dr}^+}{M\otimes t^k\Bdr^+})\ar[r]\ar[d] & M^\dual\otimes(t^r\bdr^+/t^k)\\
& h_k(V)\ar[r] &h_k(V_1)},$$
where the composed map $M^\dual\otimes (\bdr^+/t^k)\to M^\dual\otimes(t^r\bdr^+/t^k)$ is
multiplication by $t^r$. 
 Now, since $V_1$ is finite dimensional over $\Q_p$,
and ${\mathbb N}_{\dr}^+=V_1\otimes t^{-r}\Bdr^+$, we have, using Proposition~\ref{HN11}, 
$$h_k(\tfrac{t^r{\mathbb N}_{\dr}^+}{M\otimes t^k\Bdr^+})=
{\rm Hom}_{\Bdr^+}(\tfrac{t^r{\mathbb N}_{\dr}^+}{M\otimes t^k\Bdr^+},\Bdr^+/t^k)
=V_1^\dual\otimes(\bdr^+/t^k)=h_k(V_1).$$
It follows that:

$\bullet$  ${\rm Ker}\big(M^\dual\otimes(\bdr^+/t^k)\to h_k(V)\big)$ is a subobject
of ${\rm Ker}(M^\dual\otimes(\bdr^+/t^k)\overset{t^r}{\to}M^\dual\otimes(t^r\bdr^+/t^k))$,
hence is killed by $t^r$.

$\bullet$ ${\rm Coker}(h_k(V)\to h_k(V_1))$ is a subquotient
of ${\rm Coker}(M^\dual\otimes(\bdr^+/t^k)\overset{t^r}{\to}M^\dual\otimes(t^r\bdr^+/t^k))$,
and hence is killed by $t^r$.
The cokernel of $M^\dual\otimes(\bdr^+/t^k)\to h_k(V_1)$
is also killed by $t^r$ and, since the kernel of $h_k(V)\to h_k(V_1)$ is $h_k(W)$
which is killed by $t^r$, it follows that ${\rm Coker}(M^\dual\otimes(\bdr^+/t^k)\to h_k(V))$
is killed by $t^{2r}$.  

This concludes the proof.
\end{proof}

\section{Comparison theorems: examples and a conjecture}\label{SS2}
 In this Chapter we will formulate a conjecture: the existence of the fundamental diagram for smooth dagger varieties over $C$.  Before doing that though we will first look at  examples of comparison theorems and fundamental diagrams. 
\subsection{Cliffs Notes}Here we make a small digression  with a quick 
  review  of relevant results from \cite{CN3} and \cite{CN4}.  
 \begin{proposition}{\rm (Colmez-Nizio\l, \cite[Th.\,1.1]{CN3}, \cite[Th.\,1.3]{CN4})}
 \label{main00}
 \begin{enumerate}[label={\rm (\arabic*)}]
 \item {\bf Analytic varieties}:
 To any smooth dagger or rigid analytic  variety $X$ over $C$ there are  naturally  associated:
 \begin{enumerate}[label={\rm (\alph*)}]
 \item A pro-\'etale cohomology $\rg_{\proeet}(X,\Q_p(r))\in\sd(C_{\Q_p})$, $r\in \Z$.
 \item A syntomic cohomology $\R\Gamma_{\synt}(X,\Q_p(r))\in\sd(C_{\Q_p})$, $r\in\N$, with a natural  period morphism in $\in\sd(C_{\Q_p})$
    $$
  \alpha_r: \R\Gamma_{\synt}(X,\Q_p(r))\to \R\Gamma_{\proeet}(X,\Q_p(r)),
    $$
 which  is  a strict quasi-isomorphism after truncation $\tau_{\leq r}$. This morphism can be lifted to the derived category of Topological Vector Spaces. 
\item A Hyodo-Kato cohomology\footnote{We take here the Hyodo-Kato cohomology defined in \cite{CN4}.}
$\rg_{\hk}(X)\in \sd_{\phi,N}(F^{\rm nr})$.  We have   natural Hyodo-Kato strict quasi-isomorphisms in $\sd(C_C)$, $\sd(C_{\B^+_{\dr}})$, respectively: 
$$
 \iota_{\hk}: \rg_{\hk}(X)\wh{\otimes}^R_{F^{\nr}}C\stackrel{\sim}{\to} \rg_{\dr}(X),\quad  \iota_{\hk}: \rg_{\hk}(X)\wh{\otimes}^R_{F^{\nr}}\B^+_{\dr}\stackrel{\sim}{\to} \rg_{\dr}(X/\B^+_{\dr}).
 $$
  \item A  distinguished triangle in $\sd(C_{\Q_p})$
  $$
  \R\Gamma_{\synt}(X,\Q_p(r))\lomapr{} [\rg_{\hk}(X)\wh{\otimes}_{F^{\nr}}\B^+_{\st}]^{N=0,\phi=p^r}\lomapr{\iota_{\hk}} \rg_{\dr}(X/\B^+_{\dr})/F^r
  $$
  that can be lifted to the derived category of Topological Vector Spaces. 
  \item {\rm (Local-global compatibility)}
 In the case $X$ has a semistable weak formal model the above constructions are compatible with their analogs defined using the model. 
 \end{enumerate}
 \item {\bf Compatibility}: Let $X$ be a smooth dagger variety over $C$ and let $\wh{X}$ denote its completion. Then there exist  natural compatible morphisms \cite[Sec. 3.2.4]{CN3}
 \begin{align*}
  \iota_{\proeet} :\ &  \rg_{\proeet}(X,\Q_p(r))  \to \rg_{\proeet}(\wh{X},\Q_p(r)),\quad r\in \Z,\\
   \iota :\ & \rg_{\synt}(X,\Q_p(r))  \to \rg_{\synt}(\wh{X},\Q_p(r)),\quad r\in \Z,\\
&  \rg_{\dr}(X)\to \rg_{\dr}(\wh{X}),\quad \rg_{\dr}(X/\B^+_{\dr})\to \rg_{\dr}(\wh{X}/\B^+_{\dr}),\\
 & \rg_{\hk}(X)\to \rg_{\hk}(\wh{X}).
\end{align*}
They are strict quasi-isomorphisms, preserving all the extra structures, if $X$ is partially proper. 
\end{enumerate}
 \end{proposition}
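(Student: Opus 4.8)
\emph{Proof plan.} Since Proposition~\ref{main00} collects results already established in our previous work, the plan is essentially to cite the relevant statements: I would attribute (1) to \cite[Th.\,1.1]{CN3} together with \cite[Th.\,1.3]{CN4} and the constructions of \S\,2--3 there, and (2) to \cite[Th.\,1.2]{CN3} and \cite[Sec.\,3.2.4]{CN3}. For the reader's orientation, let me recall what goes into those proofs.

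First, for (1)(a) one defines the pro-\'etale cohomology of a dagger variety as a homotopy colimit, over the directed system of rigid-analytic affinoid neighborhoods defining the dagger structure, of Scholze's pro-\'etale cohomology, so that its formal properties are inherited from the rigid-analytic case. For (1)(b) one glues the local syntomic complexes attached to a cover of $X$ by weak formal models of overconvergent affinoids and assembles $\alpha_r$ from the local Fontaine--Messing period maps; the assertion that $\tau_{\leq r}\alpha_r$ is a strict quasi-isomorphism I would reduce, by a Mayer--Vietoris/descent argument, to the affinoid case, which is the heart of \cite{CN4} and rests on a computation of $p$-adic nearby cycles. For (1)(c), I would construct $\rg_{\hk}(X)$, its Frobenius $\phi$ and monodromy $N$, and the Hyodo--Kato quasi-isomorphisms $\iota_{\hk}$ by adapting Beilinson's $h$-topological construction to the dagger setting, using the alterations of Hartl and Temkin to produce good semistable local models; (1)(d) is then the definition of $\rg_{\synt}$ as a homotopy fiber combined with (b) and (c), and (1)(e) holds by construction, since the global objects are glued from local pieces that agree with the model-theoretic ones whenever a semistable weak formal model exists. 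The lifts to the derived category of Vector Spaces are obtained by keeping track of the pro-\'etale sheaf structure on $\mathrm{Perf}_C$ throughout these constructions.

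For (2), the maps $\iota_{\proeet}$, $\iota$, and the de Rham and Hyodo--Kato comparison morphisms are induced by the morphism of sites attached to the inclusion of the dagger structure into its completion; to see that they are strict quasi-isomorphisms when $X$ is partially proper, I would exhaust $X$ by Stein (or affinoid) pieces on which the overconvergence built into the dagger structure makes the comparison an isomorphism, and then control the resulting $\R^1\lim$ terms by a Mittag--Leffler argument. The genuinely delicate point in the original arguments is the construction of a well-behaved Hyodo--Kato theory in the absence of global semistable models; in the present paper Proposition~\ref{main00} is simply taken as input.
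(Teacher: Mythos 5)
Your approach is exactly the paper's: Proposition~\ref{main00} is not proved here but simply cited as a package of results from \cite{CN3} and \cite{CN4}, and your recollection of the underlying machinery (colimit definition of dagger pro-\'etale cohomology, gluing of local syntomic complexes and Fontaine--Messing period maps, Beilinson-style $h$-topological Hyodo--Kato via Hartl--Temkin alterations, syntomic as a fiber, and the completion maps for part (2)) is an accurate summary of what those references do. Nothing further is required.
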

\subsection{Proper rigid analytic  varieties} We start with smooth and proper varieties. 
\subsubsection{Algebraic varieties}
Let $X_K$ be an {\em algebraic} variety over $K$ and set $X=X_{K,\ovk}$. Recall the comparison theorem (recall  that all the cohomology groups involved have finite dimension):
\begin{theorem}\label{nerwy1}Let $r\geq 0$. 
There exists a natural $\bst$-linear Galois equivariant period isomorphism\footnote{Note that there is no assumption on the variety. This formulation of the comparison theorem is due to Beilinson~\cite{BE2}. }
$$
\alpha_{\st}: H^r_{\eet}(X,\Q_p)\otimes_{\Q_p}\bst \simeq H^r_{\hk}(X)\otimes_{F^{\nr}}\bst ,
$$
that preserves the Frobenius and the monodromy operators, and induces a filtered  isomorphism 
$$
\alpha_{\dr}: H^r_{\eet}(X,\Q_p)\otimes_{\Q_p}\bdr
 \simeq H^r_{\dr}(X_K)\otimes_{K}\bdr.
$$
\end{theorem}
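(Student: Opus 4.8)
The statement is the algebraic $C_{\st}$-conjecture; in the generality recorded here it is a theorem of Beilinson~\cite{BE2}, resting on the earlier syntomic, almost-\'etale and motivic approaches~\cite{Ts,Fa94,Ni08}, with the various period morphisms identified in~\cite{Ni18}. Note also that $\alpha_{\dr}$ is not an independent statement: granting $\alpha_{\st}$, one obtains $\alpha_{\dr}$ by inverting $t$, extending scalars along $\bst\hookrightarrow\bdr$, and using the Hyodo--Kato identification $H^r_{\hk}(X)\otimes_{F^{\nr}}\bdr\simeq H^r_{\dr}(X_K)\otimes_K\bdr$, the compatibility with the Hodge and $\bdr$-filtrations being built into the classical argument. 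So the plan is to recall how $\alpha_{\st}$ is obtained in the formalism of the rest of the paper, since Theorem~\ref{nerwy1} is the case from which the proper analytic one gets bootstrapped via GAGA. First I would reduce to $X_K$ smooth and proper, by Poincar\'e duality and Beilinson's $\mathrm{h}$-descent (or the classical d\'evissage along a normal crossings compactification).

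For proper smooth $X_K$ the three cohomologies $H^r_{\eet}(X,\Q_p)$, $H^r_{\hk}(X)$ and $H^r_{\dr}(X_K)$ are finite-dimensional, pro-\'etale and \'etale cohomology coincide, the Hodge weights of $H^r_{\dr}(X_K)$ lie in $[0,r]$ and the $\varphi$-slopes of $H^r_{\hk}(X)$ lie in $[0,r]$; moreover $\dim_{\Q_p}H^r_{\eet}(X,\Q_p)=\dim_K H^r_{\dr}(X_K)$ already because both equal the $r$-th Betti number (comparison of \'etale, resp.\ algebraic de Rham, cohomology with singular cohomology over $\C$). The deep input — which I would import from~\cite{Ts,Ni08,BE2} (the algebraic incarnation of Proposition~\ref{main00}), or from~\cite{CN3,CN4} through the analytification — is that the syntomic period map is a strict quasi-isomorphism after $\tau_{\leq r}$ and that syntomic cohomology sits in the fundamental triangle
$$\rg_{\synt}(X,\Q_p(r))\to[\rg_{\hk}(X)\wotimes_{F^{\nr}}\B^+_{\st}]^{N=0,\varphi=p^r}\to(\rg_{\dr}(X_K)\otimes_K\B^+_{\dr})/F^r,$$
whose associated long exact sequence, in degrees $\leq r$, exhibits $H^r_{\eet}(X,\Q_p(r))$ as $V^r_{\st}$ of the filtered $(\varphi,N,\sg_K)$-module $D:=(H^r_{\hk}(X),H^r_{\dr}(X_K))$. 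Now the slope and filtration hypotheses give $V^r_{\st}(D)=t^rV_{\st}(D)=t^rH^0(X_{\rm FF},\mathcal{E}(D))$, so this $\Q_p$-space is finite-dimensional exactly when $\mathcal{E}(D)$ has all slopes $\leq 0$, and of dimension $\mathrm{rk}(D)$ exactly when $\mathcal{E}(D)$ is trivial; since both hold, $D$ is weakly admissible, hence admissible. Admissibility (Remark~\ref{new-tate12}(iii), Remark~\ref{quasi4.1}(ii)) then produces the $\sg_K$-equivariant isomorphism $V_{\st}(D)\otimes_{\Q_p}\bst\simeq H^r_{\hk}(X)\otimes_{F^{\nr}}\bst$ compatible with $\varphi$ and $N$, together with its filtered $\bdr$-counterpart; after untwisting these are $\alpha_{\st}$ and $\alpha_{\dr}$.

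The single genuine obstacle is the imported input: the construction of syntomic cohomology together with the proof that the period map is an isomorphism in the range $\leq r$, and the construction of the fundamental triangle — this is where the real work sits, and I would not reprove it. Everything downstream — the dimension count via Betti cohomology, the behaviour of the long exact sequence in low degrees, the slope/triviality criterion for $\mathcal{E}(D)$, the passage from admissibility to the period isomorphisms, and the bookkeeping of $\varphi$, $N$ and the two filtrations — is the formal linear algebra of filtered $(\varphi,N,\sg_K)$-modules recalled in Chapter~\ref{FN}. I would also record, for use below, that by GAGA both sides of Theorem~\ref{nerwy1} coincide with their analytic counterparts for the analytification $X_K^{\mathrm{an}}$ (and with the Hyodo--Kato cohomology produced by the Hartl--Temkin alterations), so that this theorem is exactly the base case for the proper analytic $C_{\st}$-conjecture.
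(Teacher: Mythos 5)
The paper gives no proof of Theorem~\ref{nerwy1}: it is stated as a theorem of Beilinson (see the footnote) and then used as a black box together with the derived isomorphisms~\eqref{iso11}. Your opening paragraph correctly identifies this, so there is no ``paper's proof'' to compare against step by step.

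Your sketch is nonetheless a sound reconstruction that, as you say, runs parallel to the paper's own proof of the analytic analogue, Theorem~\ref{nerwy11}. One caution: the step you phrase as the long exact sequence ``exhibiting'' $H^r_{\eet}(X,\Q_p(r))$ as $V^r_{\st}(D)$, and which you later file under ``formal linear algebra'', is not formal. Splitting the long exact sequence into short exact sequences is precisely where, in the proof of Theorem~\ref{nerwy11}, the Hodge--de~Rham degeneration and the BC Dimension count are invoked: the cokernel of the map into $(H^r_{\dr}\otimes_K\B^+_{\dr})/F^s$ is a quotient of an affine Space yet has Dimension $(0,d)$, hence must vanish. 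For proper algebraic $X_K$ one may substitute the classical degeneration of the Hodge--de~Rham spectral sequence and the finiteness of \'etale cohomology, but either way this is a substantive input, not bookkeeping. Your dimension count via Betti numbers is a correct replacement for Scholze's finiteness theorem used in the analytic case, and is the natural one in the algebraic setting; no objection there.
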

In particular, we have the  natural isomorphism
$$
H^r_{\eet}(X,\Q_p)\simeq (H^r_{\hk}(X)\otimes_{F^{\nr}}\bst)^{\phi=1, N=0} \cap F^0(H^r_{\dr}(X_K)\otimes_{K}\bdr),\quad\text{as a  $\G_K$-module,}
$$
as well as the natural isomorphisms
\begin{align}
\label{iso11}
&{\rm Hom}^{\rm sm}_{\sg_K}(H^r_{\eet}(X,\Q_p),\bst)\simeq H^r_{{\rm HK}}(X)^\dual,\quad{\text{as a  $(\varphi,N,\sg_K)$-module,}}\\
&{\rm Hom}_{\sg_K}(H^r_{\eet}(X,\Q_p),\bdr)\simeq H^r_{{\rm dR}}(X_K)^\dual,\quad{\text{as a filtered $K$-module}}.\notag
\end{align}

\subsubsection{Proper rigid analytic varieties over $K$}
Let $X_K$ be a proper smooth rigid analytic variety over $K$. Let  $X=X_{K,C}$. The following result generalizes  \cite[Cor.\,1.10]{CN1}, where   semistable reduction was assumed. 
We note that all involved cohomology groups have finite dimension: for \'etale cohomology this is the result of Scholze \cite[Th.\,1.1]{Sch}; for Hyodo-Kato cohomology this follows from the Hyodo-Kato isomorphism and finiteness of de Rham cohomology. 
\begin{theorem} \label{nerwy11}Let $r\geq 0$. 
There exists a natural $\bst$-linear and $\sg_K$-equivariant period isomorphism
$$
\alpha_{\st}: H^r_{\eet}(X,\Q_p)\otimes_{\Q_p}\bst \simeq H^r_{\hk}(X)\otimes_{F^{\nr}}\bst ,
$$
that preserves the Frobenius and the monodromy operators, and induces a filtered   isomorphism 
$$
\alpha_{\dr}: H^r_{\eet}(X,\Q_p)\otimes_{\Q_p}\bdr
 \simeq H^r_{\dr,\ovk}(X)\otimes_{\ovk}\bdr.
$$
 \end{theorem}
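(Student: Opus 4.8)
The plan is to deduce Theorem~\ref{nerwy11} from the algebraic case (Theorem~\ref{nerwy1}) using the fact that a proper smooth rigid analytic variety over $K$ is, by GAGA-type arguments combined with the weak formal/semistable machinery, ``locally'' of the same nature as what is controlled by the distinguished triangles of Proposition~\ref{main00}. More honestly, the cleanest route is to reduce to the semistable case already treated in \cite[Cor.\,1.10]{CN1} by the alteration techniques (Hartl--Temkin) that underlie the definition of $\rg_{\hk}$ in \cite{CN4}. So first I would recall that, by \cite{CN4}, both the Hyodo-Kato cohomology $H^r_{\hk}(X)$ as a $(\varphi,N,\G_K)$-module over $F^{\nr}$ and the comparison triangle
$$
\R\Gamma_{\synt}(X,\Q_p(r))\to [\rg_{\hk}(X)\wh\otimes_{F^{\nr}}\B^+_{\st}]^{N=0,\varphi=p^r}\to \rg_{\dr}(X/\B^+_{\dr})/F^r
$$
are defined without any hypothesis on the existence of a nice model, and that the period morphism $\alpha_r$ is a strict quasi-isomorphism after $\tau_{\leq r}$.

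Next I would use the finiteness input: $H^r_{\eet}(X,\Q_p)$ is finite-dimensional over $\Q_p$ by Scholze \cite[Th.\,1.1]{Sch}, and $H^r_{\dr,\ovk}(X)$ is finite-dimensional over $\ovk$ (equivalently $H^r_{\dr}(X_K)$ over $K$) by properness; the Hyodo-Kato isomorphism $\iota_{\hk}:H^r_{\hk}(X)\wh\otimes_{F^{\nr}}C\overset{\sim}{\to}H^r_{\dr}(X)$ then forces $H^r_{\hk}(X)$ to be finite-dimensional over $F^{\nr}$. Since everything in sight is now finite-dimensional, the derived/topological subtleties of the general (Stein or dagger) case evaporate: the triangle above becomes, on cohomology in degree $r$, a short exact sequence of the shape governing weakly admissible filtered $(\varphi,N,\G_K)$-modules, and $H^r_{\eet}(X,\Q_p)$ is identified with $V_{\st}$ of the filtered $(\varphi,N,\G_K)$-module $(H^r_{\hk}(X),H^r_{\dr,\ovk}(X))$. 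Concretely: the pair is weakly admissible because $H^r_{\eet}$ is finite-dimensional (this is the ``finite-dimensional $\Rightarrow$ weakly admissible'' half of Remark~\ref{new-tate12}(iii), applied over $\overline K$ after descending the $\G_K$-action to a finite extension $L/K$ where it becomes unramified, as in the proof of Corollary~\ref{new-tate2.5}), and then admissibility gives the two period isomorphisms $\alpha_{\st}$ and $\alpha_{\dr}$ simultaneously, compatibly with $\varphi$, $N$, $\G_K$ and the filtrations, together with the inclusion $\bst\otimes_{F^{\nr}}\overline K\hookrightarrow\bdr$ intertwining them.

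The one genuine point requiring work is functoriality and the identification of $H^r_{\eet}(X,\Q_p)$ with the Beilinson-style fundamental-complex $V_{\st}$ in the analytic setting — i.e.\ checking that the syntomic period morphism $\alpha_r$ of \cite{CN4}, which is only asserted to be a $\tau_{\leq r}$-quasi-isomorphism, does land us with exactly the object $(H^r_{\hk}(X)\otimes\bst)^{N=0,\varphi=p^r}\cap F^0(H^r_{\dr}\otimes\bdr)$ and not some a priori larger thing. Here one uses that in the proper case the kernels of the horizontal maps in the fundamental diagram vanish (this is the finite-dimensional/proper input emphasized in Remark~\ref{INTRO3}(v)), so $H^r_{\synt}(X,\Q_p(r))\to {\rm HK}^r_r(X)\oplus{\rm DR}$-type exactness on the nose holds, and the acyclicity (indeed weak admissibility) of $(H^r_{\hk}(X),H^r_{\dr,\ovk}(X))$ closes the argument via Remark~\ref{quasi4.1}(ii). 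I expect this bookkeeping — matching the syntomic description with the $B$-pair/filtered-module description and tracking the Galois and Frobenius structures through $\iota_{\hk}$ — to be the main obstacle; once it is in place, the two displayed isomorphisms and their compatibility are formal consequences of weak admissibility exactly as in \cite[Cor.\,1.10]{CN1}.
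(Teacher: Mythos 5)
Your overall strategy is the same as the paper's: invoke the finiteness of $H^r_{\eet}$, $H^r_{\dr}$ and $H^r_{\hk}$, pass through the syntomic fundamental triangle of Proposition~\ref{main00}, show the resulting long exact sequence splits, deduce weak admissibility of $(H^r_{\hk}(X),H^r_{\dr}(X_K))$, and then read off the two period isomorphisms. The opening remarks about GAGA and reduction to the semistable case are a red herring — what you actually do (and what the paper does) is run the argument of \cite[Cor.\,1.10]{CN1} directly, which is possible because \cite{CN4} builds $\rg_{\hk}$ and the syntomic triangle without any model hypothesis.

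The real gap is the sentence ``Since everything in sight is now finite-dimensional, the derived/topological subtleties\ldots evaporate: the triangle above becomes, on cohomology in degree $r$, a short exact sequence.'' Finite-dimensionality of $H^r_{\hk}(X)$ over $F^{\nr}$ and of $H^r_{\dr}(X)$ over $K$ does \emph{not} by itself make the connecting maps in the long exact sequence~\eqref{long} vanish: the objects
$$(H^r_{\hk}(X)\otimes_{F^{\nr}}\B^+_{\st})^{N=0,\varphi=p^s}\quad\text{and}\quad (H^r_{\dr,\ovk}(X)\otimes_{\ovk}\B^+_{\dr})/F^s$$
are huge over $\Q_p$, and surjectivity of the map between them is the whole point. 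The paper establishes it by a BC Dimension argument: $H^r_{\synt}(X,\Q_p(s))\simeq H^r_{\eet}(X,\Q_p(s))$ is a BC of Dimension $(0,d)$, so the cokernel of $f_r$ also has dimension~$0$; but that cokernel is a quotient of a successive extension of $\mathbb{V}^1$'s, and Proposition~\ref{BS1} then forces it to vanish. You also need (as the paper notes) the identification $(H^r_{\dr}(X)\otimes_K\B^+_{\dr})/F^s\simeq H^r(\rg_{\dr}(X)\otimes_K\B^+_{\dr}/F^s)$, which uses Hodge--de Rham degeneration. Once the short exact sequence is in place, your appeal to Remark~\ref{new-tate12}(iii) to pass from ``finite-dimensional $V_{\st}$'' to ``weakly admissible'' is fine, because the short exact sequence is exactly the acyclicity hypothesis of that remark; but without it your logic is circular. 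So: correct outline, but the step you dismiss as evaporated subtleties is precisely where the paper uses the theory of BC's and where your argument needs to be filled in.
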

In particular,  have the following natural  isomorphisms 
\begin{align}
\label{iso12}
&{\rm Hom}^{\rm sm}_{\G_K}(H^r_{\eet}(X,\Q_p),\bst)\simeq H^r_{{\rm HK}}(X)^\dual,\quad{\text{as a  $(\varphi,N,\sg_K)$-module,}}\\
&{\rm Hom}_{\G_K}(H^r_{\eet}(X,\Q_p),\bdr)\simeq H^r_{{\rm dR}}(X)^\dual,\quad{\text{as a filtered $K$-module}}.\notag
\end{align}
\begin{proof} Take $s > r$. To define the period maps consider the following composition
\begin{align*}
\alpha_{\st}(s): H^r_{\eet}(X,\Q_p(s))\xrightarrow[\sim]{\alpha_s^{-1}}H^r_{\synt}(X,s)\to (H^r_{\hk}(X)\otimes_{F^{\nr}}\bst^+)^{N=0,\phi=p^s}\lomapr{p^{-s}} H^r_{\hk}(X)\otimes_{F^{\nr}}\bst,
\end{align*}
where $\alpha_s$ is the period map from \cite[Cor.\,6.9]{CN4}. 
Set
$$
\alpha_{\st}:=t^{-s}\alpha_{\st}(s)\varepsilon^s, \quad \alpha_{\dr}:=\iota_{\rm BK}^{-1}(\iota_{\hk}\otimes\iota)\alpha_{\st},
$$
where $\varepsilon$ is the generator of $\Z_p(1)$ corresponding to $t$.

   We follow the proof of \cite[Cor.\,1.10]{CN1}, which uses
 BC's. We will sketch the arguments and refer the reader to \cite{CN1} for details. We start with stating the isomorphism
 $$
 (H^r_{\dr}(X)\otimes_{K}\B^+_{\dr})/F^s \stackrel{\sim}{\to} (H^r(\rg_{\dr}(X)\otimes_{K}\B^+_{\dr})/F^s), 
 $$
 which follows from the degeneration of the Hodge-de Rham spectral sequence\footnote{Alternatively, one can use an argument analogous to the one we use in the proof of Theorem~\ref{nerwy111} below.} (proved by Scholze as a corollary of  the de Rham comparison theorem \cite{Sch}). Using this isomorphism, we obtain 
  the long exact sequence (obtained from the definition of overconvergent syntomic cohomology)
\begin{align}
\label{long}
\to  (H^{r-1}_{\dr}(X)\otimes_{\ovk}\B^+_{\dr})/F^s & \to H^r_{\synt}(X,\Q_p(s))\to  (H^r_{\hk}(X)\otimes_{F^{\nr}}\B^+_{\st})^{N=0,\phi=p^s} \\
  &     \to (H^r_{\dr,\ovk}(X)\otimes_{\ovk}\B^+_{\dr})/F^s \to \notag
\end{align}

We recall that the groups $H^r_{\hk}(X), H^r_{\dr}(X)$ have finite dimension over $F^{\nr}$ and $K$, respectively. 
The above long exact sequence yields short exact sequences
$$
0\to H^r_{\synt}(X,\Q_p(s))\to  (H^r_{\hk}(X)\otimes_{F^{\nr}}\B^+_{\st})^{N=0, \phi=p^s}\to     (H^r_{\dr,\ovk}(X)\otimes_{\ovk}\B^+_{\dr})/F^s\to 0
$$
To prove this, we observe that the map
$f_r:(H^r_{\hk}(X)\otimes_{F^{\nr}}\B^+_{\st})^{N=0,\phi=p^s}\to     (H^r_{\dr,\ovk}(X)\otimes_{\ovk}\B^+_{\dr})/F^s$ is the evaluation on $C$ of a map
of BC's. But, the syntomic cohomology group $H^r_{\synt}(X_C,\Q_p(s))$, $r\leq s$, 
is a  finite dimensional $\Q_p$-vector space since 
we have the quasi-isomorphism \cite[6.10]{CN4} with \'etale cohomology.
This implies that the cokernel of $f_i$, viewed as a map of BC's,
 is of Dimension $(0,d_i)$.
On the other hand, the Space $(H^r_{\dr,\ovk}(X)\otimes_{\ovk}\B^+_{\dr})/F^s$ is a successive extension of $C$-vector spaces. 
The theory of BC's implies now that the map $(H^{r}_{\dr,\ovk}(X)\otimes_{\ovk}\B^+_{\dr})/F^s\to 
{\rm Coker}\,f_r$ is zero, hence ${\rm Coker}\,f_r=0$,
as wanted. 

  We have the Hyodo-Kato isomorphism (see \cite[Cor.\,4.32]{CN4}) 
  $$\iota_{\hk}:H^r_{\hk}(X)\otimes_{F^{\nr}}C\simeq H^r_{\dr}(X). $$
Taking $\sg_K$-smooth vectors of both sides (note that $X$ is quasi-compact) we get the Hyodo-Kato isomorphism
  $$H^r_{\hk}(X)\otimes_{F^{\nr}}
 \ovk\simeq H^r_{\dr}(X_K)\otimes_K\ovk.$$ Hence the pair $(H^r_{\hk}(X), H^r_{\dr}(X_K))$ 
is a $(\varphi,N,\sg_K)$-filtered module (in the sense of Fontaine). The above short exact 
sequence and a ``weight" argument shows that ${{\mathbf V}_{\st}}(H^r_{\hk}(X), H^i_{\dr}(X_K))\simeq H^r_{\eet}(X,\Q_p)$. Here ${{\mathbf V}_{\st}}(-)$ is Fontaine's functor from filtered Frobenius modules to Galois representations. 
 The short exact sequence and $C$-dimension count give also that $t_N(H^r_{\hk}(X))=t_H(H^r_{\dr}(X_K))$, where $t_N(D)=v_p(\det \phi)$ and $t_H(D)=\sum_{i\geq 0}i\dim_K( F^iD/F^{i+1}D)$. 
 The theory of BC's now implies that the pair $(H^r_{\hk}(X), H^r_{\dr}(X_K))$ is weakly admissible from which  our theorem follows. 
\end{proof}
\begin{remark}
The isomorphisms (\ref{iso11}) and (\ref{iso12}) are strict if we put the natural
 topology on the $\Hom$'s. 
   \end{remark}
\subsubsection{Proper rigid analytic varieties over $C$}\label{niziol2}
Let $X$ be a smooth and proper rigid analytic variety over $C$.
 Its $p$-adic \'etale cohomology is of finite rank by \cite[Th.\,1.1]{Sch}.
 Its Hyodo-Kato cohomology is of finite rank by the Hyodo-Kato isomorphism and finiteness of de Rham cohomology.
 Its $\B^+_{\dr}$-cohomology is free of finite rank over $\B^+_{\dr}$, by the comparison isomorphism with Hyodo-Kato cohomology.
\begin{theorem} \label{nerwy111}Let $r\geq 0$. 
There exists a natural $\bst$-linear period isomorphism
$$
\alpha_{\st}: H^r_{\eet}(X,\Q_p)\otimes_{\Q_p}\bst \simeq H^r_{\hk}(X)\otimes_{F^{\nr}}\bst ,
$$
that preserves the Frobenius and the monodromy operators, and induces a filtered   isomorphism 
\begin{equation}
\label{BMS1}
\alpha_{\dr}: H^r_{\eet}(X,\Q_p)\otimes_{\Q_p}\bdr
 \simeq H^r_{\dr}(X/\B^+_{\dr})\otimes_{\B^+_{\dr}}\bdr.
 \end{equation}
Here, the filtration on $H^r_{\dr}(X/\B^+_{\dr})$ is defined by $$F^iH^r_{\dr}(X/\B^+_{\dr}):=\im (H^r(F^i\rg_{\dr}(X/\B^+_{\dr}))\to H^r_{\dr}(X/\B^+_{\dr})).
$$
 \end{theorem}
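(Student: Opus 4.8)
The plan is to reduce this to the algebraic-variety case already recorded in Theorem~\ref{nerwy1} via the comparison machinery, using the finiteness inputs mentioned just above the statement. First I would work locally over the spheres of Fontaine's functor: since $X$ is proper and smooth over $C$, all the relevant cohomologies ($p$-adic \'etale, Hyodo-Kato, $\bdr^+$-cohomology) are finite rank, and the $\bdr^+$-cohomology is a free $\bdr^+$-module. The strategy is to build $\alpha_{\st}$ exactly as in the proof of Theorem~\ref{nerwy11}: pick $s>r$, form the composite
$$\alpha_{\st}(s):\; H^r_{\eet}(X,\Q_p(s))\xrightarrow[\sim]{\alpha_s^{-1}}H^r_{\synt}(X,s)\to (H^r_{\hk}(X)\wotimes_{F^{\nr}}\bst^+)^{N=0,\phi=p^s}\lomapr{p^{-s}}H^r_{\hk}(X)\wotimes_{F^{\nr}}\bst,$$
and set $\alpha_{\st}:=t^{-s}\alpha_{\st}(s)\varepsilon^s$, $\alpha_{\dr}:=\iota_{\rm BK}^{-1}(\iota_{\hk}\otimes\iota)\alpha_{\st}$. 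The point is that the period morphism $\alpha_s$ from~\cite[Cor.\,6.9]{CN4} and the Hyodo-Kato isomorphism $\iota_{\hk}:\rg_{\hk}(X)\wotimes^R_{F^{\nr}}\bdr^+\overset{\sim}{\to}\rg_{\dr}(X/\bdr^+)$ are already available from Proposition~\ref{main00}.

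Next I would establish that these maps are isomorphisms. The key is the degeneration of the Hodge--de Rham spectral sequence, which gives the identification $(H^r_{\dr}(X/\bdr^+))/F^s\simeq H^r(\rg_{\dr}(X/\bdr^+)/F^s)$ — here, since $X$ is only over $C$, one cannot quote Scholze's degeneration over $K$ directly, so I would instead argue as suggested in the footnote of Theorem~\ref{nerwy11}'s proof: the short exact sequence obtained from the distinguished triangle of Proposition~\ref{main00}(1)(d) together with a BC-theoretic argument. Concretely, from the triangle one gets the long exact sequence relating $H^{r-1}_{\dr}(X/\bdr^+)/F^s$, $H^r_{\synt}(X,\Q_p(s))$, $(H^r_{\hk}(X)\wotimes_{F^{\nr}}\bst^+)^{N=0,\phi=p^s}$ and $(H^r_{\dr}(X/\bdr^+))/F^s$; the map $f_r$ into the last term is an evaluation on $C$-points of a morphism of BC's, and because $H^r_{\synt}(X_C,\Q_p(s))\simeq H^r_{\eet}(X,\Q_p(s))$ is finite-dimensional over $\Q_p$ for $r\leq s$ (via~\cite[6.10]{CN4}), the cokernel of $f_r$ has Dimension $(0,d_r)$; since $(H^r_{\dr}(X/\bdr^+))/F^s$ is a successive extension of copies of $C$ (i.e. of $\mathbb V^1$'s), the theory of BC's forces the map from it to $\coker f_r$ to vanish, hence $\coker f_r=0$. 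This yields the short exact sequence
$$0\to H^r_{\synt}(X,\Q_p(s))\to (H^r_{\hk}(X)\wotimes_{F^{\nr}}\bst^+)^{N=0,\phi=p^s}\to (H^r_{\dr}(X/\bdr^+))/F^s\to 0.$$

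Then I would finish by a weak-admissibility argument. The pair $(H^r_{\hk}(X),H^r_{\dr}(X/\bdr^+))$ is a filtered $(\varphi,N)$-module over $C$ in the sense of the excerpt (the Hyodo-Kato isomorphism supplies the lattice $M^+_{\dr}$). The displayed short exact sequence together with a $C$-Dimension count gives $t_N=t_H$ for this pair (degree $0$), and a ``weight'' argument shows that ${\mathbf V}_{\st}$ of this pair recovers $H^r_{\eet}(X,\Q_p)$; combined with degree $0$ and the theory of BC's (or Remark~\ref{quasi4.1}), the pair is weakly admissible, i.e.\ admissible, so ${\cal E}(H^r_{\hk}(X),H^r_{\dr}(X/\bdr^+))$ is trivial and the natural maps
$${\mathbf V}_{\st}\otimes_{\Q_p}\bst\to H^r_{\hk}(X)\wotimes_{F^{\nr}}\bst,\qquad {\mathbf V}_{\st}\otimes_{\Q_p}\bdr\to H^r_{\dr}(X/\bdr^+)\otimes_{\bdr^+}\bdr$$
are isomorphisms, the second being filtered. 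Unwinding $\alpha_{\st}$ and $\alpha_{\dr}$ through these identifications yields the claimed $\bst$-linear isomorphism preserving $\varphi$ and $N$ and the filtered isomorphism~(\ref{BMS1}). The main obstacle I expect is the degeneration/Hodge-filtration input over $C$: establishing $(H^r_{\dr}(X/\bdr^+))/F^s\simeq H^r(\rg_{\dr}(X/\bdr^+)/F^s)$ and, more importantly, that $F^\bullet H^r_{\dr}(X/\bdr^+)$ behaves well (is a direct summand filtration, so that the pair really is a filtered $(\varphi,N)$-module with a $\bdr^+$-lattice) — over $K$ this is Scholze's degeneration, but over $C$ one must run the BC-surjectivity argument carefully and check it is compatible with the filtration defined as $\im(H^r(F^i\rg_{\dr}(X/\bdr^+))\to H^r_{\dr}(X/\bdr^+))$; everything else is a transcription of the proof of Theorem~\ref{nerwy11}.
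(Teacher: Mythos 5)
Your proposal tracks the paper's proof closely, and the overall strategy is correct: build $\alpha_{\st}$ from the syntomic period map as in Theorem~\ref{nerwy11}, get the short exact sequence by a BC-Dimension argument, and finish with the weak-admissibility criterion from Remark~\ref{quasi4.1} / Remark~\ref{quasi5}. However, the step you yourself flag as the ``main obstacle'' is precisely what the paper settles and you leave open, and your presentation of it as written is logically out of order. The long exact sequence from Proposition~\ref{main00}(1)(d) has $H^r(\rg_{\dr}(X/\B^+_{\dr})/F^s)$ as its last term, \emph{not} $H^r_{\dr}(X/\B^+_{\dr})/F^s$; you conflate the two, and your BC argument (``since $(H^r_{\dr}(X/\bdr^+))/F^s$ is a successive extension of $\mathbb{V}^1$'s'') is only useful once you know that $H^r(\rg_{\dr}(X/\B^+_{\dr})/F^s)$ is a finite-dimensional affine BC. The paper establishes that fact directly, before running the BC argument, using the distinguished triangle from~\cite[3.28]{CN4},
$$\bigoplus_{i\leq s}\rg(X,\Omega^i)(s-i)[-i]\to \rg_{\dr}(X/\B^+_{\dr})/F^{s+1}\to \rg_{\dr}(X/\B^+_{\dr})/F^s,$$
together with $\rg_{\dr}(X/\B^+_{\dr})/F^1\simeq\rg(X,\so_X)$ and finiteness of coherent cohomology on the proper $X$. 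The correct order is then: (a) show $H^r(\rg_{\dr}(X/\B^+_{\dr})/F^s)$ is an affine BC; (b) use the BC-theory vanishing to obtain the short exact sequence with this term on the right; (c) deduce $H^r_{\dr}(X/\B^+_{\dr})/F^s\stackrel{\sim}{\to}H^r(\rg_{\dr}(X/\B^+_{\dr})/F^s)$ from the factorization of the surjection and the definition of $F^sH^r_{\dr}$ as an image. As written, your argument assumes the identification it sets out to prove.

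Two smaller points: over $C$ there is no $K$-structure, so $\iota_{\rm BK}^{-1}$ must be \emph{dropped} from the definition of $\alpha_{\dr}$ (the paper says exactly this); and the closing ``$t_N=t_H$ plus a weight argument'' phrasing is borrowed from the $K$-case proof, whereas the clean way to finish over $C$ is the one the paper uses: the short exact sequence plus the identification with $H^r_{\eet}(X,\Q_p(s))$ show that $V_{\st}(H^r_{\hk}(X), F^0(H^r_{\dr}(X/\bdr^+)\otimes\bdr))$ is finite-dimensional over $\Q_p$ and the relevant sequence is exact, so Remark~\ref{quasi4.1}(ii) gives weak admissibility, hence admissibility, hence the two isomorphisms.
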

 \begin{remark}
A de Rham comparison isomorphism as in (\ref{BMS1}) was constructed earlier in \cite[Th.\,13.1]{BMS1}. 
It did not treat filtrations.
 \end{remark}
\begin{proof} Take $s > r$. The period maps are defined as in Theorem~\ref{nerwy11} 
but by dropping the map $\iota_{\rm BK}$. We note that 
$H^r(\rg_{\dr}(X/\B^+_{\dr})/F^s)$ is a BC, a successive extension of $C$-vector spaces of finite rank. This follows from the fact that  the distinguished triangle 
\cite[3.28]{CN4} yields the distinguished triangle
$$
\bigoplus_{i\leq s}\rg(X,\Omega^i)(s-i)[-i]\to \rg_{\dr}(X/\B^+_{\dr})/F^{s+1}\to \rg_{\dr}(X/\B^+_{\dr})/F^s
$$
and $\rg_{\dr}(X/\B^+_{\dr})/F^1\simeq \rg(X,\so_X)$ by \cite[3.27]{CN4}.
Having that, the same arguments as in the case of proper varieties over $K$ yield short exact sequence   
\begin{equation}\label{wieczor1}
0\to H^r_{\eet}(X,\Q_p(s))\to (H^r_{\hk}(X)\otimes_{F^{\nr}}\bst^+)^{N=0,\varphi=p^s}\to H^r(\rg_{\dr}(X/\B^+_{\dr})/F^s)\to 0
\end{equation}

   Moreover, these arguments  show that the canonical map
$$
H^r_{\dr}(X/\B^+_{\dr})/F^s\to H^r(\rg_{\dr}(X/\B^+_{\dr})/F^s)$$
is an isomorphism.  This  can be proved  in the following way. 
 By  (\ref{wieczor1}),  we have a surjection
$$ (H^r_{\hk}(X)\otimes_{F^{\nr}}\bst^+)^{N=0,\varphi=p^s}\twoheadrightarrow  H^r(\rg_{\dr}(X/\B^+_{\dr})/F^s).
$$
Since the  above map 
 factors through  the natural map 
 \begin{equation}
\label{injective}
  H^r_{\dr}(X/\B^+_{\dr})/F^s\to H^r(\rg_{\dr}(X/\B^+_{\dr})/F^s)
\end{equation}
that latter is surjective as well. But it is also injective. Indeed, we have the distinguished triangle
$$
 F^s \rg_{\dr}(X/\B^+_{\dr})\to   \rg_{\dr}(X/\B^+_{\dr})\to  \rg_{\dr}(X/\B^+_{\dr})/F^s
$$
It yields the long exact sequence of cohomology groups
$$
\to  H^r(F^s\rg_{\dr}(X/\B^+_{\dr}))\stackrel{f_r}{\to}   H^r_{\dr}(X/\B^+_{\dr})\to   H^r(\rg_{\dr}(X/\B^+_{\dr})/F^s)\to 
$$
Since $F^sH^r_{\dr}(X/\B^+_{\dr})=\im f_r$, the map in (\ref{injective}) is injective. We are done.

The two isomorphisms in our theorem follow now from Remark~\ref{quasi4.1}, using the last part of Remark~\ref{quasi5} (take $M=H^r_{\hk}(X), M^+=F^0(H^r_{\dr}(X/\B^+_{\dr})\otimes_{\B^+_{\dr}}\B_{\dr})$).
\end{proof}

\begin{remark}
(i)
One can restate the theorem as follows (${\cal E}(-,-)$ is the associated vector bundle on 
$X_{\rm FF}$):
$$H^r_{\eet}(X,\Q_p)\simeq H^0(X_{\rm FF},{\cal E}(H^r_{\hk}(X),H^r_{\dr}(X/\B^+_{\dr}))).$$

(ii) From Theorem~\ref{nerwy111}, we get  natural  isomorphisms 
\begin{align*}
&{\rm Hom}_{\rm VS}(H^r_{\eet}(X,\Q_p),{\mathbb B}_{\st})
\simeq {\rm Hom}_{F^{\rm nr}}(H^r_{{\rm HK}}(X),\bst),\quad{\text{as $\bst$-modules,}}\\
&{\rm Hom}_{\rm VS}(H^r_{\eet}(X,\Q_p),{\mathbb B}_{\dr})\simeq 
{\rm Hom}_{\bdr^+}(H^r_{{\rm dR}}(X/\bdr^+),\bdr),\quad{\text{as $\bdr$-modules}}.
\end{align*}
\end{remark}

\subsection{Dagger Stein varieties and dagger  affinoids}
Having the comparison result from \cite[Cor.\,6.9]{CN4}, we can now deduce a (simplified) fundamental diagram for pro-\'etale cohomology from the one for overconvergent syntomic cohomology. 
\begin{theorem}{\rm (Simplified fundamental diagram)}\label{affinoids}
Let  $X$ be a smooth dagger Stein variety  or a smooth dagger  affinoid over $C$. 
 Let $r\geq 0$. There is  a natural map of strictly exact sequences in $\sd(C_{\Q_p})$
  $$
\xymatrix@C=.6cm@R=.6cm{
0\to  \Omega^{r-1}(X)/\kker d\ar[r]\ar@{=}[d] & H^r_{\proeet}(X,\Q_p(r))\ar[d]^{\tilde{\beta}} \ar[r] & (H^r_{\hk}(X)\wh{\otimes}_{F^{\nr}}\B^+_{\st})^{N=0,\phi=p^r}\ar[d]^{\iota_{\hk}\otimes\theta} \to 0\\
0\to  \Omega^{r-1}(X)/\kker d \ar[r]^-d & \Omega^r(X)^{d=0} \ar[r] & H^r_{\dr}(X)\longrightarrow 0
}
$$
Moreover, 
$H^r_{\proeet}(X,\Q_p(r))$ is Fr\'echet or LB, respectively, 
the vertical maps are strict and  have closed images, and
$ \kker\tilde{\beta}\simeq  (H^r_{\hk}(X)\wh{\otimes}_{F^{\nr}}\B^+_{\st})^{N=0,\phi=p^{r-1}}$. 
\end{theorem}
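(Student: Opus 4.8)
The plan is to deduce this ``simplified fundamental diagram'' for pro-\'etale cohomology from the corresponding statement for overconvergent syntomic cohomology proved in \cite{CN4}, using the basic comparison isomorphism~(\ref{iso1}) and the exact sequence~(\ref{rigid1}). First I would recall from \cite[Th.\,1.7]{CN4} (and the case $i=r$ of~(\ref{rigid1})) that, for a dagger Stein variety or dagger affinoid $X$, the long exact sequence
$$\cdots\to {\rm DR}_r^{r-1}(X)\to H^r_{\synt}(X,\Q_p(r))\to {\rm HK}_r^r(X)\xrightarrow{\iota_{\hk}} {\rm DR}_r^r(X)\to\cdots$$
splits into short exact sequences. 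Concretely, for these varieties $\rg_{\dr}(X/\B^+_{\dr})/F^r$ has cohomology computed by differential forms: using the distinguished triangles of~\cite[3.27, 3.28]{CN4} one gets ${\rm DR}_r^i(X)=H^i(\rg_{\dr}(X/\B^+_{\dr})/F^r)$ surjecting onto $\Omega^i(X)^{d=0}$ for $i<r$ with kernel built from lower-degree forms, and in degree $r$ one has the identification of the relevant cohomology with the ``truncated de Rham'' object; the key point (going back to the definition of overconvergent syntomic cohomology and the surjectivity of $d:\Omega^{r-1}(X)\to\Omega^r(X)^{d=0}$, which holds because $X$ is Stein or affinoid so higher coherent cohomology vanishes) is that the connecting maps in the syntomic long exact sequence vanish. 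This yields a strictly exact sequence
$$0\to \Omega^{r-1}(X)/\kker d\to H^r_{\synt}(X,\Q_p(r))\to (H^r_{\hk}(X)\wh{\otimes}_{F^{\nr}}\B^+_{\st})^{N=0,\phi=p^r}\to 0,$$
together with the map to the de Rham side $0\to \Omega^{r-1}(X)/\kker d\xrightarrow{d}\Omega^r(X)^{d=0}\to H^r_{\dr}(X)\to 0$ via $\iota_{\hk}\otimes\theta$, all in the category of VS's by \cite[Th.\,1.3, Th.\,1.7]{CN4}.

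Next I would transport everything to pro-\'etale cohomology: by~(\ref{iso1}), $\alpha_r$ induces a strict quasi-isomorphism $H^r_{\synt}(X,\Q_p(r))\xrightarrow{\sim} H^r_{\proeet}(X,\Q_p(r))$ for $r\geq 0$ (here $i=r\leq r$), and this holds in the derived category of VS's; so the syntomic diagram above immediately gives the claimed diagram with $\tilde\beta$ the composite of $\alpha_r^{-1}$ with the syntomic-to-de-Rham map. The topological assertions follow from \cite{CN4}: for $X$ Stein, $H^r_{\proeet}(X,\Q_p(r))$ is Fr\'echet (it is an extension of the Fr\'echet space $(H^r_{\hk}(X)\wh{\otimes}\B^+_{\st})^{N=0,\phi=p^r}$ by the Fr\'echet space $\Omega^{r-1}(X)/\kker d$); for $X$ a dagger affinoid, the overconvergent structure makes $\Omega^\bullet(X)$ an $LB$-space, so $H^r_{\proeet}(X,\Q_p(r))$ is $LB$. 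Strictness of the vertical maps and closedness of their images is then read off from strictness of $d$ (which has closed image since $\Omega^\bullet(X)$ is a complex of Fr\'echet or $LB$ spaces with $d$ strict — this is where one invokes the open mapping / closed range properties recorded in \cite[Sec. 2]{CDN3}) together with the five lemma in the quasi-abelian setting.

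Finally, for the computation $\kker\tilde\beta\simeq (H^r_{\hk}(X)\wh{\otimes}_{F^{\nr}}\B^+_{\st})^{N=0,\phi=p^{r-1}}$: by the snake lemma applied to the map of the two short exact sequences, since the left-hand vertical map is the identity, $\kker\tilde\beta$ is identified with the kernel of the right-hand vertical map $\iota_{\hk}\otimes\theta:(H^r_{\hk}(X)\wh{\otimes}_{F^{\nr}}\B^+_{\st})^{N=0,\phi=p^r}\to H^r_{\dr}(X)$, which is the composite of $\iota_{\hk}\otimes\theta:(H^r_{\hk}(X)\wh{\otimes}\B^+_{\st})^{N=0,\phi=p^r}\to H^r_{\hk}(X)\wh{\otimes}_{F^{\nr}}C$ with the Hyodo--Kato isomorphism. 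So one must show that this kernel is $(H^r_{\hk}(X)\wh{\otimes}\B^+_{\st})^{N=0,\phi=p^{r-1}}$. This follows from the structure of the $C$-point of the relevant BC/VS: the map $\theta:\B^+_{\st}\to C$ has kernel $t\B^+_{\dr}\cap\B^+_{\st}$, and on the $\varphi=p^r$, $N=0$ part the kernel of $\theta$ is exactly $t\cdot(\B^+_{\cris})^{\varphi=p^{r-1}}$ (via the isomorphism $(M\otimes\B^+_{\cris})^{\varphi=p^r}\xrightarrow{\sim}(M\otimes\B^+_{\st})^{N=0,\varphi=p^r}$ of Lemma~\ref{quasi121}'s proof and multiplication by $t$), so ${\rm Ker}=t\cdot(H^r_{\hk}(X)\wh{\otimes}\B^+_{\st})^{N=0,\varphi=p^{r-1}}\simeq (H^r_{\hk}(X)\wh{\otimes}\B^+_{\st})^{N=0,\varphi=p^{r-1}}$. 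I expect the main obstacle to be not any single computation but the bookkeeping of topologies and strictness: one must verify throughout that the exact sequences are \emph{strictly} exact (so that the five lemma and snake lemma are legitimate in $LH(C_K)$ or the category of VS's), and that passing to $\kker\tilde\beta$ commutes with the topological identifications — this is where the Fr\'echet/$LB$ dichotomy and the results of \cite[Sec. 2.1, 2.3]{CDN3} on quasi-abelian categories do the real work.
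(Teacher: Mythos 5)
Your approach matches the paper's: both derive the statement by transporting \cite[Prop.\,5.13]{CN4} (the syntomic fundamental diagram for Stein/affinoid daggers) through the period quasi-isomorphism \cite[Cor.\,6.9]{CN4}, with $\tilde\beta$ defined as the composite. The paper's proof is literally a two-line citation of those results, and your reconstruction of what lies behind them (the splitting of the syntomic long exact sequence, the topological structure in the Stein vs.\ affinoid cases, and the identification of $\kker\tilde\beta$ via the snake lemma plus the BC-theoretic computation of $\kker\bigl(\theta:(\mathbb{B}^+_{\cris})^{\varphi=p^r}\to\mathbb{V}^1\bigr)=t\,(\mathbb{B}^+_{\cris})^{\varphi=p^{r-1}}$) is the correct content underlying the citation.

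One parenthetical in your write-up is wrong and should be dropped: you assert that $d\colon\Omega^{r-1}(X)\to\Omega^r(X)^{d=0}$ is surjective because higher coherent cohomology vanishes. If that map were surjective then $H^r_{\dr}(X)$ would be zero, contradicting the bottom row of the very diagram you are proving (whose rightmost term is $H^r_{\dr}(X)$). What vanishing of higher coherent cohomology actually buys you is that $H^r_{\dr}(X)$ is computed by the global de Rham complex, i.e.\ $H^r_{\dr}(X)\simeq\Omega^r(X)^{d=0}/d\Omega^{r-1}(X)$, and that $\mathrm{DR}^{r,i}(X)$ (which lives only in degrees $\le r-1$ as a complex of global sections) surjects onto $\Omega^i(X)^{d=0}$ in the relevant range --- this, not surjectivity of $d$, is the reason the syntomic long exact sequence splits as stated.
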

\begin{proof}
We define $\tilde{\beta}:=p^{-r}\beta\alpha_{r}^{-1}$, using \cite[Cor.\,6.9]{CN4} and   \cite[Prop.\,5.13]{CN4}; the twist by $p^{-r}$ being added to make this map compatible with symbols.
The theorem  follows immediately from \cite[Cor.\,6.9]{CN4} and 
 \cite[Prop.\,5.13]{CN4}.
\end{proof}

\subsection{The fundamental diagram} We will now introduce the fundamental diagram, look at some examples, where it appears, and, finally,  state a conjecture concerning it. 
\subsubsection{Examples}\label{propervar}
We start with examples. 
\vskip.2cm
\noindent $\bullet$
{\em Proper varieties.}
\begin{corollary}
Let $X$ be a smooth proper variety over $C$. 
We have the  bicartesian diagram 
 \begin{equation}
 \label{nerwy2}
 \xymatrix{
 H^r_{\eet}(X,\Q_p(r))\ar[r]^-{\alpha_{\st}(r)} \ar[d]^-{\alpha_{\dr}(r)} & (H^r_{\hk}(X)\otimes_{F^{\nr}}\bst^+)^{\phi=p^r, N=0}\ar[d]^-{\iota_{\hk}\otimes\iota} \\
 F^rH^r_{\dr}(X/\B^+_{\dr})\ar[r]^-{\can} & H^r_{\dr}(X/\B^+_{\dr})
 }
 \end{equation}
 \end{corollary}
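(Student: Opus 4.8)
The statement is essentially a reformulation of the short exact sequence produced in the proof of Theorem~\ref{nerwy111}, so the plan is short. Set $M:=H^r_{\hk}(X)$ and $M^+_{\dr}:=F^0\big(H^r_{\dr}(X/\B^+_{\dr})\otimes_{\B^+_{\dr}}\B_{\dr}\big)$. As recorded in the proof of Theorem~\ref{nerwy111}, the pair $(M,M^+_{\dr})$ is a weakly admissible — hence acyclic — filtered $(\varphi,N)$-module over $C$; its $\varphi$-slopes lie in $[0,r]$ and one has $M\otimes_{F^{\nr}}t^r\B^+_{\dr}\subset t^rM^+_{\dr}\subset M\otimes_{F^{\nr}}\B^+_{\dr}$, because the Hodge filtration on $H^r_{\dr}(X/\B^+_{\dr})$ is concentrated in degrees $[0,r]$ (degeneration of the Hodge--de Rham spectral sequence, as used in Theorem~\ref{nerwy111}), whence $M^+_{\dr}\supseteq M\otimes\B^+_{\dr}$ and $t^rM^+_{\dr}\subseteq M\otimes\B^+_{\dr}$, and weak admissibility forces the $\varphi$-slopes into the same range. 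Moreover $V_{\rm st}(M,M^+_{\dr})\simeq H^r_{\eet}(X,\Q_p)$ compatibly with the period maps, so $t^rV_{\rm st}(M,M^+_{\dr})\simeq H^r_{\eet}(X,\Q_p(r))$ via $\alpha_{\st}(r)$.

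First I would invoke Remark~\ref{quasi5}: acyclicity of $(M,M^+_{\dr})$, together with the slope and lattice conditions above, gives the exactness of
\begin{equation*}
0\to H^r_{\eet}(X,\Q_p(r))\xrightarrow{\ \alpha_{\st}(r)\ }(M\otimes_{F^{\nr}}\B^+_{\st})^{N=0,\varphi=p^r}\longrightarrow (M\otimes_{F^{\nr}}\B^+_{\dr})/t^rM^+_{\dr}\to 0 .
\end{equation*}
Next I would rewrite the last term: the Hyodo--Kato isomorphism $\iota_{\hk}$ identifies $M\otimes_{F^{\nr}}\B^+_{\dr}$ with $H^r_{\dr}(X/\B^+_{\dr})$ (an isomorphism on $H^r$ since the latter is free of finite rank over $\B^+_{\dr}$), and under it $t^rM^+_{\dr}$ corresponds to $F^rH^r_{\dr}(X/\B^+_{\dr})$ — this uses the definition $F^iH^r_{\dr}(X/\B^+_{\dr})=\im\big(H^r(F^i\rg_{\dr}(X/\B^+_{\dr}))\to H^r_{\dr}(X/\B^+_{\dr})\big)$, the inclusion $tF^i\rg_{\dr}(X/\B^+_{\dr})\subset F^{i+1}\rg_{\dr}(X/\B^+_{\dr})$, and once more concentration in degrees $[0,r]$. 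Thus the sequence above becomes
\begin{equation*}
0\to H^r_{\eet}(X,\Q_p(r))\xrightarrow{\ \alpha_{\st}(r)\ }(H^r_{\hk}(X)\otimes_{F^{\nr}}\B^+_{\st})^{N=0,\varphi=p^r}\xrightarrow{\ \iota_{\hk}\otimes\iota\ }H^r_{\dr}(X/\B^+_{\dr})/F^rH^r_{\dr}(X/\B^+_{\dr})\to 0 .
\end{equation*}

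Finally I would conclude formally. The diagram \eqref{nerwy2} commutes by the very definitions of $\alpha_{\st}(r)$ and $\alpha_{\dr}(r)$, and a commutative square is bicartesian precisely when the three-term sequence
\begin{equation*}
0\to H^r_{\eet}(X,\Q_p(r))\to (H^r_{\hk}(X)\otimes_{F^{\nr}}\B^+_{\st})^{N=0,\varphi=p^r}\oplus F^rH^r_{\dr}(X/\B^+_{\dr})\to H^r_{\dr}(X/\B^+_{\dr})\to 0
\end{equation*}
is exact; the latter follows from the previous short exact sequence together with the tautological one $0\to F^rH^r_{\dr}(X/\B^+_{\dr})\xrightarrow{\can}H^r_{\dr}(X/\B^+_{\dr})\to H^r_{\dr}(X/\B^+_{\dr})/F^r\to 0$ by the standard ``pullback along a surjection'' chase: the fibre product of $(H^r_{\hk}(X)\otimes_{F^{\nr}}\B^+_{\st})^{N=0,\varphi=p^r}$ and $F^rH^r_{\dr}(X/\B^+_{\dr})$ over $H^r_{\dr}(X/\B^+_{\dr})$ maps to $0$ in $H^r_{\dr}(X/\B^+_{\dr})/F^r$, hence equals the kernel of $\iota_{\hk}\otimes\iota$, i.e.\ $H^r_{\eet}(X,\Q_p(r))$, while surjectivity of the total map onto $H^r_{\dr}(X/\B^+_{\dr})$ is immediate from surjectivity of $\iota_{\hk}\otimes\iota$ onto $H^r_{\dr}(X/\B^+_{\dr})/F^r$. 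The only point requiring genuine care is the filtration bookkeeping of the second step — matching $t^rM^+_{\dr}$ with $F^rH^r_{\dr}(X/\B^+_{\dr})$ under $\iota_{\hk}$, so that \eqref{nerwy2} is literally the exact sequence of Theorem~\ref{nerwy111} redrawn as a square; everything else is formal once that sequence is in hand.
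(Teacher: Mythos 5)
Your proposal is correct, but it takes a slightly more circuitous route than the paper's. The paper's proof is a one-liner: it cites the short exact sequence~(\ref{wieczor1}),
\[
0\to H^r_{\eet}(X,\Q_p(r))\to (H^r_{\hk}(X)\otimes_{F^{\nr}}\bst^+)^{N=0,\varphi=p^r}\to H^r_{\dr}(X/\B^+_{\dr})/F^r\to 0,
\]
which was established directly inside the proof of Theorem~\ref{nerwy111} by a BC-dimension argument, and then the bicartesian property is the formal gluing with the tautological sequence $0\to F^r\to H^r_{\dr}(X/\bdr^+)\to H^r_{\dr}(X/\bdr^+)/F^r\to 0$. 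You instead take the \emph{output} of Theorem~\ref{nerwy111} --- weak admissibility of $(M,M^+_{\dr})$ --- and feed it back through the abstract theory (Remark~\ref{quasi5}) to reproduce the same short exact sequence. Since the weak admissibility in Theorem~\ref{nerwy111} was itself derived \emph{from} (\ref{wieczor1}) (via Remarks~\ref{quasi4.1} and~\ref{quasi5}), your argument essentially runs that equivalence in reverse: it is logically valid (the weak admissibility is a theorem, so you may start from it), but it re-derives something already in hand. On the other hand, your write-up makes explicit a small piece of bookkeeping the paper silently elides, namely that under the Hyodo--Kato isomorphism $t^rM^+_{\dr}$ lands on $F^rH^r_{\dr}(X/\bdr^+)$ (using $tF^i\subset F^{i+1}$ and concentration of the filtration in degrees $[0,r]$); this is where the two descriptions of the bottom-right corner are reconciled. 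Both routes are correct; the paper's is shorter because it does not pass through the $(\varphi,N)$-module formalism a second time.
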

 Recall that this means that this is a pushout and pullback diagram, or, that the  sequence
 \begin{equation}\label{wieczor2}
 0\to  H^r_{\eet}(X,\Q_p(r))\verylomapr{\alpha_{\st}(r)\oplus\alpha_{\dr}(r)}  (H^r_{\hk}(X)\otimes_{F^{\nr}}\bst^+)^{N=0,\phi=p^r}\oplus  F^rH^r_{\dr}(X/\B^+_{\dr})\verylomapr{\iota_{\hk}\otimes\iota+\can} 
 H^r_{\dr}(X/\B^+_{\dr})\to 0
 \end{equation}
 is exact. 
 \begin{proof} Follows immediately from the short exact sequence (\ref{wieczor1}).
  \end{proof}
 \begin{remark} (i) The passage 
  the other way, from diagram (\ref{nerwy2}) to Theorem~\ref{nerwy111}   is also possible: the exact sequence (\ref{wieczor2}) yields the exact sequence (\ref{wieczor1}) and we can finish as in the proof of Theorem~\ref{nerwy111}.
  
   (ii) The natural map $ H^r(F^r\rg_{\dr}(X/\B^+_{\dr})) \to F^rH^r_{\dr}(X/\B^+_{\dr})$ is an isomorphism. 
  \end{remark}

\noindent $\bullet$
{\em Dagger Stein varieties and dagger  affinoids.}
\begin{corollary}\label{affine11}Let $X$ be a smooth dagger Stein variety or a smooth dagger affinoid over $C$.
Let $r\geq 0$. We have 
the  bicartesian diagram in $\sd(C_{\Q_p})$  (recall that all cohomologies are classical) 
 \begin{equation}
 \label{main-diagram}
 \xymatrix{
 H^r_{\eet}(X,\Q_p(r))\ar[r]^-{\alpha_{\st}(r)} \ar[d]^-{\alpha_{\dr}(r)} & (H^r_{\hk}(X)\wh{\otimes}_{F^{\nr}}\bst^+)^{N=0,\phi=p^r}\ar[d]^-{\iota_{\hk}\otimes\iota} \\
 H^r F^r\rg_{\dr}(X/\B^+_{\dr})\ar[r] & H^r_{\dr}(X/\B^+_{\dr})
 }
\end{equation}
\end{corollary}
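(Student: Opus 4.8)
The plan is to produce the square~(\ref{main-diagram}) as the degree-$r$ shadow of a homotopy cartesian square of complexes, obtained from the syntomic description of pro-\'etale cohomology, and then to use the finiteness available in the Stein and affinoid cases to check that it remains bicartesian after passing to $H^r$.

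First I would recall, from Proposition~\ref{main00} (lifted to the derived $\infty$-category of Vector Spaces), that $\R\Gamma_{\synt}(X,\Q_p(r))$ is the homotopy fiber of
$$\iota_{\hk}\colon\ [\rg_{\hk}(X)\wh{\otimes}_{F^{\nr}}\B^+_{\st}]^{N=0,\phi=p^r}\longrightarrow \rg_{\dr}(X/\B^+_{\dr})/F^r,$$
and that $\R\Gamma_{\synt}(X,\Q_p(r))\xrightarrow{\sim}\tau_{\le r}\R\Gamma_{\proeet}(X,\Q_p(r))$, so in particular $H^r(\R\Gamma_{\synt}(X,\Q_p(r)))\simeq H^r_{\proeet}(X,\Q_p(r))$. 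The Hyodo--Kato quasi-isomorphism $\rg_{\hk}(X)\wh{\otimes}_{F^{\nr}}\B^+_{\dr}\xrightarrow{\sim}\rg_{\dr}(X/\B^+_{\dr})$ exhibits $\iota_{\hk}$ as the composite of a map $[\rg_{\hk}(X)\wh{\otimes}_{F^{\nr}}\B^+_{\st}]^{N=0,\phi=p^r}\to\rg_{\dr}(X/\B^+_{\dr})$ with the projection onto $\rg_{\dr}(X/\B^+_{\dr})/F^r$; since the latter is the cofiber of $F^r\rg_{\dr}(X/\B^+_{\dr})\hookrightarrow\rg_{\dr}(X/\B^+_{\dr})$, the octahedral axiom rewrites the defining triangle of $\R\Gamma_{\synt}(X,\Q_p(r))$ as a homotopy cartesian square of complexes of Vector Spaces
$$\xymatrix@C=.5cm@R=.5cm{\R\Gamma_{\synt}(X,\Q_p(r))\ar[r]\ar[d] & [\rg_{\hk}(X)\wh{\otimes}_{F^{\nr}}\B^+_{\st}]^{N=0,\phi=p^r}\ar[d]\\ F^r\rg_{\dr}(X/\B^+_{\dr})\ar[r] & \rg_{\dr}(X/\B^+_{\dr}).}$$
Taking $H^r$ produces the commutative square~(\ref{main-diagram}) and embeds it into the associated Mayer--Vietoris exact sequence.

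It then remains to see that, for $X$ Stein or affinoid, this square of cohomology objects is bicartesian, i.e.\ that the associated three-term sequence is strictly exact. Here I would use that all four complexes have classical cohomology in degrees $\le r$ for such $X$ --- the Hyodo--Kato term being $(H^r_{\hk}(X)\wh{\otimes}_{F^{\nr}}\B^+_{\st})^{N=0,\phi=p^r}$ as in Theorem~\ref{affinoids}, and the vanishing of higher coherent cohomology on Stein spaces and dagger affinoids making the Hodge-filtered de Rham complexes representable by explicit complexes of Fr\'echet, resp.\ $LB$, $\B^+_{\dr}$-modules --- and then deduce the vanishing of the relevant Mayer--Vietoris connecting map, together with surjectivity of the right-hand map, from the strict-exact top row of Theorem~\ref{affinoids} (whose middle map $\tilde\beta$ is the reduction modulo $t$ of $\alpha_{\dr}(r)$), combined with the acyclicity-type surjectivity of Remark~\ref{quasi5} applied to $M=H^r_{\hk}(X)$, whose $\varphi$-slopes lie in $[0,r]$. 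The hard part will be exactly this last step: converting the homotopy cartesian square of complexes into an honest bicartesian square of (topological) Vector Spaces in degree~$r$, i.e.\ controlling classicality of the cohomology objects in $LH(C_K)$, strictness of the period maps, and closedness of their images --- which is where the Fr\'echet (Stein) / $LB$ (affinoid) structure of $H^r_{\proeet}$ and the slope bound enter in an essential way.
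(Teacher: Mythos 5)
Your approach is essentially the one the paper uses. Both proofs organize the input from Proposition~\ref{main00}(1)(d) as a map of distinguished triangles, or equivalently (via the octahedral axiom, as you say) as a homotopy cartesian square of complexes of Vector Spaces with vertex $\rg_{\dr}(X/\B^+_{\dr})/F^r$; both then exploit the vanishing $H^i(\rg_{\dr}(X/\B^+_{\dr})/F^r)=0$ for $i\ge r$, which is the coherent-cohomology vanishing available on Stein and affinoid dagger spaces; and both close the argument by identifying the kernels of the two horizontal arrows $f_1\colon H^r_{\synt}\to (H^r_{\hk}\wh\otimes\bst^+)^{N=0,\phi=p^r}$ and $f_2\colon H^rF^r\rg_{\dr}(X/\B^+_{\dr})\to H^r_{\dr}(X/\B^+_{\dr})$, using Theorem~\ref{affinoids}.

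Two small remarks. First, your appeal to Remark~\ref{quasi5} is a detour and actually not licit as stated: Remark~\ref{quasi5} is formulated for a \emph{finite-rank} $(\varphi,N)$-module $M$ over $F^{\nr}$, whereas $H^r_{\hk}(X)$ is generically infinite-dimensional when $X$ is Stein. Fortunately this is harmless, because the surjectivity of the lower horizontal arrow already follows, with no slope input at all, from $H^r(\rg_{\dr}(X/\B^+_{\dr})/F^r)=0$ applied to the filtration triangle $F^r\rg_{\dr}\to\rg_{\dr}\to\rg_{\dr}/F^r$. Second, the part you flag as ``the hard part'' is exactly where the paper's proof supplies the concrete argument, and it does so more directly than an abstract classicality/strictness discussion: both $\kker f_1$ and $\kker f_2$ are exhibited as the same quotient of $H^{r-1}(\rg_{\dr}(X/\B^+_{\dr})/F^r)$, and the paper checks that the induced map $\kker f_1\to\kker f_2$ is an isomorphism by reducing modulo the maximal ideal of $\B^+_{\dr}$ via $\vartheta$ and comparing with the purely $C$-linear picture, where Theorem~\ref{affinoids} identifies both kernels with $\Omega^{r-1}(X)/\kker d$. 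So your outline is correct but leaves the decisive injectivity $\kker f_1\hookrightarrow\kker f_2$ (equivalently, vanishing of the Mayer--Vietoris connecting map $H^{r-1}_{\dr}(X/\B^+_{\dr})\to H^r_{\synt}$) unsubstantiated; that is the step whose content is the explicit $\vartheta$-diagram in the paper.
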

\begin{proof}
Consider the following diagram of maps of distinguished triangles in $\sd(C_{\Q_p})$
$$
\xymatrix{
\R\Gamma_{\synt}(X,\Q_p(r))\ar[r]^-{f_1}\ar@{.>}[d]^{{\beta}} & [\R\Gamma_{\hk}(X)\wh{\otimes}_{F^{\nr}}\B^+_{\st}]^{N=0,\phi=p^r}
\ar[d]^-{\iota_{\hk}\otimes\iota}\ar[r]^-{\iota_{\hk}\otimes\iota} & \R\Gamma_{\dr}(X/\B^+_{\dr})/F^r\ar@{=}[d]\\
F^r\R\Gamma_{\dr}(X/\B^+_{\dr})\ar[r]^-{f_2}&  \R\Gamma_{\dr}(X/\B^+_{\dr})\ar[r] & \R\Gamma_{\dr}(X/\B^+_{\dr})/F^r
}
$$
Since $H^i(\R\Gamma_{\dr}(X/\B^+_{\dr})/F^r)=0$ for $i\geq r$, the maps $f_1, f_2$ are surjective in degrees $\geq r$. It suffices to show that $\kker f_1\stackrel{\sim}{\to} \kker f_2$ in degree $r$. 
We have the following commutative diagram
\begin{equation*}
 \xymatrix@R=6mm{
0\ar[r] &  \kker f_1\ar[r] \ar[d] & H^r_{\synt}(X,\Q_p(r))\ar[r]^-{f_1} \ar[d] & (H^r_{\hk}(X)\wh{\otimes}_{F^{\nr}}\bst^+)^{N=0,\phi=p^r}\ar[d]^{\iota_{\hk}\otimes\iota} \\
0\ar[r] &  \kker f_2\ar[r] \ar[d]_{\wr}^{\vartheta}  & H^r F^r\rg_{\dr}(X/\bdr^+)\ar[r]^{f_2}\ar@{->>}[d]^{\vartheta} & H^r_{\dr}(X/\bdr^+)\ar@{->>}[d]^{\vartheta}\\
  0\ar[r] &  \kker f_{2,C}\ar[r] & H^r F^r\rg_{\dr}(X)\ar[r]^{f_{2,C}} & H^r_{\dr}(X)
 }
\end{equation*}
We claim that the map $ \kker f_2\to  \kker f_{2,C}$ is an isomorphism.
Indeed, we compute
\begin{align*}
& \kker f_2  \stackrel{\sim}{\leftarrow} \ccoker (H^{r-1}_{\dr}(X_K)\wotimes_K\B^+_{\dr}\to H^{r-1} ((\R\Gamma_{\dr}(X_K)\wh{\otimes}_K\B^+_{\dr})/F^r)) \stackrel{\sim}{\to} 
(\Omega^{r-1}(X_K)/\kker d) \wh{\otimes}_KC,\\
& \kker f_{2,C}  \stackrel{\sim}{\leftarrow}  (\Omega^{r-1}(X_K)/\kker d) \wh{\otimes}_KC.
\end{align*}
\end{proof}

\subsubsection{Conjecture} We will formulate now a conjecture 
describing  pro-\'etale cohomology in terms of the de Rham complex. 
\begin{conjecture}\label{quasi1} 
{\rm ($C_{\st}$-conjecture)}  
Let $X$ be a smooth dagger variety over $C$. 
Let $r\geq i$. The commutative diagram 
$$
\xymatrix@C=.4cm@R=.6cm{
\wt{H}^i_{\proeet}(X,\Q_p(r))\ar[r]\ar[d]
&(\wt{H}^i_{\rm HK}(X)\wotimes_{F^{\nr}}\B^+_{\st})^{N=0,\varphi=p^r}\ar[d]\\
\wt{H}^iF^r\rg_{\rm dR}(X/\B^+_{\dr})\ar[r]
& \wt{H}^i_{\rm dR}(X/\B^+_{\dr})
}
$$
is  bicartesian. That is, the following sequence
\begin{equation}\label{quasi19}
0\to \wt{H}^i_{\proeet}(X,\Q_p(r))\to (\wt{H}^i_{\rm HK}(X)\wotimes_{F^{\nr}}\B^+_{\st})^{N=0,\varphi=p^r}\oplus \wt{H}^iF^r\rg_{\rm dR}(X/\B^+_{\dr})\to \wt{H}^i_{\rm dR}(X/\B^+_{\dr})\to 0
\end{equation}
is exact in ${\rm LH}(C_{\Q_p})$.
\end{conjecture}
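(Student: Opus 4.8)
The plan is to prove Conjecture~\ref{quasi1} in the form stated: for every smooth dagger variety $X$ over $C$, exactness of~(\ref{quasi19}) in the derived category of locally convex $\Q_p$-vector spaces (equivalently, in the category of $\qBC$'s when things are classical). The strategy is a reduction to the \emph{small} case followed by a passage to the limit along a well-chosen exhaustion. First, recall from the excerpt that the small case (quasi-compact, or finitely covered by Stein varieties with finite-dimensional de Rham cohomology on intersections) is Theorem~\ref{PRINCE}; the delicate induction on the number of affinoids/Stein pieces, driven by Proposition~\ref{PRINCE5} and the dichotomy of Proposition~\ref{pos5}, gives bicartesianness there. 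So the issue is only how to get from ``small'' (in particular quasi-compact) $X$ to a general smooth dagger variety. A general such $X$ (separated, taut, countable at infinity) can be written, by partial properness being broken into a local and a global part, as an increasing union $X=\bigcup_n X_n$ of quasi-compact dagger subvarieties $X_n\hookrightarrow X_{n+1}$ with $X_n$ relatively compact in $X_{n+1}$; each $X_n$ is small, so the diagram is bicartesian for each $X_n$ by Theorem~\ref{PRINCE}.

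\textbf{Steps.} The steps I would carry out, in order, are: (1) Set up the exhaustion $X=\bigcup_n X_n$ by quasi-compact (hence small) pieces, compatibly for all four cohomology theories appearing in~(\ref{quasi19}); note that pro-\'etale, syntomic, Hyodo-Kato, and $\B^+_{\dr}$-de Rham cohomologies all commute with such direct limits up to the usual $\R^1\lim$ correction, using \cite{CN4} (the basic comparison and the distinguished triangle of Proposition~\ref{main00} are compatible with restriction maps $X_{n+1}\to X_n$, and can be promoted to VS's). (2) For each $n$, invoke the small case: the sequence $0\to \wt H^i_{\proeet}(X_n,\Q_p(r))\to {\rm HK}_r^i(X_n)\oplus \wt H^i F^r\rg_{\dr}(X_n/\B^+_{\dr})\to \wt H^i_{\dr}(X_n/\B^+_{\dr})\to 0$ is strictly exact. (3) Take the (derived) inverse limit over $n$: since each term is a cohomology object of a complex of Fr\'echet or more generally nice convex spaces, $\R\lim_n$ of a levelwise strictly exact sequence is strictly exact \emph{provided} the relevant $\R^1\lim$'s vanish; so the real content is a Mittag-Leffler / closedness analysis. (4) Control $\R^1\lim$: the key point, as flagged in Remark~\ref{PRINCE8}(ii), is whether the Hodge filtration $F^r$ on de Rham cohomology is formed of \emph{closed} subspaces, equivalently whether the maps $\wt H^i F^r\rg_{\dr}(X_{n+1}/\B^+_{\dr})\to \wt H^i F^r\rg_{\dr}(X_n/\B^+_{\dr})$ have dense/closed enough image for Mittag-Leffler to hold. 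In the Stein case (Remark~\ref{PRINCE8}(i)) there is no obstruction and one concludes directly, exhibiting the kernel of the horizontal rows as $\Omega^{i-1}(X)/\kker d$ twisted. For the general partially proper case one uses the partial results of \cite{CN6} (products of Stein and proper, proper fibrations over curves) to get the $\R^1\lim$-vanishing, and otherwise records the statement as conditional on that vanishing. (5) Finally assemble: once the $\R^1\lim$ obstructions vanish, strict exactness of $\R\lim$ of the levelwise sequences, combined with the identification of $\R\lim_n$ of each term with the corresponding cohomology of $X$ (step~1), yields exactness of~(\ref{quasi19}) for $X$, and bicartesianness of the displayed square follows formally (a square is bicartesian iff the associated three-term sequence is short exact).

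\textbf{Main obstacle.} The hard part is unquestionably step~(4): the $\R^1\lim$ terms that appear when passing from the quasi-compact pieces $X_n$ to $X$. As the authors themselves note, this is not merely technical — it hinges on the closedness of the Hodge filtration on $H^i_{\dr}(X/\B^+_{\dr})$ along the exhaustion, which is known in the Stein case and in certain mixed cases (handled in \cite{CN6}) but not in general. In the Stein case the argument is clean because the structure sheaf has no higher cohomology and the transition maps in the relevant Fr\'echet systems have dense image, so Mittag-Leffler applies and one even gets the refined exact sequence $0\to \Omega^{r-1}(X)/\kker d\to \wt H^r_{\proeet}(X,\Q_p(r))\to {\rm HK}_r^r(X)\to 0$. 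In the quasi-compact (small) case there is nothing to do beyond Theorem~\ref{PRINCE}. So in practice the proposal proves the conjecture unconditionally for small $X$ and for Stein $X$, reduces the general partially proper case to a precise $\R^1\lim$-vanishing statement about the Hodge filtration, and verifies that vanishing in the cases covered by \cite{CN6}; the fully general statement remains tied to that analytic input.
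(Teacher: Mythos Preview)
The statement you are attempting to prove is labeled as a \emph{conjecture} in the paper, and the paper does \emph{not} prove it in full generality; it proves it only for small varieties (Theorem~\ref{quasi100} and Corollary~\ref{ALG1.1}), and records the Stein case as a consequence (Remark~\ref{PRINCE8}(i)). So there is no ``paper's own proof'' of the full statement to compare against. Your proposal is honest about this: you explicitly conclude that the general partially proper case ``remains tied to that analytic input,'' i.e., you do not actually prove the conjecture as stated, only the special cases the paper already handles.

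For those special cases your outline is accurate and matches the paper's approach. The small case is exactly Theorem~\ref{quasi100} (your Theorem~\ref{PRINCE}), proved by induction on the number of affinoids via Proposition~\ref{pos10.3} and the dichotomy of Proposition~\ref{pos5}; the Stein case is obtained by writing $X$ as an increasing union of dagger affinoids and passing to the limit, with the $\R^1\lim$ obstruction absent because the relevant systems are Mittag--Leffler. Your identification of the obstacle in step~(4) --- closedness of the Hodge filtration along the exhaustion and the associated $\R^1\lim$ vanishing --- is exactly the issue the paper flags in Remark~\ref{PRINCE8}(ii) as unresolved in general and deferred to~\cite{CN6}.

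One small correction: in step~(1) you write that the cohomologies ``commute with such direct limits,'' but for an increasing exhaustion $X=\bigcup_n X_n$ the cohomologies form an \emph{inverse} system (cohomology is contravariant), which is why $\R^1\lim$ enters; your subsequent discussion makes clear you meant this, but the phrasing should be fixed. In summary: your proposal is not a proof of Conjecture~\ref{quasi1} in the generality stated --- nor does the paper contain one --- but as an outline of what is proved and what remains, it agrees with the paper.
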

\begin{remark}\label{quasi2}
(i) This conjecture is known so far in the following cases:
\begin{itemize}
\item  $X$ is proper (see Examples~\ref{propervar}).  In this case,  the two horizontal arrows are injective.
\item  $X$ is Stein or affinoid (see Examples~\ref{propervar}).  In this case, the two horizontal arrows are surjective
and their kernels are $\Omega^{r-1}(X)/\kker \,d$.
\end{itemize}

(ii) Let $X$ be a smooth dagger variety over $C$. If $r\geq i$, set 
\begin{align*}
&\wt{H}^{r,i}:=\wt{H}^i_{\proeet}(X,\Q_p(r)) 
&&\wt{X}^{r,i}:=(\wt{H}^i_{\rm HK}(X)\wotimes_{F^{\nr}}\B^+_{\st})^{N=0,\varphi=p^r}\\
&\wt{F}^{r,i}:=\wt{H}^i(F^r\rg_{\rm dR}(X/\B^+_{\dr}))
&&\wt{B}^i:=\wt{H}^i_{\rm dR}(X/\B^+_{\dr})
\end{align*}
and denote by $H^{r,i}$, $X^{r,i}$, etc. the images of $\wt{H}^{r,i}$, $\wt{X}^{r,i}$, etc. in
$C_{\Q_p}$.
Note that the $\wt{X}^{r,i}$'s and $\wt{B}^i$'s are classical, i.e., $\wt{X}^{r,i}\simeq {X}^{r,i}$
and $\wt{B}^i\simeq B^i$.

We have a commutative diagram with exact rows  and columns:
$$\xymatrix@R=4mm@C=6mm{&0\ar[d]&0\ar[d]\\
\wt{H}^{r,i}\ar[r]\ar[d]^{\wr}& \wt{F}^{r,i}\oplus \wt{X}^{r,i}
\ar@<5mm>[d]_-t\ar@<-6mm>[d]^-{\wr}_-t\ar[r]&\wt{B}^i\ar[d]_-t\\
\wt{H}^{r+1,i}\ar[r]& \wt{F}^{r+1,i}\oplus \wt{X}^{r+1,i}
\ar@<5mm>[d]\ar[r]&\wt{B}^i\ar[d]\\
&{\phantom{XXX}H^i_{\rm HK}(X)\wotimes_{F^{\nr}} C}\ar@<5mm>[d]\ar[r]^-{\sim}_-{\iota_{\hk}}&H^i_{\rm dR}(X)\ar[d]\\
&{\phantom{XXX}0}&0}$$
The vertical maps are multiplications by $t$ (on pro-\'etale cohomology, this corresponds
to the Tate twist); for the isomorphism $X^{r+1,i}/tX^{r,i}\simeq H^i_{\rm HK}(X)\wotimes_{F^{\nr}} C$,
see Remark~\ref{quasi5}; the isomorphism $\wt{F}^{r+1,i}\simeq t\wt{F}^{r,i}$ comes from
the fact that $\tau_{\leq i}F^{r+1}\rg_{\rm dR}(X/\B^+_{\dr})=tF^r\rg_{\rm dR}(X/\B^+_{\dr})$ since $i\leq r$;
the bottom isomorphism is the Hyodo-Kato map.  It follows
that, for fixed $i$, the conjecture for $r$ is equivalent to the conjecture for $r+1$.
Hence it is enough to prove it for one $r\geq i$ (for example $r=i$).

(iii) Since the $\wt{B}^i$'s are actually classical, and we have a long exact sequence
$$\cdots\to \wt{X}^{r,i-1}\oplus \wt{F}^{r,i-1}\to \wt{B}^{i-1}\to
\wt{H}^{r,i}\to \wt{X}^{r,i}\oplus \wt{F}^{r,i}\to \wt{B}^i\to\cdots$$
it is enough, thanks to an induction on $i$, to prove surjectivity of
${X}^{r,i}\oplus {F}^{r,i}\to {B}^i$: this will show
that $\wt{X}^{r,i}\oplus \wt{F}^{r,i}\to \wt{B}^i$ is surjective (since $\wt{B}^i\simeq B^i$)
and that the long exact sequence splits into short exact sequences, as wanted.

(iv) We will prove this conjecture for many small varieties  in Theorem~\ref{quasi100} below. 
\end{remark}

\section{Equivalent conjectures}

Let $X$ be a smooth dagger variety over $C$.  In this Chapter, we will first state and discuss four conjectures, a priori unrelated, on the cohomology of  $X$:

\begin{itemize}[label=$\bullet$]
\item Conjecture~\ref{quasi1} (already stated above) describes the $p$-adic  pro-\'etale cohomology of $X$ in terms of the de Rham complex.
\end{itemize}

 The remaining conjectures assume $X$ to be quasi-compact: 
 
\begin{itemize}[label=$\bullet$]
\item Conjecture~\ref{quasi7.1} gives a restriction on the Hodge filtration on the de Rham cohomology in terms of the slopes of Frobenius on the Hyodo-Kato cohomology.

\item Conjecture~\ref{quasi10} says that,  even if huge, the pro-\'etale cohomology groups ${\mathbb H}^r_{\proeet}(X,\Q_p)$ have  nevertheless $\Q_p$-dimension equal to 
$\dim_C H^r_{\rm dR}(X)$.

\item Conjecture~\ref{pos1} says that  ${\mathbb H}^r_{\proeet}(X,\Q_p)$ is of curvature~$\leq 0$.
\end{itemize}

   Next, we proceed to studying these conjectures in  the case $X$ is quasi-compact. We  use: 

\begin{itemize}[label=---]

\item  the period quasi-isomorphism between pro-\'etale cohomology and syntomic cohomology 
from~\cite[6.10]{CN4}, 

\item the canonical  distinguished triangle involving syntomic cohomology from \cite[5.12]{CN4}, 

\item delicate properties of BC's (scattered through~\cite{CB,CF,lebras,plut}
and recalled in   Section~\ref{BACO} for the convenience of the reader).
\end{itemize}

These ingredients allow us to prove that the four conjectures above are, in fact, 
equivalent  (see~Proposition ~\ref{pos10.3}  as well as Corollary~\ref{pos20} 
for  a precise statement).

\subsection{Conjectures}
\subsubsection{Acyclicity of  de Rham cohomology} For a smooth dagger variety $X$ over $C$, we set 
\begin{enumerate}
\item $F^iH^r_{\rm dR}(X/\B^+_{\dr}):=\im(H^r(F^i\rg_{\dr}(X/\B^+_{\dr}))\to H^r_{\rm dR}(X/\B^+_{\dr}))$;
\item $H^r_{\rm dR}(X/\B_{\dr}):=
H^r_{\rm dR}(X/\B^+_{\dr})\wotimes_{\B^+_{\dr}}\B_{\dr}$ and equip it with the induced filtration. 
\end{enumerate}
\begin{conjecture}\label{quasi7.1}
Let  $X$ be a smooth quasi-compact dagger variety over $C$. For all $r$,  the map
$$(H^r_{\rm HK}(X)\wotimes_{F^{\nr}}\B^+_{\st})^{N=0,\varphi=p^r}\to H^r_{\rm dR}(X/\B^+_{\dr})/F^r$$
is surjective, i.e.\footnote{See Remark~\ref{quasi5}.}, 
the filtered $(\varphi,N)$-module $(H^r_{\rm HK}(X),F^0H^r_{\rm dR}(X/\B_{\dr}))$ is acyclic.
\end{conjecture}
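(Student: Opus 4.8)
The plan to prove this (as part of the main comparison theorem, Theorem~\ref{quasi100}) is to deduce it from the numerical identity $\mathrm{ht}(\mathbb{H}^r_{\proeet}(X,\Q_p(r)))=\dim_C H^r_{\rm dR}(X)$ of Proposition~\ref{PRINCE5}(d) (equivalently Theorem~\ref{PRINCE2}), using the equivalence of conditions recorded in Proposition~\ref{PRINCE5}. So the first task is to establish that equivalence: for a quasi-compact smooth dagger $X$ over $C$, bicartesian-ness of the fundamental diagram, acyclicity of $(H^i_{\rm HK}(X),F^0H^i_{\rm dR}(X/\B_{\dr}))$ for $i=r-1,r$, vanishing of the heights of the kernel and cokernel of $\mathbb{H}^r_{\proeet}(X,\Q_p(r))\to(H^r_{\rm HK}(X)\wotimes_{F^{\nr}}\Bst^+)^{N=0,\varphi=p^r}$, and the displayed identity are all equivalent. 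The inputs are the distinguished triangle of Proposition~\ref{main00}(1)(d) promoted to the category of VS's, the categorification of height $\mathrm{ht}(\mathbb{W})=\mathrm{rk}\,h(\mathbb{W})$ of Proposition~\ref{baco3}, and the dichotomy of Proposition~\ref{pos5}: applied to $M=H^r_{\rm HK}(X)$ (whose $\varphi$-slopes lie in $[0,r]$ because the degree is $\le r$) with lattice $F^0H^r_{\rm dR}(X/\B_{\dr})$, it says that either the map of the conjecture is surjective, or the image of its $\Bst^+$-avatar has height $>0$, in which case $\mathbb{H}^r_{\proeet}(X,\Q_p(r))$ carries strictly more height than $\dim_C H^r_{\rm dR}(X)$ (Remark~\ref{pos6}, Corollary~\ref{ppp2.2}). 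This dichotomy---hence the whole argument---is available only because of the slope bound, via Lemma~\ref{quasi121}.

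With the equivalence in hand, I would prove the numerical identity for all $r$ by induction on the number $n$ of dagger affinoids in an admissible cover of $X$. For $n=1$ this is Corollary~\ref{affine11}: the fundamental diagram of Theorem~\ref{affinoids} is bicartesian, so all the equivalent conditions hold in every degree, and $\mathbb{H}^r_{\proeet}(X,\Q_p(r))$ is a qBC of curvature $\le0$ with the correct height. For the inductive step, write $X=U\cup V$ with $U$ covered by $n-1$ of the affinoids and $V$ a single affinoid, so $U\cap V$ is also covered by $n-1$ affinoids; by induction and the equivalence, all the conditions hold for $U$, $V$ and $U\cap V$ in every degree, and their pro-\'etale cohomologies are qBC's of curvature $\le0$. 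One then runs the Mayer--Vietoris long exact sequences for $\mathbb{H}^\bullet_{\proeet}(-,\Q_p(r))$, $H^\bullet_{\rm dR}(-/\B^+_{\dr})$ and $H^\bullet_{\rm HK}(-)$, compatibly via functoriality of the fundamental diagram, and uses additivity of height on short exact sequences of qBC's (Proposition~\ref{HN201}) together with additivity of $\dim_C$ along the de Rham sequence. These sequences by themselves only express $\mathrm{ht}(\mathbb{H}^r_{\proeet}(X,\Q_p(r)))$ in terms of the heights of the images of the connecting and restriction maps, so one finishes by invoking the dichotomy of Proposition~\ref{pos5} for $X$ itself: the bad alternative, in which $\mathbb{H}^r_{\proeet}(X,\Q_p(r))$ would acquire excess height, is incompatible with the height contributions already forced by $U$, $V$, $U\cap V$. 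Unwinding Proposition~\ref{PRINCE5} for $X$ then yields the conjecture.

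The main obstacle will be precisely this inductive step. One must verify that the Mayer--Vietoris sequences of the various cohomologies genuinely assemble into a morphism of long exact sequences of qBC's of controlled curvature, so that heights are additive and the categorification of height applies at each term, and---more delicately---that the numerical bookkeeping closes up exactly rather than merely bounding $\mathrm{ht}(\mathbb{H}^r_{\proeet}(X,\Q_p(r)))$ from one side. It is here that working with condition (d) of Proposition~\ref{PRINCE5}, a statement about a single degree $r$, rather than with the two-degree acyclicity condition (b), is essential, and it is here that the dichotomy of Proposition~\ref{pos5}---and through it the slope bound coming from the degree being $\le r$---does the real work.
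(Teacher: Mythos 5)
Your overall strategy coincides with the paper's: establish the equivalence of conditions (Proposition~\ref{pos10.3}), then prove the single-degree height identity~(d) by induction on the number of affinoids covering $X$ via Mayer--Vietoris, with Corollary~\ref{affine11} as the base case. You also correctly identify that working with condition~(d) rather than the two-degree condition~(b) is what makes the induction viable, and that the slope bound ($\varphi$-slopes in $[0,r]$, Lemma~\ref{quasi121}) underlies the dichotomy of Proposition~\ref{pos5}. So the architecture is right.

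However, the inductive step as you describe it contains a genuine gap. First, a sign issue: in the bad branch of Proposition~\ref{pos5}, the image $I^r$ of ${\mathbb X}^{r,r}$ in ${\mathbb{DR}}^{r,r}$ has \emph{positive} height, and the bookkeeping $\mathrm{ht}({\mathbb H}^{r,r})=\mathrm{ht}({\mathbb X}^{r,r})-\mathrm{ht}(I^{r-1})-\mathrm{ht}(I^r)$ shows that ${\mathbb H}^{r,r}$ then has \emph{strictly less} height than $\dim_C H^r_{\rm dR}(X)$, not ``excess height.'' So one needs a \emph{lower} bound from Mayer--Vietoris, and your proposed finisher (``invoke the dichotomy for $X$ itself'') does not by itself produce one; the dichotomy only gives the one-sided inequality $\mathrm{ht}({\mathbb H}^{r,r})\leq\dim_C H^r_{\rm dR}(X)$. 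Second, and more fundamentally, naive height additivity along the Mayer--Vietoris long exact sequence cannot isolate $\mathrm{ht}(A_{3r})$: the sequence does not terminate at $k=3r+2$, and for $k>3r+2$ the terms $A_k=H^i_{\proeet}(-,\Q_p(r))$ with $i>r$ lie outside the regime $r\geq i$ where the height identity is known, so you cannot close the telescoping sum. The paper circumvents this entirely by \emph{categorifying}: it applies the functor $h(-)=\mathrm{Hom}_{\rm VS}(-,\Bdr)$ to a five-term window of the diagram, uses Corollary~\ref{HN21} (exactness of $h$ on sequences of qBC's of curvature~$\leq 0$) to get two exact rows, and applies the five lemma to conclude that $h(B_{3r})\to h(A_{3r})$ is an isomorphism; $\mathrm{ht}(A_{3r})=\mathrm{ht}(B_{3r})$ then follows from Proposition~\ref{baco3} since both have curvature~$\leq 0$. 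This is Lemma~\ref{baco7.1}, and it is the actual engine of the inductive step --- not Proposition~\ref{pos5}.

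Finally, your proposal omits a second, nested induction on $r$ inside the Mayer--Vietoris step, which is indispensable: to invoke Lemma~\ref{baco7.1} one must \emph{already} know that $A_{3r}={\mathbb H}^r_{\proeet}(U,\Q_p(r))$ has curvature~$\leq 0$, and this comes from the result for $U$ in degree $r-1$ (the inner induction hypothesis), via Lemma~\ref{pos10.2} (acyclicity of $(H^{r-1}_{\rm HK}(U),F^0H^{r-1}_{\rm dR}(U/\B_{\dr}))$) and then Lemma~\ref{pos10.1} ((b)$\Rightarrow$(d)). Without this curvature input, the exactness of $h$ fails and the five-lemma argument does not apply. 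In short: the outer induction on the number of affinoids, the inner induction on $r$ feeding the curvature hypothesis, and the $h$-functor/five-lemma device are all essential ingredients you would need to supply; the appeal to Proposition~\ref{pos5} at this stage is a red herring, as that dichotomy lives inside the proof of the equivalence of conditions (Lemma~\ref{pos10.2}), not in the Mayer--Vietoris step.
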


\begin{remark}\label{quasi8} If  $X$ is proper, 
the pair $(H^r_{\rm HK}(X), F^0H^r_{\rm dR}(X/\B_{\dr}))$ is weakly admissible, hence 
$X$ verifies the conjecture.
If  $X$ is  affinoid, then $H^r_{\rm dR}(X/\B^+_{\dr})/F^r=0$
and  $X$ clearly satisfies the conjecture.
\end{remark}

\subsubsection{Curvature and height of pro-\'etale cohomology}  
Let $X$ be a smooth quasi-compact dagger variety over $C$.
We show below (Corollary~\ref{pos4.11}) that ${\mathbb H}^r_{\proeet}(X,\Q_p)$ is naturally a qBC.
\begin{conjecture}\label{pos1} Then, for all $r\geq 0$,
${\mathbb H}^r_{\proeet}(X,\Q_p)$ has curvature~$\leq 0$.
\end{conjecture}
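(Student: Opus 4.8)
The plan is to prove Conjecture~\ref{pos1} (that ${\mathbb H}^r_{\proeet}(X,\Q_p)$ has curvature $\leq 0$ for a smooth quasi-compact dagger variety $X$ over $C$) by combining the basic comparison isomorphism~\eqref{iso1} with syntomic cohomology, the long exact sequence~\eqref{rigid1}, and the structural properties of qBC's developed in Section~\ref{HN15}. First I would observe that all the spaces in~\eqref{rigid1} are qBC's: the term ${\rm DR}_r^i(X) = H^i(\rg_{\dr}(X/\B^+_{\dr})/F^r)$ is built from the $H^i(X,\Omega^j)\otimes\Bdr$-type pieces via the Hodge filtration and hence is a successive extension of BC's of curvature $\leq 0$ (these are $C$-vector spaces and $\Bdr^+$-torsion modules), and similarly ${\rm HK}_r^i(X) = (H^i_{\hk}(X)\wotimes_{F^{\nr}}\B^+_{\st})^{N=0,\varphi=p^r}$ has curvature $\leq 0$ by the slope condition (it is of the form ${\mathbb X}^r_{\rm st}(M)$ with $\varphi$-slopes in $[0,r]$, cf.\ Proposition~\ref{new-baco3.5} and the remark that such objects have curvature $\leq 0$). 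So it suffices to show that the syntomic cohomology $H^i_{\synt}(X,\Q_p(r)) \cong {\mathbb H}^i_{\proeet}(X,\Q_p(r))$ is a qBC of curvature $\leq 0$, and then untwist.

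The key step is then to extract ${\mathbb H}^i_{\proeet}(X,\Q_p(r))$ from~\eqref{rigid1} as a sub-object of an object of curvature $\leq 0$. Concretely, $H^i_{\synt}(X,\Q_p(r))$ sits in an exact sequence
$$0 \to {\rm coker}\big({\rm HK}_r^{i-1}(X)\to {\rm DR}_r^{i-1}(X)\big) \to H^i_{\synt}(X,\Q_p(r)) \to \ker\big({\rm HK}_r^i(X)\to {\rm DR}_r^i(X)\big)\to 0.$$
The kernel term on the right is a sub-qBC of ${\rm HK}_r^i(X)$, which has curvature $\leq 0$, hence has curvature $\leq 0$ by the analogue of Corollary~\ref{ppp2.3}(i) in $\qBC$ (Remark~\ref{version-qbc}). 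The cokernel term on the left is a quotient of ${\rm DR}_r^{i-1}(X)$, which is a torsion-$\Bdr^+$/$C$-successive-extension and so has curvature $\leq 0$; but a quotient of a curvature $\leq 0$ object need not have curvature $\leq 0$, so here I would instead argue that this cokernel is a sub-object of ${\rm HK}_r^i(X)$ as well — wait, more carefully: by~\eqref{rigid1} the map ${\rm DR}_r^{i-1}(X) \to H^i_{\synt}(X,\Q_p(r))$ has image equal to this cokernel, so the cokernel is a quotient of ${\rm DR}_r^{i-1}(X)$; since ${\rm DR}_r^{i-1}(X)$ has curvature $0$ after quotienting by its BC part... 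Actually the clean route is: the cokernel of ${\rm HK}_r^{i-1}(X)\to{\rm DR}_r^{i-1}(X)$ is a quotient of ${\rm DR}_r^{i-1}(X)$, which being a successive extension of $C$'s and $\Bdr^+$-torsion modules has \emph{all} its quotients of curvature $\leq 0$ too — this is because curvature $\leq 0$ for a successive extension of $C$'s and torsion $\Bdr^+$-modules is stable under quotients (every sub-bundle of the associated coherent sheaf, and hence every quotient complex in ${\rm Coh}_X^-$, has slopes $\geq 0$; equivalently such objects are exactly $H^0(X_{\rm FF},{\cal E})$ with ${\cal E}$ a direct sum of torsion sheaves at $\infty$ and $\O(\lambda)$'s with $\lambda\geq 0$, and these are closed under quotient since $\O(\lambda)$ for $\lambda\geq 0$ has no negative-slope quotient). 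So both outer terms have curvature $\leq 0$, and by Corollary~\ref{ppp2.1}(ii) (extensions of curvature $\leq 0$ are curvature $\leq 0$, and its $\qBC$-analogue), $H^i_{\synt}(X,\Q_p(r))$ has curvature $\leq 0$.

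Finally I would untwist: ${\mathbb H}^r_{\proeet}(X,\Q_p) = {\mathbb H}^r_{\proeet}(X,\Q_p(r))(-r) \cong {\mathbb H}^r_{\proeet}(X,\Q_p(r))\otimes_{\Bdr}\Bdr(-r)$ as VS's — more precisely multiplication by $t^{-r}$ gives an isomorphism of VS's (the Tate twist is invisible at the level of VS's up to this $t$-power), so curvature is preserved since curvature is defined purely in terms of ${\rm Hom}$ to $\Bdr^+$-modules, which is stable under twisting by a power of $t$. Hence ${\mathbb H}^r_{\proeet}(X,\Q_p)$ has curvature $\leq 0$. The main obstacle I anticipate is the bookkeeping for the left (cokernel) term: one must be careful that ${\rm DR}_r^{i-1}(X)$ genuinely is a successive extension of BC's of curvature $\leq 0$ (this uses the description of $\rg_{\dr}(X/\Bdr^+)/F^r$ via the Hodge filtration from~\cite{CN4}, i.e.\ the distinguished triangles with graded pieces $\rg(X,\Omega^j)(r-j)[-j]$, and quasi-compactness of $X$ to ensure these are BC's and not just VS's), and that the quotient-stability of curvature $\leq 0$ for this particular shape of object holds — which, as noted, follows from the classification on the Fargues--Fontaine curve but should be stated as a small lemma before invoking it.
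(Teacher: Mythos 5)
Your proof has a genuine gap at the cokernel step, and it is exactly the gap that the paper's machinery is designed to close. You claim that
$$\ccoker\big({\rm HK}_r^{i-1}(X)\to {\rm DR}_r^{i-1}(X)\big),$$
being a quotient of the torsion $\Bdr^+$-Module ${\rm DR}_r^{i-1}(X)$, must again have curvature~$\leq 0$. This is false. Quotients preserve curvature~$\geq 0$ (Corollary~\ref{ppp2.3}(iii)), not curvature~$\leq 0$; curvature~$\leq 0$ is only stable under passing to \emph{sub}-objects (Corollary~\ref{ppp2.3}(i)). The simplest counterexample is ${\mathbb V}^1/\Q_p$: it is a quotient of the curvature-$0$ object ${\mathbb V}^1$, but it has ${\rm Dim}=(1,-1)$, i.e.~height $-1<0$, so by Corollary~\ref{ppp2.2} it cannot have curvature~$\leq 0$. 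Your justification via the Fargues--Fontaine curve confuses quotients of coherent sheaves with quotients of BC's: under the equivalence ${\rm Coh}_X^-\simeq\cal{BC}$, a quotient of ${\rm BC}({\cal E}_{-1}\to{\cal E}_0)$ is realized by \emph{enlarging} the degree-$(-1)$ piece (as with ${\mathbb V}^1/\Q_p = {\rm BC}(\O\to i_{\infty,*}C)$), not by taking a quotient of ${\cal E}_0$, so the observation that $\O(\lambda)$ with $\lambda\geq 0$ has no negative-slope quotient is beside the point.

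The true state of affairs is the dichotomy of Proposition~\ref{pos5} and Remark~\ref{pos6}: either the map ${\rm HK}_r^{i-1}(X)\to {\rm DR}_r^{i-1}(X)$ is surjective (equivalently, the pair $(H^{i-1}_{\hk}(X),F^0H^{i-1}_{\dr}(X/\B_{\dr}))$ is acyclic), in which case the cokernel vanishes; or it is not, and then the cokernel has height~$<0$, hence does \emph{not} have curvature~$\leq 0$. Deciding which branch holds is precisely the content of the theorem, and it is not a soft consequence of the shape of the objects. The paper handles this by showing (Lemma~\ref{pos10.1}, Lemma~\ref{pos10.2}, Proposition~\ref{pos10.3}) that the curvature condition, the acyclicity condition, the height condition ${\rm ht}({\mathbb H}^{r,r})=\dim_C H^r_{\dr}(X)$, and the bicartesian property of the fundamental diagram are all equivalent, and then proving the height statement by induction on the number $n$ of affinoids covering $X$: the base case $n=1$ comes from the explicit short exact sequence for dagger affinoids (Theorem~\ref{affinoids}, Corollary~\ref{affine11}), and the inductive step is a Mayer--Vietoris argument (Lemma~\ref{baco7.1}, Proposition~\ref{baco7}) that crucially uses the exactness of $h(-)={\rm Hom}_{\rm VS}(-,\Bdr)$ on curvature-$\leq 0$ objects (Corollary~\ref{HN21}) together with the already-established acyclicity in degree $r-1$ to get curvature~$\geq 0$ in degree~$r$. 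Your observation that the kernel piece $\kker({\rm HK}_r^i(X)\to{\rm DR}_r^i(X))$ has curvature~$\leq 0$ (as a sub-object of a curvature-$\leq 0$ object) is correct and does appear, repackaged, in the proof of Lemma~\ref{pos10.1}; but without the acyclicity input for the cokernel you cannot close the argument, and that input is obtained only by the induction, not by any direct structural reasoning.
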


\begin{conjecture}\label{quasi10}
For all~$r$,
$${\rm ht}({\mathbb H}^r_{\proeet}(X,\Q_p))=\dim_C H^r_{\rm dR}(X).$$
\end{conjecture}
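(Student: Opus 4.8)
The plan is to prove Conjecture~\ref{quasi10} not in isolation but as part of the package of equivalences announced in Section~\ref{SS4}: namely, to establish that Conjectures~\ref{quasi1}, \ref{quasi7.1}, \ref{pos1}, \ref{quasi10} are all equivalent for a fixed quasi-compact $X$ (this is the content of Proposition~\ref{pos10.3}, i.e.\ Proposition~\ref{PRINCE5} of the introduction), and then to prove one of them by induction on the number $n$ of dagger affinoids (or small Stein pieces) needed to cover $X$. The base case $n=1$ is exactly Corollary~\ref{affine11}: for a dagger affinoid, Theorem~\ref{affinoids} gives the bicartesian diagram directly, and hence all four conjectures hold. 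For the inductive step one writes $X = U \cup V$ with $U$ a union of $n-1$ affinoids and $V$ a single affinoid, and uses the Mayer--Vietoris long exact sequence in pro-\'etale, syntomic, Hyodo-Kato and de Rham cohomology (all of which exist and are compatible by~\cite{CN4}), together with the inductive hypothesis applied to $U$, $V$ and $U\cap V$.

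The key tools for running the induction on the level of qBC's are: (a) the categorification of height, Proposition~\ref{baco3}, which lets one read off ${\rm ht}({\mathbb W})$ from the $\bdr$-rank of $h({\mathbb W}) = {\rm Hom}_{\rm VS}({\mathbb W},\Bdr)$ when ${\mathbb W}$ has curvature~$\leq 0$; (b) the fact, from Conjecture~\ref{quasi7.1}/acyclicity, that ${X}^{r,i} = (H^i_{\rm HK}(X)\wotimes_{F^{\nr}}\Bst^+)^{N=0,\varphi=p^r}$ has curvature~$\leq 0$ and ${\rm ht}({X}^{r,i}) = {\rm rk}(H^i_{\rm HK}(X)) = \dim_C H^i_{\rm dR}(X)$ (Proposition~\ref{new-baco3.5}); and (c) the dichotomy of Proposition~\ref{pos5} (with Remark~\ref{pos6}), which says that either the map from the Hyodo-Kato side onto the de Rham quotient is surjective, or else its image has height~$>0$ and its cokernel has height~$<0$, hence is not of curvature~$\leq 0$. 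Additivity of height on qBC's (Proposition~\ref{HN201}) then propagates the numerical equality ${\rm ht}({\mathbb H}^r_{\proeet}(X,\Q_p)) = \dim_C H^r_{\rm dR}(X)$ through the Mayer--Vietoris sequence: from the exact sequence relating the ${\mathbb H}^r$'s of $U$, $V$, $U\cap V$ and $X$, the alternating sum of heights matches the alternating sum of $\dim_C H^r_{\rm dR}$ (which is additive by ordinary Mayer--Vietoris for de Rham cohomology), and the curvature~$\leq 0$ constraint forces the individual terms to agree degree by degree.

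The crucial structural point — and the reason one proves all four conjectures together rather than just Conjecture~\ref{quasi10} — is the observation in Remark~\ref{quasi2}(ii): for fixed $i$, the conjecture for $r$ is equivalent to the conjecture for $r+1$, via multiplication by $t$ and the isomorphism ${X}^{r+1,i}/t{X}^{r,i} \simeq H^i_{\rm HK}(X)\wotimes_{F^{\nr}} C$ of Remark~\ref{quasi5}. This is what makes Proposition~\ref{PRINCE5} tick: condition (d), which only involves degree $r$, is equivalent to condition (b), which involves both $r$ and $r-1$; this "spreading" in the cohomological degree is precisely what allows the induction on $n$ to close, because the Mayer--Vietoris sequence mixes degrees $i-1$, $i$, $i+1$. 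Concretely, I would first prove the equivalences $(a)\Leftrightarrow(b)\Leftrightarrow(c)\Leftrightarrow(d)$ of Proposition~\ref{pos10.3}: $(a)\Rightarrow(b)$ and $(b)\Rightarrow(a)$ via Remark~\ref{new-tate12}(ii) and the exact sequence~(\ref{rigid1}); $(a)\Leftrightarrow(c)$ by splitting the long exact sequence into short ones and using that kernels/cokernels of height $0$ and curvature $\leq 0$ are torsion $\Bdr^+$-modules, which contribute nothing to the de Rham realization; $(c)\Leftrightarrow(d)$ by the additivity of height together with Proposition~\ref{baco3} applied to both sides, using that both ${\mathbb H}^r_{\proeet}(X,\Q_p(r))$ and ${X}^{r,r}$ have curvature~$\leq 0$.

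The main obstacle I anticipate is \emph{establishing curvature~$\leq 0$ for ${\mathbb H}^r_{\proeet}(X,\Q_p)$ in the inductive step} — i.e.\ Conjecture~\ref{pos1} — because the Mayer--Vietoris sequence a priori only tells us that it is a qBC built out of pieces of curvature $\leq 0$ and of subquotients of such, and curvature $\leq 0$ is not obviously preserved under arbitrary extensions in $\qBC$ unless one controls the connecting maps. The resolution is the dichotomy of Proposition~\ref{pos5}: if curvature $\leq 0$ failed, one would produce inside ${\mathbb H}^r_{\proeet}$ (or a related cohomology group) a subquotient of negative height that is not of curvature $\leq 0$, contradicting the fact — coming ultimately from the affinoid case and the triangle of~\cite[5.12]{CN4} — that the relevant syntomic-to-de-Rham maps are surjective after passing to $C$-points. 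It is exactly here that the hypothesis $i \leq r$ is used in an essential way: it forces the $\varphi$-slopes of $H^i_{\rm HK}(X)$ to lie in $[0,r]$, which is what Lemma~\ref{quasi121} needs in order to conclude that $(H^r_{\rm HK}(X)\wotimes_{F^{\nr}}\Bst^+)^{N=0,\varphi=p^r}$ generates $H^r_{\rm HK}(X)\wotimes \Bdr^+$, hence that case (b) of the dichotomy genuinely entails a height obstruction rather than a vacuous statement. Once curvature~$\leq 0$ is in hand, the numerical identity ${\rm ht}({\mathbb H}^r_{\proeet}(X,\Q_p)) = \dim_C H^r_{\rm dR}(X)$ follows formally from additivity of height and the de Rham Mayer--Vietoris.
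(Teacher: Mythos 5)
Your overall strategy is the paper's: reduce to the equivalence of the four conjectures (Proposition~\ref{pos10.3}), then a double induction --- on the number $n$ of affinoids and, inside the Mayer--Vietoris step, on $r$ --- with base case $n=1$ coming from Corollary~\ref{affine11}, and with the functor $h(-)={\rm Hom}_{\rm VS}(-,\Bdr)$ of Proposition~\ref{baco3} as the device that makes height behave exactly along the Mayer--Vietoris sequence. That much is right, and your emphasis on why one must carry all four equivalent statements simultaneously (the ``spreading'' in $r$ via Remark~\ref{quasi2}(ii)) is precisely the reason the argument closes.

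Where your account has a genuine gap is in how curvature~$\leq 0$ for ${\mathbb H}^{r,r}(U_1\cup U_2)$ is obtained. You propose a direct contradiction with Proposition~\ref{pos5}: if curvature~$\leq 0$ failed, the cokernel of ${\mathbb X}^{r,r}\to {\mathbb B}^r/\FIL^{r,r}$ would have negative height, contradicting surjectivity ``coming ultimately from the affinoid case.'' But surjectivity of that map for the \emph{union} is exactly what has not yet been established at that point in the induction --- it is the content of Conjecture~\ref{quasi7.1} in degree~$r$ --- so this argument is circular. The paper's resolution (Proposition~\ref{baco7}) is different: by the induction on~$r$, the height equality in degree~$r-1$ is already known for $U=U_1\cup U_2$; via the equivalence $(d)\Rightarrow(b)$ of Proposition~\ref{pos10.3} (this is where Proposition~\ref{pos5} is actually invoked, through the step $(c)\Leftrightarrow(b)$ of Lemma~\ref{pos10.2}), this gives acyclicity of $(H^{r-1}_{\rm HK}(U), F^0H^{r-1}_{\rm dR}(U/\B_{\dr}))$; then Lemma~\ref{pos10.1}, via the \emph{elementary} implication $(b)\Rightarrow(a')\Rightarrow(d)$ (injection of $\wt H^{r,r}$ into the direct sum ${\mathbb X}^{r,r}\oplus({\mathbb F}^{r,r}/t^r)$ of two qBC's of curvature~$\leq 0$), yields that $A_{3r}={\mathbb H}^{r,r}(U)$ has curvature~$\leq 0$. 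Finally, the closing of the Mayer--Vietoris step is not an alternating-sum count: it is Lemma~\ref{baco7.1}, which applies Corollary~\ref{HN21} to get long exact sequences after applying $h$, and then the five-lemma to conclude $h(B_{3r})\overset{\sim}{\to} h(A_{3r})$, at which point Proposition~\ref{baco3} and the curvature~$\leq 0$ property turn this into ${\rm ht}(A_{3r})={\rm ht}(B_{3r})$. Your ``additivity of height'' version would at best give the equality of Euler characteristics, not the term-by-term equality actually needed.
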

\begin{remark}\label{quasi11}
(i) If $X$ is proper, Conjecture~\ref{quasi10} is a theorem: 
we have the exact sequence of BC's (see Section~\ref{niziol2})
$$
0\to H^r_{\proeet}(X,\Q_p(r))\to (H^r_{\hk}(X)\otimes_{F^{\nr}}\bst^+)^{N=0,\varphi=p^r}
\to H^r(\rg_{\dr}(X/\B^+_{\dr})/F^r)\to 0
$$
We know that
$H^r_{\proeet}(X,\Q_p)$ has finite dimension over $\Q_p$; hence  its height is  
equal to its dimension over 
$\Q_p$. We also know that the slopes of Frobenius on $H^r_{\hk}(X)$ are $\leq r$, 
which implies that ${\rm ht }((H^r_{\hk}(X)\otimes_{F^{\nr}}\bst^+)^{N=0,\varphi=p^r})
=\dim_{F^{\nr}}H^r_{\hk}(X)$. Now, 
the above short exact sequence implies that 
${\rm ht} ({\mathbb H}^r_{\proeet}(X,\Q_p)) =\dim_{F^{\nr}}H^r_{\hk}(X)$ and, 
since by the Hyodo-Kato isomorphism $\dim_{F^{\nr}}H^r_{\hk}(X)=\dim_C H^r_{\rm dR}(X)$,
 we have
$$\dim_{\Q_p}(H^r_{\proeet}(X,\Q_p))=\dim_C H^r_{\rm dR}(X),$$
as wanted.

(ii) Does there exist non proper dagger varieties $X$ such that 
$H^r_{\proeet}(X,\Q_p)$ is finite dimensional over $\Q_p$ for all $r$?
Already for $r=1$ and $X$ connected, one needs $\O(X)=C$.
\end{remark}

\subsection{Equivalence of conjectures}\label{EQUIV}
In this section we assume $X$ to be a smooth dagger variety over $C$ such that
$H^i_{\rm dR}(X/\B^+_{\dr})$ is (free) of finite rank over $\bdr^+$ for all $i$ 
(for example, $X$ could be quasi-compact,
or a naive interior\footnote{See \cite[proof of Cor. 3.6]{CN4} for a precise definition.} of a quasi-compact,
or the analytification of an algebraic variety). We call such varieties {\em small}. 
Conjectures~\ref{quasi1}, \ref{quasi7.1}, \ref{pos1}, and~\ref{quasi10} make sense in this, slightly more general, set-up. 
\subsubsection{The key diagram}\label{short-notation}
Fix $r$ and, for $i\leq r$, set
\begin{align*}
&H^{r,i}:= H^i_{\proeet}(X,\Q_p(r)),
&&X^{r,i}:=(H^{i}_{\rm HK}(X)\wotimes_{F^{\nr}}\B^+_{\st})^{N=0,\varphi=p^r},\\
&F^{r,i}:= H^i(F^r\rg_{\rm dR}(X/\B^+_{\dr})),
&&{\rm DR}^{r,i}:=H^i(\rg_{\rm dR}(X/\B^+_{\dr})/F^r),\\
&B^i:=H^i_{\rm dR}(X/\B^+_{\dr}) && {\rm Fil}^{r,i}:={\rm Im}(F^{r,i}\to B^i)
\end{align*}
We also denote by
$${\mathbb H}^{r,i},\quad {\mathbb X}^{r,i},\quad {\mathbb F}^{r,i},\quad {\mathbb {DR}}^{r,i},\quad {\mathbb B}^i,\quad \FIL^{r,i}$$
the associated TVS's.

The isomorphism between pro-\'etale and syntomic cohomologies (see~\cite[6.19]{CN4}) yields a commutative diagram with exact rows:
$$\xymatrix@C=.4cm@R=.4cm{H^{r,i-1}\ar[r]\ar[d]
&X^{r,i-1} \ar[r]\ar[d]&{\rm DR}^{r,i-1}\ar[r]\ar@{=}[d]
&H^{r,i}\ar[r]\ar[d]
&X^{r,i} \ar[r]\ar[d]
&{\rm DR}^{r,i}\ar@{=}[d]\\
F^{r,i-1}\ar[r]\ar[d]
&B^{i-1}\ar[r]\ar[d]&{\rm DR}^{r,i-1}\ar[r]\ar@{=}[d]
&F^{r,i}\ar[r]\ar[d]
&B^i\ar[r]\ar[d]
&{\rm DR}^{r,i}\ar@{=}[d]\\
F^{r,i-1}/t^r\ar[r]
&B^{i-1}/t^r{\rm Fil}^{r,i-1}\ar[r]&{\rm DR}^{r,i-1}\ar[r]
&F^{r,i}/t^r\ar[r]
&B^i/t^r{\rm Fil}^{r,i}\ar[r]
&{\rm DR}^{r,i}
}$$
The bottom sequence is exact because $t^rF^{r,i}\to t^r{\rm Fil}^{r,i}$ is an isomorphism
(multiplying by $t^r$ kills the big $\bdr^+$-torsion in $F^{r,i}$, and $B^i$
is a free finite rank $\B^+_{\dr}$-module thanks to the Hyodo-Kato isomorphism), hence we
 have an isomorphism:
\begin{equation}
\label{needed}
\kker(F^{r,i}\to F^{r,i}/t^r)\stackrel{\sim}{\to}\kker(B^i\to B^i/t^r{\rm Fil}^{r,i}).
\end{equation}
   All spaces in the diagram are $C$-points of TVS's and those 
in the top and the bottom rows are $C$-points of qBC's  (this is clear for all of them except for
$H^{r,i}$ (and $H^{r,i-1}$), for which this is proven in Corollary~\ref{pos4.11}),
and the above diagram lifts to a diagram of TVS's 
(see \cite[Sec. 7]{CN4}).

\subsubsection{The canonical filtration on ${\bb H}^{r,i}$}
If $n=0,1$, set:
$${\mathbbm h}^n(H^i_{\rm HK}\{-r\}):={\bb H}^n(X_{\rm FF},{\cal E}(H^i_{\rm HK}(X)\{-r\},
F^rH^i_{\rm dR}(X/\bdr))).$$

\begin{lemma}\label{pos4}
For all $i\leq r$, we have the following isomorphisms:
\begin{align*}
{\mathbbm h}^0(H^i_{\rm HK}\{-r\})&\simeq  {\rm Ker}\big({\bb X}^{r,i}\oplus\FIL^{r,i}\to {\bb B}^i\big)\simeq
{\rm Ker}\big({\bb X}^{r,i}\to {\bb B}^i/\FIL^{r,i}\big)\\
{\mathbbm h}^1(H^i_{\rm HK}\{-r\})&\simeq {\rm Coker}\big({\bb X}^{r,i}\oplus\FIL^{r,i}\to {\bb B}^i\big)\simeq
{\rm Coker}\big({\bb X}^{r,i}\to {\bb B}^i/\FIL^{r,i}\big)\\
{\rm Ker}\big({\bb H}^{r,i}\to {\bb X}^{r,i}\oplus {\bb F}^{r,i}\big) & \simeq
{\rm Coker}\big({\bb X}^{r,i-1}\oplus {\bb F}^{r,i-1}\to {\bb B}^{i-1}\big)
\simeq {\mathbbm h}^1(H^{i-1}_{\rm HK}\{-r\})
\end{align*}
\end{lemma}
\begin{proof} The first two isomorphisms are clear. 
Using  the snake lemma in the following commutative diagram with exact rows we get the last isomorphism:
\begin{equation}\label{wiesia101}\xymatrix@C=.4cm@R=.4cm{
0\ar[r]&{\bb X}^{r,i-1}/{\bb H}^{r,i-1}\ar[r]\ar[d]
&{\bb{DR}}^{r,i-1}\ar[r]\ar@{=}[d]
&{\rm Ker}\big({\bb H}^{r,i}\to {\bb X}^{r,i}\big) \ar[r]\ar[d]&0\\
0\ar[r]&{\bb B}^{i-1}/\FIL^{r,i-1}\ar[r]
&{\bb{DR}}^{r,i-1}\ar[r]&{\rm Ker}\big({\bb F}^{r,i}\to {\bb B}^i\big)\ar[r]
&0}
\end{equation}
\end{proof}

\begin{corollary}\label{pos4.11}
{\rm (i)} ${\bb H}^{r,i}$ is a qBC. Moreover:

$\bullet$ $({\bb H}^{r,i})_{>0}\simeq {\mathbbm h}^1(H^{i-1}_{\rm HK}\{-r\})$,

$\bullet$ $({\bb H}^{r,i})_{<0}\simeq {\mathbbm h}^0(H^{i}_{\rm HK}\{-r\})$.

$\bullet$ $({\bb H}^{r,i})_{=0}\simeq {\rm Ker}\big({\bb F}^{r,i}\to {\bb B}^i\big)=
({\bb F}^{r,i})_{\Bdr^+{\text{-tors}}}$.

{\rm (ii)} We have an exact sequence
$$0\to {\mathbbm h}^1(H^{i-1}_{\rm HK}\{-r\})\to {\bb H}^{r,i}\to
{\bb F}^{r,i}\oplus {\bb X}^{r,i}\to {\bb B}^i\to {\mathbbm h}^1(H^i_{\rm HK}\{-r\})\to 0$$
\end{corollary}
\begin{proof}By diagram \eqref{wiesia101}, 
${\bb H}^{r,i}$ has a natural filtration with successive quotients 
${\rm Coker}\big({\mathbb X}^{r,i-1}\to {\mathbb B}^{i-1}/\FIL^{r,i-1}\big)$, 
${\bb {DR}}^{r,i-1}/({\bb B}^{i-1}/{\mathbbm{Fil}}^{r,i-1})$ 
and ${\rm Ker}({\bb X}^{r,i}\to{\bb{DR}}^{r,i})$.

The first is equal to ${\mathbbm h}^1(H^{i-1}_{\rm HK}\{-r\})$, hence is 
a BC of curvature~$> 0$ being the ${\bb H}^1$ of a vector bundle on $X_{\rm FF}$
(one can also combine Proposition~\ref{lacompagnie} and Lemma~\ref{quasi121}); 
the last
is a BC of curvature $<0$ (as a sub-BC of a BC of curvature~$<0$), equal to 
${\mathbbm h}^0(H^{i}_{\rm HK}\{-r\})$ (the map ${\bb X}^{r,i}\to{\bb{DR}}^{r,i}$ factors
through ${\bb X}^{r,i}\to {\bb B}^i$);
the middle one is a $\Bdr^+$-Pair isomorphic to ${\rm Ker}\big({\bb F}^{r,i}\to {\bb B}^i\big)$
thanks to diagram \eqref{wiesia101}.  This proves (i).

(ii) follows from the last two points of Lemma~\ref{pos4}.
\end{proof}

\subsubsection{Equivalence of the conjectures}
\begin{proposition}\label{pos10.3} The following properties are equivalent:

\quad {\rm (a)} The diagram in Conjecture~\ref{quasi1} is bicartesian.

\quad {\rm (b)}
${\mathbbm h}^1(H^i_{\rm HK}\{-r\})=0$, for $i=r-1$ and $i=r$.

\quad {\rm (c)} $(H^{i}_{\rm HK}(X), F^0H^{i}_{\rm dR}(X/\B_{\dr}))$ 
is acyclic, for $i=r-1$ and $i=r$.

\quad {\rm (d)} 
${\mathbb H}^{r,r}$ and\footnote{Here ${\mathbb H}^{r,r+1}$ stands for syntomic
cohomology since we are outside of the range of validity of the comparison with pro-\'etale
cohomology.} ${\mathbb H}^{r,r+1}$ have curvature~$\leq 0$.

\quad {\rm (e)} ${\rm ht}({\mathbb H}^{r,r})=\dim_{C} H^r_{\rm dR}(X)$.
\end{proposition}
\begin{proof}
The equivalence between (a) and (b) is a direct consequence of (ii) of Corollary~\ref{pos4.11}.
That between (b) and (c) is the invariance of acyclicity by ``Tate twist''.

The equivalence between (b) and (d) is a direct consequence of Corollary~\ref{pos4.11}.

It remains to prove that (b) and (e) are equivalent. But, by definition
${\rm ht}({\mathbb H}^{r,r})={\rm ht}(({\mathbb H}^{r,r})^{>0})+{\rm ht}(({\mathbb H}^{r,r})^{<0})$,
and one gets from Corollary~\ref{pos4.11} and Remark~\ref{EP} that
\begin{align*}
{\rm ht}({\mathbb H}^{r,r})&={\rm ht}({\mathbbm h}^1(H^{r-1}_{\rm HK}\{-r\}))+
{\rm ht}({\mathbbm h}^0(H^r_{\rm HK}\{-r\}))\\
&={\rm ht}({\mathbbm h}^1(H^{r-1}_{\rm HK}\{-r\}))+\dim H^r_{\rm HK}(X)+
{\rm ht}({\mathbbm h}^1(H^{r}_{\rm HK}\{-r\}))
\end{align*}
and the equivalence between (b) and (e) follows from the Hyodo-Kato isomorphism and the fact that
${\rm ht}({\mathbbm h}^1(H^{i}_{\rm HK}\{-r\}))< 0$ if ${\mathbbm h}^1(H^{i}_{\rm HK}\{-r\})\neq 0$.
\end{proof}

\begin{corollary}\label{pos20}
If $X$ is quasi-compact,
Conjectures~\ref{quasi1}, \ref{quasi7.1}, \ref{pos1}, and~\ref{quasi10}
are equivalent. 
\end{corollary}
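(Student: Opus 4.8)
The plan is to assemble Corollary~\ref{pos20} directly from the machinery built up in this chapter, the point being that for quasi-compact $X$ the de Rham cohomology groups $H^i_{\rm dR}(X/\B^+_{\dr})$ are free of finite rank over $\bdr^+$ (by the Hyodo-Kato isomorphism and finiteness of $H^i_{\rm dR}(X_C)$), so $X$ falls into the set-up of Section~\ref{EQUIV} and Proposition~\ref{pos10.3} applies verbatim. First I would observe that Proposition~\ref{pos10.3} already gives, for each fixed $r$, the equivalence of the following four assertions about $X$: (a) the diagram in Conjecture~\ref{quasi1} in degree $r$ is bicartesian; (b) $(H^i_{\rm HK}(X),F^0H^i_{\rm dR}(X/\B_{\dr}))$ is acyclic for $i=r-1$ and $i=r$; (c)/(c$'$) the kernel and cokernel of ${\rm Coker}({\mathbb H}^{r,r}\to {\mathbb X}^{r,r})$ have curvature/height~$0$; (d) ${\rm ht}({\mathbb H}^{r,r})=\dim_C H^r_{\rm dR}(X)$. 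So the only remaining task is to connect these to the four stand-alone Conjectures~\ref{quasi1}, \ref{quasi7.1}, \ref{pos1}, \ref{quasi10} and to untangle the quantifier subtleties (some conjectures are statements for all $r$, Proposition~\ref{pos10.3}(b) mixes $r$ and $r-1$).

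The heart of the argument is the following bookkeeping. Conjecture~\ref{quasi1} (for all $i$, equivalently all $r$ by Remark~\ref{quasi2}(ii)) is precisely "(a) of Proposition~\ref{pos10.3} holds for every $r$". Conjecture~\ref{quasi7.1} is "$(H^r_{\rm HK}(X),F^0H^r_{\rm dR}(X/\B_{\dr}))$ is acyclic for all $r$", which is exactly "(b) of Proposition~\ref{pos10.3} holds for every $r$" once one notes that ranging $r$ over $\{0,1,2,\dots\}$ makes the pair of indices $\{r-1,r\}$ sweep out all degrees. Conjecture~\ref{quasi10} is "(d) of Proposition~\ref{pos10.3} holds for every $r$". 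For Conjecture~\ref{pos1} I would argue: if ${\mathbb H}^{r,r}$ has curvature~$\leq 0$ for all $r$, then in particular condition (d) of Lemma~\ref{pos10.1} holds for every $r$, so by that lemma condition (b) there holds for every $r$, i.e. $(H^{r-1}_{\rm HK}(X),F^0H^{r-1}_{\rm dR}(X/\B_{\dr}))$ is acyclic for every $r$, which is Conjecture~\ref{quasi7.1}; conversely, if Conjecture~\ref{quasi1} holds then by Remark~\ref{quasi2}(iii) and the exact sequence there, ${\mathbb H}^{r,r}$ sits in an extension of a sub-BC of the curvature-$\leq 0$ space ${\mathbb X}^{r,r}$ by the affine (curvature-$0$) space ${\mathbb F}^{r,r}/t^r$—more precisely one reads off from the now-short exact sequence $0\to{\mathbb H}^{r,r}\to({\mathbb F}^{r,r}/t^r)\oplus{\mathbb X}^{r,r}\to{\mathbb B}^r/t^r\FIL^{r,r}\to0$ that ${\mathbb H}^{r,r}$ embeds into a curvature-$\leq 0$ space, hence has curvature~$\leq 0$ by Remark~\ref{ppp1}(ii) (and Remark~\ref{version-qbc}). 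Since ${\mathbb H}^r_{\proeet}(X,\Q_p)={\mathbb H}^{r,r}$, this is Conjecture~\ref{pos1}.

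Putting this together I would organize the proof as a cycle: Conjecture~\ref{quasi1} $\Leftrightarrow$ (a)-for-all-$r$ $\Leftrightarrow$ (b)-for-all-$r$ $\Leftrightarrow$ Conjecture~\ref{quasi7.1}, using Proposition~\ref{pos10.3}; then (b)-for-all-$r$ $\Leftrightarrow$ (d)-for-all-$r$ $\Leftrightarrow$ Conjecture~\ref{quasi10}, again by Proposition~\ref{pos10.3}; and finally Conjecture~\ref{quasi1} $\Leftrightarrow$ Conjecture~\ref{pos1} by the curvature argument of the previous paragraph together with the direction "${\mathbb H}^{r,r}$ curvature $\leq 0$ for all $r$ $\Rightarrow$ (d) of Lemma~\ref{pos10.1} for all $r$ $\Rightarrow$ Conjecture~\ref{quasi7.1}". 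Each of these is essentially a one-line invocation of a lemma already proved, so no serious new content is needed.

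The only genuinely delicate point—and the step I'd flag as the main obstacle—is the interplay of the quantifier over $r$: Proposition~\ref{pos10.3}(b) involves the pair of degrees $(r-1,r)$, while Conjecture~\ref{quasi7.1} is stated degree-by-degree, so I must be careful that "(b) for all $r\geq 0$" really is the same as "acyclic in every degree", which is fine provided one includes $r=0$ (where the $r-1$ term is vacuous, $\rg_{\dr}(X/\B^+_{\dr})/F^0=0$). Similarly the reduction of Conjecture~\ref{quasi1} to a single $r$ via Remark~\ref{quasi2}(ii)–(iii) must be invoked in the right direction. None of this is hard, but it is where a sloppy write-up could go wrong, so I would state the indexing conventions explicitly at the start of the proof and then let Proposition~\ref{pos10.3}, Lemma~\ref{pos10.1}, Remark~\ref{ppp1}, and Remark~\ref{version-qbc} do the work.

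\begin{proof}
Since $X$ is quasi-compact, $H^i_{\rm dR}(X/\B^+_{\dr})$ is free of finite rank over $\bdr^+$ for all $i$ (by the Hyodo-Kato isomorphism and finiteness of $H^i_{\rm dR}(X_C)$), so $X$ lies in the set-up of Section~\ref{EQUIV} and Proposition~\ref{pos10.3} applies for every $r\geq 0$. Recall that, by Remark~\ref{quasi2}(ii)--(iii), Conjecture~\ref{quasi1} holds (for all $i$) if and only if condition (a) of Proposition~\ref{pos10.3} holds for every $r\geq 0$; we freely navigate between VS's and their $C$-points via Remark~\ref{qbc100}.

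By Proposition~\ref{pos10.3}, for each fixed $r$ conditions (a), (b) and (d) there are equivalent. Letting $r$ range over all non-negative integers (with the convention $\rg_{\dr}(X/\B^+_{\dr})/F^0=0$, so that the $i=r-1$ clause is vacuous for $r=0$), condition (b)-for-all-$r$ is exactly the assertion that $(H^i_{\rm HK}(X),F^0H^i_{\rm dR}(X/\B_{\dr}))$ is acyclic for every $i\geq 0$, i.e.\ Conjecture~\ref{quasi7.1}; condition (d)-for-all-$r$ is Conjecture~\ref{quasi10}; and condition (a)-for-all-$r$ is Conjecture~\ref{quasi1}. Hence Conjectures~\ref{quasi1}, \ref{quasi7.1} and \ref{quasi10} are equivalent.

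It remains to bring in Conjecture~\ref{pos1}. Assume Conjecture~\ref{quasi1}. Fix $r$. By Remark~\ref{quasi2}(iii) the long exact sequence there splits into short exact sequences; in particular, using ${\mathbb H}^{r,r}={\mathbb H}^r_{\proeet}(X,\Q_p(r))$ and the notation of Section~\ref{short-notation}, we get a short exact sequence of qBC's
$$0\to {\mathbb H}^{r,r}\to ({\mathbb F}^{r,r}/t^r)\oplus {\mathbb X}^{r,r}\to {\mathbb B}^r/t^r\FIL^{r,r}\to 0.$$
Now ${\mathbb X}^{r,r}$ has curvature~$\leq 0$ (the $\varphi$-slopes on $H^r_{\rm HK}(X)$ are $\leq r$, so ${\mathbb X}^{r,r}$ is a sub-VS of a ${\mathbb B}_r$-Module, cf.\ Remark~\ref{quasi2}), and ${\mathbb F}^{r,r}/t^r$ is a ${\mathbb B}_r$-Module, hence of curvature~$0$. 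Therefore $({\mathbb F}^{r,r}/t^r)\oplus {\mathbb X}^{r,r}$ has curvature~$\leq 0$, and so does its sub-object ${\mathbb H}^{r,r}$ by Remark~\ref{ppp1}(ii) and Remark~\ref{version-qbc}. Since the Tate twist does not change curvature, ${\mathbb H}^r_{\proeet}(X,\Q_p)$ has curvature~$\leq 0$ for every $r$, which is Conjecture~\ref{pos1}.

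Conversely, assume Conjecture~\ref{pos1}, so ${\mathbb H}^{r,r}$ has curvature~$\leq 0$ for every $r$. This is condition (d) of Lemma~\ref{pos10.1} (for every $r$), whence by that lemma condition (b) of Lemma~\ref{pos10.1} holds for every $r$: the pair $(H^{r-1}_{\rm HK}(X),F^0H^{r-1}_{\rm dR}(X/\B_{\dr}))$ is acyclic for every $r\geq 0$, i.e.\ $(H^i_{\rm HK}(X),F^0H^i_{\rm dR}(X/\B_{\dr}))$ is acyclic for every $i\geq 0$. That is Conjecture~\ref{quasi7.1}. Combining the two implications with the equivalences of the previous paragraph, all four conjectures are equivalent.
\end{proof}
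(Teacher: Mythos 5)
Your proof is correct and follows essentially the same route as the paper: assemble the equivalences by letting $r$ range over all non-negative integers in Proposition~\ref{pos10.3} (giving Conjectures~\ref{quasi1} $\Leftrightarrow$ \ref{quasi7.1} $\Leftrightarrow$ \ref{quasi10}), then bring in Conjecture~\ref{pos1} via Lemma~\ref{pos10.1}. The only cosmetic difference is that in the direction Conjecture~\ref{quasi1} $\Rightarrow$ Conjecture~\ref{pos1} you re-derive the curvature argument inline, whereas the paper simply cites the (b) $\Leftrightarrow$ (d) equivalence of Lemma~\ref{pos10.1} to pass between Conjectures~\ref{quasi7.1} and \ref{pos1}.
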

\begin{proof}
Equivalence of (a) and (e) from Proposition~\ref{pos10.3} shows that, 
already for a fixed  $r$,  Conjectures
~\ref{quasi1} and~\ref{quasi10} are equivalent. 
Equivalence between (a) and (c) show that they are equivalent to
Conjecture~\ref{quasi7.1} (but one has to vary $r$). And equivalence between (c) and (d),
together with the twist invariance of (ii) of Remark~\ref{quasi2} (to get back to a range
where syntomic is isomorphic to pro\'etale) show that they are equivalent 
to Conjecture~\ref{pos1}.
\end{proof}
\begin{remark}
\label{pos201}
It is easy to see that Corollary~\ref{pos20} holds, more generally, if  $H^i_{\dr}(X/\bdr^+)$ is 
of finite rank over $\bdr^+$ for all $i$. The proof is the same as in the quasi-compact case.
\end{remark}

\subsection{Decomposing syntomic cohomology}\label{Decomp}
The purpose of this section is to show that the canonical filtration
on ${\bb H}^{r,i}(X)$ (see Corollary~\ref{pos4.11} and its proof) splits (noncanonically) if $r\geq \dim X$. 
This result will not be used later. 
As remarked in Remark~\ref{nonsplit}, is it not impossible that such a splitting exists for
all qBC.

 In what follows, $\rg_{\rm cris}(X)$ is a functorial complex representing log-crystalline
cohomology of $X$ (built from an \'etale  hypercovering of $X$ by semi-stable formal schemes),
and $\rg_{\rm dR}(X/\bdr^+)$ is obtained from $\rg_{\rm cris}(X)$ by completion. Below we take the $\breve{C}$-version of Hyodo-Kato cohomology. 
\begin{lemma}\label{decomp1}
Let $X$ be a quasi-compact dagger variety over $C$ and let $r\geq\dim X$.

{\rm (i)} $H^i([\rg_{\rm cris}(X)]^{\varphi=p^r})
\simeq (H^i_{\rm HK}(X)\otimes\bst^+)^{N=0,\varphi=p^r}$ as BC's.

{\rm (ii)}
$[\rg_{\rm cris}(X)]^{\varphi=p^r}$ is quasi-isomorphic to
the direct sum of its cohomology groups.
\end{lemma}
\begin{proof}
Claim (i) is proved in~\cite[Chap.\,7]{CN4}. 
Recall that we have a (not Galois equivariant) quasi-isomorphism
$$\rg_{\rm cris}(X)\simeq \rg_{\rm HK}(X)\wh{\otimes}^{\R}_{\breve C}\bcris^+.$$
Fix $i$. Then $H^i_{\rm HK}(X)$ is a finite dimensional $\breve{C}$-vector space
with a semi-linear and bijective $\varphi$. Using Dieudonn\'e-Manin's theorem,
we can write $H^i_{\rm HK}(X)\simeq\oplus_{j\in J_i} D_{j}$ with $D_{j}$ irreducible,
of slope $\frac{a_j}{h_j}\leq \inf(i,\dim X)$, with $(a_j,h_j)=1$: i.e., $D_{j}=\breve{C}e_{j}\oplus
\breve{C}\varphi(e_{j})\oplus\cdots\oplus \breve{C}\varphi^{h_j-1}(e_{j})$ with
$\varphi^{h_j}(e_{j})=p^{a_j}e_j$. Since $r\geq\dim X$, we have
$$(D_{j}\otimes_{\breve{C}}\bcris^+)^{\varphi=p^r}=\{xe_j+p^{-r}\varphi(xe_j)+\cdots+p^{-r(h_j-1)}\varphi^{h_j-1}(xe_j)|
\varphi^{h_j}(x)=p^{rh_j-a_j}x\}$$
 i.e., $(D_{j}\otimes_{\breve{C}}\bcris^+)^{\varphi=p^r}\simeq
{\mathbb U}_{h_j,rh_j-a_j}e_j$.
Choose $\tilde e_j\in Z^i(\rg_{\rm cris}(X))$ whose image in $H^i_{\rm HK}(X)\otimes_{\breve{C}}\bcris^+$ is $e_j$.
If $\lambda\in {\mathbb U}_{h_j,rh_j-a_j}$, then 
$$(1-\tfrac{\varphi}{p^r})\big(\lambda \tilde e_j+p^{-r}\varphi(\lambda\tilde e_j)
+\cdots+p^{-r(h_j-1)}\varphi^{h_j-1}(\lambda\tilde e_j)\big)=\lambda(\tilde e_j-p^{-a_j}\varphi^{h_j}(\tilde e_j)).$$Since $\varphi^{h_j}(e_{j})=p^{a_j}e_j$, it follows that $\tilde e_j-p^{-a_j}\varphi^{h_j}(\tilde e_j)=d x_j$.
Hence $(\lambda_j)_j\mapsto (\lambda_j e_j,\lambda_j x_j)$ maps
$\oplus_{j \in J_j}{\mathbb U}_{h_j,rh_j-a_j}$ to a subcomplex of the total complex of the above mapping fiber,
with all arrows $0$ and all cohomology groups $0$ except $H^i$ which is, by construction,
indentified with $H^i([\rg_{\rm cris}(X)]^{\varphi=p^r})$.
The lemma follows.
\end{proof}
\begin{remark}\label{decomp2}
The arrow 
$$\oplus_i (H^i_{\rm HK}(X)\otimes_{\breve{C}}\bst^+)^{N=0,\varphi=p^r}[-i]\to 
[\xymatrix{\rg_{\rm cris}(X)\ar[r]^-{1-{\varphi}/{p^r}}
&\rg_{\rm cris}(X)}]$$
 that we constructed above depends on the choice of the $\tilde e_j$.
This arrow can be used to pin down a subspace of $Z^i(\rg_{\rm dR}(X/\bdr^+))$ isomorphic
to $H^i_{\rm dR}(X/\bdr^+)$, namely the $\bdr^+$-module generated by the images of
$\lambda \tilde e_j+p^{-r}\varphi(\lambda\tilde e_j)
+\cdots+p^{-r(h_j-1)}\varphi^{h_j-1}(\lambda\tilde e_j)$ for $\lambda\in \Q_{p^{h_i}}$
and $j\in J_i$.
\end{remark}

\begin{proposition}\label{decomp3}Let $r\geq \dim X$. 
We have a decomposition
$${\mathbb H}^i_{\rm syn}(X,r)\simeq {\mathbbm h}^1(H^{i-1}_{\rm HK}\{-r\})\oplus
{\mathbbm h}^0(H^{i}_{\rm HK}\{-r\}) \oplus (Y^{i-1},Z^i)$$
where
%
%
%
$(Y^{i-1},Z^i)$ is a (reasonable) $\Bdr^+$-Pair.
\end{proposition}
\begin{proof}
Let us write syntomic cohomology as the fiber
$$\rg_{\rm syn}(X,r)=\big[[\rg_{\rm cris}(X)]^{\varphi=p^r}
\oplus F^r\rg_{\rm dR}(X/\bdr^+)\to \rg_{\rm dR}(X/\bdr^+)\big]$$
and replace $[\rg_{\rm cris}(X)]^{\varphi=p^r}$ by the direct sum of its cohomology groups,
the induced map to $\rg_{\rm dR}(X/\bdr^+)$ being induced by the choice of the $\tilde e_j$ as above
(the induced map on cohomology does not depend on this choice).
 It follows that $H^i_{\rm syn}(X,r)$ is $H^1$ of the following complex
(where $\rg_{\rm dR}^s$ and $F^r\rg_{\rm dR}^s$ are the terms of degree $s$ in the respective complexes
and ${\rm HK}^s:=(H^s_{\rm HK}(X)\otimes\bst^+)^{N=0,\varphi=p^r}$)
$${\rm HK}^{i-1}\oplus F^r\rg_{\rm dR}^{i-1}\oplus\rg_{\rm dR}^{i-2}\lomapr{d^{i-1}_{\rm tot}}
{\rm HK}^{i}\oplus (F^r\rg_{\rm dR}^{i})^{d=0}\oplus\rg_{\rm dR}^{i-1}\lomapr{d^{i}_{\rm tot}}
(\rg_{\rm dR}^i)^{d=0}.$$
Now, $H^1$ does not change if we mod out by the complex $\rg_{\rm dR}^{i-2}\to {\rm Im}\,d$ to obtain the complex
\begin{equation}\label{splitting}{\rm HK}^{i-1}\oplus F^r\rg_{\rm dR}^{i-1}\lomapr{d^{i-1}_{\rm tot}}
{\rm HK}^{i}\oplus (F^r\rg_{\rm dR}^{i})^{d=0}\oplus\rg_{\rm dR}^{i-1}/({\rm Im} \,d)\lomapr{d^{i}_{\rm tot}}
(\rg_{\rm dR}^i)^{d=0}.
\end{equation}
But
$\rg_{\rm dR}^{i-1}/ ({\rm Im}\,d)$ is an extension of $\rg_{\rm dR}^{i-1}/ ({\rm Ker}\,d)$
by $H^{i-1}_{\rm dR}$ (where $H^s_{\rm dR}:=H^s_{\rm dR}(X/\bdr^+)$). Such an extension
is split $\B^+_{\dr}$-linearly  since $H^{i-1}_{\rm dR}$ is free of finite rank over $\B^+_{\dr}$ and $\rg_{\rm dR}^{i-1}/ ({\rm Ker}\,d)$ is
separated: pick a set of  continuous $\bdr^+$-linear forms on $\rg_{\rm dR}^{i-1}/ ({\rm Im}\,d)$ that
form a basis of $\bdr^+$-linear forms on $H^{i-1}_{\rm dR}$ (these exist because all the
spaces involved are of compact type hence reflexive), then the common kernel
of these linear forms is a supplementary subspace of $H^{i-1}_{\rm dR}$ that maps homeomorphically
to $\rg_{\rm dR}^{i-1}/ ({\rm Ker}\,d)$ by the open mapping theorem.
We can then mod out \eqref{splitting} by $0\to \rg_{\rm dR}^{i-1}/ ({\rm Ker}\,d)\to {\rm Im}\,d$
and obtain that $H^i_{\rm syn}(X,r)$ is $H^1$ of the complex:
$${\rm HK}^{i-1}\oplus F^r\rg_{\rm dR}^{i-1}\to
{\rm HK}^{i}\oplus (F^r\rg_{\rm dR}^{i})^{d=0}\oplus H_{\rm dR}^{i-1}
\to H^i_{\rm dR.}$$

  We have splittings ($F^rH^{i}_{\rm dR}$ and $F^rH^{i-1}_{\rm dR}$ are free of finite rank over $\B^+_{\dr}$,
so it is easy to lift  them)
\begin{align*}
(F^r\rg_{\rm dR}^{i})^{d=0}&\simeq F^rH^{i}_{\rm dR}\oplus \hat{Z}^i,
\quad{\text{where $\hat{Z}^i={\rm Ker}((F^r\rg_{\rm dR}^{i})^{d=0}\to H^{i}_{\rm dR})$}};\\
F^r\rg_{\rm dR}^{i-1}/{\rm Ker}\,d_{\rm tot}&\simeq F^rH^{i-1}_{\rm dR}\oplus \hat{Y}^{i-1},
\quad{\text{with $\hat{Y}^{i-1}=F^r\rg_{\rm dR}^{i-1}/{\rm Ker}\,d$}}.
\end{align*}
It follows that
$$H^i_{\rm syn}(X,r)\simeq (\hat{Y}^{i-1},\hat{Z}^i)\oplus {\rm Ker}({\rm HK}^{i}\oplus F^rH^{i}_{\rm dR}\to
H^{i}_{\rm dR})\oplus {\rm Coker}\,({\rm HK}^{i-1}\oplus F^rH^{i-1}_{\rm dR}\to H^{i-1}_{\rm dR})$$
The Ker (resp.~Coker) term gives us the ${\mathbbm h}^0$ (resp.~${\mathbbm h}^1$) term
in the statement of the proposition.
Now the image of $\hat{Y}^{i-1}\to \hat{Z}^i$ contains $t^r \hat{Z}^i$ (and even $t^r\rg_{\rm dR}^i$);
hence we can replace $(\hat{Y}^{i-1},\hat{Z}^i)$ by 
the $\Bdr^+$-Pair $(Y^{i-1},Z^i)$ with $Y^{i-1}=\hat{Y}^{i-1}/t^r$ and
$Z^i=\hat{Z}^i/t^r$. 
\end{proof}

\section{De Rham-to-pro-\'etale comparison theorems}\label{SS4}
  
Using Corollary~\ref{pos20} we will now prove  Conjecture~\ref{quasi1} for some quasi-compact smooth dagger varieties and then, via a limit argument, for a product of a smooth proper and a smooth Stein variety. 

\subsection{Small varieties}  \label{small1}
We will say that a smooth dagger variety  
{\it has de Rham slopes $\geq 0$} if it is small and satisfies Conjecture~\ref{quasi7.1} 
for all $r\ge 0$. We are going to prove that various dagger varieties have de Rham slopes $\geq 0$.
By Corollary~\ref{pos20} this implies that they satisfy  Conjectures~\ref{quasi1},  \ref{pos1}, and~\ref{quasi10}.
\begin{theorem}\label{quasi100}
Let $X$ be a smooth dagger variety over $C$, which has one of the
following (non mutually exclusive) forms:

\quad {\rm (a)} a product of two    varieties with de Rham slopes $\geq 0$, 

\quad {\rm (b)} the  analytification of an algebraic variety,

\quad {\rm (c)} an  almost proper smooth variety (i.e., the complement in a proper 
smooth rigid analytic variety of a closed subvariety). 

Then $X$ has de Rham slopes $\geq 0$.

%
%
%
%
\end{theorem}
\begin{proof} We start with claim (a). For   a quasi-compact smooth dagger variety $T$ over $C$, let $M^i(T):=(H^i_{\hk}(T),F^0H^i_{\dr}(T/\B_{\dr}))$ 
be the associated filtered $(\phi,N)$-modules. Let $X,Y$ be small smooth dagger varieties over $C$ that have de Rham slopes $\geq 0$.
 We claim that $X\times Y$ has de Rham slopes $\geq 0$  as well. 

 Indeed, the Hyodo-Kato and de Rham cup products induce  a pairing
$$
M^i(X)\otimes M^j(Y)\to M^{i+j}(X\times Y),\quad i,j\in\N.
$$
By functoriality of the functor $\se(-)$ it gives a pairing
$$
\gamma: \se(M^i(X))\otimes_{\so} \se(M^j(Y))\to \se(M^i(X)\otimes M^j(Y))\to \se(M^{i+j}(X\times Y)).
$$
Since we have a K\"unneth formula for Hyodo-Kato cohomology (reduce, via the Hyodo-Kato quasi-isomorphism,  to K\"unneth formula for de Rham cohomology), the map $\gamma$ yields an exact sequence of coherent sheaves on $X_{\FF}$
$$
0\to  \oplus_{i+j=n}\se(M^i(X))\otimes_{\so} \se(M^j(Y))\lomapr{\gamma} \se(M^{n}(X\times Y))\to \sff\to 0,
$$
where $\sff$ is supported at $\infty$. Applying cohomology we get the exact sequence
$$
\oplus_{i+j=n} H^1(X_{\FF},\se(M^i(X))\otimes_{\so} \se(M^j(Y)))\lomapr{\gamma} H^1(X_{\FF},\se(M^{n}(X\times Y)))\to H^1(X_{\FF},\sff)\to 0
$$
Since $H^1(X_{\FF},\sff)=0$ and the tensor product of two acyclic vector bundles is acyclic (see Remark~\ref{quasi6}), this yields $H^1(X_{\FF},\se(M^{n}(X\times Y)))=0$, as wanted.

\vskip1mm
 We pass now to claim (b). Let $X$ be a smooth algebraic variety over $C$ and denote by $X^{\rm an}$ its analytification. 
We claim that $X^{\rm an}$ has de Rham slopes $\geq 0$. 
Indeed, the filtered $(\phi,N)$-module $(H^i_{\hk}(X), F^0H^i_{\dr}(X/\B_{\dr}))$, $i\in\N$,  is admissible by \cite{BE2}, hence acyclic. The natural map 
$$\gamma: M^i(X):=(H^i_{\hk}(X), F^0H^i_{\dr}(X/\B_{\dr}))\to (H^i_{\hk}(X^{\rm an}), F^0H^i_{\dr}(X^{\rm an}/\B_{\dr}))=M^i(X^{\rm an})$$ is an isomorphism
on the first factor by Hyodo-Kato GAGA (see \cite{Shao}). Applying the functor $\se(-)$, we get an exact sequence of coherent sheaves on $X_{\FF}$
$$
0\to \se(M^i(X))\lomapr{\gamma} \se(M^i(X^{\rm an}))\to \sff\to 0,
$$
where $\sff$ is a coherent sheaf supported at $\infty$. Taking cohomology we get a short exact sequence
$$
H^1(X_{\FF}, \se(M^i(X)))\lomapr{\gamma} H^1(X_{\FF},\se(M^i(X^{\rm an})))\to 0
$$
Since $H^1(X_{\FF}, \se(M^i(X)))=0$ this yields $H^1(X_{\FF},\se(M^i(X^{\rm an})))=0$, as wanted.

\vskip1mm
 Concerning claim (c), 
let $X$ be a proper smooth rigid analytic variety over $C$ and let $Z\subset X$ be a closed subvariety. Let $U=X\setminus Z$ be the complement. We want to show  that $U$ is has de Rham slopes $\geq 0$. By resolution of singularities  (see \cite{BM97}) there exists a proper morphism $\pi: X^{\prime}\to X$ such that $\pi^{-1}(Z)_{\rm red}$ with its canonical reduced closed subspace structure is a simple normal crossing divisor. We have $U=X^{\prime}\setminus D$, hence, we may assume that $Z=D$ 
is  a divisor with simple  normal crossings in $X$.

Let $i\leq r$. Consider the following map of exact sequences in ${\rm LH}(C_{\Q_p})$
$$
\xymatrix@R=6mm@C=4mm{
 H^{i-1}_{\dr}(X^{\times}/\B^+_{\dr})\ar[r]^{0}\ar[d]^{\wr}  & H^i_{\eet}(U,\Q_p(r))\ar[r] \ar[d] 
& *\txt{\Small{ $(H^i_{\hk}(U)\wh{\otimes}_{F^{\nr}}\B^+_{\st})^{N=0,\phi=p^r}$}\\ $\oplus$ \\ 
\Small{$H^i(F^r\R\Gamma(X^{\times}/\B^+_{\dr}))$}}\ar[r] \ar[d]& H^i_{\dr}(X^{\times}/\B^+_{\dr})\ar[r] \ar[d]^{\wr} & 0\\
 H^{i-1}_{\dr}(U/\B^+_{\dr})\ar[r]  & \wt{H}^i_{\proeet}(U,\Q_p(r))\ar[r]^-{f_{i,1}} 
& *\txt{\Small{$(H^i_{\hk}(U)\wh{\otimes}_{F^{\nr}}\B^+_{\st})^{N=0,\phi=p^r}$}\\ $\oplus$\\
 \Small{$\wt{H}^i(F^r\R\Gamma(U/\B^+_{\dr}))$}}\ar[r]^-{f_{i,2}} & H^i_{\dr}(U/\B^+_{\dr})
}
$$
The top sequence is exact by \cite{Shao} (see \cite[Cor. 1.10]{CN1} for the case of semistable reduction); here $H^{i-1}_{\dr}(X^{\times}/\B^+_{\dr})$ denotes  $\B^+_{\dr}$-cohomology of the log-space $X^{\times}$ with its   logarithmic filtration. We note that
the group $H^i(F^r\R\Gamma(X^{\times}/\B^+_{\dr}))$ is a finite length  $\B^+_{\dr}$-module. 
The diagram shows   that the map $f_{i,2}$ in the bottom sequence is surjective. 
Since this works for all $i\leq r\in\N$, it implies that that the map $f_{i,1}$ is injective, as wanted. 
\end{proof}

\begin{remark}
The above proof shows that, in the case of almost proper $U$ over $C$,  the difference between the \'etale and pro-\'etale cohomology is the same as the difference between the Hodge filtration and the naive Hodge filtration:
$$
[ H^i_{\eet}(U,\Q_p(r)) \to \wt{H}^i_{\proeet}(U,\Q_p(r))]\simeq [H^i(F^r\R\Gamma(X^{\times}/\B^+_{\dr}))\to \wt{H}^i(F^r\R\Gamma(U/\B^+_{\dr}))],\quad i\leq r.
$$
\end{remark}
\subsubsection{Complements of tubular neighbourhoods} 
We will give two basic examples of complements of tubular neighbourhoods in proper smooth dagger  
varieties that have de Rham slopes $\geq 0$. It should be possible to vastly generalize these examples,
but this would require other techniques than the ones used in the paper.

\vskip1mm
  For the first example, we assume everything has semistable models.
Recall the following result of Grosse-Kl\"onne \cite[Cor. 3.8]{GKdR}
 \begin{proposition} \label{GK1}
 \begin{enumerate}
 \item  Let $\varpi\in\so_K$ be a uniformizer. Let $\mathcal{Z}\hookrightarrow  \sx$  be a closed immersion
of weak formal $\so_K$-schemes. Assume that locally on $\sx$ there exist \'etale morphisms of
weak formal $\so_K$-schemes
$$q : \sx \to {\rm Spf}(\so_K \{ X_1, . . . ,X_n \} /(X_1 . . . X_r -\varpi)),$$
such that $\mathcal{Z} = \cap_{j=r+1}^{m}V (q^*X_j )$, for some $1 \leq r \leq m \leq n$. Then the natural map
$$\R\Gamma_{\dr}(\sx_K \setminus \mathcal{Z}_K) \to \R\Gamma_{\dr}(]\sx_k \setminus \mathcal{Z}_k[_{\sx} )$$
is a filtered  quasi-isomorphism.
\item In the setting of  (1), we can replace $\mathcal{Z} = \cap_{j=r+1}^{m}V (q^*X_j )$ with  $\mathcal{Z} = \cup_{j=r+1}^{m}V (q^*X_j )$.
 \end{enumerate}
 \end{proposition}
 In the setting of Proposition~\ref{GK1}, assume that $\sx_K$ is proper and set $U:=]\sx_k \setminus \mathcal{Z}_k[_{\sx}$. We note that $U$ is quasi-compact. The space  $U_C$ has de Rham slopes $\geq 0$ because,  via the Hyodo-Kato isomorphism  and Proposition~\ref{GK1}, the canonical map
 $$
 \R\Gamma_{\hk}(\sx_K \setminus \mathcal{Z}_K) \to \R\Gamma_{\hk}(]\sx_k \setminus \mathcal{Z}_k[_{\sx} )
 $$
 is a quasi-isomorphism, the analogous map for $\B_{\dr}$-cohomology is a filtered quasi-isomorphism,  
and we can use  claim (c) of Theorem~\ref{quasi100} to deduce that the left-hand side has de Rham slopes~$\geq 0$
from which it follows that the right-hand side
has de Rham slopes~$\geq 0$.

\vskip1mm
   For the second example, 
we don't assume the existence of semistable models, but we remove a sufficiently small
tubular neighbourhood
of a smooth subvariety.
We have the following setting: 
 \begin{proposition}\label{ponie10} 
Let $X$ be a projective space over $K$ and let $Z$ be a smooth scheme-theoretic intersection $Z_1\times_X\cdots\times_XZ_c$ of $c$  hypersurfaces. Then, there exists an element $\epsilon_0\in|K^*|$ such that, for any $\epsilon<\epsilon_0$, the canonical  morphisms
  $$
 \R\Gamma_{\dr}(X\setminus Z) \to  \R\Gamma_{\dr}(X\setminus Z(\epsilon)),\quad   \R\Gamma_{\hk}(X_C\setminus Z_C)\to  \R\Gamma_{\hk}(X_C\setminus Z(\epsilon)_C)
  $$ are  quasi-isomorphism (compatible with all the structures).  Here $Z(\epsilon)$ is the tubular neighbourhood  defined in \cite[proof of Prop. 8.7]{Sch0}, \cite[Sec. 4]{BKV}.
  
  Similarly for $Z$ replaced by the union of $Z_i$'s. 
 \end{proposition}
 \begin{proof}
 The Hyodo-Kato case follows from the de Rham case via the Hyodo-Kato isomorphism. For the de Rham case, note that, locally, we have an isomorphism
 $$
 Y(\epsilon)\simeq Y\times_K {\mathbb B}(\epsilon)^c,
 $$
 where $ {\mathbb B}(\epsilon)$ denotes the open disc of radius $\epsilon$. 
 Hence  the quasi-isomorphsm in our  proposition follows from \cite[Prop. 3.7]{GKdR}.  
 
   For the union of $Z_i$'s, we use the above case and a \v{C}ech analytic descent. 
 \end{proof}
 
 The dagger variety $U(\epsilon)_C:=X_C\setminus Z(\epsilon)_C$ has de Rham slopes $\geq 0$   
via  Proposition~\ref{ponie10} and   claim (c) of  Theorem~\ref{quasi100}. 

\subsection{Proper times Stein} In this section we prove Conjecture~\ref{quasi1} for products of proper smooth and Stein varieties. 
\subsubsection{Limit arguments} This is a preliminary computation for  treating the case of partially proper varieties. 
The general set-up is the following: 
if $X$ is a smooth dagger variety over $C$, we choose a covering $\{X_n\}_{n\in\N}$ by
increasing
quasi-compact dagger subvarieties, and we want to deduce the result for $X$ from the result
for the $X_n$'s. To make formulas less cumbersome, let us introduce some notation.

Set 
\begin{align}\label{nota10}
& X^{r,i}:=
(H^i_{\hk}(X)\wh{\otimes}^R_{F^{\nr}}\B^+_{\st})^{N=0,\phi=p^r}, & & X^{r,i}_n:=
(H^i_{\hk}(X_n)\wh{\otimes}^R_{F^{\nr}}\B^+_{\st})^{N=0,\phi=p^r},\\
&F^{r,i}:= H^i(F^r\rg_{\dr}(X/\bdr^+)), && F^{r,i}_{n}:=H^i(F^r\rg_{\dr}(X_n/\bdr^+)),\notag\\
&A^{r,i}:={\rm Ker}(F^{r,i}\to H^i_{\rm dR}(X/\bdr^+)), 
&&A^{r,i}_{n}:={\rm Ker}(F^{r,i}_{n}\to H^i_{\rm dR}(X_n/\bdr^+)),\notag\\
&\overline F^{r,i}:=F^{r,i}/A^{r,i},&&\overline F_n^{r,i}:=F_n^{r,i}/A_n^{r,i}.\notag
\end{align}
So that, by definition,
\begin{equation}\label{nota11}
\overline F^{r,i}\simeq F^rH^i_{\rm dR}(X/\bdr^+),
\quad \overline F_n^{r,i}\simeq F^rH^i_{\rm dR}(X_n/\bdr^+).
\end{equation}
In particular,
\begin{equation*}
\overline F^{0,i}\simeq  H^i_{\rm dR}(X/\bdr^+),
\quad \overline F_n^{0,i}\simeq H^i_{\rm dR}(X_n/\bdr^+).
\end{equation*}

  We want to prove the exactness of:
\begin{equation}\label{nota31}
0\to H^{r,i} \to X^{r,i}\oplus F^{r,i}_{n}\to \overline F^{0,i}\to 0
\end{equation}
Assume that we have the exact sequences 
\begin{equation}
\label{wieczor5}
0\to H^{r,i}_{n} \to X^{r,i}_n\oplus F^{r,i}_{n}\to \overline F^{0,i}_{n}\to 0
\end{equation}
Hence, to prove the exactness of  (\ref{nota31}), we need to control various $\R^1\wlim_n$.
\begin{remark}\label{setup1}
(i) 
By \cite[Prop. 5.8]{CN4} and \cite[Prop. 5.12]{CN4}, respectively, we have
$$\R^1\wlim_n X^{r,i}_n=0,\quad \R^1\wlim_n \overline F^{r,i}_{n} =0$$ 
(\cite[Prop. 5.12]{CN4} proves the result for $\overline F^{0,i}_{n}$; the general case
follows because 
$\overline F^{r,i}_{n}$ contains $t^r\overline F^{0,i}_{n}$, 
and $\overline F^{r,i}_{n}/t^r\overline F^{0,i}_{n}$ is a BC).

(ii) We have two exact sequences and isomorphisms
\begin{align*}
0\to \R^1\wlim_n F^{r,i-1}_{n}\to F^{r,i}\to \wlim_n F^{r,i}_{n}\to 0,\\
0\to \R^1\wlim_n H^{r,i-1}_{n}\to H^{r,i}\to \wlim_n H^{r,i}_{n}\to 0,\\
X^{r,i}\simeq \wlim_n X^{r,i}_n,\quad \overline F^{0,i}\simeq  \wlim_n \overline F^{0,i}_{n}.
\end{align*}

(iii)
Applying $\wlim_n$ to (\ref{wieczor1}) we obtain the exact sequence 
\begin{align*}
0\to \wlim_n H^{r,i}_{n}& \to \wlim_n X^{r,i}_n\oplus \wlim_nF^{r,i}_{n} \to \wlim_n 
\overline F^{0,i}_{n}
\to \R^1\wlim_n H^{r,i}_{n} \to \R^1\wlim_n F^{r,i}_{n}\to 0
\end{align*}
Hence, if we can prove that $\R^1\wlim_n H^{r,i}_{n}=0$, for all $i$,
we get that $H^{r,i}\simeq  \wlim_n H^{r,i}_{n}$, that (\ref{nota31}) is exact and, as a bonus, that
$\R^1\wlim_n F^{r,i}_{n}=0$
and $F^{r,i}\simeq \wlim_n F^{r,i}_{n}$.
\end{remark}
Proving directly the vanishing of the $\R^1\wlim_n H^{r,i}_{n}$'s for all $i$
does not look too promising since pro-\'etale cohomology is not that easy to deal with, but one
can try to prove statements about the filtered de Rham complexes that imply this vanishing. 
Set
$$
{\rm DR}^{r,i}:=H^i(\rg_{\dr}(X/\B^+_{\dr})/F^r),
\quad {\rm DR}^{r,i}_n:=H^i(\rg_{\dr}(X_n/\B^+_{\dr})/F^r).
$$
\begin{lemma}\label{imply1}
We have the  implications
$$(\R^1\wlim_n {\rm DR}^{r,i-1}_n=0)\Longrightarrow (\R^1\wlim_n A^{r,i}_n=0)
\Longrightarrow (\R^1\wlim_n H^{r,i}_n=0).$$
\end{lemma}
\begin{proof}
We have a long exact sequence
$$ \cdots \overline F^{0,i-1}_n\to {\rm DR}^{r,i-1}_n\to F^{r,i}_n\to \overline F^{0,i}_n\to\cdots$$ 
It follows that $A^{r,i}_n$ is a quotient of ${\rm DR}^{r,i-1}_n$. This proves the first
implication.

For the second implication, note that $A^{r,i}_n$ is naturally a subspace of $H^{r,i}_n$;
denote by $\overline H^{r,i}_n$ the quotient. 
Since $t^r\overline F^{0,i}_n$ is a subspace of $\overline F^{r,i}_n$,
we have an exact sequence
$$0\to \overline H^{r,i}_n\to X^{r,i}_n\oplus (\overline F^{r,i}_n/t^r\overline F^{0,i}_n)\to
(\overline F^{0,i}_n/t^r\overline F^{0,i}_n)\to 0$$
in which all the terms, except $\overline H^{r,i}_n$ are BC's. This shows that
$\overline H^{r,i}_n$ is a BC as well\footnote{To be more precise,
by \cite[Sec. 7.3]{CN4},  the exact sequence (\ref{wieczor5})  can be lifted to the category of Vector Spaces. Moreover, since $H^i_{\rm dR}(X_n)$ is finite
dimensional, all the terms in this exact sequence  are BC's:
this is clear for $\overline{\mathbb F}^{0,i}/t^r\overline{\mathbb F}^{0,i}$ and for
${\mathbb X}^{r,i}_n$; it is true for $\overline{\mathbb F}^{r,i}/t^r\overline{\mathbb F}^{0,i}$
because it is
is a sub-$\Bdr^+$-module of $\overline{\mathbb F}^{0,i}/t^r\overline{\mathbb F}^{0,i}$, 
and, finally, it is true
for $\overline {\mathbb H}^{r,i}$ as the kernel of a morphism between BC's.}, 
hence $\R^1\wlim_n \overline H^{r,i}_n=0$
by Remark~\ref{Mittag}.
It follows that we have an exact sequence
$$0\to \lim_n A^{r,i}_n\to  \lim_n H^{r,i}_n\to \lim_n\overline H^{r,i}_n\to 
\R^1\lim_n A^{r,i}_n\to \R^1\lim_n H^{r,i}_n\to 0$$
which proves the second implication.
\end{proof}
\begin{remark}\label{imply2}
The above proof yields that: 
$$\R^1\wlim_n \overline H^{r,i}_n=0$$
\end{remark}

\subsubsection{Proper times Stein} 

 Let $X$ be the dagger variety $X:=Y\times_CS$, where $Y$ and $S$ are  smooth dagger varieties over $C$, $Y$ is proper and $S$ is Stein.   We cover $S$ with a strictly increasing admissible covering $\{S_n\}_{i\in\N}$ by open dagger affinoids. Set $X_n:=Y\times_CS_n$.
\begin{proposition} \label{above1} For $i\geq 0$, we have 
$$\R^1\lim_n H^i_{\proeet}(X_n,\Q_p)=0. 
$$ 
\end{proposition}
\begin{proof} Since $\R\lim_nH^i_{\proeet}(X_n,\Q_p)=\R\lim_nH^i_{\proeet}(\wh{X}_n,\Q_p)$, we can replace $X_n$, $S_n$ with their completions. 
Since $\wh{S}_n$'s are quasi-compact, we can pass to \'etale cohomology.  
 Using Lemma~\ref{kunn} below and Theorem~\ref{affinoids} we get
\begin{align*}
\R^1\lim_n H^i_{\eet}(X_n,\Q_p) & \simeq \bigoplus_{j\geq 0}H^j_{\eet}(Y,\Q_p)\otimes_{\Q_p}\R^1\lim_n H^{i-j}_{\eet}(\wh{S}_n,\Q_p)=0,
\end{align*}
 as wanted. 
\end{proof}
\begin{lemma}\label{kunn}
Let $Y, T$ be  smooth rigid analytic varieties over $C$. Assume that $Y$ is proper. Let $i\geq 0$. 
We have the K\"unneth  isomorphism 
\begin{equation}
\label{Kunn}
\bigoplus_{j\geq 0}H^j_{\eet}(Y,\Q_p)\otimes_{\Q_p}H^{i-j}_{\eet}(T,\Q_p)\stackrel{\sim}{\to} H^i_{\eet}(Y\times_CT,\Q_p).
\end{equation}
\end{lemma}
\begin{proof}\ 

$\bullet$ {\em Step1: the integral K\"unneth formula.} We claim that the canonical map
$$
\R\Gamma_{\eet}(Y,\Z_p)\otimes_{\Z_p}^{L}\R\Gamma_{\eet}(T,\Z_p)\to \R\Gamma_{\eet}(Y\times_CT,\Z_p)
$$
is a quasi-isomorphism. The proof of K\"unneth formula from \cite[Tag 0F13]{SP} goes through once we have the quasi-isomorphism
\begin{equation}
\label{Kun11}
\R p_{2,*}\Z_p\stackrel{\sim}{\leftarrow} \underline{\R\Gamma_{\eet}(Y,\Z_p)},
\end{equation}
where $p_{2}: Y\times_CT\to T$ is the canonical projection.  This  is a consequence of the proper base change theorem of Bhatt-Hansen \cite[Th. 1.6]{BH}.   To recall the proof from \cite[Tag 0F13]{SP}: we apply $\R\Gamma_{\eet}(T,-)$ to 
the quasi-isomorphism (\ref{Kun11}) and get a quasi-isomorphism
$$
\R\Gamma_{\eet}(Y\times_CT,\Z_p)\stackrel{\sim}{\leftarrow} \R\Gamma_{\eet}(T,\underline{\R\Gamma_{\eet}(Y,\Z_p)}).
$$
Now, we use the fact that the complex $ \R\Gamma_{\eet}(T,\underline{\R\Gamma_{\eet}(Y,\Z_p)})$ is perfect  and get a quasi-isomorphism
$$
 \R\Gamma_{\eet}(T,\underline{\R\Gamma_{\eet}(Y,\Z_p)}) \stackrel{\sim}{\leftarrow}\R\Gamma_{\eet}(T,\Z_p)\otimes_{\Z_p}^L\R\Gamma_{\eet}(Y,\Z_p).
$$
This finishes the proof of the integral K\"unneth formula.
  
\vskip1mm
 $\bullet$ {\em Step 2: passing to $\Q_p$-coefficients. } From Step 1 and the fact that the complex $ \R\Gamma_{\eet}(Y,\Q_p)$ is perfect, we get the canonical quasi-isomorphism
 $$
 \R\Gamma_{\eet}(Y,\Q_p)\otimes^L_{\Q_p}\R\Gamma_{\eet}(T,\Q_p)\stackrel{\sim}{\to }\R\Gamma_{\eet}(Y\times_CT,\Q_p).
$$
Taking cohomology of both sides we get the isomorphism in the proposition. 
\end{proof}
\begin{remark}
(i) Proposition~\ref{above1} implies the isomorphism
$$
H^i_{\proeet}(X,\Q_p)\stackrel{\sim}{\to} \lim_nH^i_{\eet}(X_n,\Q_p).
$$
\hskip.4cm(ii)
The  proof of Proposition~\ref{above1} yields also the K\"unneth  isomorphism
$$
\bigoplus_{j\geq 0}H^j_{\proeet}(Y,\Q_p)\otimes_{\Q_p} H^{i-j}_{\proeet}(S,\Q_p)\stackrel{\sim}{\to}  H^i_{\proeet}(X,\Q_p).
$$
This K\"unneth decomposition is true because we are in the very special case where the cohomology
groups of one of the factors are finite dimensional. 
Without this assumption, such a decomposition already fails in the simplest case where both $X$ and $Y$
are the open unit ball in dimension $1$.
\end{remark}
\begin{proposition} \label{limits1}
Conjecture~\ref{quasi1} holds for $X=Y\times_CS$ as above. 
\end{proposition}
\begin{proof}
 Let $i\leq r$. By Remark~\ref{quasi2}, we reduce to showing  that the  sequence of algebraic cohomologies
\begin{equation*}
0\to  {H}^i_{\proeet}(X,\Q_p(r))\to ({H}^i_{\rm HK}(X)\wotimes_{F^{\nr}}\B^+_{\st})^{N=0,\varphi=p^r}\oplus {H}^iF^r\rg_{\rm dR}(X/\B^+_{\dr})\to {H}^i_{\rm dR}(X/\B^+_{\dr})\to 0
\end{equation*}
is exact. Granting Proposition~\ref{above1}, this follows from (iii) of Remark~\ref{setup1} and Section~\ref{small1}.
%
\end{proof}

\section{Pro-\'etale--to--de Rham  comparison theorems}\label{SS7} 
In this chapter we propose a recipe to extract,
from the pro-\'etale cohomology of varieties defined over $C$,
the  Hyodo-Kato and de Rham cohomologies (as modules over the relevant rings)
and, for  varieties 
defined over $K$, to extract also Frobenius, monodromy, and the naive Hodge filtration. 
\subsection{The pro-\'etale--to--de Rham $C_{\rm st}$-conjecture for varieties over $K$}
 In this section, we study the following conjecture extracting, for analytic spaces over $K$,
the  Hyodo-Kato and de Rham cohomologies
from the pro-\'etale cohomology. This extends to $p$-adic analytic spaces
the $C_{\rm st}$-conjecture of Fontaine. 
\begin{conjecture}\label{cst1}
Let $X$ be a smooth dagger variety over $K$. We have natural strict isomorphisms:
\begin{align*}
&{\rm Hom}^{\rm sm}_{\G_K}(H^i_{\proeet}(X_C,\Q_p),\bst)
\simeq H^i_{{\rm HK}}(X_C)^\dual,\quad{\text{as  a  $(\varphi,N,\G_K)$-module,}}\\
&{\rm Hom}_{\G_K}(H^i_{\proeet}(X_C,\Q_p),\bdr)
\simeq H^i_{{\rm dR}}(X)^\dual,\quad{\text{as  a filtered $K$-module}}.
\end{align*}
\end{conjecture}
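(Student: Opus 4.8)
The strategy is to deduce Conjecture~\ref{cst1} from the already-established Conjecture~\ref{quasi1} (proved for small varieties in Theorem~\ref{quasi100} and Corollary~\ref{ALG1.1}) by applying the functors $D^\dual_{\rm st}$, $D^\dual_{\rm pst}$ and $D^\dual_{\rm dR}$ to the exact sequence associated to the bicartesian diagram, and then killing off the de Rham contributions using the vanishing results of \S\ref{USED}. Concretely, let $X$ be small over $K$ and apply Theorem~\ref{quasi100} to $X_C$: for $r\geq i$ we get a short exact sequence in ${\cal C}(\G_K)$ (or rather, after passing to the relevant topological category)
$$0\to H^i_{\proeet}(X_C,\Q_p(r))\to X^{r,i}_{\rm st}\oplus F^{r,i}_{\rm dR}\to {\rm DR}^{r,i}\to 0,$$
where $X^{r,i}_{\rm st}=(H^i_{\rm HK}(X_C)\wotimes_{F^{\rm nr}}\B^+_{\st})^{N=0,\varphi=p^r}$, $F^{r,i}_{\rm dR}=H^i(F^r\rg_{\rm dR}(X_C/\B^+_{\dr}))$ and ${\rm DR}^{r,i}=H^i(\rg_{\rm dR}(X_C/\B^+_{\dr})/F^r)$. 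Since $X$ is defined over $K$ the cohomology $H^i_{\rm dR}(X)$ is a filtered $(\varphi,N,\G_K)$-module over $K$ via the Hyodo-Kato isomorphism, and by Theorem~\ref{quasi100}(ii) the pair $(H^i_{\rm HK}(X_C),F^0H^i_{\rm dR}(X_C/\B_{\dr}))$ is acyclic; moreover the $\varphi$-slopes of $H^i_{\rm HK}$ are $\leq r$ (degrees being $\leq r$). First I would untwist: replacing $r$ by a large enough value and using the Tate twist isomorphism of Remark~\ref{quasi2}(ii), it suffices to treat one convenient $r$, and I would choose $r$ with ${\rm Fil}^rM_K$ small or use Corollary~\ref{new-tate2.7} directly, which handles exactly the groups $V^r_{\rm st}$.

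The second (de Rham) isomorphism comes first. Apply ${\rm Hom}_{\G_K}(-,\bdr)$ to the short exact sequence. The term ${\rm DR}^{r,i}$ is a successive extension of quotients $t^a\bdr^+/t^b\bdr^+$ (with $0\leq a\leq b\leq r$), so Proposition~\ref{twists}(iv) gives ${\rm Hom}_{\G_K}({\rm DR}^{r,i},\bdr)=0$ and (by the long exact ${\rm Ext}$-sequence, using that these quotients have no higher ${\rm Ext}$ into $\bdr$ in the relevant range — here one invokes Corollary~\ref{font3}, or the direct computations of \S\ref{USED}) one gets
$${\rm Hom}_{\G_K}(H^i_{\proeet}(X_C,\Q_p(r)),\bdr)\simeq {\rm Hom}_{\G_K}(X^{r,i}_{\rm st},\bdr).$$
Likewise $F^{r,i}_{\rm dR}$ contributes via ${\rm Hom}_{\G_K}(F^{r,i}_{\rm dR},\bdr)$, but $F^{r,i}_{\rm dR}$ is an extension of $\FIL^{r,i}\subset B^i$ by a torsion $\bdr^+$-module, and the torsion part is killed by Corollary~\ref{HN12}/Proposition~\ref{twists}(iv); what remains of the $F$-term and the ${\rm DR}$-term combine to reproduce the filtration exactly. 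Then Corollary~\ref{new-tate2.7} (with $M=H^i_{\rm HK}(X_C)$, ${\rm Fil}^\bullet=F^\bullet H^i_{\rm dR}(X)$, which is acyclic with $\varphi$-slopes in $[0,r]$) identifies ${\rm Hom}_{\G_K}(V^r_{\rm st}(M,{\rm Fil}^\bullet),\bdr)$ with $(M_K^\dual,{\rm Fil}_\perp^\bullet\{r\})$; finally undoing the $r$-twist (multiply back by $t^{-r}$, i.e.\ pass from $\Q_p(r)$ to $\Q_p$) gives $H^i_{\rm dR}(X)^\dual$ as a filtered $K$-module. For the first (Hyodo-Kato) isomorphism, apply ${\rm Hom}_{\G_K}^{\rm sm}(-,\bst)$: the de Rham terms again die because a torsion $\bdr^+$-module, or a $t^a\bdr^+/t^b\bdr^+$, has no smooth $\G_K$-equivariant maps to $\bst$ (smooth vectors land in $\overline K$, which is dense, and the torsion target forces $0$; this is exactly the argument pattern of Proposition~\ref{twists}), reducing to ${\rm Hom}_{\G_K}^{\rm sm}(X^{r,i}_{\rm st},\bst)$, and then $D^\dual_{\rm pst}$ of Corollary~\ref{new-tate2.7}/Corollary~\ref{new-tate2.5} gives $M^\dual\{r\}$ as a $(\varphi,N,\G_K)$-module over $F^{\rm nr}$; the $r$-twist is removed as before. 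Compatibility of the two isomorphisms (the Hyodo-Kato inclusion $\bst\subset\bdr$) follows from Remark~\ref{new-tate10} and the construction.

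\textbf{Main obstacle.} The genuinely delicate point is not the vanishing of the de Rham terms (that is \S\ref{USED}, already in hand) but the passage from the \emph{algebraic} exact sequence of Theorem~\ref{quasi100} to a statement about \emph{continuous} $\G_K$-equivariant homomorphisms with the correct topology and strictness, as the conjecture demands ``natural strict isomorphisms''. One must check that applying ${\rm Hom}_{\G_K}(-,\bdr)$ and ${\rm Hom}_{\G_K}^{\rm sm}(-,\bst)$ to the short exact sequence of topological $\Q_p$-vector spaces stays exact — this needs the sequence to be strict and the relevant $\R^1\lim$'s to vanish, which in turn uses that the $F$-term and ${\rm DR}$-term are built from Banach/Fréchet pieces and the Mittag-Leffler-type arguments of Lemmas~\ref{truc2}, \ref{truc25} (so that ${\rm Hom}(-,\bdr)={\rm Hom}(-,\bdr^+)\otimes_{\bdr^+}\bdr$ and one can work level by level modulo $t^k$). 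A secondary subtlety is that $H^i_{\rm HK}(X_C)$ is an $F^{\rm nr}$-module with a \emph{pro}-smooth $\G_K$-action, not necessarily obtained by base change from a $(\varphi,N)$-module over a finite extension on the nose; so in applying Corollary~\ref{new-tate2.5} one passes to a finite $L/K$ over which the action is unramified, proves the statement there, and descends — exactly the device used in the proof of Corollary~\ref{new-tate2.5}. Once these topological/descent points are dispatched, the identification of structures ($\varphi$, $N$, $\G_K$-action, filtration) is formal from the cited corollaries.
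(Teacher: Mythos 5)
Your overall strategy is the same as the paper's: start from the bicartesian diagram of Theorem~\ref{quasi100}, apply $\Hom_{\G_K}(-,\bdr)$ and $\Hom^{\rm sm}_{\G_K}(-,\bst)$, show that the de Rham contributions die, and close with Corollary~\ref{new-tate2.7} (resp.~Example~\ref{new-tate23} in the affinoid case). You correctly identify the right parent results. But there is a real gap in the step that kills the de Rham terms, and it is exactly the step where the paper does nontrivial work.

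You write that ${\rm DR}^{r,i}$ ``is a successive extension of quotients $t^a\bdr^+/t^b\bdr^+$, so Proposition~\ref{twists}(iv) gives ${\rm Hom}_{\G_K}({\rm DR}^{r,i},\bdr)=0$.'' This is not a valid reduction. For $X$ quasi-compact but not proper, ${\rm DR}^{r,i}$ (and the kernel $A^{r,i}=\ker(F^{r,i}\to B^i)$ that actually has to be killed) is an infinite-rank, Fr\'echet-type torsion $\bdr^+$-module: the complex $\rg_{\dr}(X/\bdr^+)/F^r$ involves genuine spaces of differential forms, whose cohomology is huge. Proposition~\ref{twists}(iv) only handles a single $t^j\bdr^+/t^k\bdr^+$, and passing to an infinite (completed) sum of such things is precisely what is at stake, because $\Hom_{\G_K}(-,\bdr)$ does not automatically commute with arbitrary inverse limits or completed sums. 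The paper's Lemma~\ref{nioule1} does exactly this: it writes $A^{r,i}$ as a quotient of a \v{C}ech cocycle group $Z^{r,i}$, chooses a Banach basis $(t^je_n)$ of $\bdr^+/t^r$ over $K$, and proves that $\sum_{j\leq r}(t^{r-j}\bdr^+/t^r)\otimes_K Z^{j,i}_K\to Z^{r,i}$ has dense image, so that a continuous $\G_K$-equivariant map to $\bdr$ is forced to vanish. This density argument is the heart of the quasi-compact case and is not derivable from Proposition~\ref{twists}(iv) alone, nor from the ``Mittag--Leffler'' Lemmas~\ref{truc2}, \ref{truc25} that you cite (those are used for the limit over $n$ in the non-quasi-compact case, not for Lemma~\ref{nioule1}).

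Two secondary points. First, your suggested fallback to Corollary~\ref{font3} is not available: that statement lives in the almost-$C$-representations framework, which requires $[K:\Q_p]<\infty$, whereas the paper works with general $K$ and deliberately restricts itself to the $K$-independent results of \S\ref{USED}. Second, a cleaner way to organize the argument, which the paper uses, is to quotient out by $\wt{A}^{r,i}$ \emph{before} applying $\Hom$: the quotient $\overline H^{r,i}=\wt{H}^{r,i}/\wt{A}^{r,i}$ fits into a short exact sequence in which the de Rham term is now a subgroup of the free $\bdr^+$-module $B^i$, hence classical; this identifies $\overline H^{r,i}$ topologically with $V^r_{\rm st}(H^i_{\rm HK}(X_C),H^i_{\rm dR}(X))$, to which Corollary~\ref{new-tate2.7} applies directly. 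Once Lemma~\ref{nioule1} is in hand, the rest of your proposal (including the descent to a finite extension $L/K$ via Corollary~\ref{new-tate2.5} and the removal of the $r$-twist) goes through as you describe.
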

\begin{remark}\label{cst2}
Our approach uses syntomic cohomology, which gives a description of
$H^i_{\proeet}(X_C,\Q_p(r))$ for $r\geq i$, rather than that of $H^i_{\proeet}(X_C,\Q_p)$.
Hence we are going to consider the following equivalent form of Conjecture~\ref{cst1}
(where the $\{r\}$ has the same meaning as in Corollary~\ref{new-tate2.7}):
\begin{align*}
&{\rm Hom}^{\rm sm}_{\G_K}(H^i_{\proeet}(X_C,\Q_p(r)),\bst)
\simeq H^i_{{\rm HK}}(X_C)^\dual\{r\},\quad{\text{as a $(\varphi,N,\G_K)$-module,}}\\
&{\rm Hom}_{\G_K}(H^i_{\proeet}(X_C,\Q_p(r)),\bdr)
\simeq H^i_{{\rm dR}}(X)^\dual\{r\},\quad{\text{as a filtered $K$-module}}.
\end{align*}
\end{remark}

Our main result is the following:
\begin{theorem}
Conjecture~\ref{cst1} holds for  smooth dagger varieties over $K$ with de Rham slopes~$\geq 0$.
\end{theorem}
\subsubsection{Dagger affinoids} The case of affinoids is included in the case of quasi-compact varieties but the proof
is considerably simpler.
We note that it suffices to show that we have natural
isomorphisms since the weak topology on the Hom-spaces is Hausdorff.
   Let 
$$M:=H^i_{\hk}(X_C),\quad X_{\st}^i(M):=(M\wh{\otimes}^R_{F^{\nr}}\B^+_{\st})^{N=0,\phi=p^i},
\quad M_K=(\overline K\otimes_{F^{\nr}}M)^{\G_K}\simeq H^i_{{\rm dR}}(X).$$
The last isomorphism follows from the Hyodo-Kato isomorphism \cite[Th.\,4.27]{CN4}. 
  Recall that we have the  exact sequence (see  Theorem~\ref{affinoids})
 \begin{equation}
 \label{seq1}
 0\to (\Omega_X^{i-1}/\kker d)\wh{\otimes}_KC\to H^i_{\proeet}(X_C,\Q_p(i))\to X_{\st}^i(M)\to 0.
 \end{equation}
Applying $\Hom_{\sg_K}(-, \B_{\st})$ and $\Hom_{\sg_K}(-, \B_{\dr})$ to it, we get the following  exact sequences
\begin{align*}
0\to &  \Hom^{\rm sm}_{\sg_K}(X_{\rm st}^i(M), \B_{\st})  \to \Hom^{\rm sm}_{\sg_K}(H^i_{\proeet}(X_C,\Q_p(i)), \B_{\st})\to \Hom^{\rm sm}_{\sg_K}((\Omega^{i-1}_{X}/\kker\,d)\widehat\otimes_K C, \B_{\st}),\\
0\to  & \Hom_{\sg_K}(X_{\rm st}^i(M), \B_{\dr})  \to \Hom_{\sg_K}(H^i_{\proeet}(X_C,\Q_p(i)), \B_{\dr})\to \Hom_{\sg_K}((\Omega^{i-1}_{X}/\kker\,d)\widehat\otimes_K C, \B_{\dr}).
\end{align*}
\begin{lemma}\label{new-niedziela}We have 
$$
\Hom^{\rm sm}_{\sg_K}((\Omega^{i-1}_{X}/\kker\,d)\widehat\otimes_K C, \B_{\st})=0,\quad \Hom_{\sg_K}((\Omega^{i-1}_{X}/\kker\,d)\widehat\otimes_K C, \B_{\dr})=0.
$$
\end{lemma}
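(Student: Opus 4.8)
\textbf{Plan for the proof of Lemma~\ref{new-niedziela}.}
The key point is that the $C$-Banach space $W:=(\Omega^{i-1}_X/\kker\,d)\wh{\otimes}_KC$ carries, after choosing a topological basis over $K$, a very simple $\G_K$-structure: it is a completed direct sum (or rather a product, since $\Omega^{i-1}_X/\kker\,d$ is a Fréchet space when $X$ is Stein and a Banach space when $X$ is a dagger affinoid) of copies of $C$, indexed by a set on which $\G_K$ acts trivially. So the plan is first to reduce, by this structural description and a continuity argument, to computing $\Hom_{\G_K}(\prod_{n\geq 0}C,\B_{?})$ (and its smooth-vector variant for $\B_{\st}$), and then to invoke the results already assembled in the paper about maps out of products of Banach spaces.

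More precisely, I would proceed as follows. First, recall from Theorem~\ref{affinoids} (and the Stein case discussed in Remark~\ref{PRINCE8}) that $\Omega^{i-1}_X/\kker\,d$ is a $K$-Fréchet space with trivial $\G_K$-action that is ``of countable type'', hence $W\simeq \prod_{n\geq 0}C$ as a $\G_K$-Banach (resp.\ Fréchet) representation; this is the point where one uses that $X$ is a dagger affinoid (or a dagger Stein variety). Second, for the $\B_{\dr}$-statement, apply Lemma~\ref{truc2}(i): any continuous $\lambda\colon\prod_{n\geq 0}C\to\B_{\dr}$ lands in some $t^{-N}\bdr^+$, and then Lemma~\ref{truc25}(i) (applied after writing $W$ as $\prod_n W'_n$ with $W'_n$ finite direct sums of copies of $C$) reduces the computation of $\Hom_{\G_K}(W,t^{-N}\bdr^+/t^{j}\bdr^+)$ to a colimit of $\Hom_{\G_K}(C^{m},t^{-N}\bdr^+/t^{j}\bdr^+)$; but $\Hom_{\G_K}(C,t^{-N}\bdr^+/t^j\bdr^+)=K$ by Proposition~\ref{twists}(ii), and since $\B_{\dr}$ has no nonzero $C$-multiples of the identity that survive in the relevant quotient—more precisely, using that a continuous $\G_K$-map $C\to\B_{\dr}$ is multiplication by an element of $K$ composed with $\bdr^+\to C$ (Proposition~\ref{twists}) and chasing through $\varprojlim_j$ as in the proof of Lemma~\ref{baco5}—one concludes $\Hom_{\G_K}(W,\B_{\dr})=0$. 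For the $\B_{\st}$-statement, note $\B_{\st}\subset\B_{\dr}$, so $\Hom^{\rm sm}_{\G_K}(W,\B_{\st})\subset\Hom_{\G_K}(W,\B_{\dr})=0$.

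Alternatively, and perhaps more cleanly, one can argue purely on $C$-points: since $\G_K$ acts trivially on the index set, a $\G_K$-equivariant continuous map $W\to\B_{?}$ is in particular a $\G_K$-equivariant continuous map $C\to\B_{?}$ on each coordinate, hence (Proposition~\ref{twists}) factors through $\bdr^+\to C$ and is multiplication by a scalar in $K$; the continuity over the full product $W$, via Lemma~\ref{truc2}(ii), forces all but finitely many of these scalars to push $C$ into $t^j\bdr^+$ for every $j$, hence to be $0$, and one is left with a map from a finite-dimensional $C$-space $C^m$ with trivial action into $\B_{\st}$ or $\B_{\dr}$, which then has to be $0$ because $C$ is not a sub-$\G_K$-representation of $\B_{\st}$ or $\B_{\dr}$ (equivalently, $\Hom_{\G_K}(C,\B_{\dr})=\Hom_{\G_K}(C,\B_{\dr}^+)=0$, as $C$ has Hodge--Tate--Sen weight $0$ with multiplicity but $\B_{\dr}$ has none surviving after the identity is removed). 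The main obstacle I anticipate is bookkeeping the topology correctly—ensuring that the ``product of copies of $C$'' description is literally valid as topological $\G_K$-modules in both the affinoid (Banach) and Stein (Fréchet) cases, and that the continuity argument genuinely kills the infinitely many coordinates—but all the needed analytic input is already packaged in Lemmas~\ref{truc2} and~\ref{truc25} and Proposition~\ref{twists}, so this should be a short argument.
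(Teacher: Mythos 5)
Your proposal reaches the right conclusion and identifies the two essential ingredients, the same ones the paper uses: the vanishing $\Hom_{\G_K}(C,\B_{\dr})=0$ (Proposition~\ref{twists}(iv) with $j=0$, $k=1$) and a density/continuity argument. But your route through Lemmas~\ref{truc2} and~\ref{truc25} and a topological product decomposition of $W$ is considerably heavier than the paper's. The paper never introduces that structure; it simply observes that, since $\G_K$ acts trivially on the $K$-vector space $\Omega^{i-1}_X/\kker\,d$, the \emph{uncompleted} tensor product $(\Omega^{i-1}_X/\kker\,d)\otimes_K C$ is an (algebraic) direct sum of copies of $C$, whence $\Hom_{\G_L}((\Omega^{i-1}_X/\kker\,d)\otimes_K C,\B_{\dr})=0$ at once for any finite $L/K$, and then uses density of this subspace in $W$ together with continuity of the maps to conclude. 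No Mittag--Leffler systems, no coordinate bookkeeping, no Lemma~\ref{truc2} or~\ref{truc25}.

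Two points to repair in your write-up. First, your $\B_{\st}$-reduction has a gap: since $\Hom^{\rm sm}_{\G_K}(W,\B_{\st})=\colim_{L/K}\Hom_{\G_L}(W,\B_{\st})$, the inclusion $\B_{\st}\subset\B_{\dr}$ reduces you to showing $\Hom_{\G_L}(W,\B_{\dr})=0$ for \emph{every} finite extension $L/K$, not only for $L=K$; the argument is identical over $\G_L$ (Section~\ref{USED} holds for arbitrary $K$), but this has to be said. Second, several of your intermediate assertions are garbled or false: there is no nonzero $\G_K$-equivariant continuous map $C\to\B_{\dr}$ that ``factors through $\bdr^+\to C$ and is multiplication by a scalar in $K$'' --- the $\Hom$-space is simply zero; the claim $\Hom_{\G_K}(C,t^{-N}\bdr^+/t^j\bdr^+)=K$ is false for $j\geq2$ (it vanishes, because the extension $0\to t\bdr^+/t^j\to\bdr^+/t^j\to C\to 0$ is non-split, cf.\ Example~\ref{firstex}); and Proposition~\ref{twists}(ii) concerns maps out of $\bdr^+$, not out of $C$. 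You do arrive at the correct fact $\Hom_{\G_K}(C,\B_{\dr})=0$ in the end, so these slips do not sink the argument, but they would confuse a reader and should be cleaned up.
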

\begin{proof}
Since $\Hom^{\rm sm}_{\sg_K}((\Omega^{i-1}_{X}/\kker\,d)\widehat\otimes_K C, \B_{\st})\simeq \colim_{L/K}\Hom_{\sg_L}((\Omega^{i-1}_{X}/\kker\,d)\widehat\otimes_K C, \B_{\st})$ and 
 $\B_{\st}\hookrightarrow \B_{\dr}$, it suffices to show that
$ \Hom_{\sg_L}((\Omega^{i-1}_{X}/\kker\,d)\widehat\otimes_K C, \B_{\dr})=0$, for a finite extension $L$ of $K$. 

But 
$\Hom_{\sg_L}(C, \B_{\dr})=0$ (Proposition~\ref{twists} (iv)); 
hence
$\Hom_{\sg_L}((\Omega^{i-1}_{X}/\kker\,d)\otimes_K C, \B_{\dr})=0$.
Since our maps are requested to be continuous and
$(\Omega^{i-1}_{X}/\kker\,d)\otimes_K C$ is dense in $(\Omega^{i-1}_{X}/\kker\,d)\wh{\otimes}_K C$, 
 we get the wanted vanishing.
 \end{proof}

  This lemma yields   isomorphisms
 \begin{align*}
   \Hom^{\rm sm}_{\sg_K}(X_{\rm st}^i(M), \B_{\st})   & \stackrel{\sim}{\to} \Hom^{\rm sm}_{\sg_K}(H^i_{\proeet}(X_C,\Q_p(i)), \B_{\st}),\\
\Hom_{\sg_K}(X_{\rm st}^i(M), \B_{\dr})  & \stackrel{\sim}{\to} \Hom_{\sg_K}(H^i_{\proeet}(X_C,\Q_p(i)), \B_{\dr}).
 \end{align*}
The filtration on $M_K$ is concentrated in degree $i$ 
(i.e., ${\rm Fil}^iM_K=M_K$ and ${\rm Fil}^{i+1}M_K=0$). Hence we can use Example~\ref{new-tate23}
of Corollary~\ref{new-tate2.7} to finish the proof of Conjecture~\ref{cst1}
in the case of dagger affinoids.
\subsubsection{Quasi-compact  dagger varieties with de Rham slopes $\geq 0$}
Let $X$ be a quasi-compact smooth dagger variety over $K$.
Fix $r\geq i$. Set:
\begin{align*}
&\wt{H}^{r,i}:=\wt{H}^i_{\proeet}(X_C,\Q_p(r)), \quad 
\wt{F}^{r,i}:=\wt{H}^i(F^r(\rg_{\rm dR}(X)\wotimes_K\bdr^+)), \\
&{X}^{r,i}:=(H^i_{\rm HK}(X_C)\wotimes_{F^{\nr}}\bst^+)^{N=0,\varphi=p^r},\quad 
 B^i:=H^i_{\rm dR}(X)\wotimes_K\bdr^+.
\end{align*}
Let $\wt{A}^{r,i}$ be the kernel of the canonical map $\wt{F}^{r,i}\to B^i$. Then
$\wt{A}^{r,i}$ is
 also canonically a subgroup of $\wt{H}^{r,i}$.
Let 
$$\overline H^{r,i}:=\wt{H}^{r,i}/\wt{A}^{r,i},\quad 
\overline F^{r,i}:=\wt{F}^{r,i}/\wt{A}^{r,i}.$$ 
Note that $\wt{F}^{r,i}/\wt{A}^{r,i}$ is a subgroup of $B^i$, hence it is classical. 
\begin{lemma}\label{nioule1}
We have, for all $i\leq r$,
$${\rm Hom}_{\G_K}^{\rm sm}(\wt{A}^{r,i},\bst)=0,\quad{\rm Hom}_{\G_K}(\wt{A}^{r,i},\bdr)=0.$$
\end{lemma}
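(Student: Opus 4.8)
The goal is to show that $\wt{A}^{r,i}$, the kernel of the canonical surjection $\wt{F}^{r,i}\to B^i$, has no nonzero $\G_K$-equivariant continuous maps to $\bst$ (in the smooth sense) or to $\bdr$. The key observation is that $\wt{A}^{r,i}$ is built entirely out of $\bdr^+$-torsion data: by the very definition of $\wt{F}^{r,i}=\wt{H}^i(F^r(\rg_{\dr}(X)\wotimes_K\bdr^+))$ and $B^i=H^i_{\dr}(X)\wotimes_K\bdr^+$, the map $\wt{F}^{r,i}\to B^i$ becomes an isomorphism after inverting $t$ (equivalently, after tensoring with $\bdr$ over $\bdr^+$), since $H^i(F^r\rg_{\dr}(X)\wotimes_K\bdr)\cong F^r(H^i_{\dr}(X)\wotimes_K\bdr)$ and $B^i\wotimes_{\bdr^+}\bdr$ both compute $H^i_{\dr}(X)\wotimes_K\bdr$ once $t$ is invertible (using that the filtration jumps are finite and $H^i_{\dr}(X)$ is finite-dimensional over $K$). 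Hence $\wt{A}^{r,i}$ is killed by a power of $t$, i.e.\ it is a torsion $\bdr^+$-module (more precisely, the $C$-points form a $\bdr^+$-module killed by some $t^N$, and one identifies the underlying VS/almost-$C$-representation structure accordingly).

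\textbf{First step.} I would make the torsion claim precise. Using the distinguished triangle $F^r\rg_{\dr}(X)\wotimes_K\bdr^+ \to \rg_{\dr}(X)\wotimes_K\bdr^+ \to (\rg_{\dr}(X)\wotimes_K\bdr^+)/F^r$ together with the fact that for $X$ quasi-compact $H^i_{\dr}(X)$ is finite-dimensional over $K$ and carries a finite Hodge filtration, one sees that the cohomology of $(\rg_{\dr}(X)\wotimes_K\bdr^+)/F^r$ in each degree is a finite-rank $\bdr^+$-module killed by $t^r$ (it is a successive extension of pieces $\Omega^j(X)\wotimes_K(\bdr^+/t^{r-j})$, all $t^r$-torsion). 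Chasing the long exact sequence, the kernel $\wt{A}^{r,i}$ of $\wt{F}^{r,i}\to B^i$ is a quotient of $\wt{H}^{i-1}((\rg_{\dr}(X)\wotimes_K\bdr^+)/F^r)$, hence is killed by $t^r$; as a topological $\G_K$-module it is a (possibly infinite-dimensional, Fréchet) $\bdr^+/t^r$-module whose smooth vectors are $\overline K$-vectors, since the smooth vectors of $\bdr^+/t^r$ and of each $\Omega^j(X_C)$ are $\overline K$-rational.

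\textbf{Second step.} Apply the vanishing results from Proposition~\ref{twists} and its relatives. Precisely: any continuous $\G_K$-map $\lambda:\wt{A}^{r,i}\to\bdr$ sends smooth vectors to smooth vectors, hence sends the dense subspace of $\overline K$-rational vectors into $\overline K\subset\bdr^+$; since $\wt{A}^{r,i}$ is killed by $t^r$, for any $\overline K$-rational $x$ we can find a sequence $(x_n)$ of $\overline K$-vectors of $\wt{A}^{r,i}$ lifting elements converging to $t^r\cdot(\text{something})=0$, so $\lambda$ must be zero on the $\overline K$-rational vectors (this is exactly the mechanism of the proof of (iv) of Proposition~\ref{twists}, where $\ker\theta$-torsion forces $\kappa t^k=0$), and by continuity and density $\lambda=0$ on all of $\wt{A}^{r,i}$. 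The same argument with $\bst\hookrightarrow\bdr$ and passing to the colimit over finite extensions $L/K$ (as in Lemma~\ref{new-niedziela}) gives ${\rm Hom}^{\rm sm}_{\G_K}(\wt{A}^{r,i},\bst)=0$. Alternatively, one can invoke Corollary~\ref{HN12}(ii) / Proposition~\ref{twists}(iv) directly once $\wt{A}^{r,i}$ is identified as a torsion $\bdr^+$-module, modulo the density argument needed to handle the infinite-dimensionality.

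\textbf{Main obstacle.} The delicate point is not the vanishing per se — once we know $\wt{A}^{r,i}$ is a torsion $\bdr^+$-module it follows from the already-established $\bdr^+$-module computations — but rather pinning down that $\wt{A}^{r,i}$ really is $t$-torsion and that its smooth/dense structure is as claimed, given that we are now back in the world of genuine locally convex topological vector spaces rather than the algebraic setting: one must check that the maps $\wt{F}^{r,i}\to B^i$ and $\wt{F}^{r,i}\to\wt{F}^{r,i}\wotimes_{\bdr^+}\bdr$ behave well topologically, that completed tensor products do not introduce extra $t$-divisible phenomena, and that $\overline K$-rational vectors are dense in $\wt{A}^{r,i}$ (this uses density of $\overline K$ in $\bdr^+$ together with finiteness of $H^i_{\dr}(X)$ and properties of $\Omega^\bullet(X_C)$). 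I expect this topological bookkeeping, rather than any new structural input, to be where the real work lies.
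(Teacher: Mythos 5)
Your first step is correct and matches the paper: $\wt{A}^{r,i}$ is a quotient of ${\rm DR}^{r,i-1}$, hence is killed by $t^r$. The gaps are in what follows, and they are not merely "topological bookkeeping."

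First, the density claim. You assert that the $\overline K$-rational (equivalently, $\G_K$-smooth) vectors are dense in $\wt{A}^{r,i}$ and leave this as a routine check. But the paper never proves density of smooth vectors in the cohomology group $\wt{A}^{r,i}$; density of smooth vectors is generally badly behaved under passage to subquotients. Instead, the paper observes that $\wt{A}^{r,i}$ is a quotient of ${\rm DR}^{r,i-1}$, and ${\rm DR}^{r,i}$ (for $i<r$) is a quotient of a cocycle space $Z^{r,i}$ of a \v{C}ech double complex built from dagger affinoids. It then exhibits an explicit dense subspace of $Z^{r,i}$: picking a $K$-Banach basis of $\bdr^+/t^r$ of the particular form $(t^je_n)_{0\leq j\leq r-1,\,n}$, the complex ${\rm DR}^{r,\bullet}$ decomposes as a completed direct sum of the $t^je_n\otimes{\rm DR}^{r-j,\bullet}_K$, which yields the dense image of $\sum_{j\leq r}(t^{r-j}\bdr^+/t^r)\otimes_K Z^{j,i}_K$ in $Z^{r,i}$. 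This explicit decomposition is the real content, and nothing in your proposal replaces it.

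Second, the vanishing step. Even granting density, the inference "$\lambda$ sends $\overline K$-vectors to $\overline K$, and the $t^r$-torsion forces it to vanish there" does not follow from what you wrote. In the proof of Proposition~\ref{twists}(iv), the torsion argument ($\kappa t^k=0$) works only after one has identified $\theta\circ\lambda$ as multiplication by a scalar $\kappa\in K$, which uses ${\rm Hom}_{\G_K}(C,C)\simeq K$ and the precise shape $\bdr^+/t^k$ of the source. For a general (possibly non-finitely-generated) $t$-torsion $\G_K$-module there is no such $\kappa$ to kill, so the mechanism does not transfer. You also cannot invoke Corollary~\ref{HN12}(ii), since that is a statement about ${\rm Hom}_{\rm VS}$ in the Banach-Colmez category, not about $\G_K$-equivariant continuous maps over $K$. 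What the paper actually does is apply Proposition~\ref{twists}(iv) coefficientwise to each piece $(t^{r-j}\bdr^+/t^r)\otimes_KZ^{j,i}_K$ — this works because $Z^{j,i}_K$ carries the trivial $\G_K$-action, so a $\G_K$-map to $\bdr$ restricted to each pure tensor factor is forced to vanish — and then concludes by continuity from the density established above.
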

\begin{proof} It suffices to show that 
$${\rm Hom}_{\G_K}^{\rm sm}(A^{r,i},\bst)=0,\quad{\rm Hom}_{\G_K}(A^{r,i},\bdr)=0.$$
And, for that, it is enough to prove the second statement.  
Let $${\rm DR}^{r,i}:=H^i((\bdr^+\wotimes\rg_{\dr}(X))/F^r)$$
so that we have a long exact sequence 
$\cdots\to{\rm DR}^{r,i-1}\to F^{r,i}\to B^i\to {\rm DR}^{r,i}\to\cdots$ which shows that
$A^{r,i}$ is a quotient of ${\rm DR}^{r,i-1}$. Hence it is enough
to prove the same statement for ${\rm DR}^{r,i}$, with $i<r$.

Now ${\rm DR}^{r,i}$ is the $i$-th hypercohomology group of the complex
$${\rm DR}^{r,\bullet}:=
\big((\bdr^+/t^r)\wotimes_K\so_X\to (\bdr^+/t^{r-1})\wotimes_K\Omega^1_X\to\cdots\to
(\bdr^+/t)\wotimes_K\Omega^{r-1}_X\big).$$
Choose a covering of $X$ by dagger affinoids, and
denote by $Z^{r,i}$ the group of $i$-cocycles of the \v{C}ech double complex associated 
to this covering. Since ${\rm DR}^{r,i}$ is a quotient of $Z^{r,i}$, it is enough to prove
that ${\rm Hom}_{\G_K}(Z^{r,i},\bdr)=0$. 

Denote by $Z^{j,i}_K$ the group of $i$-cocycles of the \v{C}ech double complex associated 
to the above covering and the complex
$${\rm DR}^{j,\bullet}_K:=\big(\so_X\to\Omega^1_X\to\cdots\to \Omega^{j-1}_X\big).$$
We are going to prove that $\sum_{j\leq r}(t^{r-j}\bdr^+/t^r\bdr^+)\otimes_K Z^{j,i}_K
\to Z^{r,i}$ has dense image. This will allow us to conclude since
${\rm Hom}_{\G_K}((t^{r-j}\bdr^+/t^r\bdr^+)\otimes_K Z^{j,i}_K,\bdr)=0$ by Proposition~\ref{twists}
and our maps are assumed to be continuous.

To prove this density, choose a Banach basis over $K$ of the $K$-Banach $\bdr^+/t^r$ of the form
$(t^je_n)_{0\leq j\leq r-1,\,n\in\N}$ (pick  a family $e_n$ of elements of $\bdr^+/t^r$
whose images in $\bdr^+/t=C$ form a Banach basis of $C$ over $K$).  Then one can use
$(t^je_n)_{0\leq j\leq k-1,\,n\in\N}$ as a Banach basis of $\bdr^+/t^k$, if $k\leq r$.
This makes it possible to decompose ${\rm DR}^{r,\bullet}$ as a completed direct
sum of the complexes $t^je_n\otimes {\rm DR}^{r-j,\bullet}_K$'s, and then $Z^{r,i}$ is the completion
of the sum of the $t^je_n\otimes Z^{r-j,i}_K$'s.
\end{proof}

It follows from Lemma~\ref{nioule1} that
we have isomorphisms
$${\rm Hom}^{\rm sm}_{\G_K}(\overline H^{r,i},\bst)\overset{\sim}{\to} {\rm Hom}_{\G_K}^{\rm sm}(\wt{H}^{r,i},\bst),
\quad
{\rm Hom}_{\G_K}(\overline H^{r,i},\bdr)\overset{\sim}{\to} {\rm Hom}_{\G_K}(\wt{H}^{r,i},\bdr) $$
 Since $X$ is quasi-compact for which
Conjecture~\ref{quasi1} holds and 
we have the exact sequence (\ref{quasi19}):
$$0\to \wt{H}^{r,i}\to
X^{r,i}\oplus \wt{F}^{r,i}\to B^i\to 0.$$
This induces
exact sequences
\begin{equation}\label{exact1}
0\to\overline H^{r,i}\to X^{r,i}\oplus \overline F^{r,i}
\to B^i\to 0, \quad
0\to\overline H^{r,i}\to X^{r,i}
\to B^i/\overline F^{r,i}\to 0, \quad
\end{equation}
Since $\overline F^{r,i}$ is a subgroup of $B^i$, all the terms in these sequences are classical. 
This identifies topologically  $\overline H^{r,i}$ with $V_{\rm st}^r(H^i_{\rm HK}(X_C),H^i_{\rm dR}(X))$. 
Hence we can use Corollary~\ref{new-tate2.7} to finish the proof of Conjecture~\ref{cst1}
in the case of quasi-compact  smooth dagger varieties with de Rham slopes~$\geq 0$.

 \subsubsection{Other  small varieties with de Rham slopes $\geq 0$}
The proof in the case of other  small varieties with de Rham slopes $\geq 0$
is  the same as in the quasi-compact case
using the fact that Conjecture~\ref{quasi1} holds in that case.

%

\subsection{The pro-\'etale--to--de Rham $C_{\rm st}$-conjecture for varieties over $C$}
The following theorem shows that one can recover de Rham cohomology (without the Hodge filtration)
and Hyodo-Kato cohomology (without actions of $\varphi$ and $N$) from
pro-\'etale cohomology for varieties over $C$, despite the absence of Galois action.
\begin{theorem}\label{miracle3.5}
Let $X$ be a   smooth dagger variety over $C$ with de Rham slopes $\geq 0$.   Let $i\geq 0$. 
\begin{align*}
& {\rm Hom}_{\rm TVS}({\mathbb H}^i_{\proeet}(X,\Q_p),\Bst)\simeq
{\rm Hom}_{F^{\rm nr}}(H^i_{\rm HK}(X),\bst),\quad{\text{as a  $\bst$-module}},\\
& {\rm Hom}_{\rm TVS}({\mathbb H}^i_{\proeet}(X,\Q_p),\Bdr)\simeq
{\rm Hom}_{\bdr^+}(H^i_{\rm dR}(X/\bdr^+),\bdr),\quad{\text{as a  $\bdr$-module}}.
\end{align*}
\end{theorem}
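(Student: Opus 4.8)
The plan is to reduce the statement over $C$ to the previously established de Rham-to-pro-\'etale comparison (Theorem~\ref{quasi100}), using the same strategy as in the proof of Conjecture~\ref{cst1} for quasi-compact varieties over $K$ but replacing Galois-equivariant $\Hom$'s by VS-morphisms and replacing the input results on almost $C$-representations (Proposition~\ref{twists}, Corollary~\ref{new-tate2.7}) by their BC-theoretic counterparts (Corollary~\ref{HN12}, Proposition~\ref{baco3.7}). First I would fix $r\geq i$ and, using the notation of Section~\ref{EQUIV}, write $\mathbb{H}^{r,i}=\mathbb{H}^i_{\proeet}(X,\Q_p(r))$, $\mathbb{X}^{r,i}=(H^i_{\rm HK}(X)\wotimes_{F^{\nr}}\Bst^+)^{N=0,\varphi=p^r}$, $\mathbb{F}^{r,i}=\mathbb{H}^i(F^r\rg_{\rm dR}(X/\B^+_{\dr}))$, $\mathbb{B}^i=\mathbb{H}^i_{\rm dR}(X/\B^+_{\dr})$. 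Since $X$ is small, Theorem~\ref{quasi100} gives the exact sequence of qBC's
$$0\to \mathbb{H}^{r,i}\to \mathbb{X}^{r,i}\oplus \mathbb{F}^{r,i}\to \mathbb{B}^i\to 0.$$
Let $\mathbb{A}^{r,i}=\Ker(\mathbb{F}^{r,i}\to\mathbb{B}^i)$, which is also a sub-VS of $\mathbb{H}^{r,i}$, and set $\overline{\mathbb H}^{r,i}=\mathbb{H}^{r,i}/\mathbb{A}^{r,i}$, $\overline{\mathbb F}^{r,i}=\mathbb{F}^{r,i}/\mathbb{A}^{r,i}$; the latter is a sub-$\Bdr^+$-module of $\mathbb{B}^i$.

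The next step is to kill the de Rham terms. The analogue of Lemma~\ref{nioule1} is the statement that $\Hom_{\rm VS}(\mathbb{A}^{r,i},\Bst)=0$ and $\Hom_{\rm VS}(\mathbb{A}^{r,i},\Bdr)=0$. Here $\mathbb{A}^{r,i}$ is a quotient of $\mathbb{DR}^{r,i-1}=\mathbb{H}^i((\Bdr^+/t^r)\wotimes\rg_{\dr})$ (shifted index), which is a $\B_r$-Module; hence $\mathbb{A}^{r,i}$ is a torsion $\Bdr^+$-module and the vanishing follows directly from Corollary~\ref{HN12}(ii) — this is where the argument is actually cleaner than over $K$, since there is no \v Cech/density argument needed, the conclusion being purely about curvature-$0$ objects. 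Applying $\Hom_{\rm VS}(-,\Bst)$ and $\Hom_{\rm VS}(-,\Bdr)$ to the exact sequences
$$0\to\overline{\mathbb H}^{r,i}\to \mathbb{X}^{r,i}\oplus \overline{\mathbb F}^{r,i}\to \mathbb{B}^i\to 0,\qquad 0\to\overline{\mathbb H}^{r,i}\to \mathbb{X}^{r,i}\to \mathbb{B}^i/\overline{\mathbb F}^{r,i}\to 0$$
and using $\Hom_{\rm VS}(\mathbb{A}^{r,i},\B_?)=0$, I get $\Hom_{\rm VS}(\overline{\mathbb H}^{r,i},\B_?)\cong \Hom_{\rm VS}(\mathbb{H}^{r,i},\B_?)$ for $?\in\{\rm st,dR\}$. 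Moreover $\overline{\mathbb H}^{r,i}$ is, by acyclicity (Theorem~\ref{quasi100}(ii)) and Remark~\ref{quasi5}, exactly $\mathbb{V}_{\rm st}^r(H^i_{\rm HK}(X),F^0H^i_{\rm dR}(X/\B_{\dr}))$.

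Finally I would invoke Proposition~\ref{baco3.7}: for the acyclic filtered $(\varphi,N)$-module $M=H^i_{\rm HK}(X)$ over $C$ with $M^+_{\dr}=F^0(H^i_{\rm dR}(X/\B_{\dr}))$, whose $\varphi$-slopes lie in $[0,r]$ (after possibly enlarging $r$; by Remark~\ref{quasi2}(ii) the conjecture for one $r\geq i$ suffices), one has
$$\Hom_{\rm VS}(\mathbb{V}_{\rm st}^r(M,M^+_{\dr}),\Bdr)\simeq M^\dual\otimes_{F^{\nr}}\bdr,\qquad \Hom_{\rm VS}(\mathbb{V}_{\rm st}^r(M,M^+_{\dr}),\Bst)\simeq M^\dual\otimes_{F^{\nr}}\bst.$$
The right-hand sides are exactly $\Hom_{F^{\nr}}(H^i_{\rm HK}(X),\bst)$ and (via $M^\dual\otimes_{F^{\nr}}\bdr = \Hom_{\bdr^+}(M_{\dr}^+,\bdr)=\Hom_{\bdr^+}(H^i_{\rm dR}(X/\bdr^+),\bdr)$, the last identification using the Hyodo-Kato isomorphism to see $H^i_{\rm dR}(X/\bdr^+)$ as a free $\bdr^+$-lattice) the right-hand sides of the theorem, so we are done. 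The main obstacle in this plan is bookkeeping the index shift and the twist $\{r\}$ correctly — in particular checking that one may pass to a value of $r$ large enough that $\varphi$-slopes lie in $[0,r]$ and that $M\otimes t^r\bdr^+\subset t^rM^+_{\dr}\subset M\otimes\bdr^+$, as required in Proposition~\ref{baco3.7}, which is handled by the $t$-multiplication argument of Remark~\ref{quasi2}(ii) — together with the routine but slightly delicate identification of $H^i_{\rm dR}(X/\bdr^+)$ with the lattice $M^+_{\dr}$ appearing in Proposition~\ref{baco3.7}; neither of these is conceptually hard, but both must be done carefully to land exactly the stated isomorphisms (in particular as $\Bst$- resp.\ $\Bdr$-modules, not merely as VS's).
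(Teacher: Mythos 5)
Your proposal matches the paper's proof in all essentials: quotient by the torsion $\Bdr^+$-module ${\mathbb A}^{r,i}$, kill the resulting ${\rm Hom}$'s via Corollary~\ref{HN12}, identify $\overline{\mathbb H}^{r,i}$ with ${\mathbb V}^r_{\rm st}$ of the acyclic filtered $(\varphi,N)$-module $(H^i_{\rm HK}(X),t^{-r}\overline F^{r,i})$ via Theorem~\ref{quasi100}, and conclude by Proposition~\ref{baco3.7}. The only small imprecision is that Corollary~\ref{HN12} covers $\Bdr^+$ and $\Bdr$ but not $\Bst$; the $\Bst$-vanishing follows from the $\Bdr$-vanishing by composing with the injection $\Bst\hookrightarrow\Bdr$.
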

\begin{proof}
If $M$ is a qBC or, more generally, a Topological Pair, we set
$$h(M):={\rm Hom}_{\rm TVS}(M,\Bdr). $$

Fix $r\geq i$. Set:
\begin{align*}
&{\mathbb H}^{r,i}:={\mathbb H}^i_{\proeet}(X_C,\Q_p(r)), \quad 
{\mathbb F}^{r,i}:=H^i(F^r(\rg_{\rm dR}(X/\bdr^+)\wotimes_{\bdr^+}\Bdr^+)), \\
&{\mathbb X}^{r,i}:=(H^i_{\rm HK}(X_C)\wotimes_{F^{\nr}}\Bst^+)^{N=0,\varphi=p^r},\quad 
 {\mathbb B}^i:=H^i_{\rm dR}(X/\bdr^+)\wotimes_{\bdr^+}\Bdr^+.
\end{align*}
Let ${\mathbb A}^{r,i}$ be the kernel of the canonical map ${\mathbb F}^{r,i}\to {\mathbb B}^i$. Then
${\mathbb A}^{r,i}$ is a $\Bdr^+$-Pair; it is 
 also canonically a subgroup of ${\mathbb H}^{r,i}$.
Let 
$$\overline {\mathbb H}^{r,i}:={\mathbb H}^{r,i}/{\mathbb A}^{r,i},\quad 
\overline {\mathbb F}^{r,i}:={\mathbb F}^{r,i}/{\mathbb A}^{r,i}.$$ 
Since ${\mathbb A}^{r,i}$ is a $\Bdr^+$-Pair, it contains
a dense submodule which is an inductive limit of finite length $\Bdr^+$-Modules,
hence 
$h({\mathbb A}^{r,i})=0$ by Corollary~\ref{HN12}, and
we have an isomorphism
$$h(\overline {\mathbb H}^{r,i})\overset{\sim}{\to} h({\mathbb H}^{r,i}).$$
We also have natural sequences
\begin{align*}
0\to{\mathbb H}^{r,i}\to
{\mathbb X}^{r,i}\oplus {\mathbb F}^{r,i}\to {\mathbb B}^i\to 0,
\quad &
0\to {\mathbb H}^{r,i}\to
{\mathbb X}^{r,i}\oplus {\mathbb F}^{r,i}\to {\mathbb B}^i\to 0\\
0\to\overline {\mathbb H}^{r,i}\to {\mathbb X}^{r,i}
&\to {\mathbb B}^i/\overline {\mathbb F}^{r,i}\to 0
\end{align*}

Since $X$ is small, the last sequence is a sequence of BC's. It is
exact because passing to $C$-points yields the sequence
$0\to\overline H^{r,i} \to {X}^{r,i}
\to {B}^i/\overline {F}^{r,i}\to 0$ which was proven to be exact,
as a consequence of the validity of
Conjecture~\ref{quasi1} (see (\ref{exact1})).
This identifies $\overline{\mathbb H}^{r,i}$ with 
${\mathbb V}^r_{\rm st}(H^i_{\rm HK}(X),t^{-r}\overline{F}^{r,i})$,
which makes it possible to use Proposition~\ref{baco3.7} to prove
Theorem~\ref{miracle3.5}.
\end{proof}

\begin{remark}\label{recover}
(i) If $X$ is proper, then ${\mathbb H}_{\proeet}^i(X,\Q_p)$ 
is a finite dimensional $\Q_p$-vector space with
no extra structure. This shows that it is hopeless to try to recover the actions of $\varphi$
and $N$ on $H^i_{\rm HK}(X)$ or the filtration on $H^i_{\rm dR}(X/\bdr^+)$ using 
Theorem~\ref{miracle3.5}. 

(ii) On the other hand, if $X$ is a dagger affinoid, we know what the filtration
on $H^i_{\rm dR}(X/\bdr^+)$ is: we have $F^{i+k}H^i_{\rm dR}(X/\bdr^+)=t^kH^i_{\rm dR}(X/\bdr^+)$.
Also, as we have seen
\begin{equation}\label{max}
(H^i_{\rm HK}(X)\otimes_{F^{\nr}}\Bst^+)^{N=0,\varphi=p^i}\simeq 
(H^i_{\rm HK}(X)\otimes_{F^{\nr}}\Bcris^+)^{\varphi=p^i}. 
\end{equation}
Using Lemma~\ref{quasi121}, one sees that 
\begin{align*}
H^i_{\rm HK}(X)^\dual \otimes_{F^{\nr}}\bcris^+
&\subset {\rm Hom}_{\rm TVS}((H^i_{\rm HK}(X)\otimes_{F^{\nr}}\Bcris^+)^{\varphi=p^i},\Bcris^+)\\
&\subset\{\lambda\in H^i_{\rm HK}(X)^\dual \otimes_{F^{\nr}}\bcris,\ \varphi^n(\lambda)\in
H^i_{\rm HK}(X)^\dual \otimes_{F^{\nr}}\bdr^+,\  \forall n\geq 0\}\\
&=H^i_{\rm HK}(X)^\dual \otimes_{F^{\nr}}\bcris^+.
\end{align*}
Since, by the proof of Theorem~\ref{miracle3.5},
 $${\rm Hom}_{\rm TVS}({\mathbb H}^i(X,\Q_p(i)),\Bcris^+)\simeq
{\rm Hom}_{\rm TVS}((H^i_{\rm HK}(X)\otimes_{F^{\nr}}\Bcris^+)^{\varphi=p^i},\Bcris^+),$$
one sees that one
can recover the action of $\varphi$ on $H^i_{\rm HK}(X)\otimes_{F^{\nr}}\breve{C}$
(by tensoring with $\breve C$ above $\bcris^+$ and dualizing).

(iii) If $X$ is a general smooth dagger variety over $C$, point (ii) suggests 
that one might be able to  recover a sheafified version of the actions
of $\varphi$ on $H^i_{\rm HK}(X)$ and of the filtration on $H^i_{\rm dR}(X/\bdr^+)$.
Recovering the action of $N$ seems out of reach by these methods.
\end{remark}

\appendix

\section{Extensions of topological vector spaces}\label{extlh}
We list here few basic facts concerning extensions of Banach and Fr\'echet spaces for which we could not find references. 
\begin{proposition}\label{extlh1}
Let $0\to W_1\to W\to W_2\to 0$ be an exact sequence in ${\rm LH}(C_K)$.
If $W_1,W_2$ are banachs, so is $W$.
\end{proposition}
\begin{proof}
Let $U$ be an open lattice in $W$, small enough so that $U\cap W_1\subset W_1^+$ (this exists
since any open lattice in $W_1$ is the trace of an open lattice in $W$ by strictness of the sequence).
Then the image of $U$ in $W_2$ is open (by strictness of the sequence), hence contains
$p^nW_2^+$ for $n$ big enough.  Changing $U$ into its intersection with the preimage of $p^nW_2^+$,
we can assume that this image is $p^nW_2^+$, and changing $U$ into $U+W_1^+$, we can assume
that we have an exact sequence $0\to W_1^+\to U\to p^nW_2^+\to 0$.  Let $W'$ be the banach obtained
by completing $W$ for the norm induced by $U$; hence $W'=\varprojlim_n W/p^nU$, and we have a
strict exact sequence $0\to W_1\to W'\to W_2\to 0$ as well as a continuous morphism
$W\to W'$ which makes the diagram
$$\xymatrix@R=4mm@C=5mm{0\ar[r]&W_1\ar[r]\ar@{=}[d] &W\ar[r]\ar[d] &W_2\ar[r]\ar@{=}[d] &0\\  
0\ar[r] &W_1\ar[r] &W'\ar[r] &W_2\ar[r] &0}$$
commutative. We conclude by the $5$ Lemma that the middle map is an isomorphism
in ${\rm LH}(C_K)$ hence $W$ is a banach.  
\end{proof}
\begin{remark}\label{extlh2}
If $K$ is spherically complete or if $W_1,W_2$ are separable (i.e. have a
dense subspace of countable dimension) then the above sequence splits.
It seems, on the other hand, that if $\widehat{\C_p}$ is a spherical completion
of $\C_p$, the sequence of $\C_p$-banachs
$0\to\C_p\to \widehat{\C_p}\to \widehat{\C_p}/\C_p\to 0$
does not split. In particular the $\C_p$-dual of $\widehat{\C_p}$ is $0$.
\end{remark}

\begin{proposition}\label{extlh3}
Let $0\to W_1\to W\to W_2\to 0$ be an exact sequence in ${\rm LH}(C_K)$.
If $W_1,W_2$ are fr\'echets, so is $W$.
\end{proposition}
\begin{proof}
If $F$ is a fr\'echet and $(U_n)_n$ is a basis of neighborhoods of $0$ consisting
of open lattices with $U_{n+1}\subset U_n$ (we can always achieve that by replacing
$U_n$ by $U_0\cap U_1\cap\dots\cap U_n$), then the natural map
$F\to \varprojlim_n F/U_n$ is an isomorphism.

Now, for any open lattices $U_1\subset W_1$ and $U_2\subset W_2$ we can construct, as above,
an open lattice $U\subset W$ with $U\cap W_1\subset U_1$ and the image of $U$ in $W_2$
contained in $U_2$.  This makes it possible to construct
a family of neighborhoods 
of $0$ in $W$ consisting
of open lattices with $U_{n+1}\subset U_n$, such that the family $(U_{2,n})$ of images
of the $U_n$ are a basis of neighborhoods of $0$ in $U_2$, and the $U_{1,n}:=W_1\cap U_n$
are a basis of neighborhoods of $0$ in $U_1$. Let $W':=\varprojlim_n W/U_n$.
Then $W'$ is a fr\'echet (can define the topology by the distance $d(x,y)=p^{-n}$
where $n$ is the smallest integer with $x-y\notin U_n$),
and we have an exact sequence 
$0\to \varprojlim_n W_1/U_{1,n}\to W'\to \varprojlim_n W_2/U_{2,n}\to 0$ of fr\'echets
(with the two projective limits being isomorphic to $W_1$ and $W_2$ by the above discussion)
as well as a continuous morphism
$W\to W'$ which makes the diagram 
$$\xymatrix@R=4mm@C=5mm{0\ar[r]&W_1\ar[r]\ar[d]^-{\wr} &W\ar[r]\ar[d] &W_2\ar[r]\ar[d]^-{\wr} &0\\      
0\ar[r] &W_1\ar[r] &W'\ar[r] &W_2\ar[r] &0}$$ 
commutative in ${\rm LH}(C_K)$  It follows that $W\to W'$ is an isomorphism
in ${\rm LH}(C_K)$ hence $W$ is a fr\'echet.
\end{proof}

\begin{proposition}\label{extlh5}
Let $0\to W_1\to W\to W_2\to 0$ be an exact sequence in ${\rm LH}(C_K)$.
If $W_1,W_2$ are of compact type and $K$ is spherically closed or
$W_1,W_2$ are separable, so is $W$.
\end{proposition}
\begin{proof}
Passing to stereotypical duals, we get an exact sequence in ${\rm LH}(C_K)$
$$0\to W_2^\dual\to W^\dual\to W_1^\dual\to {\rm Ext}^1(W_2,K)\to\cdots$$
But ${\rm Ext}^1(W_2,K)=0$ since $W_2$ is separated (see \cite[Lemma 2.5]{CGN}).
Since $W_1^*,W_2^*$ are nuclear fr\'echets,  $W^*$ is a fr\'echet by Proposition~\ref{extlh3},
and it  is nuclear by \cite[Prop. 3.8]{OV}.
Dualizing back, and denoting by $W',W'_1,W'_2$ the stereotypical duals 
of $W^\dual, W_1^\dual, W_2^\dual$, we get a diagram
 \begin{equation}
 \label{kolo12}
 \xymatrix@R=4mm@C=5mm{0\ar[r]&W_1\ar[r]\ar[d]^-{\wr} &W\ar[r]\ar[d] &W_2\ar[r]\ar[d]^-{\wr} &0\\
0\ar[r] &W'_1\ar[r] &W'\ar[r] &W'_2\ar[r] &0},
\end{equation}
which is commutative in ${\rm LH}(C_K)$ where the left and right vertical arrows are isomorphisms
since $W_1,W_2$ are of compact type and we have assumed that $K$ is spherically
complete or $W_1,W_2$ are separable (otherwise we could have $W'_i=0$...).  Hence the rows in diagram \eqref{kolo12} are exact. 
It follows that the map $W\to W'$ is an isomorphism
${\rm LH}(C_K)$ hence $W$ is of compact type.
\end{proof}

\begin{remark}\label{extlh4}
The sequences in Propositions~\ref{extlh3} and~\ref{extlh5} have no reason to split, 
even if $K$ is a finite extension of $\Q_p$.
For example, if ${\rm LA}(\Z_p)$ and ${\rm LC}(\Z_p)$
are the spaces of locally analytic (resp.~locally constant) functions
$\Z_p\to K$, the derivation $\frac{d}{dx}:{\rm LA}(\Z_p)\to{\rm LA}(\Z_p)$ is surjective
but Kohlhaase  showed  in \cite[Cor. 4.3]{Kol} that it does not have a continuous section (no continuous 
primitivation), i.e.,
the sequence of spaces of compact type $$0\to {\rm LC}(\Z_p)\to
{\rm LA}(\Z_p)\to{\rm LA}(\Z_p)\to 0$$ does not split.

By duality, this says that if ${\cal R}^+$ denotes the ring of analytic functions
on the open unit ball (dual to ${\rm LA}(\Z_p)$ via Amice transform), 
and $t=\log(1+T)$, then the sequence 
of fr\'echets $$0\to t{\cal R}^+\to{\cal R}^+\to {\cal R}^+/t\to 0$$ does not split.
\end{remark}

\end{document}